\documentclass{birkart}
\usepackage{graphicx,amsmath,amssymb,amsthm,amscd}
% THEOREM Environments (Examples)------------------------------------------
\newtheorem{thm}{Theorem}[section]

\newtheorem{lem}[thm]{Lemma}
\newtheorem{prop}[thm]{Proposition}
\theoremstyle{definition}
\newtheorem{defn}[thm]{Definition}
\theoremstyle{remark}
\newtheorem{rem}[thm]{Remark}

\numberwithin{equation}{section}
% \numberwithin{equation}{section}
\newcommand{\R}{\mathbb{R}}

\newcommand{\N}{\mathbb{N}}

\newcommand{\Z}{\mathbb{Z}}
\newcommand{\C}{\mathbb{C}}

\newcommand{\E}{\mathcal{E}}

\newcommand{\dbar}{{d\hspace{-0,05cm}\bar{}\hspace{0,05cm}}}
\newcommand{\Op}{\textup{Op}}

\newcommand{\op}{\textup{op}}

\begin{document}
%---------------------------------------------------------------------------
%Insert here the title, affiliations and abstract:
%
\title[Branching Asymptotics on Manifolds with Edge]
{Branching Asymptotics on Manifolds with Edge}
%\title[Running Heads: An Example for Birkart]
 %{An Example for the Usage of the\\ Birkart Class File}
 %{An Example for the Usage of the\\ Birkart Class File}
%----------Author 1
%\author[Running Heads: Birkh\"auser]{Birkh\"{a}user Publishing Ltd.}
\author[B.-Wolfgang Schulze]{B.-Wolfgang Schulze}

\address{%
Am Neuen Palais 10\\
14469 Potsdam\\
Germany}

\email{schulze@math.uni-potsdam.de}

\author{Andrea Volpato}
\address{%
Am Neuen Palais 10\\
14469 Potsdam\\
Germany}
\email{volpato.andrea@fastwebnet.it}
%----------classification, keywords, date
\subjclass{Primary 35S35; Secondary 35J70}

\keywords{edge pseudo-differential operators, discrete and continuous asymptotics, \\ 
Mellin operators with parameter-dependent meromorphic symbols, Green edge operators}

\date{}
  
\today

%%% ----------------------------------------------------------------------

\begin{abstract}
We study pseudo-differential operators on a wedge with continuous and
variable discrete branching asymptotics.

\end{abstract}

%%% ----------------------------------------------------------------------
\maketitle
%%% ----------------------------------------------------------------------
\tableofcontents
\section*{Introduction}
The solutions to elliptic (pseudo-) differential equations on a manifold with edge are expected to be regular 
in weighted edge Sobolev spaces with some typical asymptotic behaviour in the distance variable $r\in \R_+$ 
to the singularity (we employ here the terminology from \cite{Schu32}, see also \cite{Schu20}; later on we recall a few basic 
definitions). The simplest case of a manifold with edge is a manifold with smooth boundary. Regularity of a
(Shapiro-Lopatinskij-) 
elliptic boundary value problem in this situation (say, for the Laplacian, or any other elliptic operator $A)$ implies smoothness of solutions up to the boundary when the
boundary data and right hand sides are smooth. In particular, we obtain Taylor asymptotics 
$u(r)\sim \sum_{j\in \N}c_j(y)r^j$ as $r\rightarrow 0$ where $r$ in this case is the normal variable 
to the boundary, and the coefficients $c_j(y)$ are smooth functions in the tangential variable $y.$ Also  regularity in Sobolev spaces up to the boundary can be formulated as an asymptotic result when we rephrase classical Sobolev spaces as edge spaces, cf. \cite{Schu20}, or Remark \ref{31locw} below. However, the situation drastically changes when $A$ is an elliptic operator from the context of edge singularities. Such an $A$ is locally in the variables $(r,x,y)$ of an open stretched wedge
$\R_+\times X\times \Omega $ of edge-degenerate form
\begin{equation} \label{ideg}
A=r^{-\mu}\sum_{j+|\alpha |\leq \mu}a_{j\alpha }(r,y)( -r\partial_r)^j(rD_y)^\alpha, 
\end{equation}
$a_{j\alpha }\in C^\infty(\overline{\R}_+\times \Omega , \mbox{Diff}^{\mu -(j+|\alpha |)}(X))$. Here  $X$ is a smooth (compact, closed) manifold and $\Omega \subseteq \R^q$ an open set, and $\mbox{Diff}^\nu (X) $ is the space of differential operators of order $\nu$ over $X$ with smooth coefficients. According to \cite{Schu28}, solutions $u(r,x,y)$ to $Au=f$ (say, in 
$C^\infty(\R_+\times X\times \Omega) ),$ under an ellipticity condition on $A,$ are expected to have variable discrete asymptotics, i.e.
\begin{equation} \label{ias}
u(r,x,y)\sim \sum_j \sum^{m_j(y)}_{k=0}c_{jk }(x,y)r^{-p_j(y)} \textup{log} ^k\,r\quad\mbox{as}\quad r\rightarrow 0, 
\end{equation}
modulo some flat remainder (to be specified). The sequences $$\mathcal{P}(y):= \{p_j(y),m_j(y)\}_{j=1,\ldots,J(y)}\subset \C\times \N,\,J(y)\in\N\cup\{+\infty\},$$ are individual for every fixed $y\in \Omega .$ They are determined by the operator $A,$ and the right hand side of the equation $Au=f$ where $f$ is assumed to be of analogous structure. For the coefficients we expect $c_{jk }(x,y)\in C^\infty(X)$ for every fixed $y.$ What concerns Sobolev smoothness in suitable scales of weighted edge spaces we even need the right framework to express such asymptotics. The present paper belongs to a larger program to investigate asymptotics of solutions in such a sense. We employ results of \cite{Schu34} and \cite{Schu36} for the case $\textup{dim}\, X=0,$ combined with the recent progress of the pseudo-differential analysis on (non-trivial) manifolds with edges. Note that asymptotics of that kind have been studied also by Bennish, cf. \cite{Benn1}, \cite{Benn2}, using factorisations of the involved symbols. Wiener-Hopf techniques for (pseudo-differential) boundary value problems (in general, without the transmission property at the boundary) are developed in detail in the monograph of Eskin \cite{Eski2}. In concrete examples it may be very efficient to explicitly compute exponents as factorisation indices. Our approach is completely different; it is based on
the idea of representing asymptotics in terms of analytic functionals, cf. \cite{Remp6}.  The basics on the relationship between asymptotics and analytic functionals are already explained in \cite{Schu2}, including crucial observations on variable and branching asymptotics in this set-up. A self-contained exposition on what we call continuous asymptotics is also given in \cite{Kapa10}.\\
The idea to establish asymptotics of the form \eqref{ias} is to embed the problem into a pseudo-differential calculus and to interpret such asymptotics as a feature of the symbolic calculus. The difficulty is to incorporate the variety of different $\mathcal{P}(y),y\in \Omega ,$ into spaces of symbols and also to design ``the right" weighted Sobolev spaces with asymptotics, such that our operators are continuous between such spaces. We employ the pseudo-differential calculus on manifolds with edge in the sense of \cite{Schu32} or \cite{Schu20} which includes constant discrete or continuous asymptotics. Here, we refer to a substructure of the edge algebra with continuous asymptotics. Some basics have to be completely reorganised, for instance, the notion of Green operators. The program is altogether extremely delicate  since a pseudo-differential operator might destroy the very fragile pointwise discrete structure \eqref{ias}; in fact, integration with respect to the edge variable $y$ smears out the system of exponents over a large region in the complex plane (such phenomena are just the background of continuous asymptotics). However, we show that the operators in the edge calculus (although very general in principle) are so specific that they respect, indeed, the pointwise discrete character.\\
This paper is organised as follows. First in Section 2.1 we give an idea on how elliptic edge-degenerate differential operators produce $y$-dependent families of meromorphic operator functions the poles of which contribute to the asymptotic data of solutions. The essential point is that we admit from the very beginning poles of non-constant multiplicity under varying $y.$ In Section 2.2 we rephrase the information
with the help of families of analytic functionals in the complex plane of the Mellin covariable, and we define the crucial notion of a variable discrete Mellin asymptotic type and associated spaces of Mellin amplitude functions. In Section 2.3 we study families of associated pseudo-differential operators, depending on $y,\eta ,$ the local variables and covariables on the edge. As such they play the role of specific operator-vaued symbols of the edge calculus with variable discrete asymptotics. Other important ingredients are the Green symbols with variable discrete asymptotics, studied in Section 2.4. Here we also introduce variable discrete asymptotic types, belonging to families of functions over a cone. In particular, we show (cf. Proposition \ref{22melgreen1}) that various involved data (such as the chosen cut-off functions, and several weights) only change the Mellin edge symbols by Green symbols. Another important issue is the nature of weighted edge distributions with variable discrete asymptotics, investigated in Section 3.1, including their Fr\'echet topology. Finally in Section 3.2 we show that Mellin and Green operators of the edge calculus with variable discrete asymptotics act as continuous operators in weighted edge spaces with such asymptotics. This belongs to the idea to deduce variable discrete asymptotics of solutions to elliptic equations on a manifold with edge as an aspect of elliptic regularity in the edge calculus. It will be necessary to establish more elements of the edge calculus, e.g. the action of Mellin operators with non-smoothing holomorphic amplitude functions, and the parametrix construction in this framework. This will be the topic of \cite{Schu69}.

\section{Mellin and Green operators with asymptotics}

\subsection{Examples and motivation}
Let us first recall the idea on how solutions to an elliptic equation $Au=f$ for
\begin{equation} \label{1fu}
 A=r^{-\mu}\sum_{j=0}^\mu a_j(-r\partial_r)^j
\end{equation}
acquire asymptotics. We employ the Mellin transform $Mu(z)=\int_0^\infty  r^{z-1}u(r)dr$ which is known to induce a continuous operator 
$M:C^\infty_0(\R_+)\rightarrow \mathcal{A}(\C).$
Here $\mathcal{A}(U)$ for an open set $U\subseteq \C$ is the space of all holomorphic functions 
in $U$ (in the Fr\'echet topology of uniform convergence on compact subsets). More precisely, 
setting
$\Gamma _\beta :=\{z\in \C:\textup{Re}\,z =\beta \}$ for any real $\beta ,$ and $M_\gamma u:=Mu|_{\Gamma _{1/2-\gamma }},$ 
we have a continuous operator $M_\gamma :C^\infty_0(\R_+)\rightarrow \mathcal{S}(\Gamma _{1/2-\gamma })$ that extends to an 
isomorphism $M_\gamma :r^{-\gamma }L^2(\R_+)\rightarrow L^2(\Gamma _{1/2-\gamma })$ for every $\gamma \in \R.$ We 
call $M_\gamma $ the weighted Mellin transform. Recall that the inverse is given by the formula 
$M_\gamma ^{-1}g(r)=\int_{\Gamma _{1/2-\gamma }}r^{-z}g(z)\dbar z$ for $\dbar z:=(2\pi i)^{-1}dz. $ In this paper a cut-off function on the 
half-axis is any real-valued $\omega (r)\in C^\infty_0(\overline{\R}_+)$ that is equal to 1 in a neighbourhood of $r=0.$
The Mellin transform will be used also in the set-up of vector- and operator-valued functions. In particular, we 
employ Mellin pseudo-differential operators with amplitude functions $f(r,r',z)$ taking values in the space 
of classical pseudo-differential operators over $X$ of order $\mu.$ Let $L^\mu_{\textup{cl}}(X;\R^l)$ denote the space of classical parameter-dependent pseudo-differential operators on $X$ with parameter $\lambda\in\R^l,l\in\N.$ The local amplitude functions are classical symbols in $(\xi,\lambda)$, treated as a covariable, while the parameter-dependent smoothing operators are Schwartz functions in $\lambda\in\R^l$ with values in $L^{-\infty}(X)$, the space of smoothing operators on X, with the Fr\'echet topology from an identification with $C^\infty(X\times X)$ via some Riemannian metric. In the case $l=0$ we simply write $L^\mu_{\textup{cl}}(X)$.\\
We assume 
$f(r,r',z)\in C^\infty (\overline{\R}_+\times \overline{\R}_+,L^\mu_{\textup{cl}}(X;\Gamma _{1/2-\gamma }))$ (with $\Gamma _{1/2-\gamma }$ being identified with $\R$ via $z\mapsto\textup{Im}\,z$), and we set
\begin{equation} \label{1mop}
\op_M^\gamma (f)u(r):=\int_{-\infty }^\infty \int_0^\infty (r/r')^{-(1/2-\gamma +i\rho )}
f(r,r',1/2-\gamma +i\rho )u(r')r'^{-1}dr'\dbar\rho , 
\end{equation}
$\dbar\rho:=(2\pi )^{-1}d\rho,$ first for $u(r)\in  C^\infty_0(\R_+,C^\infty(X)),$ and later on for weighted distributions. Observe that when $f$ is independent of $r,r'$ and $(\delta _\lambda u)(r):=u(\lambda r)$ for $\lambda \in \R_+$ we have
\begin{equation} \label{1mopcom}
\delta _\lambda\op_M^\gamma (f)=\lambda\op_M^\gamma (f) \delta _\lambda, \lambda \in \R_+.
\end{equation}

Throughout this paper $H^s(\R^n)$ indicates the standard Sobolev spaces in $\R^n$ of smoothness $s\in\R.$ Globally on a compact closed manifold $X$ we also have the spaces $H^s(X).$ Moreover, $H^s_{\textup{loc}}(X)$ over an open manifold $X$ means the space of all distributions $u$ such that $\chi_*(\varphi u)\in H^s(\R^n)$ for any chart $\chi:U\rightarrow\R^n,$ $\varphi\in C_0^\infty(U),$ while $H^s_{\textup{comp}}(X)$ is the subspace of compactly supported elements of $H^s_{\textup{loc}}(X).$\\
 
We use the fact that for every $\mu \in \R$ there exists an element $R^\mu \in L^\mu _{\textup{cl}} (X;\R^l)$ 
which induces isomorphisms $R^\mu (\lambda ):H^s(X)\rightarrow H^{s-\mu }(X)$ for all $s\in \R,\lambda \in \R^l.$
Set $X^\wedge:=\R_+\times X,$ and let 
$\mathcal{H}^{s,\gamma }(X^\wedge)$ for $s,\gamma \in \R$ denote the completion of $C^\infty_0(X^\wedge)$ with respect to the norm 
$$\|  u \| _{\mathcal{H}^{s,\gamma }(X^\wedge)}:=\Big\{ \int_{\Gamma _{(n+1)/2-\gamma }} \| R^s(\textup{Im}\, z )
M_{r\rightarrow z} u(z)\|^2_{L^2(X)}\dbar z\Big\}^{1/2} $$
for $n=\dim\,X.$ Moreover, set $X^\asymp :=\R\times X$ (interpreted as a manifold with conical exits to infinity $r\rightarrow \pm \infty $), and let $H^s_{\textup{cone}}(X^\asymp )$ denote the completion of
$C^\infty_0(X^\asymp)$ with respect to the norm
$$\|  u \| _{H^s_{\textup{cone}}(X^\asymp )}:=  \| [r]^{-s+n/2}\int e^{ir\rho }R^s([r]\rho ,[r]\eta ^1 )
(F_{r\rightarrow \rho } u)(\rho )\,\dbar \rho \|_{L^2(\R\times X)} $$
for any parameter-dependent elliptic element $R^s(\tilde{\rho },\tilde{\eta })\in L^s_{\textup{cl}}(X;\R^{1+q}_{\tilde{\rho },\tilde{\eta }})$
and sufficiently large $|\eta ^1|,\eta ^1\in \R^q.$ Here $r\mapsto [r]$ is any strictly positive function in $C^\infty (\R)$ such that $[r]=r$ for sufficiently large $|r|$. Set $H^s_{\textup{cone}}(X^\wedge):=H^s_{\textup{cone}}
(X^\asymp )|_{X^\wedge}, $ and define 
 \begin{equation} \label{1K}
\mathcal{K}^{s,\gamma }(X^\wedge):= \{\omega u+(1-\omega )v:u\in \mathcal{H}^{s,\gamma }(X^\wedge), v\in 
H^s_{\textup{cone}}(X^\wedge)\}
\end{equation}
for any cut-off function $\omega (r).$ Moreover, for $\Theta :=(\vartheta ,0],-\infty< \vartheta <0 ,$ we define the subspace
 \begin{equation} \label{1FK}
\mathcal{K}^{s,\gamma }_\Theta(X^\wedge) := \bigcap_{k\in \N}\mathcal{K}^{s,\gamma -\vartheta -(1+k)^{-1}}(X^\wedge),
\end{equation}
and $\mathcal{K}^{s,\gamma }_\Theta(X^\wedge):=\bigcap_{N\in \N}\mathcal{K}^{s,N }(X^\wedge)$ for $\Theta =(-\infty ,0].$
Now let $ -\infty \leq \vartheta <0,$ and consider a sequence
\begin{equation} \label{1as}
\mathcal{P}:=\{(p_j,m_j)\}_{j=0,\ldots ,J}\subset \C \times \N,\quad J=J(\mathcal{P})\in \N \cup \{\infty \},
\end{equation}
$\pi _\C \mathcal{P}:= \{p_j\}_{j=0,\ldots ,J}\subset \{z\in \C:(n+1)/2-\gamma +\vartheta <\textup{Re}\,z <(n+1)/2-\gamma \},$ and $\textup{Re}\,p_j \rightarrow -\infty $ as 
$j\rightarrow \infty $ (when $J(\mathcal{P})=\infty $). Such a $\mathcal{P}$ will be called a discrete
asymptotic type associated with the weight data $(\gamma ,\Theta); $ recall that $n=\textup{dim}\,X. $ For finite $\Theta $ we set
\begin{equation} \label{1sing}
\mathcal{E}_{\mathcal{P}}:=\{\omega \sum_{j=0}^J\sum_{k=0}^{m_j}c_{jk}\,r^{-p_j}\textup{log}^kr:c_{jk}\in C^\infty(X)\quad\mbox{for all}\quad j,k\}
\end{equation}
for a fixed cut-off function $\omega (r).$ Observe that we have $\mathcal{E}_{\mathcal{P}}\subset \mathcal{K}^{\infty ,\gamma }(X^\wedge)$ and $\mathcal{E}_{\mathcal{P}}\cap \mathcal{K}^{s,\gamma }_\Theta(X^\wedge)=\{0\}.$
Let 
\begin{equation} \label{1asK}
\mathcal{K}^{s,\gamma }_{\mathcal{P}}(X^\wedge):=\mathcal{K}^{s,\gamma }_\Theta(X^\wedge)+\mathcal{E}_{\mathcal{P}},
\end{equation}
for finite $\Theta.$ In the case $\Theta = (-\infty,0]$ we define
$\mathcal{K}^{s,\gamma }_{\mathcal{P}}(X^\wedge):=\bigcap _{k\in\N}\mathcal{K}^{s,\gamma }_{\mathcal{P}_k}
(X^\wedge)$ for $ {\mathcal{P}}_k:=\{p\in\ \pi _\C {\mathcal{P}}:(n+1)/2-\gamma -(k+1) <\textup{Re}\,
p <(n+1)/2-\gamma \},$ 
referring to the weight data $(\gamma ,\Theta _k)$ for $\Theta _k:= (-(k+1),0].$ Let us call the elements of $\mathcal{K}^{s,\gamma }_\Theta(X^\wedge)$ flat of order $\Theta$ relative to the weight $\gamma$ (recall that $\Theta$ is a half-open weight interval; therefore we do not talk about flatness of order $-\vartheta,$ except for the case $\vartheta=-\infty$).\\
Functions with asymptotics are transformed to meromorphic functions under the Mellin transform. For instance, the space $M_{\gamma -n/2,r\rightarrow z}\mathcal{E}_{\mathcal{P}}$ consists of meromorphic functions with poles at the points $p_j$ of multiplicity $m_j+1.$
Let us introduce some spaces of such functions in the complex Mellin covariable $z.$ Let $\mathcal{A}^
{s,\gamma }_\Theta (X)$ denote the space of all holomorphic functions $f$ in the open interior of the strip 
$S^\gamma _\Theta :=\{z\in \C:\beta _0<\textup{Re}\,z\leq  \beta _1 \}$ for $\beta _0
:=(n+1)/2-\gamma +\vartheta,\beta _1:=(n+1)/2-\gamma ,$ with values in $H^s(X),$ such that 
$f|_{\Gamma _\beta \times X}\in \hat{H}^s(\Gamma _\beta \times X)$ for every $\beta \in 
(\beta _0, \beta _1],$ uniformly in compact $\beta $-intervals. The condition at the right 
end point
means $f|_{\Gamma _{\beta_1 -\varepsilon} \times X}\rightarrow f|_{\Gamma _{\beta _1}  \times X}$  
as $\varepsilon \searrow 0$ in $\hat{H}^s(\R\times X).$ Here $\hat{H}^s(\R\times X)$ 
is the Fourier transform  of the cylindrical Sobolev space $H^s(\R\times X)$ with respect to the 
variable in $\R,$ and $\hat{H}^s(\Gamma_\beta\times X)$ is the corresponding space when we 
identify $\Gamma _\beta $ with $\R.$ Observe that $$M_{\gamma -n/2,r\rightarrow z}\omega \mathcal{K}^{s,\gamma }_\Theta(X^\wedge)\subset \mathcal{A}^{s,\gamma }_\Theta (X).$$ Moreover, set
$\mathcal{A}^{s,\gamma }_{\mathcal{P}} (X):=\mathcal{A}^{s,\gamma }_\Theta (X)+M_{\gamma -n/2,r\rightarrow z}
\mathcal{E}_{\mathcal{P}}$ for an asymptotic type $\mathcal{P}$ associated with the weight data $(\gamma ,\Theta ),$ 
first for finite $\Theta .$ In the infinite case we take the intersection of the respective spaces 
over all ${\mathcal{P}}_k,k\in \N,$ for ${\mathcal{P}}_k:=\{p\in \pi _\C {\mathcal{P}}: \textup{Re} \,p>(n+1)/2-
\gamma -(k+1)\}.$ In the case $\textup{dim}\,X=0$ we also write $\mathcal{A}^{s,\gamma }_{\mathcal{P}} .$ Note that the cut-off function involved in $\mathcal{E}_{\mathcal{P}}$ does not affect the 
space $\mathcal{A}^{s,\gamma }_{\mathcal{P}} (X).$ We employ the fact that 
the weighted Mellin transform induces continuous mappings 
\begin{equation} \label{11M}
M_{\gamma -n/2}\omega :\mathcal{K}^{s,\gamma }_{\mathcal{P}}(X^\wedge)\rightarrow \mathcal{A}^{s,\gamma }_{\mathcal{P}} (X)
\quad\mbox{and}\quad \omega M_{\gamma -n/2}^{-1}: \mathcal{A}^{s,\gamma }_{\mathcal{P}} (X)\rightarrow \mathcal{K}^
{s,\gamma }_{\mathcal{P}}(X^\wedge),
\end{equation}
respectively, for any cut-off function $\omega .$ Now consider the equation $Au=f$ where $A$ is of the form
\eqref{1fu} with constant coefficients, and $f\in \mathcal{K}_{\mathcal{Q}}^{s-\mu,\gamma -\mu}(\R_+), s\in \R,$ for some discrete asymptotic type $\mathcal{Q},$ associated with the weight data $(\gamma -\mu,(-\infty ,0]).$ Assume for simplicity that $f$ vanishes for $r>R$ for some $R>0.$ Then, under the condition $a_\mu \neq 0$ and 
\begin{equation} \label{11coel}
\sigma _{\textup{c}}(A)(z):=\sum_{j=0}^\mu a_j z^j\neq 0\quad \mbox{for all}\quad z\in \Gamma _{1/2-\gamma }
\end{equation}
we find a solution $u(r)$ of the form 
\begin{equation} \label{11sol}
u(r)=\textup{op}_M^\gamma (\sigma _{\textup{c}}(A)^{-1})(r^\mu  f)\in \mathcal{H}^{s,\gamma }(\R_+),
\end{equation} 
and the relation \eqref{11M} shows what happens near $r=0.$ We have $M_\gamma r^\mu f \in \mathcal{A}_{T^{-\mu }\mathcal{Q}}^{s-\mu,\gamma }$ ($T^{-\mu }$ indicates a corresponding translation of the asymptotic type in the complex plane), and $\sigma _{\textup{c}}(A)^{-1}(z)M_\gamma (r^\mu f)(z)\in \mathcal{A}_{\mathcal{P}}^{s,\gamma }$ for some discrete asymptotic type ${\mathcal{P}}$ associated with the weight data $(\gamma, (-\infty,0])$. The asymptotic type $\mathcal{P}$ is determined by the multiplication of two meromorphic functions, the inverse of the conormal symbol, and the Mellin transform of the right hand side. Thus, the second mapping in \eqref{11M} gives us $\omega u =\omega M_\gamma^{-1}\{ \sigma _{\textup{c}}(A)^{-1}(z)M_\gamma (r^\mu f)(z)\} \in \mathcal{K}^{s,\gamma }_{\mathcal{P}}(\R_+).$(Similar conclusions apply in the case $n=\textup{dim}\,X>0.$ The operator $A$ is asked to be elliptic with respect to the homogeneous principal symbol $\sigma _\psi (A),$ i.e. non-vanishing as a function in $C^\infty$ of the cotangent bundle (minus zero section) of the smooth part of our manifold with conical singularity, and non-vanishing of the reduced symbol $\tilde{\sigma }_\psi (A)(r,x,\rho, \xi ):= r^\mu\sigma _\psi (A)(r,x,r^{-1}\rho ,\xi )$ in the splitting of variables $(r,x)\in X^\wedge=\R_+\times X$ up to $r=0$ (which refers to a neighbourhood of the conical singularity).  In addition \eqref{11coel} is to be replaced by the condition that $\sigma _{\textup{c}}(A)(z):=\sum_{j=0}^\mu a_j z^j : H^s(X)\rightarrow H^{s-\mu}(X)$ is bijective for all $z\in \Gamma _{(n+1)/2-\gamma }$ for some real $s;$ this is then independent of $s.$ If the coefficients $a_j$ are not constant in $r,$ then, in order to deduce asymptotics of solutions we can employ a parametrix of the operator $A$ in the cone algebra rather than the inverse, cf. \cite{Schu2}. However, the main idea to obtain asymptotics is the same as in the case with constant coefficients, namely, to employ the meromorphic inverse of $\sigma _{\textup{c}}(A)(z),$ taking values in Fredholm operators over $X.$ The non-bijectivity points $z\in \C$ of $\sigma _{\textup{c}}(A)(z)$ substitute the former zeros in the scalar case, and the finite  multiplicities of poles give rise to the logarithmic terms of the asymptotics.\\
In the case of an edge-degenerate operator of the form \eqref{ideg} the problem to derive and to express asymptotics of solutions to $Au=f$ (under a corresponding ellipticity condition on $A$) is much more complicated. First we have the homogeneous principal symbol $\sigma _\psi (A)(r,x,y,\rho ,\xi ,\eta )$ of order $\mu$ and the reduced symbol $\tilde{\sigma} _\psi(A)(r,x,y,\rho ,\xi ,\eta )=r^\mu\sigma _\psi (A)(r,x,y,r^{-1}\rho ,\xi ,r^{-1}\eta ).$ In addition there is the principal edge symbol
\begin{equation} \label{11edge}
\sigma _\land (A)(y,\eta ):=r^{-\mu}\sum_{j+|\alpha |\leq \mu}a_{j\alpha }(0,y) (-r\partial r)^j(r\eta 
)^\alpha  
\end{equation}
for $(y,\eta )\in T^*\Omega \setminus 0,$ interpreted as a family of continuous operators
\begin{equation} \label{11edk}
\sigma _\land (A)(y,\eta ):=\mathcal{K}^{s,\gamma }(X^\wedge)\rightarrow  \mathcal{K}^{s-\mu,
\gamma -\mu}(X^\wedge) 
\end{equation}
for every $s\in \R$ and a fixed choice of $\gamma \in \R.$ The operators \eqref{11edk} belong to the cone calculus over $X^\wedge$. As such they have corresponding ``subordinate" principal symbols, namely, $\sigma _\psi \sigma _\land (A), \tilde{\sigma} _\psi \sigma _\land (A),$ including the principal conormal symbol $\sigma _{\textup{c}}\sigma _\land (A),$ 
\begin{equation} \label{11bi}
\sigma _{\textup{c}}\sigma _\land(A)(y,z):=\sum_{j=0}^\mu a_j(0,y) z^j : H^s(X)\rightarrow H^{s-\mu}(X);
\end{equation}
the latter is a smooth operator function in $y\in \Omega .$ Similarly as before the asymptotics is determined by \eqref{11bi}. However, it is evident that the pointwise inverse $\sigma _{\textup{c}}\sigma _\land(A)^{-1}(y,z)$ may have poles depending on $y $ of variable multiplicities. Those are just the source of the $y$-dependent asymptotic data in \eqref{ias}. Moreover, we need a machinery to express such variable asymptotics in the framework of a suitable version of weighted edge Sobolev spaces. Both problems belong to the focus of the present paper. The final goal, realised in a forthcoming paper, will be to obtain variable asymptotics in the edge case by applying a parametrix $P$ from the left to the equation $Au=f,$ similarly as in the cone calculus, where $P$ is sensitive enough not to destroy the very individual asymptotic types. One of the crucial points will be that $1-PA=:G$ is a smoothing operator which produces functions with variable asymptotics, coming from the involved operator $A.$ Moreover, $Pf$ also has such asymptotics, provided that $f$ is of that kind. This entails altogether such asymptotics of the solution $u.$
\\For an open set $U\subseteq\C$ and a Fr\'echet space $E$ by $\mathcal{A}(U,E)$ we denote the space of holomorphic functions with values in $E.$  We employ this, in particular, for $E=L^\mu_{\textup{cl}}(X),$ the space of classical pseudo-differential operators of order $\mu$ on $X$ in its natural Fr\'echet topology. 
\begin{defn} \label{21mhol}
Let $M^\mu_{\mathcal{O}}(X)$ for $\mu\in\R$ denote the space of all $h(z)\in\mathcal{A}(\C,L^\mu _{\textup{cl}}(X))$ such that $h(\beta+i\rho)\in L^\mu_{\textup{cl}}(X;\R_\rho)$ for every $\beta\in\R$, uniformly in compact $\beta$-intervals.
\end{defn}
The space $M^\mu_{\mathcal{O}}(X)$ is Fr\'echet in a natural way; thus, we can form the space $C^\infty(\Omega,M^\mu_{\mathcal{O}}(X)).$ Observe that, in particular, \eqref{11bi} belongs to the latter space. If the parameter $\rho\in\R$ is identified with the imaginary part of the complex variable on the line $\Gamma_\beta$ we also write $L^\mu_{\textup{cl}}(X;\Gamma_\beta)$ instead of $L^\mu_{\textup{cl}}(X;\R_\rho).$
\begin{thm} \label{21kcut}
For every $l(y,z)\in C^\infty(\Omega,L^\mu_{\textup{cl}}(X;\Gamma_\beta))$ and for any fixed $\beta\in\R$ there exists a $k(y,z)\in C^\infty(\Omega,M^\mu_{\mathcal{O}}(X))$ such that $$l(y,z)-k(y,z)|_{\Omega\times\Gamma_\beta}\in C^\infty(\Omega,L^{-\infty}(X;\Gamma_\beta)).$$
\end{thm}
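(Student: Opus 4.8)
The plan is to reduce the theorem to the classical Mellin-kernel-cutting argument of the cone calculus, carried out with a parameter $y\in\Omega$ and with control that is locally uniform in $y$. First I would recall that for a single (parameter-free) symbol $l(z)\in L^\mu_{\textup{cl}}(X;\Gamma_\beta)$ there is a standard procedure (due to the structure theory of holomorphic Mellin symbols, cf.\ \cite{Schu20,Schu32}) producing $k(z)\in M^\mu_{\mathcal{O}}(X)$ with $l-k|_{\Gamma_\beta}\in L^{-\infty}(X;\Gamma_\beta)$: one writes $l$ via an oscillatory Mellin/Fourier representation, applies a Taylor- or kernel-cut-off in the group variable near $1\in\R_+$ to produce an entire extension in $z$ with the right symbol estimates on every vertical line, and checks the remainder is smoothing on $\Gamma_\beta$. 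The core of the present statement is to perform this construction $C^\infty$-smoothly in $y$ and to verify that all seminorm estimates hold uniformly on compact subsets of $\Omega$, which is exactly what membership in $C^\infty(\Omega,M^\mu_{\mathcal{O}}(X))$ and in $C^\infty(\Omega,L^{-\infty}(X;\Gamma_\beta))$ demands.

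In more detail I would proceed as follows. Step 1: fix a cut-off function $\psi(t)\in C^\infty_0(\R_+)$ equal to $1$ near $t=1$, and for $l(y,z)=l(y,\beta+i\rho)$ — viewed via $z\mapsto\operatorname{Im}z$ as a $y$-dependent classical parameter-dependent operator family — form the associated Mellin kernel and split it using $\psi$ into a part supported near the diagonal $t=1$ and a far part. Step 2: the far part, being an integral of the rapidly decaying (in $\rho$) remainder against a kernel bounded away from the diagonal, gives by repeated integration by parts an element $r(y,z)$ that is holomorphic in all of $\C$ with values in $L^{-\infty}(X)$ and Schwartz along every vertical line, hence lies in $C^\infty(\Omega,M^{-\infty}_{\mathcal{O}}(X))$; the $y$-smoothness and local uniformity are inherited from those of $l$. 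Step 3: the near-diagonal part, after the substitution that turns the Mellin convolution into a genuine parameter-dependent symbol quantization, extends to an entire function of $z$ because the $t$-integration now runs over a compact set; the classical symbol estimates in $(\rho,\cdot)$ persist on each line $\Gamma_{\beta'}$ with constants depending only on $\beta'$ through a compact interval, because shifting $\beta$ only inserts bounded factors $t^{\beta-\beta'}$ over the compact $t$-support of $\psi$. This produces $k(y,z)\in C^\infty(\Omega,M^\mu_{\mathcal{O}}(X))$. Step 4: collect the errors: $l(y,z)-k(y,z)|_{\Gamma_\beta}$ equals (up to the already-smoothing $r$) the commutator/remainder from the symbol-kernel passage, which is smoothing and $C^\infty$ in $y$, locally uniformly — giving the claimed membership in $C^\infty(\Omega,L^{-\infty}(X;\Gamma_\beta))$.

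The main obstacle, and the part deserving genuine care, is Step 3: proving that the cut-off construction yields estimates on $\Gamma_{\beta'}$ that are uniform for $\beta'$ in compact intervals \emph{and} locally uniform in $y$, \emph{simultaneously}. The holomorphy in $z$ is essentially automatic from the compact $t$-support, but the two uniformities interact: differentiating in $y$ the $y$-dependent symbol of $l$ produces again classical symbols of the same order (this is where one invokes that $l\in C^\infty(\Omega,L^\mu_{\textup{cl}}(X;\Gamma_\beta))$, so all $\partial_y^\delta l$ are parameter-dependent of order $\mu$ with locally uniform seminorms), and one must see that the cut-off operation commutes with $\partial_y^\delta$ up to lower-order terms and does not spoil the $\beta'$-uniformity. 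I would handle this by keeping track of a single family of seminorms on $M^\mu_{\mathcal{O}}(X)$ — sup over $\beta'$ in a compact interval of the parameter-dependent symbol seminorms on $\Gamma_{\beta'}$ — and estimating, for each multi-index $\delta$ in $y$ and each compact $K\Subset\Omega$, the corresponding seminorm of $\partial_y^\delta k$ by a finite sum of symbol seminorms of $\partial_y^{\delta'}l$ ($\delta'\le\delta$) over $K$, with constants depending only on $\psi$, the compact $\beta'$-interval, and the order $\mu$. Once this bookkeeping is set up, everything else is the routine cone-calculus computation, and the theorem follows.
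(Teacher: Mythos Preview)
Your proposal is correct and follows precisely the kernel cut-off method that the paper invokes: the paper does not give its own detailed proof but simply refers to the standard argument in \cite{Schu2} and \cite[Theorem 2.2.28]{Schu20} (with an alternative in \cite[Theorem 7.1.10]{Krai2}), and your outline---splitting the Mellin kernel by a cut-off $\psi$ near $t=1$, observing that the compactly supported part extends holomorphically with uniform symbol estimates on vertical lines while the far part is smoothing, and tracking the $y$-smoothness throughout---is exactly that construction. Your emphasis on the locally uniform bookkeeping in $y$ is the only additional point beyond the parameter-free case, and it is handled the way you indicate.
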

The proof relies on the method of kernel cut-off for Mellin symbols, see, for instance, \cite{Schu2}, or \cite[Theorem 2.2.28]{Schu20}. An alternative proof may be found in \cite[Theorem 7.1.10]{Krai2}.

\subsection{Families of analytic functionals induced by conormal symbols}
If the operator \eqref{ideg} is elliptic with respect to $\sigma_\psi$ (i.e. $\sigma_\psi(A)\neq 0,$ and $\tilde{\sigma}_\psi(A)\neq 0 $ up to $r=0$) the operator function \eqref{11bi} is a family of Fredholm operators of index zero. For any fixed $y\in\Omega$ we are in a situation which is well-known in pseudo-differential operators on a manifold with conical singularities. There is a discrete set $D(y)\subset \C$ which intersects $ \{c<\textup{Re}\,z<c'\}$ in a finite set for every $c\leq c',$ such that \eqref{11bi} is bijective for all $z\notin D(y)$  (for abbreviation we often write $ \{c<\textup{Re}\,z<c'\}$ instead of $\{z\in\C: c<\textup{Re}\,z<c'\},$ (and similarly for $\textup{Im}$ 
rather than $\textup{Re}).$ For every open $U\subset \Omega$ with compact $\overline{U}\subset\Omega$ and $c\leq c'$ there is an $M>0$ such that
\begin{equation} \label{21du}
D(y)\cap  \{c<\textup{Re}\,z<c'\}\subseteq \{|\textup{Im}\,z|\leq M\}
\end{equation}
for all $y\in\overline{U}.$
Let us set $D(y)=\{p_j(y)\}_{j\in J(y)}$ for a corresponding index set $J(y)\subseteq \Z$ (the numeration is individual for every $y$). The function $f(y,z) := \sigma _{\textup{c}}\sigma_{\land}(A)^{-1}(y,z)$ is extendible to a meromorphic family of Fredholm operators with poles at the points $p_j(y)$ of multiplicity $l_j(y)+1$ and Laurent expansions
\begin{equation} \label{21lau}
f(y,z)=f_j(y,z)+\sum_{k=0}^{l_j(y)}d_{jk}(y)/(z-p_j(y))^{k+1}
\end{equation}
where $f_j(y,z)$ is an $L^{-\mu}_{\textup{cl}}(X)$-valued holomorphic function in a neighbourhood of $p_j(y),$ and $d_{jk}(y)$ belongs to $L^{-\infty}(X)$ and is of finite rank. Let us set
\begin{equation} \label{21mas}
\mathcal{R}(y):=\{(p_j(y),l_j(y))\}_{j\in J(y)},
\end{equation}
and $\pi_\C\mathcal{R}(y):=\{p_j(y)\}_{j\in J(y)}$ (which is equal to $D(y)$). Moreover, let $\mathcal{U}(\Omega)$ denote the system of all open $U\subset\Omega$ such that $\overline{U}$ is compact and contained in $\Omega.$ Now fix a $y_0\in\Omega$ and choose numbers $c\leq c'.$ Then for any fixed $\varepsilon (y_0)>0$ we have a disjoint decomposition 
\begin{equation} \label{21dec0}
\pi_\C\mathcal{R}(y_0)=D_{c,c'}(y_0)\cup E_{c,c'}(y_0)
\end{equation}
for $D_{c,c'}(y_0):= \{p\in \pi_\C\mathcal{R}(y_0):c-\varepsilon(y_0)<\textup{Re}\,p<c'+\varepsilon(y_0))\},$ and $E_{c,c'}(y_0):=\pi_\C\mathcal{R}(y_0)\setminus D_{c,c'}(y_0)$ where $E_{c,c'}(y_0)\subset \{c-\tilde{\varepsilon}(y_0)<\textup{Re}\,z\}\cup\{\textup{Re}\,z >c'+\tilde{\varepsilon}(y_0)\}$ for some $\tilde{\varepsilon}(y_0)<\varepsilon(y_0).$
There is then a neighbourhood $U_0\in\mathcal{U}(\Omega)$ of $y_0$ such that 
\begin{equation} \label{21dec1}
\pi_\C\mathcal{R}(y)=D_{c,c'}(y)\cup E_{c,c'}(y)
\end{equation}
for $D_{c,c'}(y):= \{p\in \pi_\C\mathcal{R}(y):c-\varepsilon(y_0)<\textup{Re}\,p<c'+\varepsilon(y_0)\},$ and $E_{c,c'}(y):=\pi_\C\mathcal{R}(y)\setminus D_{c,c'}(y)$ with $E_{c,c'}(y)\subset \{c-\tilde{\varepsilon}(y_0)<\textup{Re}\,z\}\cup\{\textup{Re}\,z >c'+\tilde{\varepsilon}(y_0)\}$ for the same $\tilde{\varepsilon}(y_0), \varepsilon(y_0),$ for all $y\in \overline{U}_0.$\\Taking into account the relation \eqref{21du} we form a rectangle consisting of the intervals
$$I_\pm:= \{z\in\C:c-\varepsilon_0\leq\textup{Re}\,z\leq c'+\varepsilon_0, \textup{Im}\,z=\pm M\},$$
and
$$ I:=\{z\in\C:\textup{Re}\,z=c-\varepsilon_0, |\textup{Im}\,z|\leq M\},I':=\{z\in\C:\textup{Re}\,z=c'+\varepsilon_0, |\textup{Im}\,z|\leq M\}$$
for some $\varepsilon_0>0$ such that $\tilde{\varepsilon}(y_0)<\varepsilon_0<\varepsilon(y_0).$\\

\setlength{\unitlength}{0.008466666666666667cm}
\begin{center}
\begin{picture}(1417.0,836.0)
\put(0,0){\includegraphics[width=11.997cm]{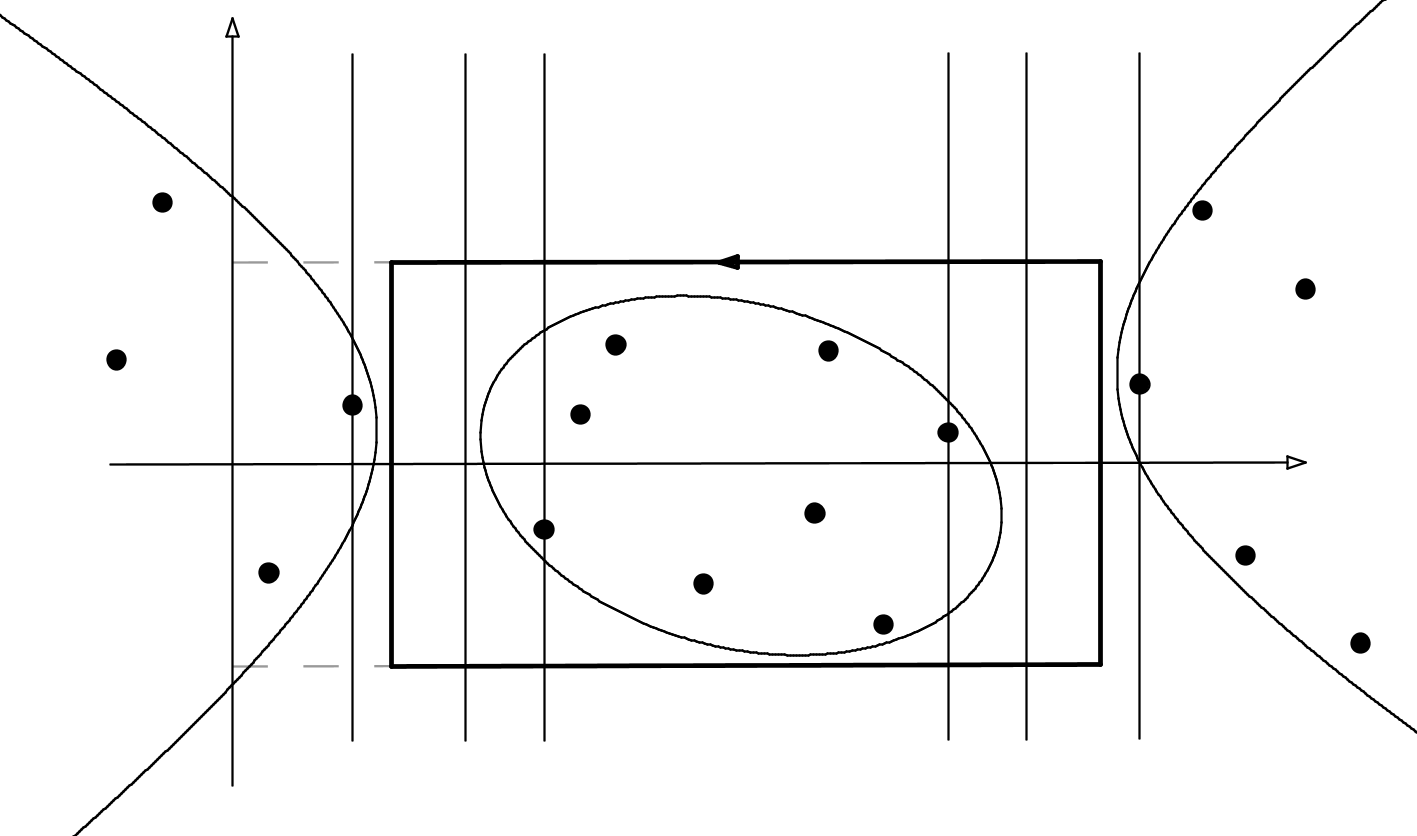}}
\put(178.181,559.308){$M$}
\put(141.937,151.332){$-M$}
\put(130.777,787.523){$\textup{Im }z$}
\put(1277.062,308.981){$\textup{Re }z$}
\put(750.163,612.694){$C_0$}
\put(308.147,52.597){$c-\varepsilon$}
\put(421.104,26.541){$c-\tilde\varepsilon$}
\put(974.247,26.541){$c'+\tilde\varepsilon$}
\put(1087.542,52.597){$c'+\varepsilon$}
\put(534.362,52.597){$c$}
\put(939.059,52.597){$c'$}
\put(641.256,402.364){$D_{c,c'}(y)$}
\put(1265.311,604.475){$E_{c,c'}(y)$}
\put(5.847,252.283){$E_{c,c'}(y)$}
\end{picture} 
\end{center}

\bigskip

The union of the small black disks inside the rectangle as a subset of $\C$ contains all $D_{c,c'}(y)$ for $y\in\overline U_0$ while the centers just form the set $D_{c,c'}(y_0).$ Then the piecewise smooth curve $C_0:=I_-\cup I'\cup I_+\cup I,$ taken in counter-clockwise orientation, surrounds the set $D_{c,c'}(y)$ for every $y\in \overline{U}_0.$ Thus, the integral
\begin{equation} \label{21func}
\langle\zeta_0(y),h\rangle := \int_{C_0}f(y,z)h(z)\dbar z,
\end{equation}
$\dbar z:=(2\pi i)^{-1}dz,$ for $h\in\mathcal{A}(\C),$ defines a family of analytic functionals \begin{equation} \label{21func0}
\zeta_0(y)\in C^\infty(U_0,\mathcal{A}'(K_0,L^{-\mu}_{\textup{cl}}(X)))
\end{equation} 
carried by the compact set
$$K_0:=\overline{\bigcup_{y\in \overline{U}_0}D_{c,c'}(y)}.$$
We use here notation and general terminology on analytic functionals, cf. \cite{Horm5}, vol.1; a selfcontained exposition may also be found in \cite{Kapa10}. In particular, if $E$ is a Fr\'echet space and $K\subset\C$ compact by $\mathcal{A}'(K,E)$ we denote the space of all $E$-valued analytic functionals carried by $K.$ In the case $E=\C$ we simply write $\mathcal{A}'(K)$ which is a nuclear Fr\'echet space, and then $\mathcal{A}'(K,E)=\mathcal{A}'(K)\hat{\otimes}_\pi E$ (with $\hat{\otimes}_\pi$ indicating the completed projective tensor product between the respective spaces). The background of the formula \eqref{21func} is the fact that for any $f(y,z)\in C^\infty(\Omega,\mathcal{A}(\C\setminus K,E)),\Omega\subseteq\R^q$ open, and for every (piecewise smooth) curve $C,$ counter-clockwise surrounding the set $K,$ the mapping 
\begin{equation} \label{21zeta}
h\mapsto\int_Cf(y,z)h(z)\dbar z,\quad h\in \mathcal{A}(\C),
\end{equation}
represents a $\zeta(y)\in C^\infty(\Omega,\mathcal{A}'(K,E)).$ We always choose the curve $C$ in such a way that its winding number with respect to every $z\in K$ is equal to $1.$ It can be proved (cf., for instance, \cite{Schu62}) that for every $\varepsilon>0$ there is a $C$ of that kind such that $\textup{dist}\,(K,C)<\varepsilon.$ \\Conversely, 
every $\zeta(y)\in C^\infty(\Omega,\mathcal{A}'(K,E))$ can be written in the form \eqref{21zeta}. For purposes below we give an explicit construction. Let $\omega(r)$ be a cut-off function on the half-axis, and apply the Mellin transform $(M_{r\rightarrow z}\omega)(z)=:\Phi(z).$ The function $\Phi(z)$ is meromorphic with a simple pole at $z=0,$ and $\Phi(z)-z^{-1} $ is an entire function. If $\chi(z)$ denotes any element of $C^\infty_0(\C)$ which is equal to $1$ in a neighbourhood of $0,$ then $\chi(z)\Phi(z)|_{\Gamma_\beta}\in\mathcal{S}(\Gamma_\beta),$ for every $\beta\in\R,$ uniformly in compact $\beta$-intervals. An analytic functional carried by $K$ can be applied to functions which are holomorphic in a neighbourhood of $K.$ In particular, for every $z\notin K$ the function $ \Phi(z-w)$ is holomorphic with respect to $w$ in a neighbourhood of $K.$ The pairing of $\zeta(y)\in C^\infty(\Omega,\mathcal{A}'(K,E))$ with $\Phi(z-w)$ in the variable $w$ gives us a function
\begin{equation} \label{21pair}
f_1(y,z):=\langle\zeta(y)_w,\Phi(z-w)\rangle\in C^\infty(\Omega,\mathcal{A}(\C\setminus K,E)).
\end{equation}
Note that the function \eqref{21pair} can be interpreted as the potential of $\zeta$ with respect to a fundamental solution of the Cauchy-Riemann operator (up to the factor $\pi^{-1}).$ If $\zeta(y)$ is defined by \eqref{21zeta} we have $f(y,z)-f_1(y,z)\in C^\infty(\Omega,\mathcal{A}(\C,E))$ and $\langle\zeta(y),h\rangle=\int_Cf_1(y,z)h(z)\dbar z$ for every $h\in \mathcal{A}(\C).$\\Observe that we can also write
\begin{equation} \label{21alt}
\langle\zeta(y)_w,\Phi(z-w)\rangle= M_{r\rightarrow z}\omega(r)\langle\zeta(y)_w,r^{-w}\rangle
\end{equation}
with $M_{r\rightarrow z}$ being interpreted as the weighted Mellin transform, with a weight $<1/2-\textup{Re}\,w.$
Applying this process to the present situation we obtain 
\begin{equation} \label{21func1}
\zeta_0(y)\in C^\infty(U_0,\mathcal{A}'(K_0,L^{-\infty}(X)))
\end{equation} 
which is more precise than \eqref{21func0}. This observation is compatible with another way of extracting the
asymptotic information from a Mellin symbol, namely, via the kernel cut-off Theorem \ref{21kcut}.
\begin{rem}
The operator function $f(y,z), z\in U_0,$ involved in \eqref{21func}, restricts to an element in $C^\infty(U_0,L^{-\mu}_{\textup{cl}}(X;\Gamma_\beta))$ for $\beta=c-\varepsilon_0$ (or $\beta=c'+\varepsilon_0$). Applying Theorem \ref{21kcut} we find a $k(y,z)\in\ C^\infty(U_0,M^{-\mu}_{\mathcal{O}}(X))$ such that  $k(y,z)|_{U_0\times\Gamma_\beta}-f(y,z)\in C^\infty(U_0,L^{-\infty}(X;\Gamma_\beta)).$ For any $\chi(z)$ that is equal to zero in a neighbourhood of the above-mentioned set $K_0$ with $\chi(z)=1$ for $\textup{dist}\,(z,K_0)>c$ for some $c>0$ we can apply 
Theorem \ref{21kcut} also to $\chi(z)f(y,z)|_{U_0\times\Gamma_\beta}$ for any other $\beta\in\R.$ This shows that $$(\chi(z)f(y,z)-k(y,z))|_{U_0\times\Gamma_\beta} \in C^\infty(U_0,L^{-\infty}(X;\Gamma_\beta)),$$
uniformly in compact $\beta$-intervals, and we obtain $$\langle\zeta_0(y),h\rangle := \int_{C_0}\{f(y,z)-k(y,z)\}h(z)\dbar z,$$ $h\in\mathcal{A}(\C),$ which is an integration over an $L^{-\infty}(X)$-valued function on $C_0$, and hence, $\zeta_0$ is $L^{-\infty}(X)$-valued as well.
\end{rem}
Let us set $b:=(c,c',U)$ for fixed reals $c<c'$ and $U\in \mathcal{U}(\Omega),$ and carry out the above construction for \eqref{21func1} for every $y_0\in\overline{U}.$ Then we obtain neighbourhoods $U_0$ of $y_0,$
associated compact sets $K_0,$ etc. The sets $U_0$ form an open covering of $\overline{U}.$ Let us choose a finite subcovering and denote it by $U_0,U_1,\ldots,U_N.$ There are associated compact sets 
$K_j, j=0,\ldots,N,$ and elements $$\zeta_j(y)\in C^\infty(U_j,\mathcal{A}'(K_j,L^{-\infty}(X))),j=0,\ldots,N.$$ Moreover, choose a system of functions $\varphi_j\in C^\infty(U_j),j=0,\ldots,N,$ such that $\sum_{j=0}^N\varphi_j(y)=1$ for all $y\in\overline{U}.$
Then $$\zeta(y):=\sum_{j=0}^N\varphi_j(y)\zeta_j(y)\in C^\infty(U,\mathcal{A}'(K_b,L^{-\infty}(X)))$$ for $K_b:=\bigcup_{j=0}^NK_j,$ and $$\langle\zeta(y)_w,\Phi(z-w)\rangle-f(y,z)$$ is holomorphic in $\{c<\textup{Re}\,z<c'\}$ for every $y\in U.$ In other words $\langle\zeta(y)_w,\Phi(z-w)\rangle$ has the same (singular parts of the) Laurent expansions at the points of
$D(y)\cap\{c<\textup{Re}\,z<c'\}$ as the function $f(y,z),$ for every $y\in U.$\\
The function $\sigma _{\textup{c}}(A)^{-1}(y,z)$ from \eqref{11bi} belongs to the essential ingredients of operator-valued Mellin symbols of the edge calculus. The distribution of the poles including multiplicities has the properties of the following definition.
\begin{defn} \label{21melas}
A variable discrete Mellin asymptotic type $\mathcal{R}$ over an open set $\Omega\subseteq\R^q$ is a system of sequences $\mathcal{R}(y)\subset\C\times\N$ as in \eqref{21mas} parametrised by $y\in\Omega,$ such that for every $b=(c,c',U),c<c',U\in\mathcal{U}(\Omega),$ there are sets
\begin{equation} \label{21sequ}
 \{U_i\}_{0\leq i\leq N},\quad\{K_i\}_{0\leq i\leq N}
\end{equation}
for some $N=N(b)\in\N$ where $U_i\in\mathcal{U}(\Omega),0\leq i\leq N,$ form an open covering of $\overline{U},$ moreover, $K_i\Subset\C,$ and
\begin{equation}
K_i\subset\{c-\varepsilon_i<\textup{Re}\,z<c'+\varepsilon_i\}\quad\mbox{for some}\quad\varepsilon_i>0, 
\end{equation}
\begin{equation}
\pi_\C\mathcal{R}(y)\cap\{c-\varepsilon_i<\textup{Re}\,z<c'+\varepsilon_i\}\subset K_i\quad\mbox{for all}\quad y\in U_i,
\end{equation}
and $\textup{sup}_{y\in U_i}\sum_j(1+l_j(y))<\infty$ where the sum is taken over those $j\in J(y)$ such that $p_j(y)\in K_i,i=0,\ldots,N.$ 
\end{defn}
The restriction of an $\mathcal{R}$ to an open subset $U\subseteq\Omega$ is defined by $\mathcal{R}|_U:= (\mathcal{R}(y))_{y\in U}.$ It satisfies Definition \ref{21melas} with respect to $U.$ Moreover, we can restrict $\mathcal{R}$ to a subset $A\subset \C$ by setting
\begin{equation} \label{21rest}
 p_A\mathcal{R}(y) :=\{(p(y),l(y))\in \mathcal{R}(y):p(y)\in A\},y\in \Omega .
\end{equation}
Such a definition allows us also to form restrictions $p_A\mathcal{R}|_U$ both with respect to $y$ and the complex variable $z.$ Another useful operation for variable discrete Mellin asymptotic types $\mathcal{R}_1,\dots,\mathcal{R}_L$ is the union $$\mathcal{R}_1\cup\dots\cup\mathcal{R}_L$$ defined in an obvious manner which yields again a variable discrete asymptotic type. Also infinite unions may be admitted, for instance, when we decompose an $\mathcal{R}$ as in Definition \ref{21melas} as 
\begin{equation} \label{21decasd}
 \mathcal{R}=\bigcup_{b=(c,c',U)}\bigcup_{0\leq i\leq N(b)} p_{K_i}\mathcal{R}|_{U_i}.
\end{equation}
The elements $\mathcal{R}=(\mathcal{R}(y))_{y\in\Omega}$ will classify $y$-dependent meromorphic functions $f(y,z)$ with poles $p_j(y)$ of multiplicity $\leq l_j(y)+1$ belonging 
to $\pi_\C\mathcal{R}(y).$ \\

Let us introduce the following general notation. Let $K\subset\C$ be compact, $U\in\mathcal{U}(\Omega)$, and $E$ a Fr\'echet space. By 
\begin{equation} \label{21asa}
 C^\infty(U,\mathcal{A}(\C\setminus K,E))^\bullet
\end{equation} 
we denote the subspace of all $f(y,z)\in C^\infty(U,\mathcal{A}(\C\setminus K,E))$ that have for every $y\in U$ an extension to a meromorphic function with finitely many poles in the set $K$ where the supremum over the sum of all multiplicities is less than some constant $M$ independent of $y\in U.$ With a function $f$ in the space \eqref{21asa} we can associate a family $\delta_f(y)\in C^\infty(U,\mathcal{A}'(K,E))$ in the usual way, namely, $\delta_f(y):h\mapsto \int_Cf(y,z)h(z)\dbar z, h\in\mathcal{A}(\C)$. Let
\begin{equation} \label{21ofu}
 C^\infty(U,\mathcal{A}'(K,E))^\bullet
\end{equation} 
denote the subspace of all $\delta\in C^\infty(U,\mathcal{A}'(K,E))$ of the form $\delta=\delta_f$ for some $f\in C^\infty(U,\mathcal{A}(\C\setminus K,E))^\bullet.$ \\
We say that a system of $E$-valued meromorphic functions $f(y,z),y\in\Omega,$ is subordinate to the variable discrete asymptotic type $\mathcal{R}$ over $\Omega$ if for any $y_0\in\Omega$ every pole $p$ of $f(y_0,z)$ belongs to $\pi_\C\mathcal{R}(y_0),$ say, $p=p_j(y_0)$ for some $j,$ and its multiplicity is $\leq l_j(y_0)+1.$\\
For every $b=(c,c',U),c<c',U\in\mathcal{U}(\Omega),$ and $U_i,K_i$ as in Definition \ref{21melas} we find smooth compact curves $C_i$ in the strips $\{c-\varepsilon_i<\textup{Re}\,z<c'+\varepsilon_i\},$ counter-clockwise surrounding the compact sets $K_i.$ This allows us to form $\delta_i(y)\in \mathcal{A}'(K_i,E)$ by $\langle\delta_i(y),h\rangle:=\int_{C_i}f(y,z)h(z)\dbar z.$ The function $f$ is called smooth in $y$ if $\delta_i(y)\in C^\infty(U_i,\mathcal{A}'(K_i,E))^\bullet$ for every 
$i.$ Then we have
\begin{equation} \label{21fu}
f_i(y,z):= M_{r\rightarrow z}\omega(r)\langle\delta_{i,w},r^{-w}\rangle\in C^\infty(U_i,\mathcal{A}(\C\setminus K_i,E))^\bullet,i=0,\ldots,N(b).
\end{equation} 
Let $C^\infty (\Omega ,\mathcal{M}_{\mathcal{R}}(E))$ denote the space of all such smooth functions $f$ subordinate to the variable discrete asymptotic type $\mathcal{R}.$

\begin{prop} \label{21Fre}
The space 
$C^\infty (\Omega ,\mathcal{M}_{\mathcal{R}}(E))$ is Fr\'echet in a canonical way.
\end{prop}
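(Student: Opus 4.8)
The plan is to exhibit $C^\infty(\Omega,\Mu_{\mathcal{R}}(E))$ as a projective limit of Fréchet spaces and to equip it with the corresponding projective limit topology. First I would reduce to a local question: since the property of being subordinate to $\mathcal{R}$ is local in $y,$ and since $C^\infty(\Omega,\cdot)$ of a Fréchet space is itself a Fréchet space built from a countable exhaustion $\Omega=\bigcup_\ell \Omega_\ell$ by relatively compact open sets, it suffices to topologise $C^\infty(U,\Mu_{\mathcal{R}|_U}(E))$ for $U\in\mathcal{U}(\Omega)$ in a compatible way and then pass to the projective limit over such $U.$ Fix $U$ and, for each $b=(c,c',U)$ (running over a countable cofinal family of rationals $c<c'$), choose the covering data $\{U_i\}_{0\le i\le N(b)},\{K_i\}$ and the surrounding curves $C_i$ as in Definition \ref{21melas}. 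For $f\in C^\infty(U,\Mu_{\mathcal{R}|_U}(E))$ the data $\delta_i(y)\in C^\infty(U_i,\mathcal{A}'(K_i,E))^\bullet$ of \eqref{21fu} are well-defined, and the map $f\mapsto (\delta_i)_{0\le i\le N(b),\,b}$ is where the topology will be pulled back from.

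Next I would show that each target space $C^\infty(U_i,\mathcal{A}'(K_i,E))^\bullet$ is Fréchet. The ambient space $C^\infty(U_i,\mathcal{A}'(K_i,E))=C^\infty(U_i,\mathcal{A}'(K_i)\hat\otimes_\pi E)$ is Fréchet because $\mathcal{A}'(K_i)$ is a nuclear Fréchet space and $E$ is Fréchet, so the completed projective tensor product is Fréchet and $C^\infty$ of it (with the usual seminorms: sup over compacta in $U_i$ of finitely many $y$-derivatives, measured in the Fréchet seminorms of $\mathcal{A}'(K_i,E)$) is Fréchet. The subspace "$\bullet$" — those $\delta$ that arise as $\delta_f$ from an $f$ with finitely many poles in $K_i$ of total multiplicity bounded by a constant $M$ uniformly in $y$ — must be shown to be \emph{closed} in this Fréchet space; then it is itself Fréchet. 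Here is the key point: the condition of being of the form $\delta_f$ for some $f$ holomorphic on $\C\setminus K_i$ is automatic for any analytic functional carried by $K_i$ (every element of $\mathcal{A}'(K_i,E)$ is represented by its Cauchy--Riemann potential $f_1(y,z)=\langle\delta(y)_w,\Phi(z-w)\rangle$ as in \eqref{21pair}), so the only real constraint is the uniform bound on the number and total multiplicity of poles. That bound can be tested by a finite collection of contour integrals: if $\Gamma$ is a small circle around a putative pole $p,$ then $\oint_\Gamma (z-p)^k f_1(y,z)\,\dbar z$ vanishes for $k$ large, and the largest $k$ for which such an integral is nonzero controls the multiplicity; the convergence of a sequence $\delta^{(\nu)}\to\delta$ in the ambient topology implies convergence of all these contour functionals, hence the limit satisfies the same bound. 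Making this precise — that "total multiplicity $\le M$" survives limits — is the step I expect to be the main obstacle, and it is essentially a normal-families / Rouché argument for the $E$-valued meromorphic representatives, done uniformly on compact $y$-sets.

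Granting closedness, I would then define the topology on $C^\infty(U,\Mu_{\mathcal{R}|_U}(E))$ as the coarsest one making all the maps $f\mapsto\delta_i$ (over all $i$ and all $b$ in the countable cofinal family) continuous, i.e. the initial topology with respect to countably many Fréchet-valued maps into the Fréchet spaces $C^\infty(U_i,\mathcal{A}'(K_i,E))^\bullet.$ This is metrisable (countably many seminorms pulled back) and locally convex; completeness follows because a Cauchy sequence has, for each $i,b,$ a limit $\delta_i$ in the respective Fréchet space, these limits are compatible on overlaps $U_i\cap U_j$ (the compatibility being a closed condition, preserved under the limit), and by \eqref{21alt}--\eqref{21pair} one reconstructs a genuine $f\in C^\infty(U,\mathcal{A}(\C\setminus K_b,E))^\bullet$ for each $b$ via $f=\sum_i\varphi_i\, M_{r\to z}\omega\langle\delta_{i,w},r^{-w}\rangle$ using a partition of unity subordinate to $\{U_i\}$; the decomposition \eqref{21decasd} then shows these glue to a single $f$ over $U$ subordinate to $\mathcal{R}|_U,$ whose associated data are the $\delta_i.$ Finally, to see the construction is "canonical", i.e. independent of the auxiliary choices (the rationals $c,c',$ the coverings $U_i,$ the compacts $K_i,$ the curves $C_i,$ the cut-off $\omega$), I would invoke Proposition \ref{22melgreen1}-type stability already noted in the introduction, or argue directly that two such systems yield contour functionals that differ by data depending continuously on each other (shrinking or enlarging a curve inside the holomorphy region, or changing $\omega,$ changes $f_i$ only by an entire $E$-valued function, as recorded after \eqref{21pair}), so the initial topologies coincide. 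The global statement for $\Omega$ follows by taking the projective limit over $U\in\mathcal{U}(\Omega)$ along the exhaustion $\Omega_\ell,$ which is again a countable projective limit of Fréchet spaces and hence Fréchet.
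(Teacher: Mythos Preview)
Your overall strategy---projective limit over countable data $b=(c,c',U)$, closedness of the ``$\bullet$'' subspaces, reconstruction of $f$ from the local analytic functionals---is close in spirit to the paper's argument, but there is a genuine gap in the way you define the topology. You propose to take the initial topology with respect to the maps $f\mapsto\delta_i(y)\in C^\infty(U_i,\mathcal{A}'(K_i,E))^\bullet$. These maps do \emph{not} separate points of $C^\infty(U,\mathcal{M}_{\mathcal{R}|_U}(E))$: any entire $E$-valued family $f(y,z)\in C^\infty(U,\mathcal{A}(\C,E))$ is trivially subordinate to $\mathcal{R}$ and has $\delta_i\equiv0$ for every $i$ and every $b$. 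Hence the initial topology you describe is not Hausdorff, so it cannot be Fr\'echet, and your ``reconstruction'' $f=\sum_i\varphi_i\,M_{r\to z}\omega\langle\delta_{i,w},r^{-w}\rangle$ only recovers the singular part $f_b$, not $f$ itself (compare \eqref{21dia}: $f-f_b$ is a nontrivial holomorphic remainder in the strip).

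The paper fixes exactly this by passing to the \emph{non-direct sum} decomposition
\[
C^\infty(\Omega,\mathcal{M}_{p_{K_b}\mathcal{R}})|_U \;=\; C^\infty(U,\mathcal{A}(\{c<\mathrm{Re}\,z<c'\},E)) \;+\; C^\infty(U,\mathcal{A}_{\mathcal{R}}(\C\setminus K_b,E))^\bullet,
\]
topologising each summand as a Fr\'echet space and giving the left side the non-direct sum topology; the global space is then the countable intersection over $b$. The second summand is the space of meromorphic \emph{functions} on $\C\setminus K_b$ (not of functionals), and closedness is argued in the ambient Fr\'echet space $C^\infty(U,\mathcal{A}(\C\setminus K_b,E))$: convergence there is locally uniform in $(y,z)$, so for each fixed $y$ the limit of functions with poles in $\pi_\C\mathcal{R}(y)\cap K_b$ of multiplicity $\le l(y)+1$ again has at most those poles with those multiplicities---no Rouch\'e argument is needed. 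To repair your approach you would need, for each $b$, an additional continuous map recording the holomorphic remainder $f-f_b$ restricted to $\{c<\mathrm{Re}\,z<c'\}$; once you include that, your initial-topology construction becomes equivalent to the paper's non-direct sum.
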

\begin{proof}
Let us set $f_b(y,z):=\sum_{i=0}^{N(b)}\varphi_i(y)f_i(y,z)$ for functions $\varphi_i(y)\in C^\infty_0(U_i)$ such that $\sum_{i=0}^{N(b)}\varphi_i(y)=1$ for all $y\in\overline{U},$ and $\delta_b(y,z):=\sum_{i=0}^{N(b)}\varphi_i(y)\delta_i(y,z).$ Then we have $$\delta_b\in C^\infty(U,\mathcal{A}'(K_b,E))^\bullet$$
for $K_b:=\bigcup_{i=0}^{N(b)}K_i,$ moreover, $f_b(y,z):= M_{r\rightarrow z}\omega(r)\langle\delta_{b,w},r^{-w}\rangle\in C^\infty(U,\mathcal{A}(\C\setminus K_b,E))^\bullet,$ and
 \begin{equation} \label{21dia}
f(y,z)-f_b(y,z)\in C^\infty(U,\mathcal{A}(\{c<\textup{Re}\,z<c'\},E)). 
\end{equation}
In this way for every $b=(c,c',U)$ we obtain a (non-direct) decomposition 
\begin{equation} \label{21decseq}
C^\infty(\Omega, \mathcal{M}_{p_{K_b}\mathcal{R}})|_U=C^\infty(U ,\mathcal{A}(\{c<\textup{Re}\,z<c'\},E))+C^\infty(U ,\mathcal{A}_{\mathcal{R}}(\C\setminus K_b,E))^\bullet. 
\end{equation}
Here
\begin{equation} \label{21subsp}
C^\infty(U ,\mathcal{A}_{\mathcal{R}}(\C\setminus K_b,E))^\bullet\subset C^\infty(U ,\mathcal{A}(\C\setminus K_b,E))^\bullet 
\end{equation}
means the subspace of all elements of the right hand side, subordinate to $p_{K_b}\mathcal{R}|_U.$ This in turn is a closed subspace of of the Fr\'echet space $C^\infty(U ,\mathcal{A}(\C\setminus K_b,E)).$ In fact, it suffices to verify that when $(f_\nu )_{\nu \in \N}$ is a sequence of elements of $C^\infty(U ,\mathcal{A}_{\mathcal{R}}\newline(\C\setminus K_b,E))^\bullet,$ convergent in $C^\infty(U ,\mathcal{A}(\C\setminus K_b,E)),$ the limit belongs to $C^\infty(U ,\mathcal{A}_{\mathcal{R}}\newline(\C\setminus K_b,E))^\bullet.$ However, for every fixed $y\in U$ the functions $f_\nu (y)$ extend to meromorphic functions with poles at the points $p(y)\in \pi _\C\mathcal{R}(y)\cap K_b$ of multiplicities $\leq l(y)+1.$ Then also the limit is meromorphic with those poles, including multiplicities and hence defines an element of $C^\infty(U ,\mathcal{A}_{\mathcal{R}}(\C\setminus K_b,E))^\bullet.$ We endow the space \eqref{21decseq} with the Fr\'echet topology of the non-direct sum of Fr\'echet spaces. Then $C^\infty (\Omega ,\mathcal{M}_{\mathcal{R}}(E))$ itself is Fr\'echet as an intersection of spaces \eqref{21decseq} over all $b=(c,c',U)$ in a countable set.
\end{proof}
\begin{defn} \label{21melas1}
Let $\Omega\subseteq\R^q$ be open, and $\mathcal{R}$ a variable discrete Mellin asymptotic type. A family of meromorphic functions $f(y,z)\in \mathcal{A}(\C\setminus\pi_\C\mathcal{R},L^{-\infty}(X)), y\in\Omega,$ is called a (smoothing) Mellin symbol with asymptotics of type $\mathcal{R}$ if $f$ is subordinate to $\mathcal{R}$ in the above-mentioned sense for $E=L^{-\infty}(X),$ i.e. for every $b=(c,c',U),c<c',U\in\mathcal{U}(\Omega),$ there is a $K_b\Subset\C$ such that for any cut-off function $\omega$ we have
\begin{equation}\label{21melhol}
\begin{split}
g(y,z):=(f(y,z)&-M_{r\rightarrow z}\omega(r)\langle\delta_{b,w},r^{-w}\rangle)|_{U\times\{c<\textup{Re}\,z<c'\}}\\&\in C^\infty(U,\mathcal{A}(\{c<\textup{Re}\,z<c'\},L^{-\infty}(X))
\end{split}
\end{equation}
for a suitable $\delta_b\in C^\infty(U,\mathcal{A}'(K_b,L^{-\infty}(X)))^\bullet.$ In addition we ask the property
\begin{equation} \label{21es}
g(y,z)|_{U\times\Gamma_\beta}\in C^\infty(U,\mathcal{S}(\Gamma_\beta,L^{-\infty}(X))
\end{equation}
for every $c<\beta<c'$, uniformly in compact $\beta$-intervals, and that the ($L^{-\infty}(X)$-valued) Laurent coefficients of $f(y,z)$ at the negative powers of $z-p,p\in\pi_\C\mathcal{R}(y),$ are of finite rank (required to be uniformly bounded in $y\in U,p\in K_b$, for every $b=(c,c',U)$). By
\begin{equation} \label{21et}
C^\infty(\Omega,\mathcal{M}_{\mathcal{R}}^{-\infty}(X))
\end{equation}
we denote the set of all those $f(y,z)$.
\end{defn}
%\begin{rem} \label{21frmel}
%The space $C^\infty(\Omega,\mathcal{M}_{\mathcal{R}}^{-\infty}(X))$ is Fr\'echet in a natural way.
%\end{rem}
%In fact, $C^\infty(\Omega,\mathcal{M}_{\mathcal{R}}^{-\infty}(X))\subset C^\infty (\Omega ,\mathcal{M}_{\mathcal{R}}(L^{-\infty}(X)))$ is a closed subspace;  this is an easy consequence of Proposition \ref{21Fre} together with \eqref{21es}. 
\begin{rem}\label{21muas}
Note that for any $f(y,z)\in C^\infty(\Omega,\mathcal{M}_{\mathcal{R}}^{-\infty}(X))$ the system of pairs $\mathcal{R}(f):=\bigcup_{y\in \Omega }\mathcal{R}(f)(y)$ for
\begin{equation} \label{21muass}
\mathcal{R}(f)(y)\!:\!=\{(p(y),m(y))\in \C\times \N: p(y\!)\, \,\mbox{is a pole of}\, f(y,z\!) \,\,\mbox{of multiplicity }\,m(y)+1\}
\end{equation}
is a variable discrete Mellin asymptotic type, and $\mathcal{R}=\bigcup_{f\in C^\infty(\Omega,\mathcal{M}_{\mathcal{R}}^{-\infty}(X))}\mathcal{R}(f).$ 
\end{rem}
\begin{prop} \label{21merdec}
Let $f(y,z)\in C^\infty(U,\mathcal{A}(\C\setminus K,E))^\bullet$ for some compact set $K\subset \C$ and an open set $U\subset \R^q$ with compact closure. Then for every $\beta \in \R$ there are $0<\varepsilon _1<\varepsilon _0<\varepsilon $ for any $\varepsilon >0$ so small as we want and a decomposition
\begin{equation} \label{21merdec1}
f=f_0+f_1,f_i\in C^\infty(U,\mathcal{A}(\C\setminus K,E))^\bullet,i=0,1,
\end{equation}
such that $f_0$ is holomorphic in $\{\textup{Re}\,z>\beta +\varepsilon _0\}$ and $f_1$ in $\{\textup{Re}\,z<\beta +\varepsilon _1\},$ for all $y\in U.$
\end{prop}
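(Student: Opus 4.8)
The plan is to pass to the associated family of analytic functionals, to split its carrier along two nearby vertical lines using the decomposition theorem for analytic functionals, and then to transport the splitting back to $f$ (absorbing the resulting entire summand).

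First I would use that, since $f\in C^\infty(U,\mathcal{A}(\C\setminus K,E))^\bullet$, the family $\delta_f(y)\in C^\infty(U,\mathcal{A}'(K,E))^\bullet$ with $\langle\delta_f(y),h\rangle=\int_Cf(y,z)h(z)\dbar z$, $h\in\mathcal{A}(\C)$, encodes exactly the (bounded, discrete) pole structure of $f$, and that $f$ is recovered from it up to a holomorphic summand: with $f^\natural(y,z):=\langle\delta_f(y)_w,\Phi(z-w)\rangle$ one has $f-f^\natural\in C^\infty(U,\mathcal{A}(\C,E))$ by \eqref{21pair}, \eqref{21alt}. Fix $\varepsilon>0$, pick (provisionally) $0<\varepsilon_1<\varepsilon_0<\varepsilon$, and set $K_0:=K\cap\{\textup{Re}\,z\leq\beta+\varepsilon_0\}$ and $K_1:=K\cap\{\textup{Re}\,z\geq\beta+\varepsilon_1\}$, so that $K_0,K_1\Subset\C$ and $K_0\cup K_1=K$. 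The core step is to decompose $\delta_f=\delta_f^{(0)}+\delta_f^{(1)}$ with $\delta_f^{(i)}\in C^\infty(U,\mathcal{A}'(K_i,E))^\bullet$. For a single $y$ and $E=\C$ this is the classical identity $\mathcal{A}'(K_0\cup K_1)=\mathcal{A}'(K_0)+\mathcal{A}'(K_1)$ (cf. \cite{Horm5}); the $E$-valued case follows via $\hat{\otimes}_\pi E$, and the pointwise-in-$y$ version respects the $\bullet$-structure because $\delta_f(y)$ is carried by the \emph{finite} set $\pi_\C\mathcal{R}(y)\cap K$, which splits canonically (and compatibly with bounded total multiplicity) into its parts in $K_0$ and in $K_1$ as soon as the separating line $\textup{Re}\,z=\beta+\tfrac12(\varepsilon_0+\varepsilon_1)$ carries no pole of $f(y,\cdot)$.

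I expect the main obstacle to be exactly the simultaneous $y$-smoothness and $\bullet$-preservation of this decomposition: because the poles of $f(y,\cdot)$ may migrate (with vanishing residues) across a fixed vertical line as $y$ varies, there is in general no pole-free strip valid on all of $U$. My plan here is to localise: for each $y_0\in\overline U$ the function $f(y_0,\cdot)$ has only finitely many poles in $K$, so $\varepsilon_1,\varepsilon_0$ may be shrunk and a neighbourhood $U_{y_0}\in\mathcal{U}(\Omega)$ chosen so that the separating data can be selected compatibly with the (finite, for the single parameter $y_0$) pole configuration and depends smoothly on $y\in U_{y_0}$; one then passes to a finite subcover $\{U_j\}$ of $\overline U$, glues the local splittings by a partition of unity $\{\varphi_j\}\subset C^\infty_0(U_j)$, and takes $\varepsilon_1:=\min_j\varepsilon_1^{(j)}$, $\varepsilon_0:=\max_j\varepsilon_0^{(j)}$ (still $0<\varepsilon_1<\varepsilon_0<\varepsilon$). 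This is where the hypothesis ``finitely many poles with uniformly bounded total multiplicity'' is essential, and where the construction of Section~2.2 (the $\zeta_j,\varphi_j$-gluing around \eqref{21func1}) is invoked.

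Finally I would set $f_i^\natural(y,z):=\langle\delta_f^{(i)}(y)_w,\Phi(z-w)\rangle$, so that $f_i^\natural\in C^\infty(U,\mathcal{A}(\C\setminus K_i,E))^\bullet$; since $K_0\subset\{\textup{Re}\,z\leq\beta+\varepsilon_0\}$ and $K_1\subset\{\textup{Re}\,z\geq\beta+\varepsilon_1\}$, the function $f_0^\natural$ is holomorphic in $\{\textup{Re}\,z>\beta+\varepsilon_0\}$ and $f_1^\natural$ in $\{\textup{Re}\,z<\beta+\varepsilon_1\}$ (and both are holomorphic on $\C\setminus K$). Moreover $f-(f_0^\natural+f_1^\natural)=f-f^\natural\in C^\infty(U,\mathcal{A}(\C,E))$ is entire in $z$, so absorbing it into the first summand and putting $f_0:=f_0^\natural+(f-f_0^\natural-f_1^\natural)$, $f_1:=f_1^\natural$ gives $f=f_0+f_1$ with $f_0$ still holomorphic in $\{\textup{Re}\,z>\beta+\varepsilon_0\}$, $f_1$ holomorphic in $\{\textup{Re}\,z<\beta+\varepsilon_1\}$, and $f_0=f-f_1$, $f_1\in C^\infty(U,\mathcal{A}(\C\setminus K,E))^\bullet$ — the pole-free entire correction does not enlarge the bounded discrete pole structure. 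This yields the asserted decomposition.
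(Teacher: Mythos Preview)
Your proposal is correct and follows essentially the same route as the paper: localise at each $y_0\in\overline{U}$, choose a pole-free separating line $\textup{Re}\,z=\beta+\varepsilon(y_0)$, extract the poles to the left by a curve integral (the paper's $\zeta$ and $g_0$, your $\delta_f^{(0)}$ and $f_0^\natural$) using the potential $M_{r\to z}(\omega(r)\langle\zeta_w,r^{-w}\rangle)=\langle\zeta_w,\Phi(z-w)\rangle$, pass to a finite subcover, glue by a partition of unity, and set $\varepsilon_1=\min_j\varepsilon(y_j)$, $\varepsilon_0=\max_j\varepsilon(y_j)$. The only difference is cosmetic: you first invoke the abstract splitting $\mathcal{A}'(K_0\cup K_1)=\mathcal{A}'(K_0)+\mathcal{A}'(K_1)$ before recognising that the $\bullet$-smoothness forces localisation anyway, whereas the paper goes straight to the local curve-integral construction.
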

\begin{proof}
We fix any $y_0\in \overline{U}$ and form the set $K(y_0)$ of all poles of $f(y_0,z)$ in $\{\textup{Re}\,z<\beta +\varepsilon (y_0)\}$ for some $\varepsilon (y_0)<\varepsilon .$ Let $C(y_0)$ be a curve counter-clockwise surrounding $K(y_0)$ in the half-plane $\{\textup{Re}\,z<\beta +\varepsilon \}$ such that all the other poles are outside that curve. Then there is an open neighbourhood $U(y_0)$ of $y_0$ such that all poles of $f(y,z),y\in U(y_0)$ in $\{\textup{Re}\,z<\beta +\varepsilon (y_0)\}$ are surrounded by $C(y_0)$. We produce a $\zeta \in C^\infty(U(y_0),\mathcal{A}'( K(y_0),E))^\bullet$ in the usual way by
\begin{equation} \label{21merdec2}
\langle \zeta (y),h(z)\rangle =\int_{C(y_0)}f(y,z)h(z)\dbar z,\,h \,\,\mbox{holomorphic in a neighbourhood of}\,\,K(y_0).
\end{equation}
Then $g_0(y,z):=M_{r\rightarrow z}(\omega (r)\langle \zeta _w(y),r^{-w}\rangle)$ belongs to $C^\infty(U(y_0),\mathcal{A}(\C\setminus K,E))^\bullet,$ and $f(y,z)-g_0(y,z)$ is holomorphic in $\{\textup{Re}\,z<\beta +\varepsilon (y_0)\}$ for all $y\in U(y_0).$ Doing this for every $y\in \overline{U}$ we get an open covering of $\overline{U}$ by open sets, and we find a finite subcovering $\{U(y_0),\dots,U(y_N)\}$ for some $N.$ Let us denote by $g_j$ the analogue of $g_0$ for the point $y_j.$ Then $f(y,z)-g_j(y,z)$ is holomorphic in $\{\textup{Re}\,z<\beta +\varepsilon (y_j)\}$ for all $y\in U(y_j).$ Now for a system of functions $\{\varphi _j\in C_0^\infty (U(y_j)),j=0,\dots,N\}$ such that $\sum\varphi _j\equiv 1$ over $\overline{U}$ we can form $f_0(y,z):=\sum_{j=0}^N\varphi _j(y)g_j(y,z)\in C^\infty(U,\mathcal{A}(\C\setminus K,E))^\bullet,$ and $f_1(y,z):= f(y,z)-f_0(y,z)$ is holomorphic in $\{\textup{Re}\,z<\beta +\textup{min}_{0\leq j\leq N}\{\varepsilon (y_j)\}\},$ while $f_0(y,z)$ is holomorphic in $\{\textup{Re}\,z>\beta +\textup{max}_{0\leq j\leq N}\{\varepsilon (y_j)\}\},$ for all $y\in U.$ This is the desired construction where $\varepsilon _1=\textup{min}_{0\leq j\leq N}\{\varepsilon (y_j)\},\varepsilon _0=\textup{max}_{0\leq j\leq N}\{\varepsilon (y_j)\}.$
\end{proof}
\begin{prop} \label{21mult}
Let $f_j(y,z)\in C^\infty(\Omega,\mathcal{M}_{\mathcal{R}_j}^{-\infty}(X))$ for variable discrete Mellin asymptotic types $\mathcal{R}_j, j=1,2;$ then we have $(f_1f_2)(y,z)\in C^\infty(\Omega,\mathcal{M}_{\mathcal{R}}^{-\infty}(X))$ for a resulting $\mathcal{R},$
and the multiplication defines a bilinear map
\begin{equation} \label{21bil}
C^\infty(\Omega,\mathcal{M}_{\mathcal{R}_1}^{-\infty}(X))\times C^\infty(\Omega,\mathcal{M}_{\mathcal{R}_2}^{-\infty}(X))\rightarrow C^\infty(\Omega,\mathcal{M}_{\mathcal{R}}^{-\infty}(X)).
\end{equation}
\end{prop}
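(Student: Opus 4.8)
The plan is to check that the pointwise composition $f_1f_2$ retains every feature demanded by Definition~\ref{21melas1}, the resulting Mellin asymptotic type being read off directly from the poles of the two factors; only the uniformity in $y$ requires care. Since $L^{-\infty}(X)$ is a Fr\'echet algebra under composition of operators, $f_1(y,z)f_2(y,z)$ is again $L^{-\infty}(X)$-valued, holomorphic in $z$ off $\pi_\C\mathcal{R}_1(y)\cup\pi_\C\mathcal{R}_2(y)$ and meromorphic across these points, with a pole at $p$ of order at most the sum of the orders of the poles of $f_1$ and $f_2$ at $p$ (order $0$ meaning holomorphic). Its Laurent coefficients at $p$ are finite sums of compositions of Laurent and Taylor coefficients of $f_1,f_2$ in which at least one factor — coming from whichever symbol genuinely has a pole at $p$ — is a finite-rank smoothing operator; composing with a finite-rank operator does not increase rank, and the uniform rank bounds of the two symbols multiply. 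I therefore define $\mathcal{R}$ by $\pi_\C\mathcal{R}(y):=\pi_\C\mathcal{R}_1(y)\cup\pi_\C\mathcal{R}_2(y)$ with multiplicity at $p$ the sum of the multiplicities prescribed there by $\mathcal{R}_1$ and $\mathcal{R}_2$. Then $f_1f_2$ is subordinate to $\mathcal{R}$, and the finite-rank requirement of Definition~\ref{21melas1} is met. That $\mathcal{R}$ is a variable discrete Mellin asymptotic type in the sense of Definition~\ref{21melas} follows by taking, for any $b=(c,c',U)$, the common refinement of the open coverings of $\overline U$ supplied by Definition~\ref{21melas} for $\mathcal{R}_1$ and $\mathcal{R}_2$, together with the unions of the associated compact sets; the bound $\sup_{y\in U_i}\sum_j(1+l_j(y))<\infty$ comes from $1+l_j(y)\le(1+l_j^{(1)}(y))+(1+l_j^{(2)}(y))$ at a common pole. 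This is the union operation on asymptotic types noted after Definition~\ref{21melas}.

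It then remains to exhibit, for each $b=(c,c',U)$, the decomposition \eqref{21melhol} together with the Schwartz property \eqref{21es}. The family $f_1f_2$ is a $C^\infty(\Omega,\cdot)$-family of $L^{-\infty}(X)$-valued meromorphic functions — each $f_j$ has this property as a consequence of its own box conditions (write $f_j=M_{r\rightarrow z}\omega\langle\delta_{j,b,w},r^{-w}\rangle+g_j$ on each strip and cover $\C$ by countably many strips), and composition on $L^{-\infty}(X)$ is continuous — with pole set $\pi_\C\mathcal{R}(y)$ of locally uniformly bounded total multiplicity. Hence the construction carried out in the passage preceding Definition~\ref{21melas}, applied verbatim to $f_1f_2$ (integrate over a curve surrounding the relevant compact set, patch over a finite subcover of $\overline U$ with a partition of unity), produces $\delta_b\in C^\infty(U,\mathcal{A}'(K_b,L^{-\infty}(X)))^\bullet$, $K_b:=K_{1,b}\cup K_{2,b}$, with $g:=(f_1f_2-M_{r\rightarrow z}\omega\langle\delta_{b,w},r^{-w}\rangle)|_{U\times\{c<\textup{Re}\,z<c'\}}$ holomorphic, which is \eqref{21melhol}. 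For \eqref{21es} I would use the box decompositions $f_j=H_j+g_j$ of the factors on $S:=\{c<\textup{Re}\,z<c'\}$, with $H_j:=M_{r\rightarrow z}\omega\langle\delta_{j,b,w},r^{-w}\rangle$ and $g_j|_{\Gamma_\beta}$ Schwartz, and expand $f_1f_2=g_1g_2+g_1H_2+H_1g_2+H_1H_2$. The term $g_1g_2$ is holomorphic on $S$ and Schwartz on every $\Gamma_\beta$, $c<\beta<c'$ (submultiplicativity of Schwartz seminorms, continuity of composition on $L^{-\infty}(X)$); for the remaining three terms, each $\langle\delta_{j,b,w},\Phi(z-w)\rangle$ with $\Phi=M_{r\rightarrow z}\omega$ is rapidly decreasing on lines disjoint from $K_{j,b}$ because $\Phi$ has only the simple pole $z=0$, so these contributions are Schwartz on lines missing $K_b$; on lines that meet $K_b$ the poles cancel against those of $M_{r\rightarrow z}\omega\langle\delta_{b,w},r^{-w}\rangle$ (since $g$ is holomorphic on $S$), which reduces the estimate to a bounded portion of the line where $g$ is merely smooth. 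As the conditions of Definition~\ref{21melas1} are local in $y$ and checked box by box, this gives $f_1f_2\in C^\infty(\Omega,\mathcal{M}_{\mathcal{R}}^{-\infty}(X))$, and the bilinearity of \eqref{21bil} is immediate.

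I expect the genuine obstacle to be the uniform-in-$y$ bookkeeping rather than any single computation: confirming that the analytic functionals and remainders delivered by \eqref{21pair}--\eqref{21alt} really lie in the $C^\infty(U,\cdot)^\bullet$-classes (equivalently, that the pole data of $f_1f_2$ assemble, via the common-refinement step, into a legitimate variable discrete Mellin asymptotic type), and that the remainder $g$ satisfies the precise Schwartz decay \eqref{21es} on every vertical line in the strip, including those that cross $K_b$. This hinges on the mapping properties of the potential operator $\delta\mapsto M_{r\rightarrow z}\omega\langle\delta_w,r^{-w}\rangle$ and on continuity of composition $L^{-\infty}(X)\times L^{-\infty}(X)\rightarrow L^{-\infty}(X)$ combined with submultiplicativity of the Schwartz seminorms; isolating these as a lemma is the cleanest way to organise the proof.
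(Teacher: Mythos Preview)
Your proposal is correct and supplies precisely the elementary verification the paper omits: the paper gives no proof, stating only that it is ``elementary and left to the reader,'' with the side remark that the Laurent data of the product involve Taylor coefficients of the factors. Your construction of $\mathcal{R}$ as the union of pole sets with additive multiplicity bounds, the common refinement of the coverings for Definition~\ref{21melas}, and the box-by-box check of \eqref{21melhol}--\eqref{21es} via $f_j=H_j+g_j$ is exactly the intended argument; the paper's remark about Taylor expansions is not pointing to a different $\mathcal{R}$ but merely noting that the principal parts of $f_1f_2$ at a pole of one factor pick up holomorphic Taylor data of the other (which you already account for in your finite-rank discussion).
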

The proof is elementary and left to the reader. Let us only note that the new asymptotic type $\mathcal{R}$ is not only affected by the involved poles and multiplicities but also by finite parts of Taylor expansions in the holomorphic regions of the factors.
\begin{prop} \label{21diff}
Let $f(y,z)\in C^\infty(\Omega,\mathcal{M}_{\mathcal{R}}^{-\infty}(X))$ for a variable discrete Mellin asymptotic type $\mathcal{R};$ then we have $D_y^\alpha f(y,z)\in C^\infty(\Omega,\mathcal{M}_{\mathcal{R_\alpha}}^{-\infty}(X))$ for every $\alpha\in\N^q$ and a resulting $\mathcal{R_\alpha},$ and $D_y^\alpha$ induces a linear operator
\begin{equation} \label{21bdiff}
D_y^\alpha:C^\infty(\Omega,\mathcal{M}_{\mathcal{R}}^{-\infty}(X))\rightarrow C^\infty(\Omega,\mathcal{M}_{\mathcal{R_\alpha}}^{-\infty}(X)).
\end{equation}
\end{prop}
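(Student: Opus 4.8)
The plan is to reduce the claim to a single first-order derivative, which I write $\partial_{y_l}$, and then iterate. First I would unravel the definition: by Definition \ref{21melas1}, for a fixed $b=(c,c',U)$ there are a compact $K_b\Subset\C$, a cut-off $\omega$, a datum $\delta_b\in C^\infty(U,\mathcal{A}'(K_b,L^{-\infty}(X)))^\bullet$ and, on $U\times\{c<\textup{Re}\,z<c'\}$, a splitting $f=\tilde f+g$ with $\tilde f(y,z):=M_{r\to z}\omega(r)\langle\delta_{b,w},r^{-w}\rangle$ and $g\in C^\infty(U,\mathcal{A}(\{c<\textup{Re}\,z<c'\},L^{-\infty}(X)))$ satisfying \eqref{21es}, while the Laurent coefficients of $f$ at its poles are finite-rank, uniformly in $y$. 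Differentiating, $\partial_{y_l}f=\partial_{y_l}\tilde f+\partial_{y_l}g$ on the strip, and since $M_{r\to z}\omega$ and the pairing $\langle\,\cdot_w,r^{-w}\rangle$ are continuous and linear in the ($L^{-\infty}(X)$-valued, Fr\'echet) functional, $\partial_{y_l}\tilde f=M_{r\to z}\omega(r)\langle(\partial_{y_l}\delta_b)_w,r^{-w}\rangle$. The remainder is harmless: $\partial_{y_l}$ commutes with $z$-holomorphy and with the Schwartz bounds along the lines $\Gamma_\beta$ (uniformly on compact $\beta$-intervals, by differentiating under the defining seminorms), so $\partial_{y_l}g$ stays in the same holomorphic space and keeps \eqref{21es}. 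The statement thus reduces to showing that $\partial_{y_l}\tilde f\in C^\infty(U,\mathcal{A}(\C\setminus K_b,L^{-\infty}(X)))^\bullet$ (equivalently, that $\partial_{y_l}\delta_b$ lies in $C^\infty(U,\mathcal{A}'(K_b,L^{-\infty}(X)))^\bullet$, with the \emph{same} carrier $K_b$), together with the finite-rank bound on the Laurent coefficients of $\partial_{y_l}f$.

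For the first point, $\partial_{y_l}\tilde f\in C^\infty(U,\mathcal{A}(\C\setminus K_b,L^{-\infty}(X)))$ is automatic, so the content is that for each fixed $y$ the function $\partial_{y_l}\tilde f(y,\cdot)$ extends meromorphically across $K_b$ with finitely many poles in $K_b$ of total multiplicity bounded independently of $y$. I would deduce this by writing $\partial_{y_l}\tilde f(y,\cdot)$ as a limit, in $\mathcal{A}(\C\setminus K_b,L^{-\infty}(X))$, of the difference quotients $t^{-1}(\tilde f(y+te_l,\cdot)-\tilde f(y,\cdot))$, each of which is meromorphic across $K_b$ with poles only in $K_b$, of total multiplicity at most $2M$ if $M$ is the uniform bound attached to $\tilde f$ (the pole divisor of a difference being dominated by the sum of the two pole divisors). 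It then suffices to know that, for each $N$, the subset $\mathcal{V}_N(K_b,L^{-\infty}(X))\subset\mathcal{A}(\C\setminus K_b,L^{-\infty}(X))$ of functions extending meromorphically across $K_b$ with poles in $K_b$ of total multiplicity $\leq N$ is closed; this follows by writing an element of $\mathcal{V}_N$ as a quotient $Q/D$ with $D$ a monic scalar polynomial of degree $N$ with roots in $K_b$ (and one fixed auxiliary point) and $Q$ an $L^{-\infty}(X)$-valued polynomial of degree $<N$, then extracting convergent subsequences of coefficients — $L^{-\infty}(X)\cong C^\infty(X\times X)$ being Fr\'echet--Montel — plus a normal-families argument for the entire part. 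Hence $\partial_{y_l}\tilde f(y,\cdot)\in\mathcal{V}_{2M}(K_b,L^{-\infty}(X))$, as wanted; iterating bounds the total multiplicity by $(|\alpha|+1)M$ after $|\alpha|$ derivatives.

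For the finite-rank point, the Laurent coefficients of $\partial_{y_l}f(y,\cdot)$ at a pole are likewise limits, in $L^{-\infty}(X)$ and hence in the operator norm on $L^2(X)$, of difference quotients built from the Laurent coefficients of $f$ at nearby poles; each of the latter has rank $\leq N_0$ uniformly, and at most boundedly many (in terms of $M$) such poles can merge, so the difference quotients have rank bounded by a constant depending only on $M$ and $N_0$, and lower semicontinuity of the rank carries this bound to the limit. This verifies all requirements of Definition \ref{21melas1} for $\partial_{y_l}f$ relative to every $b$, so $\partial_{y_l}f\in C^\infty(\Omega,\mathcal{M}^{-\infty}_{\mathcal{S}}(X))$ for the variable discrete Mellin asymptotic type $\mathcal{S}$ with carrier $\pi_\C\mathcal{S}(y)\subseteq\pi_\C\mathcal{R}(y)$ and multiplicities the (uniformly bounded) orders of the poles of $\partial_{y_l}f$. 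Iterating $|\alpha|$ times gives $D_y^\alpha f\in C^\infty(\Omega,\mathcal{M}^{-\infty}_{\mathcal{R}_\alpha}(X))$, where one may take $\mathcal{R}_\alpha:=\mathcal{R}(D_y^\alpha f)$ in the sense of Remark \ref{21muas}; linearity of $D_y^\alpha$ is evident.

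The interchange of $\partial_{y_l}$ with the Mellin transform, the pairing and $z$-holomorphy, and the Schwartz estimates for the remainder, are routine. The main obstacle is the one handled in the second paragraph: controlling, uniformly in $y$, the location and order of the poles of the differentiated Mellin symbol. The delicacy is that the poles $p_j(y)\in\pi_\C\mathcal{R}(y)$ move non-smoothly and may collide as $y$ varies, so that a single pole of $D_y^\alpha f(\cdot,z)$ at a collision value of $y$ may have order strictly larger than $l_j(y)+1+|\alpha|$; what saves the argument is that only the \emph{total} multiplicity over $K_b$ is constrained in the symbol classes, and this quantity is stable under differentiation thanks to the closedness of the spaces $\mathcal{V}_N$.
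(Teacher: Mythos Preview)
Your argument is correct and hinges on exactly the same fact the paper isolates: that $C^\infty(U,\mathcal{A}(\C\setminus K,E))^\bullet$ is preserved under $\partial_{y_l}$. The paper's proof is a one-line citation of this fact to \cite[Section 1.1.5, Theorem 6]{Schu2}, whereas you supply an actual proof sketch via difference quotients and the closedness of the ``total multiplicity $\leq N$'' subspace $\mathcal{V}_N\subset\mathcal{A}(\C\setminus K_b,E)$. So the route is the same; you simply fill in what the paper outsources.

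Two small points on your sketch. First, in the $Q/D$ representation you should let $Q$ be \emph{entire} (with the polynomial part being only the principal part at the poles), and then the closedness of $\mathcal{V}_N$ follows cleanly: if $f_\nu\to f$ in $\mathcal{A}(\C\setminus K_b,E)$ with $f_\nu=Q_\nu/D_\nu$, $D_\nu$ monic of degree $N$ with zeros in $K_b$, then $Q_\nu=D_\nu f_\nu$ is bounded on any curve $C\subset\C\setminus K_b$ surrounding $K_b$, hence (maximum principle) locally bounded on all of $\C$, and Montel plus compactness of the zero sets of the $D_\nu$ finishes. Second, your finite-rank argument is a bit telegraphic where poles collide; the cleanest way to phrase it is that each Laurent coefficient of $\partial_{y_l}f(y_0,\cdot)$ at a pole $p$ is $\displaystyle\lim_{t\to0}\frac{1}{2\pi i t}\oint_{C_p}(z-p)^{k-1}\big(f(y_0+te_l,z)-f(y_0,z)\big)\,dz$ for a small fixed circle $C_p\subset\C\setminus K_b$, and the integrand for each $t$ is an $L^{-\infty}(X)$-valued function whose values are finite sums of the (uniformly finite-rank) Laurent coefficients of $f$, so the integral has uniformly bounded rank and lower semicontinuity applies. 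With these clarifications your proof stands on its own.
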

\begin{proof}
The assertion is a consequence of the fact that the space $C^\infty(U,\mathcal{A}(\C\setminus K,E))^\bullet$ is preserved under differentiation with respect to $y,$ cf. \cite[Section 1.1.5, Theorem 6]{Schu2}.
\end{proof}
\begin{rem}\label{21dicont}
Our constructions have an analogue in the set-up of continuous asymptotics. Roughly speaking it suffices to forget about the pointwise discrete behaviour of analytic functionals and in the first part of Definition \ref{21melas1} to ask the conditions \eqref{21melhol} for suitable $\delta_b\in C^\infty(U,\mathcal{A}'(K_b,L^{-\infty}(X))).$ In the second part of Definition \ref{21melas1} we drop the condition on finite rank Laurent coefficients. Then instead of the asymptotic type $\mathcal{R}$ itself we can speak about a system of closed subsets $V\subset \C,V=V(U),$ such that $V\cap \{c\leq \textup{Re}\,z\leq c'\}$ is compact for every $c<c'.$ In some proofs it will be of help first to employ results for continuous asymptotics and then to observe the corresponding structure in the pointwise discrete case.
\end{rem}

\subsection{Smoothing Mellin symbols of the edge calculus}
Mellin symbols $f\in C^\infty(\Omega,\mathcal{M}_{\mathcal{R}}^{-\infty}(X))$ give rise to families of operators
\begin{equation} \label{21melop1}
\op_M^\beta (f)(y):C_0^\infty(X^\wedge)\rightarrow C^\infty(X^\wedge)
\end{equation}
for every $\beta \in \R$ with $\pi_\C\mathcal{R}(y)\cap\Gamma_{1/2-\beta}=\emptyset.$ 
These are involved in operators of the form
\begin{equation} \label{21melop2}
\omega_\eta r^{-\mu+j}\op_M^{\gamma_j-n/2} (f)(y)\omega'_\eta:\mathcal{K}^{s,\gamma}(X^\wedge)\rightarrow \mathcal{K}^{\infty,\gamma-\mu}(X^\wedge)
\end{equation}
for some $\mu\in\R$ and $j\in\N,$ and cut-off functions $\omega(r),\omega'(r),$
\begin{equation}\label{21omega}
\omega_\eta(r):=\omega(r[\eta]), 
\end{equation}
etc., $\eta\in\R^q.$ Here $\eta\mapsto [\eta]$ denotes some strictly positive function in $C^\infty(\R^q)$ 
with $[\eta]=|\eta|$ for $|\eta|\geq C$ for some $C>0.$ The choice of $C$ is unessential; for the sake of definiteness we assume $C=1.$ Concerning $\gamma_j=\gamma_j(y)$ we ask the conditions $\pi_\C\mathcal{R}(y)\cap\Gamma_{(n+1)/2-\gamma_j}=\emptyset$ (to avoid poles of $f(y)$ on the integration line), and $\gamma-j\leq\gamma_j(y)\leq\gamma$ (to have the continuity of the operator \eqref{21melop2}). For $j=0$ this means that the weight line $\Gamma_{(n+1)/2-\gamma}$ remains free of poles of $f(y,z)$ for all $y\in\Omega.$ For $j>0$ this is not necessary but we pass to $f_U,$ the restriction of $f$ to any $U\in\mathcal{U};$ this is adequate and sufficient for our purposes. In order to express an operator convention we first note that for every $y_0\in \overline{U}$ there is a $\gamma_j(y_0)$ with $\gamma-j\leq\gamma_j(y_0)\leq\gamma$ and a neighbourhood $U(y_0)\in \mathcal{U}(\Omega)$ of $y_0$ such that $\pi_\C\mathcal{R}(y)\cap\Gamma_{(n+1)/2-\gamma_j (y_0)}=\emptyset$ for all $y \in U(y_0).$ This can be done for every $y_0\in \overline{U},$ and since $\overline{U}$ is compact, there are finitely many points $y_0,\ldots,y_N$ such that the respective sets $U(y_0),\ldots,U(y_N)$ form an open covering of $\overline{U}.$ Choose a system of functions $\varphi_l\in C^\infty_0(U(y_l)),l=0,\ldots,N,$ such that $\sum_{l=0}^N \varphi_l(y)=1$ for all $y\in \overline{U}.$ Then we have
$f|_U(y,z)=\sum_{l=0}^N\varphi_l(y)f(y,z)$ for all $y\in U.$ We set
\begin{equation} \label{21melop3}
\omega_\eta r^{-\mu+j}\Op_M^{\gamma-n/2} (f|_U)(y)\omega'_\eta:= \omega_\eta r^{-\mu+j}\sum_{l=0}^N\op_M^{\gamma_j(y_l)-n/2} (\varphi_lf)(y)\omega'_\eta,
\end{equation}
$y\in U,$ for a fixed choice of data
\begin{equation} \label{21melconv}
\{U(y_l),\varphi_l,\gamma_j(y_l)\}_{l=0,\ldots,N}.
\end{equation}
\begin{rem} \label{21data}
Observe that when we have a sequence of Mellin symbols $f_{j\alpha}\in C^\infty(\Omega,\mathcal{M}_{\mathcal{R}_{j\alpha}}^{-\infty}(X))$ parametrised by finitely many indices $j,\alpha,$ then, in order to form expressions like \eqref{21melop3}, the above-mentioned $\{U(y_l),\varphi_l\}_{l=0,\ldots,N}$ can be chosen independently of $j,\alpha.$
\end{rem}
Finally, in order to define an operator convention globally over $\Omega$ we choose 
\begin{equation} \label{21melconv0n}
\{U^\iota,\varphi^\iota\}_{\iota\in I}
\end{equation}
where $U^\iota,\iota\in I,$ is a locally finite covering of $\Omega$ by sets $U^\iota\in\mathcal{U}(\Omega)$ and $\varphi^\iota,\iota\in I,$ a subordinate partition of unity. Then we define
\begin{equation} \label{21melconvglob}
\omega_\eta r^{-\mu+j}\Op_M^{\gamma-n/2} (f)(y)\omega'_\eta:= \sum_{\iota\in I}\varphi^\iota\omega_\eta r^{-\mu+j}\Op_M^{\gamma-n/2} ( f|_{U^\iota})(y)\omega'_\eta
\end{equation}
The question of characterising remainders under changing the data \eqref{21melconv}, \eqref{21melconv0n}, the cut-off functions, or the function $\eta\mapsto [\eta]$  leads to so-called Green symbols of the edge calculus, here with variable asymptotics. These will be studied below.
For the moment we fix these data and establish a number of important properties of the operator functions \eqref{21melop3}. First, we have $m_j(y,\eta):=\omega_\eta r^{-\mu+j}\Op_M^{\gamma-n/2} (f|_U)(y)\omega'_\eta\in C^\infty(U,\mathcal{L}(\mathcal{K}^{s,\gamma}(X^\wedge), \mathcal{K}^{\infty,\gamma-\mu}(X^\wedge)))$ for every $s$ and
\begin{equation} \label{21melhom}
m_j(y,\lambda\eta)=\lambda^{\mu-j}\kappa_\lambda m_j(y,\eta)\kappa_\lambda^{-1}
\end{equation}
for all $(y,\eta),|\eta|\geq1$ and $\lambda\geq1.$ Here
\begin{equation} \label{21kappa}
(\kappa_\lambda u)(r,x):=\lambda^{(n+1)/2}u(\lambda r,x);
\end{equation}
recall that $n=\textup{dim}\,X.$ The operators $\kappa_\lambda,\lambda\in\R_+,$ form a group action on the spaces $\mathcal{K}^{s,\gamma}(X^\wedge).$ In general, by a group action $\kappa=\{\kappa_\lambda\}_{\lambda\in\R_+}$ on a Hilbert space $H$ we understand a group of isomorphisms $\kappa_\lambda:H\rightarrow H, \lambda\in\R_+,$ such that $\kappa_\lambda\kappa_{\lambda'}=\kappa_{\lambda\lambda'}$ for every $\lambda, \lambda'\in\R_+$ and $\kappa_\lambda h\in C(\R_+,H)$ for every $h\in H$ (i.e. strong continuity). A similar notation is used when $H$ is replaced by a Fr\'echet space $E,$ written as a projective limit of Hilbert spaces $E_j,j\in\N,$ and continuous embeddings $E_j\hookrightarrow E_0$ for all $j$ where $E_0$ is endowed with a group action which restricts to a group action on $E_j$ for every $j.$ \\
Given two pairs $\{H,\kappa\},\{\tilde{H},\tilde{\kappa}\}$ of Hilbert spaces with group action, we have spaces 
\begin{equation} \label{21symb}
S^\mu(U\times\R^q;H,\tilde{H})\quad\mbox{and}\quad S_{\textup{cl}}^\mu(U\times\R^q;H,\tilde{H})
\end{equation}
of operator-valued symbols of order $\mu$ over an open set $U\subseteq \R^p.$ These are defined as follows. The first space of \eqref{21symb} is the set of all $a(y,\eta)\in C^\infty(U\times\R^p, \mathcal{L}(H,\tilde{H}))$ such that 
\begin{equation} \label{21symbest}
\|\tilde{\kappa}^{-1}_{\langle\eta\rangle}(D_y^\alpha D_\eta^\beta a(y,\eta))\kappa_{\langle\eta\rangle}\|_{\mathcal{L}(H,\tilde{H})}\leq c\langle\eta\rangle^{\mu-|\beta|}
\end{equation}
for all $(y,\eta)\in K\times\R^q,K\Subset U,\alpha\in\N^p, \beta\in\N^q,$ for constants $c=c(K,\alpha,\beta)>0.$ 
\begin{rem} \label{21cinf}
The space
\begin{equation} \label{21reminf}
S^{-\infty }(U\times\R^q;H,\tilde{H}):=\bigcap_{\mu \in \R}S^\mu (U\times\R^q;H,\tilde{H})
\end{equation}
is independent of the choice of $\kappa $ and $\tilde{\kappa }$.
\end{rem}
In fact, there are constants $c,M,\tilde{c},\tilde{M}>0$ such that 
$$\| \kappa _\lambda \| _{\mathcal{L}(H)}\leq c\,\textup{max}\,\{\lambda ,\lambda ^{-1}\}^M,
\|\tilde{\kappa }  _\lambda\|_{\mathcal{L}(\tilde{H})}\leq \tilde{c}\,\textup{max}\,\{\lambda ,\lambda ^{-1}\}^{\tilde{M}}.$$
If $\delta $ and $\tilde{\delta}$ are other group actions on the respective spaces we have analogous estimates, with exponents $D$ and $\tilde{D},$ respectively. Thus, \eqref{21symbest} implies
\begin{equation} \label{21symbestalt}
\begin{split}
\|\tilde{\delta }^{-1}_{\langle\eta\rangle}&(D_y^\alpha D_\eta^\beta a(y,\eta))\delta _{\langle\eta\rangle}\|_{\mathcal{L}(H,\tilde{H})}\\&=\|\tilde{\delta }^{-1}_{\langle\eta\rangle}\tilde{\kappa}_{\langle\eta\rangle}\tilde{\kappa}^{-1}_{\langle\eta\rangle}(D_y^\alpha D_\eta^\beta a(y,\eta))\kappa_{\langle\eta\rangle}\kappa^{-1}_{\langle\eta\rangle}\delta _{\langle\eta\rangle}\|_{\mathcal{L}(H,\tilde{H})}\\&=\|\tilde{\delta }^{-1}_{\langle\eta\rangle}\tilde{\kappa}_{\langle\eta\rangle}\|_{\mathcal{L}(\tilde{H})}\|\tilde{\kappa}^{-1}_{\langle\eta\rangle}(D_y^\alpha D_\eta^\beta a(y,\eta))\kappa_{\langle\eta\rangle}\|_{\mathcal{L}(H,\tilde{H})}\|\kappa^{-1}_{\langle\eta\rangle}\delta _{\langle\eta\rangle}\|_{\mathcal{L}(H)}\\&\leq c\langle\eta\rangle^{\nu -|\beta |}\,\,\mbox{for}\,\,\nu =\mu+M+D+\tilde{M}+\tilde{D}.
\end{split}
\end{equation}
In other words, if a symbol is of order $\mu $ with respect to $\kappa ,\tilde{\kappa}$ it is of order $\nu $ with respect to $\delta  ,\tilde{\delta },$ and the intersection over all orders is the same in both cases.
\begin{lem} \label{21clsymb}
A function $a(y,\eta)\in C^\infty(U\times\R^q, \mathcal{L}(H,\tilde{H}))$ with the homogeneity property $a(y,\lambda\eta)=\lambda^\mu\tilde{\kappa}_\lambda a(y,\eta)\kappa_\lambda^{-1}$ for all $|\eta|\geq C,\lambda\geq 1,$ for some $C>0,$ belongs to the space $S_{\textup{cl}}^\mu(U\times\R^q;H,\tilde{H}).$
\end{lem}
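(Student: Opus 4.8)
The plan is to argue in two stages. First I would verify the symbol estimates \eqref{21symbest} directly from the twisted homogeneity, obtaining $a\in S^\mu(U\times\R^q;H,\tilde H)$. Then I would observe that, outside a fixed ball, $a$ coincides with a globally twisted-homogeneous function of order $\mu$, so that its classical asymptotic expansion has only the leading term, with a smoothing remainder; this gives $a\in S^\mu_{\textup{cl}}$.

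For the first stage, note that since $\kappa_\lambda,\tilde\kappa_\lambda$ do not depend on $y$, differentiating $a(y,\lambda\eta)=\lambda^\mu\tilde\kappa_\lambda a(y,\eta)\kappa_\lambda^{-1}$ in $y$ and $\eta$ shows that every $D_y^\alpha D_\eta^\beta a$ is again twisted homogeneous, of order $\mu-|\beta|$, on $\{|\eta|\geq C\}$; hence it suffices to estimate $a$ itself, with $\mu$ replaced by $\mu-|\beta|$. For $|\eta|\geq C$ I would apply the homogeneity with $\lambda=|\eta|/C\geq 1$ and base covector $C\eta/|\eta|$ (of length $C$) and rearrange the group actions, using $\tilde\kappa_{\langle\eta\rangle}^{-1}\tilde\kappa_{|\eta|/C}=\tilde\kappa_{|\eta|/(C\langle\eta\rangle)}$ and $\kappa_{|\eta|/C}^{-1}\kappa_{\langle\eta\rangle}=\kappa_{C\langle\eta\rangle/|\eta|}$, to get
$$\tilde\kappa_{\langle\eta\rangle}^{-1}a(y,\eta)\kappa_{\langle\eta\rangle}=(|\eta|/C)^\mu\,\tilde\kappa_{|\eta|/(C\langle\eta\rangle)}\,a(y,C\eta/|\eta|)\,\kappa_{C\langle\eta\rangle/|\eta|}.$$
On $\{|\eta|\geq C\}$ the scaling parameters $|\eta|/(C\langle\eta\rangle)$ and $C\langle\eta\rangle/|\eta|$ range over a fixed compact subset of $\R_+$, so by the bounds $\|\kappa_\lambda\|\leq c\max\{\lambda,\lambda^{-1}\}^M$ recalled before \eqref{21symbestalt} the two outer factors have operator norm bounded by a constant depending only on $C$ and the group actions; moreover $\|a(y,C\eta/|\eta|)\|_{\mathcal{L}(H,\tilde H)}$ is bounded by continuity of $a$ on the compact set $K\times\{|\theta|=C\}$, $K\Subset U$. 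Since $|\eta|$ and $\langle\eta\rangle$ are equivalent on $\{|\eta|\geq C\}$, this yields $\|\tilde\kappa_{\langle\eta\rangle}^{-1}a(y,\eta)\kappa_{\langle\eta\rangle}\|\leq c\langle\eta\rangle^\mu$ there; on the complementary compact set the same expression, and all its $y,\eta$-derivatives, is continuous hence bounded, while $\langle\eta\rangle^\mu$ is bounded above and below, so \eqref{21symbest} holds and $a\in S^\mu$.

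For the second stage I would set $a_{(\mu)}(y,\eta):=(|\eta|/C)^\mu\,\tilde\kappa_{|\eta|/C}\,a(y,C\eta/|\eta|)\,\kappa_{|\eta|/C}^{-1}$ for $\eta\neq 0$. A short computation with $\tilde\kappa_t\tilde\kappa_{|\eta|/C}=\tilde\kappa_{t|\eta|/C}$ shows $a_{(\mu)}(y,t\eta)=t^\mu\tilde\kappa_t a_{(\mu)}(y,\eta)\kappa_t^{-1}$ for all $t>0$, while the hypothesis gives $a_{(\mu)}=a$ on $\{|\eta|\geq C\}$; and $a_{(\mu)}$ is smooth on $\R^q\setminus\{0\}$ because near any $\eta_0\neq 0$ one has $a_{(\mu)}(y,\eta)=t_0^{-\mu}\tilde\kappa_{t_0}^{-1}a(y,t_0\eta)\kappa_{t_0}$ for a fixed large $t_0$, the right-hand side being jointly smooth. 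Choosing an excision function $\chi\in C^\infty(\R^q)$ vanishing near $0$ and equal to $1$ on $\{|\eta|\geq C\}$, the difference $a-\chi a_{(\mu)}$ is smooth and supported, in $\eta$, in the compact set $\{|\eta|\leq C\}$, hence lies in $S^{-\infty}(U\times\R^q;H,\tilde H)$ (cf. \eqref{21reminf}). Thus $a=\chi a_{(\mu)}+(a-\chi a_{(\mu)})$ is the (terminating) classical expansion of $a$, so $a\in S^\mu_{\textup{cl}}(U\times\R^q;H,\tilde H)$.

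There is no serious obstacle here; the points that need a little care are the bookkeeping between $|\eta|$, $\langle\eta\rangle$ and the fixed radius $C$ — handled by their equivalence on $\{|\eta|\geq C\}$ — and the fact that $\lambda\mapsto\kappa_\lambda$ is merely strongly continuous, not smooth. The latter is why the homogeneous component must be produced by rescaling $a$ itself rather than by differentiating $\kappa_\lambda$, and why one invokes the uniform operator-norm bounds $\|\kappa_\lambda\|\leq c\max\{\lambda,\lambda^{-1}\}^M$ over compact $\lambda$-ranges in place of continuity; both ingredients are already available in the text preceding \eqref{21symbestalt}.
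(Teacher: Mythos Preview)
Your argument is correct and is precisely the standard proof that the paper's one-line justification (``The proof is of the same structure as in the scalar case where it is evident'') is alluding to. Both stages are carried out cleanly; in particular your construction of the genuinely twisted-homogeneous component $a_{(\mu)}$ via rescaling is the right way to handle the merely strongly continuous group actions, and the observation that $a-\chi a_{(\mu)}$ has compact $\eta$-support immediately gives the smoothing remainder.
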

The proof is of the same structure as in the scalar case where it is evident.
Functions as in Lemma \ref{21clsymb} will be called homogeneous of order $\mu$ for large $|\eta|.$\\
The second space of \eqref{21symb} of so-called classical symbols is the set of all $a(y,\eta)\in S^\mu(U\times\R^q;H,\tilde{H}) $ such that there are functions $a_{\mu-j}(y,\eta)\in C^\infty(U\times\R^q, \mathcal{L}(H,\tilde{H})),\newline j\in\N,$ homogeneous of order $\mu-j$ for large $|\eta|,$ such that $r_N(y,\eta ):=a(y,\eta)-\sum_{j=0}^Na_{\mu-j}(y,\eta)\in S^{\mu-(N+1)}(U\times\R^q;H,\tilde{H}),$ for every $N\in\N.$ For references below by 
\begin{equation} \label{21hm}
S^{(\nu )}(U\times (\R^q\setminus \{0\});H,\tilde{H}) 
\end{equation}
we denote the subspace of all $a_{(\nu )}(y,\eta)\in C^\infty(U\times (\R^q\setminus \{0\}), \mathcal{L}(H,\tilde{H}))$ with the homogeneity property
\begin{equation} \label{21hm1}
a_{(\nu )}(y,\lambda \eta)=\lambda ^\nu \tilde{\kappa }_\lambda a_{(\nu )}(y, \eta)\kappa_\lambda ^{-1}
\end{equation}
for all $\lambda \in \R_+,(y,\eta)\in U\times (\R^q\setminus \{0\}).$ The space \eqref{21hm} is Fr\'echet in the topology induced by $ C^\infty(U\times (\R^q\setminus \{0\}), \mathcal{L}(H,\tilde{H})).$
\begin{rem}\label{21syfr}
The spaces \eqref{21symb} are Fr\'echet in a natural way. In the case $S^\mu(U\times\R^q;H,\tilde{H})$ the best possible constants in the symbolic estimates \eqref{21symbest} can be taken as the semi-norm system. For the subspace $S_{\textup{cl}}^\mu(U\times\R^q;H,\tilde{H})$ we take these semi-norms as well, and in addition the ones from the homogeneous components $a_{(\mu-j)}(y,\eta)\in S^{(\mu-j )}(U\times (\R^q\setminus \{0\});H,\tilde{H}),$  uniquely determined by $a(y,\eta ),$ together with those from the remainders $a(y,\eta)-\sum_{j=0}^N\chi (\eta )a_{(\mu-j)}(y,\eta)$ in the space  $S^{\mu-(N+1)}(U\times\R^q;H,\tilde{H}),$ for any excision function $\chi $.
\end{rem}
Analogous constuctions make sense when we replace $\tilde{H}$ by a Fr\'echet space $E$ with group action. Then, with the above notation,
\begin{equation} \label{21fre}
S_{\textup{(cl)}}^\mu(U\times\R^q;H,E):= \lim_{\substack{\longleftarrow\\j\in\N}}S_{\textup{(cl)}}^\mu(U\times\R^q;H,E_j). 
\end{equation}
Subscript ``(cl)" is used when we refer both to the classical and the general case. There is also a notion of symbols when both spaces are Fr\'echet with group action, cf., for instance, \cite{Schu20}, but this is not so urgent at the moment. Let $S_{\textup{(cl)}}^\mu(\R^q;\cdot,\cdot)$ denote the respective subspaces of symbols that are independent of $y.$ \\Clearly the choice of the group actions affects our symbol spaces, cf. also the estimates \eqref{21symbestalt}. If necessary we write $S_{\textup{(cl)}}^\mu(U\times\R^q;\cdot,\cdot)_{\kappa,\tilde{\kappa}}.$ In the definitions we also admit the case of trivial group actions (i.e. identity operators for all $\lambda\in\R_+$). These are always taken when the Hilbert space is of finite dimension. If both $H$ and $\tilde{H}$ are equal to $\C$ then we recover the scalar symbol spaces $S_{\textup{(cl)}}^\mu(U\times\R^q).$ \\
Let us set
\begin{equation} \label{21mmmel}
m(y,\eta):=\sum_{j=0}^k\sum_{|\alpha|\leq j}\omega_\eta r^{-\mu+j}\Op_M^{\gamma-n/2} (f_{j\alpha})(y)\eta^\alpha\omega'_\eta
\end{equation}
for elements $f_{j\alpha}(y,z)\in C^\infty(\Omega,\mathcal{M}_{\mathcal{R}_{j\alpha}}^{-\infty}(X))$ for variable discrete Mellin asymptotic types $\mathcal{R}_{j\alpha}.$ 
\begin{prop} \label{21smmel}
We have $m(y,\eta)\in S_{\textup{cl}}^\mu(\Omega\times\R^q;\mathcal{K}^{s,\gamma}(X^\wedge),\mathcal{K}^{\infty,\gamma-\mu}(X^\wedge))$
for every $s\in\R.$
\end{prop}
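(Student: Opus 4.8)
The plan is to reduce the statement to Lemma~\ref{21clsymb} by exhibiting each summand of \eqref{21mmmel} as a sum of a finite number of terms which are homogeneous of order $\mu$ for large $|\eta|$, plus a remainder in $S^{-\infty}$. Since $S^\mu_{\textup{cl}}$ is closed under addition and each monomial $\eta^\alpha$ contributes a factor of homogeneity degree $|\alpha|$, it suffices to treat, for fixed $j$ and $|\alpha|\le j$, the operator family $m_{j\alpha}(y,\eta):=\omega_\eta r^{-\mu+j}\Op_M^{\gamma-n/2}(f_{j\alpha})(y)\eta^\alpha\omega'_\eta$ and show $m_{j\alpha}\in S^\mu_{\textup{cl}}(\Omega\times\R^q;\K^{s,\gamma}(X^\wedge),\K^{\infty,\gamma-\mu}(X^\wedge))$. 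The weight $r^{-\mu+j}$ together with the extra factor $\eta^\alpha$ of size $|\eta|^{|\alpha|}\le|\eta|^j$ is exactly what compensates for the powers of $[\eta]$ hidden in $\omega_\eta=\omega(r[\eta])$, so that the total order comes out to $\mu$; this is the heuristic behind the homogeneity relation \eqref{21melhom}, which was already recorded for the model term $m_j(y,\eta)$.

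First I would localise: using the operator convention \eqref{21melop3} (and Remark~\ref{21data}, which lets the data $\{U(y_l),\varphi_l\}$ be chosen uniformly in $j,\alpha$) it is enough to prove the claim for each piece $\omega_\eta r^{-\mu+j}\op_M^{\gamma_j(y_l)-n/2}(\varphi_l f_{j\alpha})(y)\eta^\alpha\omega'_\eta$ on $U(y_l)$, because a locally finite sum of classical symbols of order $\mu$ (patched by a subordinate partition of unity via \eqref{21melconvglob}) is again such a symbol. On each $U(y_l)$ the weight line $\Gamma_{(n+1)/2-\gamma_j(y_l)}$ is free of poles of $f_{j\alpha}(y,\cdot)$, so $\op_M^{\gamma_j(y_l)-n/2}(\varphi_l f_{j\alpha})(y)$ is a well-defined smoothing Mellin operator depending smoothly on $y$, and it maps $\K^{s,\gamma}(X^\wedge)$ continuously into $\K^{\infty,\gamma}(X^\wedge)$ because $f_{j\alpha}$ takes values in $L^{-\infty}(X)$ (cf.\ \eqref{21es} and the mapping property \eqref{21melop2}); multiplication by $r^{-\mu+j}$ and by the cut-off functions then lands us in $\K^{\infty,\gamma-\mu}(X^\wedge)$, using $\gamma-j\le\gamma_j(y_l)\le\gamma$.

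Next I would split off the homogeneous components. Writing $m^{(0)}_{j\alpha}(y,\eta):=\omega(r|\eta|)\,r^{-\mu+j}\op_M^{\gamma_j-n/2}(\varphi_l f_{j\alpha})(y)\,\eta^\alpha\,\omega'(r|\eta|)$ for $|\eta|\ge 1$ and observing the commutation rule \eqref{1mopcom} (equivalently \eqref{1mopcom} in the operator-valued form, together with $\delta_\lambda r^{-\mu+j}=\lambda^{-\mu+j}r^{-\mu+j}\delta_\lambda$ and $\delta_\lambda\omega(r|\eta|)=\omega((\lambda r)|\eta|)\delta_\lambda$), one checks directly, exactly as for \eqref{21melhom}, that $m^{(0)}_{j\alpha}(y,\lambda\eta)=\lambda^{\mu}\kl m^{(0)}_{j\alpha}(y,\eta)\kappa_\lambda^{-1}$ for all $|\eta|\ge 1$, $\lambda\ge 1$, where $\kl$ is the group action \eqref{21kappa}; here the bookkeeping is that the $|\alpha|$ powers of $\lambda$ from $\eta^\alpha$, the $-\mu+j$ powers from $r^{-\mu+j}$ (picked up from $\kappa_\lambda$), and the shift of the argument of $\omega$ combine with $|\alpha|\le j$ to give exactly $\lambda^\mu$ modulo the rescaling absorbed in $\kappa_\lambda$. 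By Lemma~\ref{21clsymb} this $m^{(0)}_{j\alpha}$ lies in $S^\mu_{\textup{cl}}(\Omega\times\R^q;\K^{s,\gamma},\K^{\infty,\gamma-\mu})$. Finally the difference $m_{j\alpha}-m^{(0)}_{j\alpha}$ is supported in $\{|\eta|\le 1\}$ (since $[\eta]=|\eta|$ there), hence is a smooth compactly $\eta$-supported $\mathcal{L}(\K^{s,\gamma},\K^{\infty,\gamma-\mu})$-valued function, thus in $S^{-\infty}$; adding it back does not affect membership in $S^\mu_{\textup{cl}}$.

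\textbf{Main obstacle.} The delicate point is not the homogeneity bookkeeping but verifying the \emph{smoothness and uniformity in $y$} of the symbol estimates \eqref{21symbest} across the whole compact $\overline U$: one must control $D_y^\alpha$-derivatives of $\op_M^{\gamma_j(y_l)-n/2}(\varphi_l f_{j\alpha})(y)$, which by Proposition~\ref{21diff} again produces smoothing Mellin symbols (of possibly enlarged asymptotic type, irrelevant here), and one must ensure the constants in \eqref{21symbest} are uniform on $K\Subset\Omega$ despite the $y$-dependent choices of the weights $\gamma_j(y_l)$. This is handled by the finiteness of the covering $\{U(y_l)\}$ and the fact that the seminorm estimates for smoothing Mellin operators acting $\K^{s,\gamma}\to\K^{\infty,\gamma'}$ depend continuously on the relevant seminorms of $f_{j\alpha}$ in $C^\infty(\Omega,\mathcal{M}^{-\infty}_{\mathcal{R}_{j\alpha}}(X))$ (which is Fr\'echet by Proposition~\ref{21Fre}) and on the distance of the integration line to $\pi_\C\mathcal{R}_{j\alpha}$; on each $U(y_l)$ this distance is bounded below, so the estimates close up.
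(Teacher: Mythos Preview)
Your approach is the same as the paper's---show that each summand is homogeneous for large $|\eta|$ and invoke Lemma~\ref{21clsymb}---but your homogeneity computation is wrong. The $(j,\alpha)$th summand is homogeneous of order $\mu-j+|\alpha|$, not of order $\mu$: from \eqref{21melhom} the Mellin block $\omega_\eta r^{-\mu+j}\Op_M^{\gamma-n/2}(f_{j\alpha})(y)\omega'_\eta$ is homogeneous of order $\mu-j$, and the factor $\eta^\alpha$ contributes an additional $|\alpha|$. Your sentence ``the $|\alpha|$ powers of $\lambda$ from $\eta^\alpha$, the $-\mu+j$ powers from $r^{-\mu+j}$ \ldots\ combine with $|\alpha|\le j$ to give exactly $\lambda^\mu$'' is a red flag: an \emph{inequality} cannot produce an \emph{exact} homogeneity degree. (Indeed, $\kappa_\lambda r^{-\mu+j}\kappa_\lambda^{-1}=\lambda^{-\mu+j}r^{-\mu+j}$, so the factor $r^{-\mu+j}$ contributes $\lambda^{\mu-j}$ on the \emph{other} side of the identity $m_{j\alpha}(y,\lambda\eta)=\lambda^{\nu}\kappa_\lambda m_{j\alpha}(y,\eta)\kappa_\lambda^{-1}$, giving $\nu=\mu-j+|\alpha|$.) The overall conclusion survives because $\mu-j+|\alpha|\le\mu$, so each summand lies in $S^{\mu-j+|\alpha|}_{\textup{cl}}\subset S^\mu_{\textup{cl}}$; but you should correct the stated degree.

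Two further remarks. First, the detour through $m^{(0)}_{j\alpha}$ (replacing $[\eta]$ by $|\eta|$) plus a compactly $\eta$-supported remainder is unnecessary: since $[\eta]=|\eta|$ for $|\eta|\ge 1$, the original $m_{j\alpha}$ already satisfies the homogeneity of Lemma~\ref{21clsymb} for $|\eta|\ge 1$, $\lambda\ge 1$, which is all that lemma requires. Second, the material in your ``Main obstacle'' paragraph (smoothness in $y$, uniformity of the symbolic estimates across the finite covering) is correct in spirit but is exactly the content that is packaged into the statement preceding \eqref{21melhom}, namely $m_j(y,\eta)\in C^\infty(U,\mathcal{L}(\mathcal{K}^{s,\gamma}(X^\wedge),\mathcal{K}^{\infty,\gamma-\mu}(X^\wedge)))$; the paper simply takes this as given and applies Lemma~\ref{21clsymb} directly.
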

\begin{proof}
The $(j,\alpha)\mbox{th}$ summand of \eqref{21mmmel} is homogeneous of order $\mu-j+|\alpha|$ for large $|\eta|.$ Thus, it suffices to apply Lemma \ref{21clsymb}.
\end{proof}
Operator functions $m(y,\eta)$ of the form \eqref{21mmmel} will be referred to as smoothing Mellin symbols of the edge calculus, here with variable discrete asymptotics.
\begin{prop} \label{21smad}
The family $m^*(y,\eta)$ of formal adjoints of \eqref{21mmmel}, defined by $(m(y,\eta)u,v)_{\mathcal{K}^{0,0}(X^\wedge)}=(u,m^*(y,\eta)v)_{\mathcal{K}^{0,0}(X^\wedge)}$ for all $u,v\in C_0^\infty(X^\wedge),$ has the form
\begin{equation} \label{21add}
m^*(y,\eta)=\sum_{j=0}^k\sum_{|\alpha|\leq j}\omega'_\eta  \Op_M^{-\gamma-n/2} (f^*_{j\alpha })(y) r^{-\mu+j}\eta^\alpha\omega_\eta
\end{equation}
where $\sum_{j=0}^k\sum_{|\alpha|\leq j}\sum_{\iota\in I}\sum_{l=0}^N\omega'_\eta \varphi^\iota\varphi_l \op_M^{-\gamma_j(y_l)-n/2} (f^*_{j\alpha})(y) r^{-\mu+j}\eta^\alpha\omega_\eta $ is the interpretation of the right hand side of \eqref{21add}, and $f^*_{j\alpha}(y,z)=f^{(*)}_{j\alpha }(y,n+1-\overline{z})$ with $(^*)$ as the pointwise formal adjoint of operators over $X.$
\end{prop}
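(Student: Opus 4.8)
The plan is to reduce everything to the formal adjoint of a single $r$-independent Mellin pseudo-differential operator over $X^\wedge$ and then to reassemble. First note that, for fixed real $\eta$, each of the factors $\omega_\eta(r)=\omega(r[\eta])$, $\omega'_\eta(r)$ and $r^{-\mu+j}$ is multiplication by a real-valued function of $r$ (and $r^{-\mu+j}>0$), hence formally self-adjoint on $\mathcal{K}^{0,0}(X^\wedge)$, while $\eta^\alpha\in\R$ is a scalar commuting with every operator occurring. Applying $(ABC)^{*}=C^{*}B^{*}A^{*}$ to each summand of \eqref{21mmmel}, the adjoint of $\omega_\eta r^{-\mu+j}\Op_M^{\gamma-n/2}(f_{j\alpha})(y)\eta^\alpha\omega'_\eta$ equals $\omega'_\eta\,\eta^\alpha\bigl(\Op_M^{\gamma-n/2}(f_{j\alpha})(y)\bigr)^{*}r^{-\mu+j}\omega_\eta$; moving the scalar $\eta^\alpha$ past the $r$-operators puts this into the shape of the $(j,\alpha)$-th summand of the right-hand side of \eqref{21add}. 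Hence the proposition is equivalent to the identity
\[
\bigl(\Op_M^{\gamma-n/2}(f_{j\alpha})(y)\bigr)^{*}=\Op_M^{-\gamma-n/2}(f^{*}_{j\alpha})(y),\qquad f^{*}_{j\alpha}(y,z)=f^{(*)}_{j\alpha}(y,n+1-\bar z),
\]
understood in the patched sense of \eqref{21melop3}, \eqref{21melconvglob}.

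Next I would establish this identity for one unpatched operator $\op_M^{\delta}(g)(y)$, where $g:=f_{j\alpha}(y,\cdot)$ is $L^{-\infty}(X)$-valued and $\Gamma_{1/2-\delta}$ avoids its poles. Because $g$ does not depend on $r$, the double integral \eqref{1mop} collapses to $\op_M^{\delta}(g)u(r)=\int_{\Gamma_{1/2-\delta}}r^{-z}g(z)(Mu)(z)\,\dbar z$. Recalling that $\mathcal{K}^{0,0}(X^\wedge)=\mathcal{H}^{0,0}(X^\wedge)$ carries, by Mellin--Plancherel on $\Gamma_{(n+1)/2}$, the inner product $(u,v)_{\mathcal{K}^{0,0}}=\int_X\int_0^\infty u\bar v\,r^n\,dr\,dx$ (with respect to the fixed metric on $X$), I would substitute this integral, interchange the $z$-, $r$- and $x$-integrations, and use $\int_0^\infty r^{-z+n}\overline{v(r,x)}\,dr=\overline{(Mv)(n+1-\bar z,x)}$; the interchange is legitimate since $Mu,Mv$ are entire and rapidly decreasing along vertical lines for $u,v\in C_0^\infty(X^\wedge)$ while $g$ is rapidly decreasing on $\Gamma_{1/2-\delta}$ by Definition \ref{21melas1}. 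This rewrites the pairing as $\int_{\Gamma_{1/2-\delta}}\bigl(g(z)(Mu)(z),(Mv)(n+1-\bar z)\bigr)_{L^2(X)}\,\dbar z$; the pointwise formal $L^2(X)$-adjoint then moves $g(z)$ to the other side as $g^{(*)}(z)$. Finally the change of variable $z=n+1-\bar w$ leaves every imaginary part (hence the contour, up to orientation) fixed, sends $\Gamma_{1/2-\delta}$ to $\Gamma_{1/2-(-\delta-n)}$, and exhibits the expression as $(u,\op_M^{-\delta-n}(g^{*})v)_{\mathcal{K}^{0,0}}$ with $g^{*}(w)=g^{(*)}(n+1-\bar w)$. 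Taking $\delta=\gamma-n/2$ gives $-\delta-n=-\gamma-n/2$ and the stated $f^{*}_{j\alpha}$; note that $f^{*}_{j\alpha}(y,\cdot)$ is holomorphic on its weight line precisely because $f_{j\alpha}(y,\cdot)$ is holomorphic on the reflected line, so all objects are defined.

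It remains to pass from $\op_M$ to $\Op_M$. In \eqref{21melop3} and \eqref{21melconvglob} the operator $\Op_M^{\gamma-n/2}(f_{j\alpha})(y)$ is a finite sum $\sum_{\iota}\varphi^\iota(y)\sum_{l}\op_M^{\gamma_j(y_l)-n/2}(\varphi_l f_{j\alpha})(y)$ in which the cut-off weights $\varphi^\iota,\varphi_l$ are real and depend only on $y$, hence commute with everything and are unchanged by the adjoint; applying the previous step to each $\op_M^{\gamma_j(y_l)-n/2}(\varphi_l f_{j\alpha})(y)$ (with $\delta=\gamma_j(y_l)-n/2$, so $-\delta-n=-\gamma_j(y_l)-n/2$ and $(\varphi_l f_{j\alpha})^{*}=\varphi_l f^{*}_{j\alpha}$) yields exactly the interpretation of $\Op_M^{-\gamma-n/2}(f^{*}_{j\alpha})(y)$ given in the statement. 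Summing over $j,\alpha$ and reinstating the self-adjoint factors $\omega'_\eta,r^{-\mu+j},\eta^\alpha,\omega_\eta$ in reversed order finishes the proof. The only genuine computation is the second paragraph, and inside it the one point needing care is keeping the reflection $z\mapsto n+1-\bar z$ and the weight change $\gamma-n/2\mapsto -\gamma-n/2$ consistent with every contour, together with the (routine) verification of absolute convergence for Fubini, which is immediate from Definition \ref{21melas1} and the Schwartz behaviour in $\textup{Im}\,z$ of Mellin transforms of $C_0^\infty(X^\wedge)$-functions.
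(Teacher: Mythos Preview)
Your proof is correct and follows exactly the approach the paper indicates: the paper's own proof consists of the single sentence ``The formal adjoint can be carried out for the involved summands separately; then the assertion follows by a straightforward computation,'' and you have carried out precisely that straightforward computation, supplying the Mellin--Plancherel details (the reflection $z\mapsto n+1-\bar z$ and the resulting weight shift $\gamma-n/2\mapsto-\gamma-n/2$) that the paper leaves implicit.
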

\begin{proof}
The formal adjoint can be carried out for the involved summands separately; then the assertion follows by a straightforward computation.
\end{proof}
Let us now compare operators $\op_M^{\delta-n/2}(f)(y)$ and $\op_M^{\delta+\beta-n/2}(f)(y)$ for different $\delta,\beta\in\R$ and $f\in C^\infty(\Omega,M^{-\infty}_{\mathcal{R}}(X))$ under the assumption $\pi_\C\mathcal{R}(y)\newline\cap\{\Gamma_{(n+1)/2-\delta}\cup\Gamma_{(n+1)/2-(\delta+\beta)}\}=\emptyset$ for all $y\in U,$ for some $U\in \mathcal{U}(\Omega),$ first applied to functions in $C^\infty_0(X^\wedge).$ Writing $(T^\sigma f)(y,z)=f(y,z+\sigma)$ for any real $\sigma$ we have
$$\op_M^{\delta-n/2}(f)=r^{\delta-n/2}\op_M(T^{-\delta+n/2}f)r^{-\delta+n/2}$$
for $\op_M:=\op_M^0,$ and
$\op_M^{\delta+\beta-n/2}(f)=r^\beta\op_M^{\delta-n/2}(T^{-\beta}f)r^{-\beta}.$ We assume $u\in C_0^\infty (\R_+,C^\infty(X));$ then $Mu(z)\in\mathcal{A}(\C,C^\infty(X))$ is strongly decreasing on lines parallel to the imaginary axis, uniformly in finite intervals with respect to the real part. Thus,
\begin{equation}\label{21mcomm}
\begin{split}
\op_M^{\delta+\beta-n/2}&(f)(y)u(r)=r^\beta\op_M^{\delta-n/2}(T^{-\beta}f)(y)r^{-\beta}u(r)\\
&=\int_{\Gamma_{(n+1)/2-\delta}}r^{-z+\beta}f(y,z-\beta) (Mu)(z-\beta)\dbar z\\
&=\int_{\Gamma_{(n+1)/2-(\delta+\beta)}}r^{-w}f(y,w) Mu(w)\dbar w\\
&=\int_{\Gamma_{(n+1)/2-\delta}}r^{-z}f(y,z) Mu(z)\dbar z+G(y)u(r)
\end{split}
\end{equation}
for 
\begin{equation} \label{21curve}
 G(y)u(r):=\int_{\Delta_{\delta,\beta}}r^{-z}f(y,z)Mu(z)\dbar z
\end{equation}
where $\Delta_{\delta,\beta}=\Gamma_{(n+1)/2-\delta}\cup\Gamma_{(n+1)/2-(\delta+\beta)}.$ In other words,
\begin{equation} \label{21G}
 G(y):=\op_M^{\delta+\beta-n/2}(f)(y)-\op_M^{\delta-n/2}(f)(y).
\end{equation}
The numbers $\delta$ and $\beta$ are arbitrary; thus, without loss of generality we assume $\beta>0.$ The orientation of parallels to the imaginary axis is from $\textup{Im}\,z=-\infty$ to $+\infty.$ Thus, $\Delta_{\delta,\beta}$ in 
\eqref{21curve} may be replaced by a compact smooth counter-clockwise oriented curve $C_{\delta,\beta}$ in the strip $\{(n+1)/2-(\delta+\beta)<\textup{Re}\,z<(n+1)/2-\delta\},$ surrounding all poles of $f(y,z)$ in this strip for all $y\in U.$ From \eqref{21curve}, \eqref{21G} it follows that 
\begin{equation} \label{21g}
\begin{split}
g(y,\eta)&u(r):=\omega_\eta\op_M^{\delta+\beta-n/2}(f)(y)\omega'_\eta u(r)-\omega_\eta\op_M^{\delta-n/2}(f)(y)\omega'_\eta u(r)\\
&=\omega_\eta\int_0^\infty\Big\{\int_ {C_{\delta,\beta}}(r/r')^{-z}f(y,z)\dbar z\Big\} u(r')\omega'_\eta(r')dr'/r'\\
&=\omega_\eta\int_0^\infty\Big\{\int_ {C_{\delta,\beta}}(r[\eta]/r'[\eta])^{-z}f(y,z)\dbar z\Big\} u(r')\omega'_\eta(r')dr'/r'.
\end{split}
\end{equation}

\subsection{Green symbols}
Green symbols are specific operator-valued symbols, well-known in the special case of (pseudo-differential) boundary value problems, where the associated so-called Green operators characterise the contribution from the boundary in parametrices of elliptic problems. For instance, the Green function of the Dirichlet problem for the Laplace equation in a smooth domain is a sum of such a Green operator and a fundamental solution of the Laplacian. The situation is similar for edge problems where the boundary is substituted by an edge, and the half space, locally modelling the domain near the boundary, is replaced by a wedge. In contrast to the comparatively very simple case of boundary value problems with the transmission property at the boundary (the Dirichlet problem for the Laplacian is just of that type) the Green symbols in edge problems inherit the full complexity of asymptotic phenomena such as variable discrete asymptotics coming from meromorphic inverses of families of Mellin symbols, and it is necessary to analyse their nature in a separate consideration.\\
In studying boundary or edge problems one of the main issues is to recognise the structure (and, of course, also the role) of the Green symbols in the calculus. Recall that in boundary value problems with the transmission property at the boundary a Green symbol of order $m$ (and type $0$) is a $g(y,\eta)\in C^\infty(U\times\R^q,\mathcal{L}(L^2(\R_+),L^2(\R_+)))$ which has the structure of a symbol such that 
\begin{equation} \label{22gtrans}
g(y,\eta), g^*(y,\eta)\in S_{\textup{cl}}^m(U\times\R^q;L^2(\R_+),\mathcal{S}(\overline{\R}_+)).
\end{equation}
Here $^*$ means the $(y,\eta)$-wise $L^2(\R_+)$-adjoint, the open set $U\subseteq\R^q$ corresponds to a chart on the boundary of dimension $q,$ moreover, $\R_+$ to the inner normal, and $\mathcal{S}(\overline{\R}_+)=\mathcal{S}(\overline{\R})|_{\R_+}.$ As the group action in \eqref{22gtrans} we take $\kappa_\lambda:u(r)\mapsto \lambda^{1/2}u(\lambda r),\lambda \in \R_+,$ which also makes sense on $\mathcal{S}(\overline{\R}_+)=\projlim_{j\in\N}\langle r\rangle^{-j}H^j(\R_+).$
A Green symbol of the edge calculus is a
\begin{equation*}
g(y,\eta)\in C^\infty(U\times\R^q,\mathcal{L}(\mathcal{K}^{s,\gamma}(X^\wedge),\newline\mathcal{K}^{\infty,\gamma-\mu}(X^\wedge)))
\end{equation*}
for some weight shift parameter $\mu,$ such that
\begin{equation} \label{22g}
g(y,\eta)\in S_{\textup{cl}}^m(U\times\R^q;\mathcal{K}^{s,\gamma;g}(X^\wedge),\mathcal{K}_{\mathcal{P}}^{\infty,\gamma-\mu;\infty}(X^\wedge)),
\end{equation}
\begin{equation} \label{22gad}
g^*(y,\eta)\in S_{\textup{cl}}^m(U\times\R^q;\mathcal{K}^{s,-\gamma+\mu;g}(X^\wedge),\mathcal{K}_{\mathcal{Q}}^{\infty,-\gamma;\infty}(X^\wedge))
\end{equation}
for every $s,g\in\R.$
In this case the open set $U\subseteq\R^q$ corresponds to a chart on the edge of dimension $q,$ and $\mathcal{P},\mathcal{Q}$ indicate certain additional asymptotic properties in the spaces in the image. Note that the conditions in \eqref{22gtrans} concerning $\mathcal{S}(\overline{\R}_+)$ just imply Taylor asymptotics in the image at the boundary. In the edge case we have to be aware that the asymptotic properties may depend on the variable $y.$ The relations \eqref{22g} and \eqref{22gad} make sense with spaces with constant discrete asymptotic types $\mathcal{P},\mathcal{Q}$ as in Section 2.1. There are also continuous asymptotic types that admit such a definition, cf. \cite{Schu2}, \cite{Schu20}, or \cite{Schu55}. However, in the variable discrete case we should find an alternative description, since the involved spaces, e.g. $\mathcal{K}_{\mathcal{P}(y)}^{\infty,\gamma-\mu;\infty}(X^\wedge)$ depend on $y\in U.$ Let us assume for the moment that $\mathcal{P}$ and $\mathcal{Q}$ are constant discrete asymptotic types. Then we have a representation of Green symbols by a kernel function.\\
Given a Fr\'echet space $E$ by $S^m(U\times\R^q,E)$ for an open set $U\subseteq \R^p$ we denote the set of all $a(y,\eta)\in C^\infty(U\times\R^q,E)$ such that for every $\pi$ (belonging to the countable system of semi-norms of $E$) we have $\pi(D_y^\alpha D_\eta^\beta a(y,\eta))\leq c\langle\eta\rangle^{m-|\beta|}$ for all $(y,\eta)\in K\times\R^q$ and $K\Subset U,$  for all multi-indices $\alpha\in\N^p,\beta\in\R^q,$ and constants $c=c(\alpha,\beta,K)>0.$ Moreover, $S^m_{\textup{cl}}(U\times\R^q,E),$ the subspace of classical $E$-valued symbols $a(y,\eta)$ is defined by the condition of existence of functions $a_{m-j}(y,\eta)\in C^\infty(U\times\R^q,E),j\in\N,$ with $a_{m-j}(y,\lambda\eta)=\lambda^{m-j}a_{m-j}(y,\eta)$ for all $\lambda\geq1,|\eta|\geq C$ for some $C>0,$ and $a(y,\eta)-\sum_{j=0}^Na_{m-j}(y,\eta)\in S^{m-(N+1)}(U\times\R^q,E)$ for every $N\in\N.$\\
Now let us set $E:=E^1\cap E^2$ for
\begin{equation} \label{21E}
\begin{split}
E^1&:=\mathcal{K}_{\mathcal{P}}^{\infty,\gamma-\mu;\infty}(X^\wedge_{r,x})\hat{\otimes}_\pi\mathcal{K}^{\infty,-\gamma;\infty}(X^\wedge_{r',x'}),\\E^2&:=\mathcal{K}^{\infty,\gamma-\mu;\infty}(X^\wedge_{r,x})\hat{\otimes}_\pi\mathcal{K}_{\overline{{\mathcal{Q}}}}^{\infty,-\gamma;\infty}(X^\wedge_{r',x'})
\end{split}
\end{equation}
where $\hat{\otimes}_\pi$ indicates the completed projective tensor product between the respective Fr\'echet spaces, and $\overline{\mathcal{Q}}=\{(\overline{q},m):(q,m)\in \mathcal{Q}\}.$ 
\begin{thm} \label{21grn}
Consider a function
\begin{equation} \label{21Gr}
k(y,\eta;r,x,r',x')\in S^\nu_{\textup{cl}}(U\times\R^q,E),
\end{equation}
$U\subseteq\R^q$ open, for discrete asymptotic types $\mathcal{P}$ and $\mathcal{Q}$ associated with the weight data $(\gamma-\mu,\Theta)$ and $(-\gamma,\Theta),$ respectively. Then the family of mappings 
\begin{equation} \label{21Gmap}
g(y,\eta):u\mapsto \int_X\!\!\int_0^\infty k(y,\eta;r[\eta],x,r'[\eta],x')u(r',x')(r')^ndr'dx'
 \end{equation}
defines a Green symbol in the sense of the relations \eqref{22g}, \eqref{22gad}, for $\nu=m+n+1$. Conversely, every such $g(y,\eta)$ admits a representation \eqref{21Gmap} for some $k$ as in \eqref{21Gr}.
\end{thm}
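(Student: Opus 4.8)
The plan is to strip off the $[\eta]$--scaling and reduce the statement to the operator-valued symbol calculus attached to the group action $\kappa$ of \eqref{21kappa}. Let $K(y,\eta)$ be the integral operator on $X^\wedge$ given by $(K(y,\eta)u)(r,x)=\int_X\int_0^\infty k(y,\eta;r,x,r',x')u(r',x')(r')^n\,dr'\,dx'$. A change of variables shows
\begin{equation*}
g(y,\eta)=[\eta]^{-(n+1)}\,\kappa_{[\eta]}\,K(y,\eta)\,\kappa_{[\eta]}^{-1},
\end{equation*}
and conversely this formula, solved for $K$, recovers $K(y,\eta)$ from a given $g(y,\eta)$. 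The dilation $r\mapsto r/[\eta]$ that relates the kernel of $K(y,\eta)$ to that of $g(y,\eta)$ maps each space $\mathcal{K}^{\bullet,\bullet;\bullet}(X^\wedge)$ into itself and sends $r^{-p}$ to a constant multiple of $r^{-p}$, hence preserves membership in each of the spaces $E^1,E^2,E$ of \eqref{21E}. So the theorem is equivalent to: $g$ satisfies \eqref{22g} for every $s,g\in\R$ and $g^*$ satisfies \eqref{22gad} for every $s,g\in\R$ if and only if $k\in S^\nu_{\textup{cl}}(U\times\R^q,E)$ with $\nu=m+n+1$, the shift of orders being precisely the power $n+1$ above.

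I would first treat the correspondence for a fixed $(y,\eta)$. With respect to the $\mathcal{K}^{0,0}$--pairing one has the duality $\big(\mathcal{K}^{s,-\gamma;g}(X^\wedge)\big)'=\mathcal{K}^{-s,\gamma;-g}(X^\wedge)$, so the space on which $g(y,\eta)$ acts, $\bigcup_{s,g}\mathcal{K}^{s,\gamma;g}(X^\wedge)$, is the dual of the nuclear Fr\'echet space $\mathcal{K}^{\infty,-\gamma;\infty}(X^\wedge)$; since $\mathcal{K}_{\mathcal{P}}^{\infty,\gamma-\mu;\infty}(X^\wedge)$ is nuclear Fr\'echet as well, the Schwartz kernel theorem identifies $\mathcal{L}\big((\mathcal{K}^{\infty,-\gamma;\infty})',\mathcal{K}_{\mathcal{P}}^{\infty,\gamma-\mu;\infty}\big)$ topologically with $E^1$; the point is that an operator carrying distributions \emph{into} a space with asymptotics of type $\mathcal{P}$ automatically has a kernel with those asymptotics in the first group of variables. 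The requirement \eqref{22gad} is handled the same way: the $\mathcal{K}^{0,0}$--formal adjoint of \eqref{21Gmap} is the operator from functions of $(r',x')$ to functions of $(r,x)$ with kernel $\overline{k(y,\eta;r'[\eta],x',r[\eta],x)}$ --- one interchanges the two groups of variables and conjugates, which turns $\overline{\mathcal{Q}}$ back into $\mathcal{Q}$ --- so \eqref{22gad} for all $s,g$ is equivalent to $k(y,\eta;\cdot)\in E^2$. Thus \eqref{22g} and \eqref{22gad} together characterise exactly the kernels in $E=E^1\cap E^2$, and conversely every $k(y,\eta;\cdot)\in E$ produces such a pair.

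It remains to transfer the dependence on $(y,\eta)$. For the $\eta$--estimates write $\kappa_{[\eta]}=\kappa_{\langle\eta\rangle}\kappa_{[\eta]/\langle\eta\rangle}$, whence $\kappa_{\langle\eta\rangle}^{-1}g(y,\eta)\kappa_{\langle\eta\rangle}=[\eta]^{-(n+1)}\kappa_{[\eta]/\langle\eta\rangle}K(y,\eta)\kappa_{[\eta]/\langle\eta\rangle}^{-1}$; since $[\eta]/\langle\eta\rangle$ stays in a fixed compact subset of $\R_+$ with all $\eta$--derivatives of order $0$, the families $\kappa_{[\eta]/\langle\eta\rangle}^{\pm1}$ are uniformly bounded with the symbolic behaviour of order $0$ --- with the only caveat that an $\eta$--derivative passing through the generator $\tfrac{n+1}{2}+r\partial_r$ of $\kappa$ costs one Sobolev order on the source, which is harmless since $g$ is required to be a symbol for every $s$, while it costs nothing on the image because $r\partial_r$ preserves $\mathcal{K}_{\mathcal{P}}^{\infty,\gamma-\mu;\infty}(X^\wedge)$. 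Together with the continuity of the kernel--to--operator map this gives that the estimates of order $m$ for $g$, for all $s,g$, i.e. $\|\kappa_{\langle\eta\rangle}^{-1}(D^\alpha_yD^\beta_\eta g)\kappa_{\langle\eta\rangle}\|_{\mathcal{L}(\mathcal{K}^{s,\gamma;g},\,\mathcal{K}_{\mathcal{P}}^{\infty,\gamma-\mu;\infty})}\le c\,\langle\eta\rangle^{m-|\beta|}$, are equivalent to $\pi\big(D^\alpha_yD^\beta_\eta k(y,\eta;\cdot)\big)\le c\,\langle\eta\rangle^{\nu-|\beta|}$ for the semi-norms $\pi$ of $E^1$, and the same argument for $g^*$ yields those of $E^2$; hence $g$ and $g^*$ satisfy the estimates of order $m$ for all $s,g$ iff $k\in S^\nu(U\times\R^q,E)$. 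Smoothness in $y$ is immediate from the same identity, the kernel map being an isomorphism. For the classical case one checks that conjugation by the non-truncated $\kappa_{|\eta|}$ together with multiplication by $|\eta|^{-(n+1)}$ send a function homogeneous of degree $\nu-j$ in $\eta$ (as an $E$--valued function) to one homogeneous of order $m-j$ for large $|\eta|$ in the sense of Lemma \ref{21clsymb}, so the two asymptotic expansions match term by term and the classicality transfers, with $\nu=m+n+1$.

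The main obstacle is the single-operator step of the second paragraph: showing that requiring $g(y,\eta)$ \emph{and} $g^*(y,\eta)$ to take values in weighted cone Sobolev spaces with the prescribed asymptotics $\mathcal{P}$, resp. $\mathcal{Q}$, for all smoothness and all cone-weight indices, is \emph{exactly} equivalent to the kernel lying in $E=E^1\cap E^2$. This rests on the nuclearity and reflexivity of $\mathcal{K}^{\infty,-\gamma;\infty}(X^\wedge)$ and $\mathcal{K}_{\mathcal{P}}^{\infty,\gamma-\mu;\infty}(X^\wedge)$, on the precise form of the $\mathcal{K}^{0,0}$--pairing used to define $g^*$ (so that the kernel of $g^*$ really is the conjugate--transpose of that of $g$), and --- for the faithfulness of the asymptotic data --- on the fact, ultimately a consequence of $\mathcal{E}_{\mathcal{P}}\cap\mathcal{K}^{s,\gamma}_\Theta(X^\wedge)=\{0\}$, that the asymptotic type of a kernel is not destroyed by passing to the associated operator and back. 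Once this is in place, the remaining symbolic and $y$--smoothness assertions are the routine operator-valued symbol calculus built on $\kappa$.
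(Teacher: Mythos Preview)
The paper does not actually prove this theorem: it states that the forward direction (from $k$ to a Green symbol) is ``straightforward'' and refers the converse to \cite{Schu55}. Your outline is the expected argument and is consistent with how such kernel characterisations are established in the references; in particular the conjugation identity $g(y,\eta)=[\eta]^{-(n+1)}\kappa_{[\eta]}K(y,\eta)\kappa_{[\eta]}^{-1}$ is exactly the mechanism that converts the $E$-valued symbolic estimates of order $\nu$ into the operator-valued estimates of order $m=\nu-(n+1)$, and the Schwartz kernel step you isolate as the main obstacle is precisely the substance of the result proved in \cite{Schu55} (and, in the pointwise form, in Seiler's paper \cite{Seil2} cited just after the theorem). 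So your approach matches the intended one; there is nothing to compare against in the present paper beyond the one-line attribution.
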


The first part of the latter theorem is straightforward. The second part is proved in \cite{Schu55} (also in the version of continuous asymptotics). Note that Green symbols are $(y,\eta)$-wise Green operators in the cone algebra on the (open stretched) infinite cone $X^\wedge.$ These have kernel characterisations, too; they can be obtained in a much more precise form, cf. the paper of Seiler \cite{Seil2}.
\begin{rem}\label{21gralt}
The choice of the function $\eta \mapsto [\eta ]$ in \eqref{21Gmap} affects the respective symbol by a Green symbol of order $-\infty .$
\end{rem}
In fact, if $\eta \rightarrow [\eta ]_1$ is another function of this type (i.e. smooth, strictly positive, and equal to $|\eta |$ for large $|\eta |$) then
$$k(y,\eta ;r[\eta ],x,r'[\eta ],x')-k(y,\eta ;r[\eta ]_1,x,r'[\eta ]_1,x')$$
is of compact support in $\eta $ and hence generates via \eqref{21Gmap} a symbol with the properties \eqref{22g}, \eqref{22gad}, for $m=-\infty .$
Let us now assume the weight interval $\Theta$ to be finite (the infinite case is then automatic since it is reduced to the case of any finite $\Theta$). Similarly as \eqref{1asK} we have a direct decomposition \begin{equation}\label{21asdde}
\mathcal{K}_{\mathcal{P}}^{\infty,\gamma-\mu;\infty}(X^\wedge)= \mathcal{K}_\Theta^{\infty,\gamma-\mu;\infty}(X^\wedge)+\mathcal{E}_{\mathcal{P}}
\end{equation}  
which allows us to write $\mathcal{K}_{\mathcal{P}}^{\infty,\gamma-\mu;\infty}(X^\wedge)\hat{\otimes}_\pi\mathcal{K}^{\infty,-\gamma;\infty}(X^\wedge)=E^1_\Theta+E^1_{\mathcal{P}}$ for
\begin{equation} \label{21dec5}
E^1_\Theta:=\mathcal{K}_\Theta^{\infty,\gamma-\mu;\infty}(X^\wedge)\hat{\otimes}_\pi \mathcal{K}^{\infty,-\gamma;\infty}(X^\wedge),
E^1_{\mathcal{P}}:=\mathcal{E}_{\mathcal{P}}\hat{\otimes}_\pi\mathcal{K}^{\infty,-\gamma;\infty}(X^\wedge).
 \end{equation}

In a similar manner, we obtain $\mathcal{K}^{\infty,\gamma-\mu;\infty}(X^\wedge)\hat{\otimes}_\pi\mathcal{K}_{\overline{\mathcal{Q}}}^{\infty,-\gamma;\infty}(X^\wedge)=E^2_\Theta+E^2_{\mathcal{Q}}$ for 
\begin{equation} \label{21dec7}
E^2_\Theta:=\mathcal{K}^{\infty,\gamma-\mu;\infty}(X^\wedge)\hat{\otimes}_\pi \mathcal{K}_\Theta^{\infty,-\gamma;\infty}(X^\wedge),
E^2_{\mathcal{Q}}:=\mathcal{K}^{\infty,\gamma-\mu;\infty}(X^\wedge)\hat{\otimes}_\pi \mathcal{E}_{\mathcal{Q}}.
 \end{equation}

The property \eqref{21Gr} is equivalent to $k(y,\eta;r,x,r',x')\in S^\nu_{\textup{cl}}(U\times\R^q,E^i)$ \newline for $i=1,2,$ or
\begin{equation} \label{21dec8}
\begin{split}
k(y,\eta;r,x,r',x')&\in S^\nu_{\textup{cl}}(U\times\R^q,E^1_{\mathcal{P}})+S^\nu_{\textup{cl}}(U\times\R^q,E^1_\Theta),\\
k(y,\eta;r,x,r',x')&\in S^\nu_{\textup{cl}}(U\times\R^q,E^2_{\mathcal{Q}})+S^\nu_{\textup{cl}}(U\times\R^q,E^2_\Theta).
\end{split}
\end{equation}
Applying \eqref{21Gmap} to the decompositions \eqref{21dec8} we obtain 
\begin{equation} \label{21dec9}
g(y,\eta)=g^1_{\mathcal{P}}(y,\eta)+g^1_\Theta(y,\eta),g^*(y,\eta)=g^2_{\mathcal{Q}}(y,\eta)+g^2_\Theta(y,\eta)
\end{equation}
with obvious meaning of notation.
The summands with subscript $\Theta$ represent symbols that produce flatness of order $\Theta$ (relative to the involved weights) under the mappings. The specific parts are those with $\mathcal{P}$ and $\mathcal{Q},$ respectively. Let us consider, for instance, $\mathcal{P}.$ The space $\mathcal{E}_{\mathcal{P}}$ consists of all functions of the form $\omega(r)\langle\zeta_z,r^{-z}\rangle$ (with the pairing in $z$) for arbitrary $\zeta\in\mathcal{A}'(\pi_\C \mathcal{P},C^\infty(X))$ that are represented by $C^\infty(X)$-valued meromorphic functions with poles at the points $p_j\in\pi_\C \mathcal{P}$ of multiplicity $m_j+1,j=0,\ldots,N_{\mathcal{P}},$ cf. the formula \eqref{1as}. Analogously, in connection with $\mathcal{Q}$ we speak about the points $q_l\in\pi_\C \mathcal{Q}$ of multiplicity $n_l+1,l=0,\ldots,N_{\mathcal{Q}}.$ This gives us the following result.
\begin{thm}\label{21ggrep}
A Green symbol $g(y,\eta)$ as in Theorem \textup{\ref{21grn}} is characterised by the decompositions \eqref{21dec9} for flat symbols $g_\Theta^i(y,\eta),i=1,2,$ and asymptotic parts
\begin{equation} \label{21anP}
\begin{split}
g^1_{\mathcal{P}}(y,\eta)u(r,x)&=\int\!\!\int_0^\infty \omega_\eta(r)\langle \zeta^1(y,\eta,r'[\eta],x,x'),(r[\eta])^{-z}\rangle u(r',x'))(r')^n dr'dx',\\
g^2_{\mathcal{Q}}(y,\eta)v(r',x')&=\int\!\!\int_0^\infty \omega_\eta(r')\langle \zeta^2(y,\eta,r[\eta],x,x'),(r'[\eta])^{-z}\rangle v(r,x))r^n drdx.
\end{split}
\end{equation}
for $\zeta^1\in\mathcal{A}'(\pi_\C \mathcal{P},S^\nu_{\textup{cl}}(U\times\R^q,C^\infty(X_x)\hat{\otimes}_\pi\mathcal{K}^{\infty,-\gamma;\infty}(X^\wedge_{r',x'}))),$
represented by meromorphic functions with poles at the points $p_j\in\pi_\C \mathcal{P}$ of multiplicity $m_j+1,j=0,\ldots,N_{\mathcal{P}},$ and $\zeta^2\in\mathcal{A}'(\pi_\C Q,S^\nu_{\textup{cl}}(U\times\R^q,C^\infty(X_{x'})\hat{\otimes}_\pi\mathcal{K}^{\infty,-\gamma+\mu;\infty}(X^\wedge_{r,x}))),$ represented by meromorphic functions with poles at the points $q_l\in\pi_\C \mathcal{Q}$ of multiplicity $n_l+1,l=0,\ldots,N_{\mathcal{Q}}.$
\end{thm}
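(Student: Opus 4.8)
The plan is to read off both implications from the kernel characterisation of Theorem \ref{21grn}, combined with the direct decompositions \eqref{21asdde}, \eqref{21dec5}, \eqref{21dec7} and the standard correspondence between the spaces $\mathcal{E}_{\mathcal{P}}$ and analytic functionals.

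\textbf{From a Green symbol to the decompositions.} By Theorem \ref{21grn} a $g(y,\eta)$ in the sense of \eqref{22g}, \eqref{22gad} is given by \eqref{21Gmap} for a kernel $k\in S^\nu_{\textup{cl}}(U\times\R^q,E)$, $\nu=m+n+1$, with $E=E^1\cap E^2$. Since \eqref{21asdde} is a direct decomposition induced by a continuous projection, tensoring with $\mathcal{K}^{\infty,-\gamma;\infty}(X^\wedge)$ preserves the complementation, so $E^1=E^1_\Theta+E^1_{\mathcal{P}}$ and likewise $E^2=E^2_\Theta+E^2_{\mathcal{Q}}$ are topological direct sums. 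As $S^\nu_{\textup{cl}}(U\times\R^q,-)$ is functorial in the Fr\'echet target and commutes with such direct sums, $k$ splits as $k=k^1_{\mathcal{P}}+k^1_\Theta=k^2_{\mathcal{Q}}+k^2_\Theta$ with the indicated targets. Inserting these splittings into \eqref{21Gmap} produces \eqref{21dec9}; by construction of the spaces $E^i_\Theta$ and continuity of \eqref{21Gmap} the summands $g^i_\Theta$ have images flat of order $\Theta$ relative to the respective weights, hence are the flat parts.

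\textbf{Identification of the asymptotic parts.} With $\Theta$ finite the set $\pi_\C\mathcal{P}=\{p_0,\dots,p_{N_{\mathcal{P}}}\}$ is finite, and the residue theorem identifies $\mathcal{E}_{\mathcal{P}}$ with the space of $C^\infty(X)$-valued analytic functionals carried by $\pi_\C\mathcal{P}$ and represented by meromorphic functions with poles at $p_j$ of multiplicity $m_j+1$, via $u(r,x)=\omega(r)\langle\zeta_z,r^{-z}\rangle$. Because $\mathcal{A}'(\pi_\C\mathcal{P})$ is nuclear Fr\'echet, this identification passes through $\hat{\otimes}_\pi$ and through $S^\nu_{\textup{cl}}(U\times\R^q,-)$, so $k^1_{\mathcal{P}}$ corresponds to a $\zeta^1\in\mathcal{A}'(\pi_\C\mathcal{P},S^\nu_{\textup{cl}}(U\times\R^q,C^\infty(X_x)\hat{\otimes}_\pi\mathcal{K}^{\infty,-\gamma;\infty}(X^\wedge_{r',x'})))$ with exactly the stated pole structure. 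Performing the substitutions $r\mapsto r[\eta]$, $r'\mapsto r'[\eta]$ of \eqref{21Gmap} turns $\omega(r)\langle\zeta^1_z,r^{-z}\rangle$ into $\omega_\eta(r)\langle\zeta^1(y,\eta,r'[\eta],x,x'),(r[\eta])^{-z}\rangle$, which is precisely the first line of \eqref{21anP}. For the second line one applies the same reasoning to the $(r',x')$-factor of $k$, which carries the $\overline{\mathcal{Q}}$-asymptotics: the $(y,\eta)$-wise $\mathcal{K}^{0,0}(X^\wedge)$-adjoint transposes $(r,x)$ and $(r',x')$ and conjugates the kernel, so the $\overline{q}$-poles of $k$ become $q$-poles in the image variable of $g^*$; the splitting $E^2=E^2_\Theta+E^2_{\mathcal{Q}}$ then gives $g^*=g^2_\Theta+g^2_{\mathcal{Q}}$ with $g^2_{\mathcal{Q}}$ of the asserted form and $\zeta^2$ having poles $q_l$ of multiplicity $n_l+1$.

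\textbf{Converse.} Conversely, given flat symbols $g^i_\Theta$ and asymptotic parts as in \eqref{21anP}, set $k^1_{\mathcal{P}}(y,\eta;r,x,r',x'):=\omega(r)\langle\zeta^1(y,\eta,r',x,x'),r^{-z}\rangle$; by the $\mathcal{E}_{\mathcal{P}}$-correspondence above this lies in $S^\nu_{\textup{cl}}(U\times\R^q,E^1_{\mathcal{P}})$, and adding the kernel of $g^1_\Theta$ yields a $k$ with $k\in S^\nu_{\textup{cl}}(U\times\R^q,E^1)$ for which \eqref{21Gmap} reproduces $g$. The formal adjoint of this $g$ has as kernel the conjugate transpose of $k$; reading the second line of \eqref{21dec9}, \eqref{21anP} backwards shows that this conjugate transpose lies in $S^\nu_{\textup{cl}}(U\times\R^q,E^2)$, i.e. $k\in S^\nu_{\textup{cl}}(U\times\R^q,E^2)$. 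Hence $k\in S^\nu_{\textup{cl}}(U\times\R^q,E^1\cap E^2)=S^\nu_{\textup{cl}}(U\times\R^q,E)$, using that the symbol functor commutes with intersections of Fr\'echet spaces, so $g$ is a Green symbol by Theorem \ref{21grn}. The delicate point of the whole argument is precisely this functorial bookkeeping: that the isomorphism of $\mathcal{E}_{\mathcal{P}}$ with the relevant subspace of $\mathcal{A}'(\pi_\C\mathcal{P},C^\infty(X))$ survives the completed projective tensor products with the nuclear spaces $\mathcal{K}^{\infty,\cdot;\infty}(X^\wedge)$ and the application of $S^\nu_{\textup{cl}}(U\times\R^q,-)$, that this symbol functor commutes with $E^1\cap E^2$ and with topological direct sums, and that the conjugation exchanging $\mathcal{Q}$ and $\overline{\mathcal{Q}}$ under passage to the adjoint is tracked correctly.
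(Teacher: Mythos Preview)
Your proof is correct and follows precisely the route the paper takes. In the paper the theorem is stated without a separate proof; the argument is the discussion immediately preceding the statement, namely the kernel characterisation of Theorem~\ref{21grn}, the direct decompositions \eqref{21asdde}, \eqref{21dec5}, \eqref{21dec7}, the resulting splitting \eqref{21dec8}--\eqref{21dec9} of kernel and symbol, and the identification of $\mathcal{E}_{\mathcal{P}}$ with analytic functionals via $\omega(r)\langle\zeta_z,r^{-z}\rangle$. Your write-up makes explicit the functorial points (commutation of $S^\nu_{\textup{cl}}(U\times\R^q,-)$ with direct sums and intersections, stability of the $\mathcal{E}_{\mathcal{P}}\cong\mathcal{A}'_{\mathcal{P}}$ identification under $\hat{\otimes}_\pi$) that the paper leaves implicit, and supplies a converse direction that the paper does not spell out.
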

\begin{rem}\label{21ggcont}
Theorem \ref{21ggrep} has an immediate analogue in the case of continuous asymptotics, cf. \cite{Schu55}. If the carriers of the involved asymptotic types are compact sets in the respective weight strips, i.e. $K\subset \{(n+1)/2-(\gamma -\mu )+\vartheta <\textup{Re}\,z<(n+1)/2-(\gamma -\mu )\}$ in the case of $\mathcal{P}$, and similarly for $\mathcal{Q}$, the formal scheme of the proof is the same as for Theorem \ref{21ggrep}.
\end{rem}
In fact, we have a decomposition analogously as \eqref{21asdde}, namely,
\begin{equation}\label{21asdconnt}
\mathcal{K}_{\mathcal{P}}^{\infty,\gamma-\mu;\infty}(X^\wedge)= \mathcal{K}_\Theta^{\infty,\gamma-\mu;\infty}(X^\wedge)+\mathcal{E}_K
\end{equation}
for
\begin{equation}\label{21asfuctc}
\mathcal{E}_K :=\{\omega (r)\langle\zeta ,r^{-z}\rangle:\zeta \in \mathcal{A}'(K,C^\infty (X))\},
\end{equation}
and similarly for $\mathcal{Q}$ for another compact set. In the following we argue for $\mathcal{P};$ the constructions for $\mathcal{Q}$ are analogous. Similarly as \eqref{21dec5} we can form
\begin{equation}\label{21asfuctc1}
E_K :=\mathcal{E}_K\hat{\otimes}_\pi\mathcal{K}^{\infty,-\gamma;\infty}(X^\wedge)
\end{equation}
and replace $S^\nu_{\textup{cl}}(U\times\R^q,E^1_{\mathcal{P}})$ by its continuous analogue
$$S^\nu_{\textup{cl}}(U\times\R^q,E^1_K)  =C^\infty (U,S^\nu_{\textup{cl}}(\R^q,E^1_K)) = S^\nu_{\textup{cl}}(\R^q,C^\infty (U,E^1_K)).$$
The latter space can be identified with
\begin{equation}\label{21disc}
S^\nu_{\textup{cl}}(\R^q,C^\infty (U,\mathcal{E}_K))\hat{\otimes}_\pi\mathcal{K}^{\infty,-\gamma;\infty}(X^\wedge).
\end{equation}
This allows us to formulate an analogue for the variable discrete case, namely, to replace $C^\infty (U,\mathcal{E}_K)$ in \eqref{21disc} by
\begin{equation}\label{21asfuctc3}
\{\omega (r)\langle\zeta ,r^{-z}\rangle:\zeta \in C^\infty (U,\mathcal{A}'(K,C^\infty (X))^\bullet\}.
\end{equation}
However, in order to have reasonable Fr\'echet space structures the variable discrete behaviour will be controlled by corresponding asymptotic types:
\begin{defn} \label{21vars}
A variable discrete asymptotic type $\mathcal{P}$ over an open set $\Omega\subseteq\R^q$ associated with the weight data $(\gamma,\Theta),\Theta=(\vartheta,0],\infty<\vartheta<0,$ is a system of sequences
\begin{equation}
\mathcal{P}(y)=\{(p_j(y),m_j(y))\}_{j=0,\ldots,J(y)} 
\end{equation}
for $J(y)\in\N,y\in\Omega,$ such that $\pi_\C\mathcal{P}(y)=\{(p_j(y))\}_{j=0,\ldots,J(y)}\subseteq\{(n+1)/2-\gamma+\vartheta<\textup{Re}\,z<(n+1)/2-\gamma\}$ for all $y\in\Omega,$ and for every $b:=(c,U)$ with $(n+1)/2-\gamma+\vartheta<c<(n+1)/2-\gamma, U\in\mathcal{U}(\Omega),$ there are sets
\begin{equation} \label{21sequ1}
 \{U_i\}_{0\leq i\leq N},\quad\{K_i\}_{0\leq i\leq N}
\end{equation}
for some $N=N(b)\in\N$ where $U_i\in\mathcal{U}(\Omega),0\leq i\leq N,$ form an open covering of $\overline{U},$ moreover, $K_i\Subset\C,$ and
\begin{equation}
K_i\subset\{c-\varepsilon_i<\textup{Re}\,z<(n+1)/2-\gamma\}\quad\mbox{for some}\quad\varepsilon_i>0, 
\end{equation}
\begin{equation}
\pi_\C\mathcal{P}(y)\cap\{c-\varepsilon_i<\textup{Re}\,z\}\subset K_i\quad\mbox{for all}\quad y\in U_i,
\end{equation}
and $\textup{sup}_{y\in U_i}\sum_j(1+m_j(y))<\infty$ where the sum is taken over those $0\leq j\leq J(y)$ such that $p_j(y)\in K_i,i=0,\ldots,N.$ 
 \end{defn}
We will say that a variable discrete asymptotic type $\mathcal{P}$ satisfies the shadow condition if
\begin{equation} \label{21shadow}
\begin{split}
(p(y),m(y))& \in \, \mathcal{P}(y) \,\,\mbox{implies}\,\, (p(y)-l,m(y))\in \mathcal{P}(y) \\& \mbox{for all}\,\, l=l(y)\in \N \,\,\mbox{such that}\,\, \textup{Re}\,p(y)-l>(n+1)/2-\gamma +\vartheta ,y\in \Omega .
\end{split}
\end{equation} 
Similarly as in Section 2.2 we can restrict $\mathcal{P}$ to open sets $U\subseteq \Omega $ and sets $A\subset \C$ which gives us again variable discrete asymptotic types $p_A\mathcal{P}|_U$ associated with $(\gamma ,\Theta ).$ Moreover, if $E$ is a Fr\'echet space we say that a system of $E$-valued meromorphic functions $f(y,z),y\in\Omega,$ in the strip $\{(n+1)/2-\gamma+\vartheta<\textup{Re}\,z<(n+1)/2-\gamma\}$ is  subordinate to the variable discrete asymptotic type $\mathcal{P}$ over $\Omega$ as in Definition \ref{21vars} if for any $y_0\in\Omega$ every pole $p$ of $f(y_0,z)$ belongs to $\pi_\C\mathcal{P}(y_0),$ say, $p=p_j(y_0)$ for some $j,$ and its multiplicity is $\leq m_j(y_0)+1.$\\
For every $b=(c,U)$ and $U_i,K_i$ as in Definition \ref{21vars} we choose smooth compact curves $C_i$ in the strips $\{c-\varepsilon_i<\textup{Re}\,z<(n+1)/2-\gamma\},$ counter-clockwise surrounding the compact sets $K_i,$ and we define $\delta_i(y)\in \mathcal{A}'(K_i,E)$ by   $\langle\delta_i(y),h\rangle:=\int_{C_i}f(y,z)h(z)\dbar z, h\in\mathcal{A}(\C),y\in U_i.$ The function $f$ is called smooth in $y$ if $\delta_i(y)\in C^\infty(U_i,\mathcal{A}'(K_i,E))^\bullet$ for every $i.$ This gives us elements 
\begin{equation} \label{21fu1}
f_i(y,z):= M_{r\rightarrow z}\omega(r)\langle\delta_{i,w},r^{-w}\rangle\in C^\infty(U_i,\mathcal{A}(\C\setminus K_i,E))^\bullet,i=0,\ldots,N(b). 
\end{equation}
Let $f_b(y,z):=\sum_{i=0}^{N(b)}\varphi_i(y)f_i(y,z)$ for functions $\varphi_i(y)\in C^\infty_0(U_i)$ with $\sum_{i=0}^{N(b)}\varphi_i(y)\newline=1$ for all $y\in\overline{U},$ and $\delta_b(y,z):=\sum_{i=0}^{N(b)}\varphi_i(y)\delta_i(y,z).$ Then 
\begin{equation} \label{21bul}
\delta_b\in C^\infty(U,\mathcal{A}'(K_b,E))^\bullet
 \end{equation}
for $K_b:=\bigcup_{i=0}^{N(U)}K_i,$ moreover, $f_b(y,z):= M_{r\rightarrow z}\omega(r)\langle\delta_{b,w},r^{-w}\rangle\in C^\infty(U,\mathcal{A}(\C\setminus K_b,E))^\bullet,$ and
 \begin{equation} \label{21dia2}
f(y,z)-f_b(y,z)\in C^\infty(U,\mathcal{A}(\{c-\varepsilon <\textup{Re}\,z<(n+1)/2-\gamma\},E)) 
\end{equation}
for any $0<\varepsilon <\textup{min}_{i=0,\dots,N(b)}\{\varepsilon _i\}=:\varepsilon (b).$ In the following definition we set 
\begin{equation} \label{21dia4}
E^1:= S^\nu_{\textup{cl}}(\R^q,C^\infty(X_x)\hat{\otimes}_\pi\mathcal{K}^{\infty,-\gamma;\infty}(X^\wedge_{r',x'})),
\end{equation}
\begin{equation} \label{21dia5}
E^2:= S^\nu_{\textup{cl}}(\R^q,C^\infty(X_{x'})\hat{\otimes}_\pi\mathcal{K}^{\infty,-\gamma+\mu;\infty}(X^\wedge_{r,x}))
\end{equation}
for $\nu:=m+n+1.$
\begin{defn} \label{21gr1}
The space $R^m_G(\Omega \times \R^q,{\bf{g}})_{\mathcal{P},\mathcal{Q}},{\bf{g}}=(\gamma ,\gamma -\mu ,\Theta ),$ of Green symbols of order $m\in\R$ with variable discrete asymptotic types $\mathcal{P}$ and $\mathcal{Q}$ associated with the weight data $(\gamma-\mu,\Theta)$ and $(-\gamma,\Theta),$ respectively, is defined to be the set of 
\begin{equation} \label{22g1}
g(y,\eta)\in S_{\textup{cl}}^m(\Omega \times\R^q;\mathcal{K}^{s,\gamma;e}(X^\wedge),\mathcal{K}^{\infty,\gamma-\mu;\infty}(X^\wedge)),
\end{equation}
with
\begin{equation} \label{22gad1}
g^*(y,\eta)\in S_{\textup{cl}}^m(\Omega \times\R^q;\mathcal{K}^{s,-\gamma+\mu;e}(X^\wedge),\mathcal{K}^{\infty,-\gamma;\infty}(X^\wedge))
\end{equation}
for every $s,e\in\R,$ such that for every $b^1:=(c^1,U)\in\big((n+1)/2-(\gamma-\mu)+\vartheta,(n+1)/2-(\gamma-\mu)\big) \times\mathcal{U}(\Omega),b^2:=(c^2,U)\in\big((n+1)/2+\gamma+\vartheta,(n+1)/2+\gamma\big) \times\mathcal{U}(\Omega)$ there are $K^1_{b^1}\Subset \{(n+1)/2-(\gamma-\mu)+\vartheta<\textup{Re}\,z<(n+1)/2-(\gamma-\mu)\},$ $K^2_{b^2}\Subset \{(n+1)/2+\gamma+\vartheta<\textup{Re}\,z<(n+1)/2+\gamma\}$ and elements 
\begin{equation} \label{22gad3}
\zeta^l_{b^l}\in C^\infty(U,\mathcal{A}'(K^l_{b^l},E^l))^\bullet,\quad l=1,2,
\end{equation}
described by families $f^l_{b^l}(y,z)$ of $E^l$-valued meromorphic functions over $U,$ subordinate to the variable discrete asymptotic type $\mathcal{P}|_U$ for $l=1$ and $\mathcal{Q}|_U$ for $l=2,$
such that for
\begin{equation} \label{21anP1}
\begin{split}
g_{b^1,\mathcal{P}}(y,\eta)u(r,x)&\!:=\!\int\!\!\int_0^\infty \!\omega_\eta(r)\langle \zeta^1_{b^1}(y,\eta,r'[\eta],x,x'),\!(r[\eta])^{-z}\rangle u(r',x'))(r')^n dr'\!dx',\\
g_{b^2,\mathcal{Q}}(y,\eta)v(r',x')&:=\int\!\!\int_0^\infty \omega_\eta(r')\langle \zeta^2_{b^2}(y,\eta,r[\eta],x,x'),(r'[\eta])^{-z}\rangle v(r,x))r^n drdx.
\end{split}
\end{equation}
we have
\begin{equation} \label{22gad5}
\begin{split}
&g(y,\eta)-g_{b^1,\mathcal{P}}(y,\eta)\in S_{\textup{cl}}^m(U\times\R^q;\mathcal{K}^{s,\gamma;g}(X^\wedge),\mathcal{K}^{\infty,\gamma-\mu+\beta^1;\infty}(X^\wedge)),\\
&g^*(y,\eta)-g_{b^2,\mathcal{Q}}(y,\eta)\in S_{\textup{cl}}^m(U\times\R^q;\mathcal{K}^{s,-\gamma+\mu;g}(X^\wedge),\mathcal{K}^{\infty,-\gamma+\beta^2;\infty}(X^\wedge))
\end{split}
\end{equation}
for $\beta^1=\beta^1_0+\varepsilon $ for any $0<\varepsilon <\varepsilon ^1=\varepsilon ^1(b^1),$ $\beta^1_0:=(n+1)/2-(\gamma-\mu)-c^1,$ and $\beta^2=\beta^2_0+\varepsilon $ for any $0<\varepsilon <\varepsilon ^2=\varepsilon ^2(b^2), \beta^2_0:=(n+1)/2+\gamma-c^2$ (for brevity in \eqref{22gad5} we wrote $g(y,\eta )$ rather than $g(y,\eta )|_{U\times \R^q}$). We set
\begin{equation}\label{22grn}
R^m_G(\Omega \times \R^q,{\bf{g}}):=\bigcup_{\mathcal{P},\mathcal{Q}}R^m_G(\Omega \times \R^q,{\bf{g}})_{\mathcal{P},\mathcal{Q}}.
\end{equation}
\end{defn}
\begin{rem}\label{grfr}
The space $R^m_G(\Omega \times \R^q,{\bf{g}})_{\mathcal{P},\mathcal{Q}}$ is Fr\'echet in a natural way.
\end{rem}
In fact, the arguments concerning the asymptotic part are similar to those for Proposition \ref{21Fre}.
A special subspace of Green symbols is
\begin{equation}\label{22grnflat}
R^m_G(\Omega \times \R^q)_{\mathcal{O}},
\end{equation}
the set of symbols $g(y,\eta )$ of infinite flatness, defined by the properties
\begin{equation}\label{22infflat}
g(y,\eta),g^*(y,\eta)\in S_{\textup{cl}}^m(\Omega \times\R^q;\mathcal{K}^{s,\gamma;e}(X^\wedge),\mathcal{K}^{\infty,\delta ;\infty}(X^\wedge)),
\end{equation}
for any reals $\gamma ,\delta, e. $
Observe that
\begin{equation}\label{22grnadas}
g(y,\eta )\in R^m_G(\Omega \times \R^q,{\bf{g}})_{\mathcal{P},\mathcal{Q}}\Leftrightarrow g^*(y,\eta )\in R^m_G(\Omega \times \R^q,{\bf{g^*}})_{\mathcal{Q},\mathcal{P}}
\end{equation}
for ${\bf{g}}=(-\gamma +\mu ,-\gamma ,\Theta ).$
\begin{rem} \label{22weight1}
Definition \ref{21gr1} admits a straightforward generalisation to a notion of Green symbols $g(y,\eta)$ between any pairs of weights $\gamma,\delta$ rather than $\gamma,\gamma-\mu.$ However, the main application here concerns the weight shift $\mu$ which is coming from the order of the non-smoothing operators in the edge calculus.
\end{rem}
For references below we fix the following notation.
\begin{defn} \label{21m+g}
The space 
\begin{equation}
R^\mu _{M+G}(\Omega \times \R^q,{\bf{g}})
\end{equation}
for ${\bf{g}}=(\gamma ,\gamma -\mu ,\Theta ),\Theta =(-(k+1),0],k\in \N,$ is the set of all operator functions $(m+g)(y,\eta )$ for any $m(y,\eta )$ of the form \eqref{21mmmel} and $g(y,\eta )\in R^\mu _G(\Omega \times \R^q,{\bf{g}}).$
\end{defn}
\begin{prop} \label{24grop}
Let $g(y,\eta )\in R^m _G(\Omega \times \R^q,{\bf{g}}),{\bf{g}}=(\gamma ,\gamma -\mu ,\Theta ).$ Then we have the following properties:
\begin{itemize}
\item[\textup{(i)}] $r^j g(y,\eta ), g(y,\eta )r^j\in R^{m -j}_G(\Omega \times \R^q,{\bf{g}})$ for every $j\in \N;$
\item[\textup{(ii)}] $a(y,\eta )g(y,\eta )\in R^{m +\nu }_G(\Omega \times \R^q,{\bf{g}})$ for every $ a(y,\eta )\in S^\nu _{\textup{cl}}(\Omega \times \R^q);$
\item[\textup{(iii)}] $D_y^\alpha D_\eta ^\beta g(y,\eta )\in R^{m -|\beta | }_G(\Omega \times \R^q,{\bf{g}})$ for every $\alpha ,\beta \in \N^q.$
\end{itemize}
\end{prop}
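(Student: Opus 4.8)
The plan is, for each of the three operations, to verify first the operator-valued symbol estimates \eqref{21symbest} together with classicality (applying Lemma~\ref{21clsymb} to the homogeneous components), for $g(y,\eta)$ and for its formal adjoint, and then the asymptotic decomposition \eqref{22gad5} from Definition~\ref{21gr1}. Since $R^{m}_G(\Omega\times\R^q,{\bf{g}})=\bigcup_{\mathcal{P},\mathcal{Q}}R^{m}_G(\Omega\times\R^q,{\bf{g}})_{\mathcal{P},\mathcal{Q}}$ by \eqref{22grn}, one may fix $\mathcal{P},\mathcal{Q}$ with $g\in R^m_G(\Omega\times\R^q,{\bf{g}})_{\mathcal{P},\mathcal{Q}}$ and only needs to exhibit \emph{some} variable discrete asymptotic types for the image. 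Part (iii) is then routine: $S^m_{\textup{cl}}(\Omega\times\R^q;H,\tilde H)$ is stable under $D_y^\alpha$ and under $D_\eta^\beta$, the latter lowering the order by $|\beta|$, which is immediate from \eqref{21symbest} and the homogeneity relations \eqref{21hm1}, and this applies to $g^*$ as well since $(D_y^\alpha D_\eta^\beta g)^*=\pm D_y^\alpha D_\eta^\beta g^*$; for the asymptotic part I would differentiate $g=g_{b^1,\mathcal{P}}+R$ with $R$ the flatter remainder of \eqref{22gad5}, use that $C^\infty(U,\mathcal{A}(\C\setminus K,E))^\bullet$ is preserved by $\partial_y$ (cf.\ the proof of Proposition~\ref{21diff} and \cite{Schu2}), and observe that the $\eta$-derivatives of $(r[\eta])^{-z}$ produce only scalar symbol factors of order $-1$ and a factor $z$; multiplying the describing meromorphic family by $z$ does not raise pole multiplicities, so subordination to $\mathcal{P}|_U$ (and to $\mathcal{Q}|_U$ for the adjoint) persists, while the terms carrying $\omega'(r[\eta])$ or $\omega'_\eta$ vanish near $r=0$ and are absorbed into the remainder, which by the same estimates has order $m-|\beta|$. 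Hence $D_y^\alpha D_\eta^\beta g\in R^{m-|\beta|}_G(\Omega\times\R^q,{\bf{g}})_{\mathcal{P},\mathcal{Q}}$.

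For (ii), $a(y,\eta)$ is scalar (trivial group action, order $\nu$), hence commutes with every $\kappa_{\langle\eta\rangle}$; the Leibniz rule and \eqref{21symbest} then give $ag\in S^{m+\nu}_{\textup{cl}}(\Omega\times\R^q;\mathcal{K}^{s,\gamma;e}(X^\wedge),\mathcal{K}^{\infty,\gamma-\mu;\infty}(X^\wedge))$, with classicality from the homogeneous components $\sum_{k+l=j}a_{\nu-k}\,g_{(m-l)}$, and similarly $(ag)^*=\bar a\,g^*$ with $\bar a\in S^\nu_{\textup{cl}}(\Omega\times\R^q)$. In \eqref{21anP1} the factor $a(y,\eta)$ merely multiplies $\zeta^l_{b^l}$, leaving the carriers $K^l_{b^l}$ and the poles — hence the types $\mathcal{P},\mathcal{Q}$ — unchanged, only raising the symbol order of the $E^l$-valued coefficients by $\nu$ and multiplying the remainder bounds in \eqref{22gad5} by an order-$\nu$ factor. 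Thus $ag\in R^{m+\nu}_G(\Omega\times\R^q,{\bf{g}})_{\mathcal{P},\mathcal{Q}}$.

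Part (i) is the delicate one. As multiplication operators on the cone spaces one has the identity $\kappa_\lambda^{-1}r^j\kappa_\lambda=\lambda^{-j}r^j$, and $r^j$ commutes with $D_y^\alpha$ and $D_\eta^\beta$; since moreover $r^j:\mathcal{K}^{\infty,\gamma-\mu;\infty}(X^\wedge)\to\mathcal{K}^{\infty,\gamma-\mu+j;\infty}(X^\wedge)\hookrightarrow\mathcal{K}^{\infty,\gamma-\mu;\infty}(X^\wedge)$ is bounded (raising the conormal weight by $j$ and lowering the conical-exit weight by $j$, then embedding because $\gamma-\mu+j\ge\gamma-\mu$ and the exit flatness is infinite), \eqref{21symbest} gives order $m-j$ for $r^j g$, and classicality from the homogeneous components $r^j g_{(m-k)}$, which are homogeneous of order $m-j-k$ by the same identity; for $g r^j$ one argues on the domain side with $r^j:\mathcal{K}^{s,\gamma;e}(X^\wedge)\to\mathcal{K}^{s,\gamma+j;e-j}(X^\wedge)\hookrightarrow\mathcal{K}^{s,\gamma;e-j}(X^\wedge)$, and the formal adjoints via $(r^j g)^*=g^*r^j$, $(g r^j)^*=r^j g^*$. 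For the asymptotic part the key computation is $r^j(r[\eta])^{-z}=[\eta]^{-j}(r[\eta])^{-(z-j)}$: in \eqref{21anP1}, left multiplication of the output by $r^j$ produces the extra scalar factor $[\eta]^{-j}$ of order $-j$ together with the rigid translate $z\mapsto z-j$ of the describing meromorphic family, so that the carrier becomes $K^1_{b^1}-j$ and the poles lie in $\pi_\C\mathcal{P}|_U-j$. The main obstacle, and the only genuinely non-formal step, is to check that this shifted data still defines a variable discrete asymptotic type associated with the \emph{same} weight data $(\gamma-\mu,\Theta)$: the poles $p-j$ with $\textup{Re}\,(p-j)\le(n+1)/2-(\gamma-\mu)+\vartheta$ give terms $r^{-(p-j)}\log^k r$ that are flat of order $\Theta$ relative to the weight $\gamma-\mu$, hence are absorbed into the flat summand $g^1_\Theta$, while the poles that remain inside the weight strip inherit the localisation and finite-multiplicity bounds of Definition~\ref{21vars} from those of $\mathcal{P}$ under the rigid shift; re-collecting the pieces yields \eqref{22gad5} for $r^j g$ with the shifted types and a flat remainder of order $m-j$, and symmetrically for $g r^j$ by shifting the $\mathcal{Q}$-side of the adjoint. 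This gives $r^j g,\,g r^j\in R^{m-j}_G(\Omega\times\R^q,{\bf{g}})$ and completes the plan.
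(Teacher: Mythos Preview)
Your proof is correct and follows essentially the same route as the paper: part (i) via the translation $T^{-j}$ of the analytic functional together with the extra factor $[\eta]^{-j}$, then splitting off the poles that leave the weight strip into the flat remainder (the paper invokes Proposition~\ref{21merdec} for this step), part (ii) as straightforward, and part (iii) by differentiating the decomposition \eqref{22gad5} and observing that the cut-off derivatives are supported away from $r=0$. One small correction: in (iii) you claim that subordination to $\mathcal{P}|_U$ and $\mathcal{Q}|_U$ \emph{persists} under $D_y^\alpha$, but as your own reference to Proposition~\ref{21diff} shows, $y$-differentiation of a family like $1/(z-p(y))$ raises pole multiplicities, so the types must in general be replaced by enlarged ones $\mathcal{P}_\alpha,\mathcal{Q}_\alpha$; since the statement only asserts membership in the union $R^{m-|\beta|}_G(\Omega\times\R^q,{\bf g})$, your conclusion is unaffected.
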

\begin{proof}
$\textup{(i)}$ From the first relation of \eqref{22gad5} it follows that $r^jg(y,\eta)-r^jg_{b^1,\mathcal{P}}(y,\eta)\in S_{\textup{cl}}^{m-j}(U\times\R^q;\mathcal{K}^{s,\gamma;g}(X^\wedge),\mathcal{K}^{\infty,\gamma-\mu+\beta^1;\infty}(X^\wedge)).$ Thus, setting for the moment $\tilde{g}_{b^1,\mathcal{P}}\newline(y,\eta):=r^jg_{b^1,\mathcal{P}}(y,\eta)$ we have to show that there is a $\tilde{\zeta}^1_{b^1}$ of analogous structure as \eqref{22gad3} for $l=1$ such that there is an analogous relation between $\tilde{g}_{b^1,\mathcal{P}}$ and $\tilde{\zeta}^1_{b^1}$ as in \eqref{21anP1}. Multiplying the right hand side of \eqref{21anP1} by $r^j$ gives us
\begin{equation}
\begin{split}
r^j& \int\!\!\int_0^\infty \omega_\eta(r)\langle \zeta^1_{b^1}(y,\eta,r'[\eta],x,x'),(r[\eta])^{-z}\rangle u(r',x'))(r')^n dr'dx'\\&=\int\!\!\int_0^\infty \omega_\eta(r)\langle [\eta]^{-j}\zeta^1_{b^1}(y,\eta,r'[\eta],x,x'),(r[\eta])^{-z+j}\rangle u(r',x'))(r')^n dr'dx'\\&=\int\!\!\int_0^\infty \omega_\eta(r)\langle [\eta]^{-j}(T^{-j}\zeta^1_{b^1})(y,\eta,r'[\eta],x,x'),(r[\eta])^{-z}\rangle u(r',x'))(r')^n dr'dx'.
\end{split}
\end{equation}
Here $T^{-j}\zeta$ means the analytic functional translated to the left in the complex plane, defined by
$\langle T^{-j}\zeta,h\rangle=\langle\zeta ,T^{-j}h\rangle $ for $(T^{-j}h)(z)=h(z-j).$ Because of the factor $[\eta]^{-j}$ we have $\tilde{\zeta}^1_{b^1}=[\eta]^{-j}(T^{-j}\zeta^1_{b^1})\in C^\infty (U,\mathcal{A}'(\tilde{K}^1_{b^1},[\eta]^{-j}E^1))^\bullet$ (in the notation of \eqref{21dia4}) where $\tilde{K}^1_{b^1}$ is the translation of $K^1_{b^1}$ to the left by $-j.$ In particular, the order in $\eta $ is diminished by $j.$ Concerning the position of $\tilde{K}^1_{b^1}$ relative to the original weight strip it might happen now that this set has a non-trivial intersection with $\Gamma _{(n+1)/2-(\gamma -\mu) +\vartheta }.$ However, we can write $\tilde{\zeta}^1_{b^1}$ as a sum $\tilde{\zeta}^1_{b^1,0}+\tilde{\zeta}^1_{b^1,1}$ where the first summand contributes a flat symbol of order $m-j$ in the sense of the first relation of \eqref{22gad5} while the second summand belongs to a family of analytic functionals carried by a compact set in $\{(n+1)/2-(\gamma-\mu)+\vartheta<\textup{Re}\,z<(n+1)/2-(\gamma-\mu)\},$ as required in the definition, cf. analogously, Proposition \ref{21merdec}. Thus, we have characterised $r^jg(y,\eta)$ in the desired way. The arguments for $g(y,\eta)r^j$ are similar when we pass to the formal adjoint, cf. also the relation \eqref{22grnadas}.
$\textup{(ii)}$ is straightforward.
$\textup{(iii)}$ Derivatives in $y,\eta $ may be carried out in the relations \eqref{22gad5}, because of the symbolic estimates \eqref{21symbest}. The $\eta $-differentiations of \eqref{21anP1} are straightforward concerning the first $\eta $-variables, while in the differentiations with respect to the remaining $\eta $-dependence we produce extra powers of $r$ and $r'$ that can be treated in a similar manner as in the proof of $\textup{(i)}.$ Differentiations in $y$ are possible for similar reasons as in the proof of Proposition \ref{21diff}.
\end{proof}
\begin{prop} \label{24asymp}
Let $g_j(y,\eta )\in R^{m -j}_G(\Omega \times \R^q,{\bf{g}})_{\mathcal{P},\mathcal{Q}}, j\in \N,$ be arbitrary Green symbols with $j$-independent asymptotic types $\mathcal{P},\mathcal{Q}.$ Then there is an asymptotic sum $g(y,\eta )\sim \sum_{j=0}^\infty g_j(y,\eta )$ in $R^m _G(\Omega \times \R^q,{\bf{g}})_{\mathcal{P},\mathcal{Q}}$ in the sense that $g(y,\eta )-\sum_{j=0}^Ng_j(y,\eta )\in R^{m -(N+1)}_G(\Omega \times \R^q,{\bf{g}})_{\mathcal{P},\mathcal{Q}}$ for every $N\in \N,$ and $g(y,\eta )$ is unique $\textup{mod}\,R^{-\infty }_G(\Omega \times \R^q,{\bf{g}})_{\mathcal{P},\mathcal{Q}}.$
\end{prop}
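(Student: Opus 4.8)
The plan is to run the standard asymptotic-summation (Borel) argument, but inside the decreasing Fr\'echet scale of Green symbols with the \emph{fixed} asymptotic types $\mathcal P,\mathcal Q$, not in the ambient space of operator-valued symbols. By Remark \ref{grfr} each $R^{m-k}_G(\Omega\times\R^q,{\bf{g}})_{\mathcal P,\mathcal Q}$, $k\in\N$, is Fr\'echet, the natural inclusions $R^{m-k-1}_G(\Omega\times\R^q,{\bf{g}})_{\mathcal P,\mathcal Q}\hookrightarrow R^{m-k}_G(\Omega\times\R^q,{\bf{g}})_{\mathcal P,\mathcal Q}$ are continuous, and we set $R^{-\infty}_G(\Omega\times\R^q,{\bf{g}})_{\mathcal P,\mathcal Q}:=\bigcap_{k\in\N}R^{m-k}_G(\Omega\times\R^q,{\bf{g}})_{\mathcal P,\mathcal Q}$. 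Fix once and for all an excision function $\chi\in C^\infty(\R^q)$, equal to $0$ near the origin and to $1$ for $|\eta|\geq1$. The sought symbol will be $g(y,\eta):=\sum_{j=0}^\infty\chi(\eta/c_j)g_j(y,\eta)$ for a suitably fast sequence $c_j\nearrow\infty$; this series is locally finite in $\eta$, hence defines a smooth function, and the construction rests on two facts about multiplication by $\chi(\eta/c)$.

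First I would record the \emph{invariance}: if $g\in R^{m-j}_G(\Omega\times\R^q,{\bf{g}})_{\mathcal P,\mathcal Q}$ then $\chi(\eta/c)g$ belongs to the same space for every $c>0$, with the same carriers $K^l_{b^l}$ and the same subordination of the analytic-functional representatives to $\mathcal P|_U,\mathcal Q|_U$. Indeed $\chi(\cdot/c)\in S^0_{\textup{cl}}(\R^q)$, so Proposition \ref{24grop}(ii) handles the underlying symbol estimates for $g$ and $g^*$; moreover $\chi(\eta/c)$ does not act on the Mellin covariable $z$, so it merely multiplies each $\zeta^l_{b^l}$ of \eqref{22gad3} by the scalar symbol $\chi(\cdot/c)$, which keeps it in $C^\infty(U,\mathcal A'(K^l_{b^l},E^l))^\bullet$ with unchanged pole data, and multiplies the flat remainders of \eqref{22gad5} by the same factor without raising their orders. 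Second, the \emph{decay}: if $g\in R^{m-j-1}_G(\Omega\times\R^q,{\bf{g}})_{\mathcal P,\mathcal Q}$, then $\chi(\eta/c)g\to0$ in $R^{m-j}_G(\Omega\times\R^q,{\bf{g}})_{\mathcal P,\mathcal Q}$ as $c\to\infty$, seminorm by seminorm: on $\supp\chi(\cdot/c)$ one has $|\eta|\geq\textup{const}\cdot c$, where $\langle\eta\rangle^{(m-j-1)-|\beta|}\leq(\textup{const}\cdot c)^{-1}\langle\eta\rangle^{(m-j)-|\beta|}$, and feeding this into the $\kappa$-twisted estimates \eqref{21symbest} for $g$ and $g^*$, as well as into the $E^l$-valued symbol estimates for the $\zeta^l_{b^l}$ and for the remainders in \eqref{22gad5}, produces a gain $O(c^{-1})$ in each seminorm.

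Granting these two points, the rest is the classical argument (cf.\ \cite{Schu20} for the symbol version): a recursion based on the decay statement produces $c_j\nearrow\infty$ such that, for every $N$, the tail $\sum_{j\geq N}\chi(\eta/c_j)g_j$ converges in $R^{m-N}_G(\Omega\times\R^q,{\bf{g}})_{\mathcal P,\mathcal Q}$; in particular $g:=\sum_j\chi(\eta/c_j)g_j$ lies in $R^m_G(\Omega\times\R^q,{\bf{g}})_{\mathcal P,\mathcal Q}$, and since this space is complete, $g$ genuinely has asymptotic types $\mathcal P,\mathcal Q$. For the partial-sum property I would split
\[
g-\sum_{j=0}^{N}g_j=\sum_{j=0}^{N}\bigl(\chi(\eta/c_j)-1\bigr)g_j+\sum_{j=N+1}^\infty\chi(\eta/c_j)g_j;
\]
the first sum has compact support in $\eta$, hence lies in $R^{-\infty}_G(\Omega\times\R^q,{\bf{g}})_{\mathcal P,\mathcal Q}\subset R^{m-(N+1)}_G(\Omega\times\R^q,{\bf{g}})_{\mathcal P,\mathcal Q}$, and the second is in $R^{m-(N+1)}_G(\Omega\times\R^q,{\bf{g}})_{\mathcal P,\mathcal Q}$ by the choice of the $c_j$. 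Uniqueness modulo $R^{-\infty}_G$ is then immediate: two symbols $g,\tilde g$ with the asymptotic-sum property differ, for each $N$, by an element of $R^{m-(N+1)}_G(\Omega\times\R^q,{\bf{g}})_{\mathcal P,\mathcal Q}$, hence $g-\tilde g\in R^{-\infty}_G(\Omega\times\R^q,{\bf{g}})_{\mathcal P,\mathcal Q}$. The only step calling for genuine care is the invariance-and-decay of the \emph{asymptotic data} under the cut-offs $\chi(\eta/c)$ — the part of the second paragraph concerning the $\zeta^l_{b^l}$ and the flat remainders — but once the analytic-functional description of Definition \ref{21gr1} is in place this is a routine consequence of the continuity of scalar excision-multiplication on $S^0_{\textup{cl}}(\R^q)$, applied in the $\eta$-variable with values in the Fr\'echet spaces $E^l$.
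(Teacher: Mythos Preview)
Your proposal is correct and follows exactly the approach the paper uses: construct $g(y,\eta)=\sum_{j=0}^\infty\chi(\eta/c_j)g_j(y,\eta)$ for an excision function $\chi$ and $c_j\to\infty$ sufficiently fast, invoking the Fr\'echet topology of $R^m_G(\Omega\times\R^q,{\bf g})_{\mathcal P,\mathcal Q}$ from Remark~\ref{grfr}. The paper's proof is a two-line sketch of precisely this Borel-type argument; your version simply spells out the invariance and decay steps (via Proposition~\ref{24grop}(ii) and the explicit handling of the $\zeta^l_{b^l}$ and flat remainders) that the paper leaves implicit.
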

\begin{proof}
We employ the Fr\'echet topology of the space $R^m _G(\Omega \times \R^q,{\bf{g}})_{\mathcal{P},\mathcal{Q}},$ cf. Remark \ref{grfr}, and $g(y,\eta )$ as a convergent sum $\sum_{j=0}^\infty \chi (\eta /c_j)g_j(y,\eta )$ for an excision function $\chi (\eta )$ and a sequence $c_j>0$ tending to $\infty $ sufficiently fast as $j\rightarrow \infty .$
\end{proof}
Let us now turn to other properties of Green and Mellin symbols that play a role in the calculus.
\begin{prop} \label{24grdiff}
Let $U\in \mathcal{U}(\Omega )$ and consider an $f\in C_0^\infty (U,\mathcal{M}_{\mathcal{R}}^{-\infty }(X))$ such that $\pi _\C{\mathcal{R}}\cap \Gamma _{(n+1)/2-\gamma }=\emptyset$ for all $y\in U,$ and set $m(y,\eta ):=r^{-\mu }\omega _\eta \op_M^{\gamma -n/2}(f)\omega '_\eta .$ Then we have 
\begin{equation}\label{24dff}
D_\eta ^\alpha m(y,\eta )\in R^{\mu -|\alpha |}_G(\Omega \times \R^q,{\bf{g}})
\end{equation}
for every $\alpha \in \N,\alpha \neq 0,{\bf{g}}=(\gamma ,\gamma -\mu ,\Theta ).$
\end{prop}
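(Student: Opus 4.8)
The plan is to reduce the statement to the comparison formula already established in \eqref{21g}. Recall that for a Mellin symbol $f\in C^\infty(\Omega,\mathcal{M}_{\mathcal{R}}^{-\infty}(X))$ and any two weights whose corresponding lines are free of poles of $f$, the difference $\omega_\eta\op_M^{\delta+\beta-n/2}(f)\omega'_\eta-\omega_\eta\op_M^{\delta-n/2}(f)\omega'_\eta$ is an integral over a compact curve $C_{\delta,\beta}$ surrounding the finitely many poles in the intermediate strip, and that such an integral is of the form of an asymptotic part in the sense of Definition \ref{21gr1}. So the first step is to write $m(y,\eta)=r^{-\mu}\omega_\eta\Op_M^{\gamma-n/2}(f|_U)(y)\omega'_\eta$ via the localisation \eqref{21melop3}, i.e. as a finite sum $\sum_l r^{-\mu}\omega_\eta\op_M^{\gamma_0(y_l)-n/2}(\varphi_l f)(y)\omega'_\eta$ with $\varphi_l\in C_0^\infty(U(y_l))$, where each $\gamma_0(y_l)$-line avoids $\pi_\C\mathcal{R}(y)$ for $y\in U(y_l)$. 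It suffices to prove \eqref{24dff} for each summand, since $R_G$ is closed under finite sums. So without loss of generality we may assume $f\in C_0^\infty(U,\mathcal{M}_{\mathcal{R}}^{-\infty}(X))$ with a single weight $\gamma$ whose line is pole-free.

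The second step is to compute $D_\eta^\alpha m(y,\eta)$ directly. Write $m(y,\eta)u(r)=\omega_\eta r^{-\mu}\int_{\Gamma_{(n+1)/2-\gamma}}(r/r')^{-z}f(y,z)\dbar z\, \omega'_\eta u(r')dr'/r'$. The $\eta$-dependence enters only through $\omega_\eta(r)=\omega(r[\eta])$ and $\omega'_\eta(r')=\omega'(r'[\eta])$. Differentiating once in $\eta_k$ produces terms in which at least one factor $\omega(r[\eta])$ is replaced by $r\,(\partial_k[\eta])\,\omega'(r[\eta])$ (and similarly for the primed factor). Writing $\omega_1(r):=r\omega'(r)$ — a function in $C_0^\infty(\overline{\R}_+)$ vanishing at $r=0$, hence of the type appearing in Green kernels — and absorbing the smooth symbol $\partial_k[\eta]\in S_{\textup{cl}}^0(\R^q)$ via Proposition \ref{24grop}(ii), we see that each first $\eta$-derivative produces a finite sum of expressions of the shape $\omega_\eta r^{-\mu}\op_M^{\gamma-n/2}(f)(y)(\omega_1)_\eta\cdot(\text{symbol of order }0)$, $(\omega_1)_\eta r^{-\mu}\op_M^{\gamma-n/2}(f)(y)\omega'_\eta\cdot(\cdots)$, or the analogue with both cut-offs replaced, together with $\eta$-independent residual kernel terms coming from $[\eta]=|\eta|$ for large $|\eta|$. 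The key observation is that in each such term at least one of the two cut-off factors is now a genuinely flat-at-zero function $\omega_1$ (times possibly a smooth compactly supported factor), while the operator $r^{-\mu}\op_M^{\gamma-n/2}(f)$ between them is smoothing. One then argues, exactly as in the derivation of \eqref{21g} together with Theorem \ref{21grn} and Theorem \ref{21ggrep}, that such a term is a Green symbol of order $\mu-1$: the presence of $\omega_1$ forces the kernel to belong to the relevant tensor-product space $E$ after the substitution $r\mapsto r[\eta]$, and the homogeneity of the construction for large $|\eta|$ (Lemma \ref{21clsymb}) gives the symbol order $\mu-1$, the drop by one coming from the factor $r\,\partial_k[\eta]$. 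Iterating: each further $\eta$-differentiation either hits a remaining cut-off (again lowering the order by one) or hits an already-produced symbolic factor (harmless, lowers order by one by definition of $S_{\textup{cl}}$), or hits a power of $r$/$r'$ produced earlier, which — as in the proof of Proposition \ref{24grop}(iii) and (i) — is again absorbed into the asymptotic data by translating the carrier, using Proposition \ref{21merdec}. After $|\alpha|$ derivatives we arrive at order $\mu-|\alpha|$.

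The third step is bookkeeping of the asymptotic types. Since $f\in \mathcal{M}_{\mathcal{R}}^{-\infty}(X)$, the poles of $f(y,z)$ that get encircled when we move from the weight line to the cut-off-generated integration curve (these arise because multiplying by $\omega_1$ instead of $\omega'$ is, up to a flat remainder, the same as shifting a weight line à la \eqref{21g}) lie in $\pi_\C\mathcal{R}(y)$, and the Laurent coefficients are of finite, uniformly bounded rank. Hence the associated families of analytic functionals $\zeta^1_{b^1},\zeta^2_{b^2}$ are subordinate to variable discrete asymptotic types $\mathcal{P},\mathcal{Q}$ obtained from $\mathcal{R}$ (and its reflection $z\mapsto n+1-\bar z$ for the adjoint side, cf. Proposition \ref{21smad}) by the translation $T^{-j}$-type operations of Proposition \ref{24grop}(i); the uniformity in $y$ over compact sets is inherited from Definition \ref{21melas} for $\mathcal{R}$. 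The formal-adjoint condition \eqref{22gad1} is checked by the same computation applied to $m^*(y,\eta)$, whose form is given by Proposition \ref{21smad}, using \eqref{22grnadas}. Thus $D_\eta^\alpha m(y,\eta)\in R_G^{\mu-|\alpha|}(\Omega\times\R^q,\mathbf{g})_{\mathcal{P},\mathcal{Q}}\subset R_G^{\mu-|\alpha|}(\Omega\times\R^q,\mathbf{g})$.

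The main obstacle I expect is making precise the claim that ``replacing a cut-off $\omega'_\eta$ by $(\omega_1)_\eta$ inside $\omega_\eta r^{-\mu}\op_M^{\gamma-n/2}(f)(\cdot)$ yields a Green symbol.'' Unlike the clean weight-shift identity \eqref{21g}, here the outer and inner cut-offs are differentiated asymmetrically, so one must verify carefully that the resulting kernel, after the rescaling $r\mapsto r[\eta]$, $r'\mapsto r'[\eta]$, lands in the space $E=E^1\cap E^2$ of \eqref{21E} (i.e. has the correct asymptotics and Schwartz decay in both $X^\wedge$-variables simultaneously), with symbol estimates uniform on compact subsets of $\Omega$. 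This is essentially the content of Theorem \ref{21grn} and Proposition \ref{22melgreen1}-type arguments; the honest work is to trace the mapping properties \eqref{21melop2} of $r^{-\mu}\op_M^{\gamma-n/2}(f)$ through this substitution and confirm the flatness gained from $\omega_1$ is exactly the weight shift $\mu$ already present, and that every $\eta$-derivative strictly lowers the order.
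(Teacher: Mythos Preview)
Your approach matches the paper's: differentiate the cut-offs, observe that the resulting factor $\varphi:=\partial_r\omega'$ lies in $C_0^\infty(\R_+)$ (note: supported \emph{strictly away} from $0$, not merely vanishing there --- this is what makes the adjoint side trivial, $\zeta^2_{b^2}=0$, and what permits commuting arbitrary powers $r^\beta$ through), and reduce to the curve-integral identity \eqref{21g}. The paper resolves your acknowledged obstacle by an explicit auxiliary weight shift $\gamma\to\gamma+\beta$ for large $\beta$: the shifted term $r^{-(\mu-1)}\omega_\eta\op_M^{(\gamma+\beta)-n/2}(f)\varphi_\eta\,\partial_{\eta_j}[\eta]$ is a flat Green symbol in $R_G^{\mu-1}(\cdot)_{\mathcal O}$ outright, and the remaining curve integral $g_2$ is then massaged into the form \eqref{21anP1} via the substitution $z=w-\mu+1$ and split into flat plus asymptotic parts by Proposition~\ref{21merdec}.
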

\begin{proof}
For our Mellin operators we assume that $\Theta $ is finite, i.e. $\Theta =(-(k+1),0]$ for some $k\in \N$. We study, for instance, the derivative $\partial _{\eta _j}$ for any $0\leq j\leq q.$ Then the assertion for higher derivatives is a consequence of Proposition \ref{24grop}, $\textup{(iii)}$. We have 
$$\partial _{\eta _j}m(y,\eta )=r^{-(\mu -1)}\{\omega _\eta\op_M^{\gamma -n/2}(f)(y)(\partial _{\eta _j}\omega '_\eta)+(\partial _{\eta _j}\omega _\eta)\op_M^{\gamma -n/2}(f)(y)\omega '_\eta \}. $$
The first summand can be written as
\begin{equation}\label{24dff00}
g_1(y,\eta )=r^{-(\mu -1)}\omega _\eta\op_M^{\gamma -n/2}(f)(y)\varphi  _\eta \partial _{\eta _j}[\eta ]
\end{equation}
for $\varphi :=\partial _r\omega \in C_0^\infty (\R_+),\varphi  _\eta(r)=\varphi (r[\eta ]).$ Since $\partial _{\eta _j}[\eta ]\in S^0 _{\textup{cl}}(\Omega \times \R^q)$ by virtue of Proposition \ref{24grop} it suffices to show $g_0(y,\eta ):=r^{-\mu }\omega _\eta\op_M^{\gamma -n/2}(f)(y)\varphi  _\eta \in R^\mu _G(\Omega \times \R^q,{\bf{g}}).$ As noted in Remark \ref{21dicont} it makes sense first to interpret $f$ is a Mellin symbol with continuous asymptotics, carried by the compact set $K_b$ as in Definition \ref{21melas1}. For sufficiently large $\beta >0$ the Mellin symbol $f(y,z)$ is holomorphic in a neighbourhood of the weight line $\Gamma _{(n+1)/2-(\gamma +\beta )},$ and we can form
\begin{equation}\label{24dff1}
g_2(y,\eta ):=\omega _\eta\op_M^{(\gamma+\beta ) -n/2}(f)(y)\varphi  _\eta-\omega _\eta\op_M^{\gamma -n/2}(f)(y)\varphi  _\eta.
\end{equation}
Similarly as \eqref{21g} we have
\begin{equation}\label{24dff2}
g_2(y,\eta )=\omega_\eta\int_0^\infty\Big\{\int_ {C_{\gamma ,\beta}}(r[\eta]/r'[\eta])^{-z}f(y,z)\dbar z\Big\} u(r')\varphi _\eta(r')dr'/r'
\end{equation}
(for the computation it is unessential that we have $\varphi _\eta$ on the right instead of $\omega  _\eta$). For sufficiently large $\beta $ we have
$$g_0(y,\eta ):=r^{-(\mu -1)}\omega _\eta\op_M^{(\gamma+\beta ) -n/2}(f)(y)\varphi  _\eta \partial _{\eta _j}[\eta ]\in R_G^{\mu -1}(U\times \R^q,{\bf{g}})_{\mathcal{O}}$$
(the latter observation has nothing to do with the asymptotic nature of $f$). Thus, it remains to verify that $r^{-(\mu -1)}g_2(y,\eta )\partial _{\eta _j}[\eta ]$ is a Green symbol of order $\mu -1$. Because of $a(\eta )\partial _{\eta _j}[\eta ]\in S^0_{\textup{cl}}(\R^q)$ and Proposition \ref{24grop} (ii) we may ignore $a(\eta ).$ Moreover, write $r^{-(\mu -1)}=[\eta ]^{\mu -1}(r[\eta ])^{-(\mu -1)}$ and $[\eta ]^{\mu -1}\in S^{\mu -1}_{\textup{cl}}(\R^q).$ Again by virtue of Proposition \ref{24grop} (ii)  it suffices to show that $g_3(y,\eta ):=r([\eta ])^{-(\mu -1)}g_2(y,\eta )$ is a Green symbol of order $0,$ more precisely,
\begin{equation}\label{24dff3}
\begin{split}
g_3(y,\eta )u(r)&=(r[\eta ])^{-(\mu -1)}\omega_\eta(r)\!\int_0^\infty\!\!\Big\{\!\int_ {C_{\gamma ,\beta}}\!\!\!(r[\eta]/r'[\eta])^{-z}\!f(y,z)\dbar z\Big\} u(r')\varphi _\eta(r')dr'/r'\\&
=\int_0^\infty \!\omega_\eta(r)\langle \zeta^1_{b^1}(y,\eta,r'[\eta]),(r[\eta])^{-z}\rangle u(r'))(r')^n dr'+g_4(y,\eta ),
\end{split}
\end{equation}
in the notation of the first expression of \eqref{21anP1} (we now suppressed the variables $x,x'$ that are involved via an integration with a kernel in $ C^\infty (X_x\times X_{x'});$ such an abbreviation is contained in \eqref{24dff2} anyway). $g_4(y,\eta )$ will be a flat remainder of a similar meaning as the difference in the first relation of \eqref{22gad5}. According to \eqref{21anP1}, \eqref{22gad3}, we have to recognise that 
$$\zeta^1_{b^1}(y,\eta,r')\in C^\infty (U,\mathcal{A}'(K^1_{b^1}E^1))^\bullet,$$
here for $E^1:= S^\nu_{\textup{cl}}(\R^q,C^\infty(X_x)\hat{\otimes}_\pi\mathcal{K}^{\infty,-\gamma;\infty}(X^\wedge_{r',x'})),\nu =n+1,$ cf. the formula \eqref{21dia4}. By reformulating the expression in the middle of \eqref{24dff3} we obtain
\begin{equation}\label{24dff4}
\begin{split}
g_3(y,\eta )=\omega_\eta(r)\int_0^\infty\Big\{\int_ {C_{\gamma ,\beta}}\varphi _\eta(r')[\eta ]^{n+1}(r'[\eta])^{z-n-1}f(y,z)(r&[\eta])^{-z-\mu +1}\dbar z\Big\}\\& u(r')(r')^n dr'/r',
\end{split}
\end{equation}
which yields after substituting $z=w-\mu +1$
\begin{equation}\label{24dff5}
\begin{split}
g_3(y,\eta )=\omega_\eta(r)\int_0^\infty\Big\{\int_ {C_{\gamma -\mu +1,\beta}}\varphi _\eta(r')[\eta ]^{n+1}&(r'[\eta])^{w-\mu -n}f(y,w-\mu +1)\\&(r[\eta])^{-w}\dbar w\Big\} u(r')(r')^n dr'/r'.
\end{split}
\end{equation}
The curve $C_{\gamma -\mu +1,\beta}$ surrounds the poles of $(r'[\eta ])^{w-\mu -n}f(y,w-\mu +1)$ for all $y\in U,$ and the integration against a holomorphic function in $w$ represents a family of analytic functionals carried by a compact set in the strip $\{(n+1)/2-(\gamma -\mu +1+\beta )<\textup{Re}\,w<(n+1)/2-(\gamma -\mu +1)\},$ pointwise discrete and of finite order. Recall that $\beta >0$ is chosen as large as we want, but for the connection with a variable discrete asymptotic type we only need the poles of real part $>c$ for some $c\in \{(n+1)/2-(\gamma -(k+1))<\textup{Re}\,w<(n+1)/2-\gamma \},\Theta =(-(k+1),0].$ Analogously as Proposition \ref{21merdec} we find a decomposition $f=f_0+f_1$ such that $f_0$ is holomorphic in $\{\textup{Re}\,w > c-\varepsilon _0\}$ and $f_1$ in $\{\textup{Re}\,w < c-\varepsilon _1\}$ for some small $0<\varepsilon _0<\varepsilon _1.$ This gives us a decomposition of the right hand side of \eqref{24dff5}. Because of the position of the poles in corresponding half-planes we may replace the curves for the integrals with $f_i$ by curves $C_i$ surrounding the poles of $f_i,i=0,1,$ such that $C_0$ is contained in 
$\textup{Re}\,w<c-\varepsilon _0'$ and $C_1$ in $\textup{Re}\,w>c-\varepsilon _1'$ for certain $0<\varepsilon _0'<\varepsilon _0,0<\varepsilon _1\varepsilon _1',c-\varepsilon _1'>(n+1)/2-(\gamma -(k+1)).$
Then we may set
\begin{equation}\label{24dff6}
\begin{split}
g_4(y,\eta )=\omega_\eta(r)\int_0^\infty\Big\{\int_ {C_0}\varphi _\eta(r')[\eta ]^{n+1}&(r'[\eta])^{w-\mu -n}f_0(y,w-\mu +1)\\&(r[\eta])^{-w}\dbar w\Big\} u(r')(r')^n dr'/r',
\end{split}
\end{equation}
and define $ \zeta^1_{b^1}$ by
\begin{equation}\label{24dff7}
\begin{split}
\langle \zeta^1_{b^1}(y,\eta,&r'[\eta]),(r[\eta])^{-z}\rangle\\&:=\int_ {C_1}\varphi _\eta(r')[\eta ]^{n+1}(r'[\eta])^{w-\mu -n}f_1(y,w-\mu +1)(r[\eta])^{-w}\dbar w.
\end{split}
\end{equation}
To finish the proof it remains to note that for the formal adjoint of $g_1(y,\eta )$ we have $ \zeta^2_{b^2}=0$ in the characterisation \eqref{22gad5}, since $\varphi \in C_0^\infty (\R_+).$ Moreover, the formal adjoint of the second summand in \eqref{24dff00} is of the same structure as the first one.
\end{proof}
\begin{prop} \label{22melgreen1}
Let $f_{j\alpha}(y,z)\in C^\infty(\Omega,\mathcal{M}_{\mathcal{R}_{j\alpha}}^{-\infty}(X))$ for variable discrete Mellin asymptotic types $\mathcal{R}_{j\alpha}, |\alpha|\leq j,j=0,\ldots,k,$ form the operator functions \eqref{21mmmel}, and
\begin{equation} \label{21mmmel5}
\widetilde{m}(y,\eta):=\sum_{j=0}^k\sum_{|\alpha|\leq j}\tilde{\omega}_\eta r^{-\mu+j}\widetilde{\Op}_M^{\gamma-n/2} (f_{j\alpha})(y)\eta^\alpha\tilde{\omega}'_\eta
\end{equation}
where $\widetilde{\Op}_M^{\gamma-n/2}(\cdot)$ indicates the corresponding operators for another choice of data \eqref{21melconv}, \eqref{21melconv0n}, and $\tilde{\omega},\tilde{\omega}'$ are other cut-off functions. Then 
\begin{equation} \label{21mmmel6}
g(y,\eta):= m(y,\eta)-\widetilde{m}(y,\eta)
\end{equation}
is a Green symbol of order $\mu$ in the sense of Definition \textup{\ref{21gr1}}.
\end{prop}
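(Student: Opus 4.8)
The plan is to decompose the difference $g=m-\widetilde{m}$ into a few elementary constituents and to recognise each of them as a Green symbol by means of results already at our disposal. First I will use Remark \ref{21data} to form both \eqref{21mmmel} and \eqref{21mmmel5} with the \emph{same} global data \eqref{21melconv0n} and local data \eqref{21melconv} for all indices $j,\alpha$. Passing to a common refinement of the two coverings with a subordinate partition of unity, inserting the relations $\sum\varphi_l\equiv1$ and $\sum\tilde\varphi_{l'}\equiv1$ on the relevant $\overline{U}$, and using that $R^m_G(\Omega\times\R^q,{\bf{g}})$ is stable under finite sums and is a module over $S^0_{\textup{cl}}(\Omega\times\R^q)$ (Proposition \ref{24grop}\,(ii)), the difference becomes a finite sum of terms
\[
\varphi(y)\big(\omega_\eta r^{-\mu+j}\op_M^{\delta-n/2}(f)(y)\omega'_\eta-\tilde\omega_\eta r^{-\mu+j}\op_M^{\tilde\delta-n/2}(f)(y)\tilde\omega'_\eta\big),\qquad\varphi\in C_0^\infty(U),
\]
with $U\in\mathcal{U}(\Omega)$, $f$ a localised Mellin symbol in $C_0^\infty(U,\mathcal{M}_{\mathcal{R}}^{-\infty}(X))$, two admissible weights $\delta,\tilde\delta\in[\gamma-j,\gamma]$ whose weight lines avoid $\pi_\C\mathcal{R}(y)$ for $y\in\overline{U}$, and two pairs of cut-off functions. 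It therefore suffices to show that such a difference lies in $R^{\mu-j}_G(\Omega\times\R^q,{\bf{g}})_{\mathcal{P},\mathcal{Q}}$ for suitable $\mathcal{P},\mathcal{Q}$; the scalar factor $\eta^\alpha\in S^{|\alpha|}_{\textup{cl}}$ is then absorbed by Proposition \ref{24grop}\,(ii), and since $|\alpha|\le j$ this lands the summand in $R^\mu_G(\Omega\times\R^q,{\bf{g}})$, so that summing over $j,\alpha,\iota$ gives the assertion.

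I will split this difference into the bracket that changes only the weight line plus the bracket that changes only the cut-off functions. For the bracket changing the weight line I invoke the computation \eqref{21mcomm}--\eqref{21g}: with $\tilde\delta=\delta+\beta$ it equals $r^{-\mu+j}$ applied to the operator $g(y,\eta)$ of \eqref{21g}, i.e.\ the integral operator whose kernel carries the contour integral $\int_{C_{\delta,\beta}}(r[\eta]/r'[\eta])^{-z}f(y,z)\,\dbar z$ over a compact, counter-clockwise curve $C_{\delta,\beta}$ in the strip between the two weight lines that surrounds all poles of $f(y,\cdot)$ there, uniformly for $y\in\overline{U}$ --- such a curve exists precisely because $\mathcal{R}$ is a variable discrete Mellin asymptotic type (Definition \ref{21melas}), so those poles are finite in number with uniformly bounded total multiplicity over $\overline{U}$. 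Substituting $z=w-\mu+j$ and then arguing exactly as in the proof of Proposition \ref{24grdiff} --- decompose $f$ by Proposition \ref{21merdec} into a summand holomorphic to the right of the weight strip of ${\bf{g}}$, contributing a flat remainder of order $\mu-j$, and a summand carrying the remaining poles, which produces, by integration against $(r[\eta])^{-w}$, a family $\zeta^1_{b^1}\in C^\infty(U,\mathcal{A}'(K^1_{b^1},E^1))^\bullet$ subordinate to the translated Mellin asymptotic type $T^{\mu-j}\mathcal{R}$ restricted to the appropriate strip --- one recognises this operator as a Green symbol of order $\mu-j$ in the sense of Definition \ref{21gr1}; the operator-valued symbolic estimates in $(y,\eta)$ and the $\kappa_\lambda$-homogeneity for large $|\eta|$ come out as in Proposition \ref{24grdiff} from the fact that $f$ is $L^{-\infty}(X)$-valued, and the parallel computation for the formal adjoint (where $\zeta^2_{b^2}$ arises in the same way from the reflected Mellin symbol) supplies the second relation of \eqref{22gad5}.

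For the bracket changing the cut-off functions I write it as $(\omega_\eta-\tilde\omega_\eta)r^{-\mu+j}\op_M^{\tilde\delta-n/2}(f)(y)\omega'_\eta+\tilde\omega_\eta r^{-\mu+j}\op_M^{\tilde\delta-n/2}(f)(y)(\omega'_\eta-\tilde\omega'_\eta)$, with $\omega-\tilde\omega,\ \omega'-\tilde\omega'\in C_0^\infty(\R_+)$. The first term has, for each $\eta$, range consisting of functions that vanish near $r=0$; together with the decay of the Mellin kernel of $f$ restricted to $\Gamma_{(n+1)/2-\tilde\delta}$ (cf.\ \eqref{21es}) and the rescaling $r^{-\mu+j}=[\eta]^{\mu-j}(r[\eta])^{-(\mu-j)}$ on $\supp(\omega_\eta-\tilde\omega_\eta)$, it is a Green symbol of infinite flatness, an element of $R^{\mu-j}_G(\Omega\times\R^q)_{\mathcal{O}}\subset R^{\mu-j}_G(\Omega\times\R^q,{\bf{g}})$. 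In the second term, $\op_M^{\tilde\delta-n/2}(f)(y)$ applied to a function supported away from $r=0$ produces near $r=0$ exactly the asymptotics coming from the poles of $f(y,\cdot)$ to the left of $\Gamma_{(n+1)/2-\tilde\delta}$ (shift the contour leftwards and collect residues); after the $r^{-\mu+j}$-shift, and after splitting off, as in Proposition \ref{21merdec}, the poles inside the weight strip of ${\bf{g}}$ from a flat remainder, this is again a Green symbol of order $\mu-j$ with asymptotic types subordinate to $T^{\mu-j}\mathcal{R}$. Throughout it is convenient, as in Remarks \ref{21dicont} and \ref{21ggcont}, first to read these difference operators as Green symbols with \emph{continuous} asymptotics carried by the compact curves above, and only then to observe that the pointwise discrete, finite-order, finite-rank structure of $f$ forces the resulting asymptotic types to be of the pointwise discrete kind demanded by Definition \ref{21gr1}. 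The adjoint requirement for this bracket follows in the same way, the analysis being symmetric under $*$; alternatively, one notes once and for all that by Proposition \ref{21smad} the family $g^*=m^*-\widetilde{m}^*$ is itself a difference of smoothing Mellin symbols, now for the adjoint data ${\bf{g}}^*=(-\gamma+\mu,-\gamma,\Theta)$ (cf.\ \eqref{22grnadas}), to which the present argument applies verbatim.

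The main obstacle is the content of the two middle paragraphs: verifying that the weight-change and the cut-off-change difference operators meet \emph{all} the requirements of Definition \ref{21gr1} --- the operator-valued symbol estimates of order $\mu-j$ with the $\kappa_\lambda$-twist, together with the precise asymptotic description \eqref{22gad5} involving analytic functionals in the $(\cdot)^\bullet$-spaces --- and, within this, the bookkeeping of the translated Mellin asymptotic type $T^{\mu-j}\mathcal{R}$ and of which of its points fall inside the weight strip of ${\bf{g}}$ (contributing genuine asymptotics) and which lie outside (contributing only flat remainders). Since this repeats almost verbatim the analysis carried out in the proof of Proposition \ref{24grdiff}, the proof will consist mainly in setting up the localisation of the first paragraph and then invoking that analysis twice.
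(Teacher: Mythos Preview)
Your overall strategy --- common refinement of the coverings, partition of unity, reduction to a single localised summand, and the split into a weight-change bracket plus a cut-off-change bracket, each handled via the contour-integral analysis of Proposition \ref{24grdiff} --- is exactly the paper's approach.

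There is, however, one genuine error. You claim that the first cut-off-change term
\[
(\omega_\eta-\tilde\omega_\eta)\,r^{-\mu+j}\op_M^{\tilde\delta-n/2}(f)(y)\,\omega'_\eta
\]
lies in $R^{\mu-j}_G(\Omega\times\R^q)_{\mathcal{O}}$. Its image is indeed supported away from $r=0$, so the first relation of \eqref{22gad5} holds with $\zeta^1_{b^1}=0$. But infinite flatness in the sense of \eqref{22infflat} requires the \emph{adjoint} to map into $\mathcal{K}^{\infty,\delta;\infty}(X^\wedge)$ for all $\delta$ as well, and the adjoint has $\omega'_\eta$ on the left and the compactly supported factor $(\omega_\eta-\tilde\omega_\eta)$ on the right. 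Applying the Mellin operator with symbol $f^*$ to a function supported away from $0$ produces genuine asymptotics near $r=0$ from the poles of $f^*$, so $\zeta^2_{b^2}\neq 0$ in general. This term is a Green symbol, but not one of infinite flatness.

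The paper resolves this cleanly by the observation you yourself make at the end: the formal adjoint of $g_0$ (your first term) has the same structure as $g_1$ (your second term) and vice versa, so it suffices to analyse $g_1$ --- for which $\zeta^2_{b^2}=0$ trivially, and $\zeta^1_{b^1}$ is obtained by the $r^\beta$-commutation trick you sketch. Your ``alternative'' route via Proposition \ref{21smad} applied to $g^*=m^*-\widetilde m^*$ also works and fixes the gap; just drop the incorrect $R_G^{\mu-j}(\Omega\times\R^q)_{\mathcal O}$ claim and instead say that the first term contributes $\zeta^1_{b^1}=0$ while its $\zeta^2_{b^2}$ is supplied by the analysis of the second term applied to the adjoint.
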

\begin{proof}
The notation in \eqref{22melgreen1} refers to \eqref{21mmmel}, \eqref{21melop3}. The assertion can be reduced to a decomposition of the operators with respect to a joint refinement of the open coverings and to a common partition of unity. Then we may compare the summands separately. Thus, if $f\in C_0^\infty (U,\mathcal{M}^
{-\infty }_{\mathcal{R}}(X))$ is given for an open set $U$, we consider two operators 
\begin{equation} \label{21mvergl}
m(y,\eta)\!:=\omega _\eta r^{-\mu +j}\op_M^{\gamma_j -n/2}\!(f)(y)\eta ^\alpha \omega' _\eta ,\,\,\tilde{m}(y,\eta)\!:=\tilde{\omega} _\eta r^{-\mu +j}\op_M^{\tilde{\gamma}_j -n/2}\!(f)(y)\eta ^\alpha\tilde{\omega}' _\eta 
\end{equation}
for different cut-off functions $\omega, \omega',\tilde{ \omega}, \tilde{\omega}' ,|\alpha |\leq j,$ and weights $\gamma -j\leq \gamma _j\leq \gamma ,\gamma -j\leq \tilde{\gamma}_j\leq \gamma ,$ such that $\pi _\C\mathcal{R}(y)\cap\Gamma _{(n+1)/2-\gamma _j}=\emptyset,\pi _\C\mathcal{R}(y)\cap\Gamma _{(n+1)/2-\tilde{\gamma} _j}=\emptyset$ for all $y\in U.$ By virtue of the latter assumptions on the weights it makes sense to form $\op_M^{\gamma_j -n/2}(f)(y)$ as well as $\op_M^{\tilde{\gamma}_j -n/2}(f)(y),$ and we first consider the difference
\begin{equation} \label{21mg1}
g(y,\eta):=\omega _\eta r^{-\mu +j}\{\op_M^{\tilde{\gamma}_j -n/2}\!(f)(y)-\op_M^{\gamma_j -n/2}(f)(y)\}\eta ^\alpha \omega' _\eta.
\end{equation}
To characterise $g(y,\eta)$ as a Green symbol for abbreviation we set $\delta :=\gamma_j$ and assume without loss of generality  $\tilde{\gamma}_j =\delta +\beta $ for some $\beta >0.$ Analogously as \eqref{21g} we obtain
\begin{equation} \label{21mg2}
g(y,\eta)=\omega _\eta r^{-\mu +j}\Big\{\int_ {C_{\delta,\beta}}(r[\eta]/r'[\eta])^{-z}f(y,z)\dbar z\Big\}\eta ^\alpha \omega' _\eta.
\end{equation}
Now similarly as in the preceding proof it follows that $g(y,\eta )$ is a Green symbol. It remains to discuss the effect under changing the cut-off functions, i.e. to consider
\begin{equation} \label{21mg3}
g_0(y,\eta):=(\omega _\eta -\tilde{ \omega}_\eta)r^{-\mu +j}\op_M^{\gamma_j -n/2}\!(f)(y)\eta ^\alpha \omega' _\eta
\end{equation}
and
\begin{equation} \label{21mg4}
g_1(y,\eta):=\omega _\eta r^{-\mu +j}\op_M^{\gamma_j -n/2}\!(f)(y)\eta ^\alpha (\omega' _\eta-\tilde{\omega}'_\eta ).
\end{equation}
The formal adjoint of $g_0$ is of analogous nature as $g_1$ and vice versa; therefore, it suffices to consider $g_1.$ The operators  $g^*_1$ contain the factor $\varphi _\eta :=\omega' _\eta-\tilde{\omega}'_\eta $ from the left where $\varphi \in C_0^\infty (\R_+).$ Thus, $\zeta^2_{b^2}=0,$ in the notation of \eqref{21anP1} applied to $g_1.$ Concerning the identification of $\zeta^1_{b^1}$ we may apply once again the trick of commuting powers $r^\beta $ for suitable $\beta >0$ from the right to the left through the Mellin operator after replacing $\varphi (r)$ by $r^\beta r^{-\beta}\varphi (r),$ using that $r^{-\beta}\varphi (r)\in C_0^\infty (\R_+).$ Because of the $y$-wise discrete character of $\mathcal{R}$ and since the set $U$ may be taken of a diameter as small as we want, we can choose $\beta $ in such a way that the resulting operator obtained after the commutation process is flat in the sense of symbols as in the first relation of \eqref{22gad5}. The computation leaves again a remainder of a similar structure as what we discussed before, see once again \eqref{21g}. Thus, this remainder gives us $\zeta^1_{b^1}$ of the desired quality.
\end{proof}

\section{Branching asymptotics}

\subsection{Weighted edge spaces with asymptotics}
As noted at the beginning solutions to elliptic (edge-degenerate) equations on a manifold with edge belong to weighted spaces of the edge calculus, cf. \cite{Schu32}, or \cite{Schu20}. Recall that locally near the edge of dimension $q$ in the variables $(r,x,y)$ of the (open stretched) wedge $X^\wedge\times \R^q$ they are modelled on the spaces $\mathcal{W}^s(\R^q,\mathcal{K}^{s,\gamma}(X^\wedge)).$ If $H$ is a Hilbert space with group action $\kappa=\{\kappa_\lambda\}_{\lambda\in\R_+}$ the space $\mathcal{W}^s(\R^q,H)$ is the completion of $\mathcal{S}(\R^q,H)$ with respect to the norm $\|\langle\eta\rangle^s\kappa_{\langle\eta\rangle}^{-1} \hat{u}(\eta)\|_{L^2(\R^q,H)}.$ Here $\hat{u}(\eta)=F_{y\rightarrow\eta}u(\eta)$ is the Fourier transform of $u$ in $\R^q.$ Equivalently we may (and will) replace $\langle\eta\rangle$ by $[\eta].$ The group action on $\mathcal{K}^{s,\gamma}(X^\wedge)$ is defined by \eqref{21kappa}. From the definition we see that there is an isomorphism
\begin{equation} \label{31pot}
K:=F^{-1}\kappa_{[\eta]}F:H^s(\R^q,H)\rightarrow\mathcal{W}^s(\R^q,H)
\end{equation}
for every $s\in\R$ where $H^s(\R^q,H)$ is the standard Sobolev space of $H$-valued distributions of smoothness $s$ on $\R^q$ (which is equal to the respective $\mathcal{W}^s$-space when we take the trivial group action on $H$). The spaces $\mathcal{W}^s  $ also admit $\textup{loc}$- and $\textup{comp}$-variants over an open set $\Omega \subseteq \R^q,$ denoted by $\mathcal{W}^s_{\textup{loc}}(\Omega ,H)$ and $\mathcal{W}^s_{\textup{comp}}(\Omega ,H),$ respectively. $\mathcal{W}^s_{\textup{loc}}(\Omega ,H)$ means the space of all distributions $u$ such that $\varphi u\in \mathcal{W}^s(\R^q,H),$ while $\mathcal{W}^s_{\textup{comp}}(\Omega ,H)$ is the subspace of compactly supported elements of $\mathcal{W}^s_{\textup{loc}}(\Omega ,H).$ A similar notation is used when $H$ is replaced by a Fr\'echet space.

\begin{rem}\label{31locw}
Although the definition of $\mathcal{W}^s(\R^q,\mathcal{K}^{s,\gamma}(X^\wedge))$ is anisotropic insofar it treats the direction of the edge $\R^q$ in a different manner than $X^\wedge,$ we have
\begin{equation} \label{31locww}
H_{\textup{comp}}^s(\R^q\times X^\wedge)\subset \mathcal{W}^s(\R^q,\mathcal{K}^{s,\gamma}(X^\wedge))\subset H_{\textup{loc}}^s(\R^q\times X^\wedge)
\end{equation}
for every $s,\gamma \in\R. $ This is a consequence of the identity
\begin{equation} \label{31locwww}
 \mathcal{W}^s(\R^q,H^s(\R^{n+1}))=H^s(\R^q \times \R^{n+1})
\end{equation}
where $H^s(\R^{n+1})$ is endowed with the group action $(\kappa _\lambda u)(\tilde{x})=\lambda ^{(n+1)/2}u(\lambda \tilde{x}),\lambda \in \R_+,$ cf. \cite{Schu20}.
\end{rem}
\begin{rem}\label{31potK}
For every $u(r,y)\in \mathcal{W}^s(\R^q,\mathcal{K}^{s,\gamma }(X^\wedge))$ (with $x\in X$ being suppressed in this notation) there exists a unique $v(r,y)\in H^s(\R^q,\mathcal{K}^{s,\gamma }(X^\wedge))$ such that
\begin{equation} \label{31potKK}
(F_{y'\rightarrow \eta }u)(r,\eta )=[\eta ]^{(n+1)/2}\hat{v}(r[\eta ],\eta ).
\end{equation} 
\end{rem}
This is an immediate consequence of relation \eqref{31pot}
The constructions also apply to a Fr\'echet space $E=\projlim_{j\in\N}E^j$ endowed with a group action (cf. the notation in connection with \eqref{21fre}); then we set
\begin{equation} \label{31fre}
\mathcal{W}^s(\R^q,E):= \lim_{\substack{\longleftarrow\\j\in\N}}\mathcal{W}^s(\R^q,E^j). 
\end{equation}
In particular, if $E:=\mathcal{K}_{\mathcal{P}}^{s,\gamma}(X^\wedge)$ for a constant discrete asymptotic type $\mathcal{P},$ cf. Section 2.1, we have a corresponding edge space with such asymptotics, namely,
\begin{equation} \label{31asd}
\mathcal{W}^s(\R^q,\mathcal{K}_{\mathcal{P}}^{s,\gamma}(X^\wedge)).
\end{equation}
Such a definition shows us immediately that we have a problem when we admit the asymptotic type $\mathcal{P}$ to be $y$-dependent. Therefore, the main issue of this section is to give a well-motivated notion of variable discrete asymptotics in edge spaces of finite smoothness $s.$ The case $\textup{dim}\,X=0$ has been treated in \cite{Schu34}. It will be instructive to consider once again the constant discrete case.\\
Let us first have a look at the standard Sobolev space $H^s(\R^{n+1}\times \R^q)$ where $\R^q$ is interpreted as an edge embedded in $\R^{n+1}\times \R^q.$ The transversal cone to the edge is $\R^{n+1}$ which can be identified via polar coordinates with $(S^n)^\Delta:=(\overline{\R}_+\times S^n )/(\{0\}\times S^n).$ In this case, taking the group action $\kappa_\lambda:u(\tilde{x})\rightarrow\lambda^{(n+1)/2}u(\lambda\tilde{x}),\lambda\in\R_+,$ in $H^s(\R^{n+1}),$ we have a canonical identification 
\begin{equation} \label{31sob}
H^s(\R^{n+1}\times \R^q)=\mathcal{W}^s(\R^q,H^s(\R^{n+1})).
\end{equation}
For any $s\geq0,s-(n+1)/2\notin\N $ fixed, we have a direct decomposition
\begin{equation} \label{31sdec}
H^s(\R^{n+1})=H^s_0(\R^{n+1})+\mathcal{E}_{\mathcal{T}}
\end{equation}
where
\begin{equation} \label{31sdect}
\mathcal{E}_{\mathcal{T}}:= \Big\{\omega(|\tilde{x}|)\sum_{|\alpha|< s-(n+1)/2}c_\alpha\tilde{x}^\alpha:c_\alpha\in\C,|\alpha|< s-(n+1)/2\Big\},
\end{equation}
and $H_0^s(\R^{n+1}):= \{u\in H^s(\R^{n+1}):D_{\tilde{x}}^\alpha u(0)=0 \quad\mbox{for all}\quad|\alpha|< s-(n+1)/2\}.$ The space $\mathcal{E}_T$ can also be written as 
\begin{equation} \label{31sdece}
\mathcal{E}_{\mathcal{T}}:= \Big\{\omega(r)\sum_{0\leq j<s-(n+1)/2}c_j(x)r^j:c_j\in L_j,0\leq j< s-(n+1)/2\Big\},
\end{equation}
for certain well-defined finite-dimensional subspaces $L_j$ of $C^\infty(S^n).$ According to \eqref{31sob} we can pass to the direct decomposition
\begin{equation} \label{31sdecw}
H^s(\R^{n+1}\times \R^q)=\mathcal{W}^s(\R^q,H^s_0(\R^{n+1}))+KH^s(\R^q,\mathcal{E}_{\mathcal{T}}),
\end{equation}
for the operator $K$ of \eqref{31pot}. The second term of \eqref{31sdecw} shows the form of the singular functions of the Taylor asymptotics of functions in $H^s(\R^{n+1}\times \R^q)$ transversally to $\R^q,$ namely,
\begin{equation} \label{31tay}
\begin{split}
KH^s(\R^q,\mathcal{E}_{\mathcal{T}})=F^{-1}_{\eta\rightarrow y}&\Big\{[\eta]^{(n+1)/2}\sum_{j<s-(n+1)/2}\omega(r[\eta])c_j(x)(r[\eta])^j\hat{v}_j(\eta):\\
&c_j\in L_j, v_j\in H^s(\R^q),0\leq j< s-(n+1)/2\Big\}.
\end{split}
\end{equation}
(Of course, in this case we can also write $KH^s(\R^q,\mathcal{E}_{\mathcal{T}})=F^{-1}_{\eta\rightarrow y}\big\{[\eta]^{(n+1)/2}\omega(|\tilde{x}|[\eta])\newline\sum_{|\alpha|< s-(n+1)/2}\hat{c}_\alpha(\eta)([\eta]\tilde{x})^\alpha:c_\alpha\in H^s(\R^q)\big\}$.)
In any case, every $u(r,x,y)\in H^s(\R^{n+1}\times \R^q)$ has the form
$u(r,x,y)=u_{\textup{flat}}(r,x, y)+u_{\textup{sing}}(r,x,y),$ for $u_{\textup{flat}}(r,x,y)\newline\in\mathcal{W}^s(\R^q\!,\!H^s_0(\R^{n+1})),u_{\textup{sing}}(r,x,y)\!\in\! KH^s(\R^q\!,\mathcal{E}_{\mathcal{T}}).$ More details on the relationship between standard Sobolev spaces and embedded submanifolds interpreted as edges may be found in \cite{Dine4}, \cite{Liu3}, and also in \cite{Haru13}, with applications to mixed elliptic problems.\\
Using the decomposition \eqref{1asK}, for the space \eqref{31asd} we obtain 
\begin{equation} \label{31asdecw}
\mathcal{W}^s(\R^q,\mathcal{K}_{\mathcal{P}}^{s,\gamma}(X^\wedge))=\mathcal{W}^s(\R^q,\mathcal{K}_\Theta^{s,\gamma}(X^\wedge))+KH^s(\R^q,\mathcal{E}_{\mathcal{P}}),
\end{equation}
applying the operator $K:H^s(\R^q,\mathcal{E}_{\mathcal{P}})\rightarrow \mathcal{W}^s(\R^q,\mathcal{K}^{\infty ,\gamma}(X^\wedge)).$ Thus, every $u(r,x,y)\newline\in \mathcal{W}^s(\R^q,\mathcal{K}_{\mathcal{P}}^{s,\gamma}(X^\wedge))$ can be written as $u(r,x,y)=u_{\textup{flat}}(r,x,y)+u_{\textup{sing}}(r,x,y),$ for $u_{\textup{flat}}(r,x,y)\in\mathcal{W}^s(\R^q,\mathcal{K}_\Theta^{s,\gamma}(X^\wedge)),$ $u_{\textup{sing}}(r,x,y)\in KH^s(\R^q,\mathcal{E}_{\mathcal{P}}). $ In the present case the space of singular functions with constant discrete asymptotics is given by
\begin{equation} \label{31tayK}
\begin{split}
KH^s(\R^q,\mathcal{E}_{\mathcal{P}})=F^{-1}_{\eta\rightarrow y}\Big\{[\eta]^{(n+1)/2}&\sum_{j=0}^J\sum_{l=0}^{m_j}\omega(r[\eta])c_{jl}(x,\eta)(r[\eta])^{-p_j}\textup{log}^l(r[\eta]):\\&c_{jl}(x,\eta)\in C^\infty(X,\hat{H}^s(\R_\eta^q))\quad\mbox{for all}\quad j,l\Big\},
\end{split}
\end{equation} 
cf. the equation \eqref{1sing}, where $\hat{H}^s(\R_\eta^q):=F_{y\rightarrow \eta }H^s(\R^q_y).$ It is clear that when we change the cut-off function, or replace $[\eta]$ by another function of that kind, or by $\langle\eta\rangle,$ and denote the resulting decomposition by $u(r,x,y)=\tilde{u}_{\textup{flat}}(r,x,y)+\tilde{u}_{\textup{sing}}(r,x,y),$ then $u_{\textup{sing}}-\tilde{u}_{\textup{sing}}= \tilde{u}_{\textup{flat}}-u_{\textup{flat}}\in\mathcal{W}^s(\R^q,\mathcal{K}_\Theta^{s,\gamma}(X^\wedge)).$ Moreover, we have $\mathcal{W}^\infty(\R^q,H)=H^\infty(\R^q,H)$ for any Hilbert (or Fr\'echet) space
with group action. Thus, in particular, 
\begin{equation} \label{31asinf}
\begin{split}
\mathcal{W}^\infty(\R^q,\mathcal{K}_{\mathcal{P}}^{\infty,\gamma}(X^\wedge))&=H^\infty(\R^q,\mathcal{K}_{\mathcal{P}}^{\infty,\gamma}(X^\wedge))\\
&=H^\infty(\R^q,\mathcal{K}_\Theta^{\infty,\gamma}(X^\wedge))+H^\infty(\R^q,\mathcal{E}_{\mathcal{P}}).
\end{split}
\end{equation}
In order to find a formulation for the variable discrete asymptotics in weighted edge spaces we first observe that the space \eqref{31tayK} can be written in the form
\begin{equation} \label{31tayF}
\begin{split}
KH^s(\R^q,\mathcal{E}_{\mathcal{P}})=F^{-1}_{\eta\rightarrow y}\big\{[\eta]^{(n+1)/2}&\omega(r[\eta])\langle\hat{\zeta}(\eta)_z,(r[\eta])^{-z}\rangle:\\&\hat{\zeta}(\eta)\in\mathcal{A}'_{\mathcal{P}}(\pi_\C\mathcal{P}, C^\infty(X,\hat{H}^s(\R_\eta^q)))\big\},
\end{split}
\end{equation} 
where $\pi_\C\mathcal{P}=\{p_j\}_{j=0,\ldots,J},$ and $\mathcal{A}'_{\mathcal{P}}(\pi_\C\mathcal{P},E)$ for a Fr\'echet space $E$ is the space of all $E$-valued analytic functionals, carried by $\pi_\C\mathcal{P},$ of the form
\begin{equation} \label{31asana}
\langle\zeta,h\rangle=\sum_{j=0}^J\sum_{l=0}^{m_j}c_{jl}\frac{d^l}{dz^l}h(z)|_{z=p_j}\quad\mbox{for any}\quad c_{jl}\in E.
\end{equation}

\begin{rem}\label{31aspot}
Let 
\begin{equation} \label{31as}
\mathcal{P}:=\{(p_j,m_j)\}_{j=0,\ldots ,J}\subset \C \times \N,\quad J=J(\mathcal{P})<\infty ,
\end{equation}
be a sequence such that $\pi _\C \mathcal{P}= \{p_j\}_{j=0,\ldots ,J}\subset \{z\in \C:(n+1)/2-\gamma +\vartheta <\textup{Re}\,z <(n+1)/2-\gamma \},$ and let $\mathcal{A}_{\mathcal{P}} $ defined to be the space of all scalar meromorphic functions with poles at the points $p_j$ of multiplicity $m_j+1,j=0,\ldots ,J.$ Moreover, let $C$ be a smooth compact curve counter-clockwise surrounding $\pi _\C\mathcal{P}$ in the usual way. Every $f\in \mathcal{A}_{\mathcal{P}}$ gives rise to an element $\zeta _f\in \mathcal{A}'(\pi_\C\mathcal{P})$ via $h\mapsto \int_C f(z)h(z)\dbar z, h\in \mathcal{A}(\C).$ Then
\begin{equation} \label{31asfu}
k_{\zeta _f}(\eta )c:=[\eta]^{(n+1)/2}\omega(r[\eta])\langle\zeta_f,(r[\eta])^{-z}\rangle c,\,\,c\in C^\infty (X),
\end{equation}
represents a symbol $k_{\zeta _f}(\eta )\in S^0_{\textup{cl}}(\R^q;C^\infty (X),\mathcal{K}^{\infty ,\gamma }(X^\wedge))_{\textup{id},\kappa }$ for the trivial group action $\textup{id}$ on $C^\infty (X)$ and $(\kappa _\lambda u)(\lambda r,x)=\lambda ^{(n+1)/2}u(\lambda r,x),\lambda \in \R_+,$ on the space $\mathcal{K}^{\infty ,\gamma }(X^\wedge).$ The associated pseudo-differential operator 
\begin{equation} \label{31asfop}
\Op(k_{\zeta _f})=F^{-1}k_{\zeta _f}(\eta )F
\end{equation}
is continuous as an operator
\begin{equation} \label{31asfopL}
\Op(k_{\zeta _f}):H^s(\R^q,C^\infty (X))\rightarrow \mathcal{W}^s(\R^q,\mathcal{K}^{\infty ,\gamma }(X^\wedge)),
\end{equation}
and we have
\begin{equation} \label{31asfopK}
KH^s(\R^q,\mathcal{E}_{\mathcal{P}})=\{\Op(k_{\zeta _f})v:v\in H^s(\R^q,C^\infty (X)),f\in \mathcal{A}_{\mathcal{P}}\}.
\end{equation}
\end{rem}
Setting for the moment 
$$\mathcal{A}'_{\mathcal{P}}:=\{\zeta _f:f\in \mathcal{A}_{\mathcal{P}}\}$$
we easily see that $\mathcal{A}'_{\mathcal{P}}$ is of finite dimension. Moreover, for any fixed cut-off function $\omega $  the transformation $\zeta _f\mapsto \omega (r)\langle\zeta _f,r^{-z}\rangle$ gives us an isomorphism $\mathcal{A}'_{\mathcal{P}}\rightarrow \mathcal{A}^1_{\mathcal{P}}$
to a finite-dimensional subspace $\mathcal{A}^1_{\mathcal{P}}$ of $\mathcal{A}_{\mathcal{P}},$ and we have an isomorphism
$$KH^s(\R^q,\mathcal{E}_{\mathcal{P}})\cong \mathcal{A}^1_{\mathcal{P}}\otimes H^s(\R^q,C^\infty (X)).$$
Let $\mathcal{P}$ be a variable discrete asymptotic type over $\Omega,$ associated with the weight data $(\gamma,\Theta).$ Then we define $C^\infty(\Omega,\mathcal{K}_{\mathcal{P}}^{\infty,\gamma}(X^\wedge))$ to be the set of all $u(r,x,y)\in C^\infty(\Omega_y,\mathcal{K}^{\infty,\gamma}(X^\wedge_{r,x}))$ such that for any cut-off function $\omega$ the Mellin transform $M_{r\rightarrow z}\omega(r)u(r,x,y)=:f(y,z)$ is a 
(in $y$ smooth) family of meromorphic functions in the strip $\{(n+1)/2-\gamma+\vartheta<\textup{Re}\,z<(n+1)/2-\gamma\},$ subordinate to $\mathcal{P},$ as explained after Definition \ref{21melas} (we often suppress the variable $x$ when we speak about $C^\infty(X)$-valued functions).
\begin{defn} \label{31wedgeas}
Let $\mathcal{P}$ be a variable discrete asymptotic type over $\Omega,$ associated with the weight data $(\gamma,\Theta).$ The space $\mathcal{W}^s_{\textup{loc}}(\Omega,\mathcal{K}_{\mathcal{P}}^{s,\gamma}(X^\wedge))$ for $s\in\R$ is defined to be the set of all $u\in\mathcal{W}^s_{\textup{loc}}(\Omega,\mathcal{K}^{s,\gamma}(X^\wedge))$ such that for $b:=(c,U)$ for any $(n+1)/2-\gamma+\vartheta<c<(n+1)/2-\gamma$ and $U\in\mathcal{U}(\Omega)$ there exists a compact set $K_b\subset\{(n+1)/2-\gamma+\vartheta<\textup{Re}\,z<(n+1)/2-\gamma\}$ and a function 
\begin{equation} \label{31asef}
\hat{f}_b(y,z,\eta)\in C^\infty(U,\mathcal{A}(\C\setminus K_b,E^s))^\bullet\quad\mbox{for}\quad E^s:=C^\infty(X,\hat{H}^s(\R^q_\eta))
\end{equation}
subordinate to $\mathcal{P}|_U$ and a corresponding
\begin{equation} \label{31ased}
\hat{\delta}_b(y,\eta)\in C^\infty(U,\mathcal{A}'(K_b,E^s))^\bullet, \langle\hat{\delta}_b(y,\eta),h\rangle=\int_{C_b}\hat{f}_b(y,z,\eta)h(z)\dbar z,
\end{equation}
$h\in\mathcal{\C}$, with $C_b$ counter-clockwise surrounding $K_b,$ such that 
\begin{equation} \label{31asinwg}
\begin{split}
u(r,x,y)-F^{-1}_{\eta\rightarrow y}\big\{[\eta]^{(n+1)/2}&\omega(r[\eta])\langle\hat{\delta}_b(y,\eta),(r[\eta])^{-z}\rangle\big\}\\
&\in\mathcal{W}^s_{\textup{loc}}(U,\mathcal{K}^{s,\gamma+\beta}(X^\wedge))
\end{split}
\end{equation}
for $\beta =\beta _0+\varepsilon $ for any $0<\varepsilon <\varepsilon (b),$ and $\beta_0=(n+1)/2-\gamma-c,$ cf. also the relation \eqref{21dia2}. Moreover, we set $$\mathcal{W}^s_{\textup{comp}}(\Omega,\mathcal{K}_{\mathcal{P}}^{s,\gamma}(X^\wedge)):=\mathcal{W}^s_{\textup{loc}}(\Omega,\mathcal{K}_{\mathcal{P}}^{s,\gamma}(X^\wedge))\cap\mathcal{W}^s_{\textup{comp}}(\Omega,\mathcal{K}^{s,\gamma}(X^\wedge)).$$
\end{defn}
In other words, the space $\mathcal{W}^s_{\textup{loc}}(\Omega,\mathcal{K}_{\mathcal{P}}^{s,\gamma}(X^\wedge))$ is the set of all $\tilde{u}\in \mathcal{W}^s_{\textup{loc}}(\Omega,\mathcal{K}^{s,\gamma}\newline(X^\wedge))$ such that (in the notation of Definition \ref{31wedgeas}) for every $b=(c,U)$ the restriction $u:=\tilde{u}|_U$ belongs to 
\begin{equation} \label{32assum}
\mathcal{W}^s_{\textup{loc}}(U,\mathcal{K}^{s ,\gamma +\beta}(X^\wedge)) +  \mathcal{W}^s_{b,\mathcal{P}}                 
\end{equation}
for 
\begin{equation} \label{32contras}
\begin{split}
  &\mathcal{W}^s_{b,\mathcal{P}} :=\big\{F^{-1}_{\eta\rightarrow y}\{[\eta]^{(n+1)/2}\omega(r[\eta])\langle\hat{\delta}_b(y,\eta),(r[\eta])^{-z}\rangle\}:\\ &\hat{\delta}_b(y,\eta)\,\,\mbox{belonging to an}\,\,\hat{f}_b(y,z,\eta)\,\,\mbox{in the sense of \eqref{31ased} }\\ &\mbox{and}\,\,\hat{f}_b(y,z,\eta) \,\,\mbox{as in \eqref{31asef} subordinate to}\,\,\mathcal{P}_b:=p_{K_b}\mathcal{P}|_U\big\}. 
\end{split}               
\end{equation}
The notation $\mathcal{W}^s_{b,\mathcal{P}}$ is only an abbreviation for $\mathcal{W}^s_{b,\mathcal{P}_b}.$\begin{rem}\label{32fr}
The space $\mathcal{W}^s_{b,\mathcal{P}}$ is Fr\'echet. Thus, \eqref{32assum} is Fr\'echet as well in the topology of the non-direct sum.
\end{rem}
In fact, this can be verified by similar arguments as for Proposition \ref{21Fre}, although the structure here is slightly more complicated. 
\begin{rem}\label{32frsppot}
The function in \eqref{31asef} can also be interpreted as an element of $C^\infty (U,\mathcal{A}_{\mathcal{P}}(\C\setminus K_b))^\bullet \hat{\otimes}_\pi E^s,E^s=C^\infty (X,\hat{H}^s(\R^q_\eta )).$ Here $C^\infty (U,\mathcal{A}_{\mathcal{P}}(\C\setminus K_b))^\bullet$ means the space of all scalar smooth functions in $U$ with values in $\mathcal{A}(\C\setminus K_b)$ that $y$-wise extend to meromorphic functions across $K_b$ subordinate to $\mathcal{P};$ this space is Fr\'echet in a natural way. Similarly as Remark \ref{31aspot} the space $\mathcal{W}^s_{b,\mathcal{P}}$ is associated with potential symbols, namely, 
\begin{equation} \label{32asepott}
k_{\zeta  _f}(y,\eta ) c:=  [\eta]^{(n+1)/2}\omega(r[\eta])\langle\zeta _f(y),(r[\eta])^{-z}\rangle c,\,\,c\in C^\infty (X),              
\end{equation}
$k_{\zeta  _f} (y,\eta )\in S^0_{\textup{cl}}(U\times \R^q;C^\infty (X),\mathcal{K}^{\infty ,\gamma }(X^\wedge))_{\textup{id},\kappa }.$ This gives us a continuous operator
\begin{equation}\label{32edfrepot}
\Op(k_{\zeta  _f} ):H^s(\R^q,C^\infty (X))\rightarrow \mathcal{W}^s_{\textup{loc}}(U,\mathcal{K}^{\infty ,\gamma }(X^\wedge)),
\end{equation}
and we have a tensor product expansion
\begin{equation}\label{32edfrepotas}
\begin{split}
\mathcal{W}^s_{b,\mathcal{P}}=\Big\{&\sum_{j=0}^\infty \lambda _j\Op(k_{\zeta  _{f_j}} )v_j:\sum_{j=0}^\infty |\lambda _j|<\infty ,\\&f_j\in C^\infty (U,\mathcal{A}_{\mathcal{P}}(\C\setminus K_b))^\bullet,v_j\in H^s(\R^q,C^\infty (X)),\\&f_j\rightarrow 0,v_j\rightarrow 0 \,\,\mbox{in the respective spaces, as}\,\,j\rightarrow \infty \Big\}.
\end{split}
\end{equation}
\end{rem}
\begin{prop}\label{32edfre}
There are continuous embeddings
\begin{equation} \label{32asembd}
\mathcal{W}^{s'}_{\textup{loc}}(\Omega,\mathcal{K}_{\mathcal{P}}^{s',\gamma}(X^\wedge)) \hookrightarrow  \mathcal{W}^s_{\textup{loc}}(\Omega,\mathcal{K}_{\mathcal{P}}^{s,\gamma}(X^\wedge))                   
\end{equation}
for every $s'\geq s.$
\end{prop}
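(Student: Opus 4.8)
The plan is to deduce the statement from the corresponding embedding of the ambient (non-asymptotic) edge spaces, together with the observation that the data describing the asymptotic part of an element are essentially independent of the smoothness index. First I would record that for $s'\geq s$ one has continuous embeddings $\mathcal{K}^{s',\gamma}(X^\wedge)\hookrightarrow\mathcal{K}^{s,\gamma}(X^\wedge)$ and $\mathcal{K}^{s',\gamma+\beta}(X^\wedge)\hookrightarrow\mathcal{K}^{s,\gamma+\beta}(X^\wedge)$, both compatible with the group action $\kappa_\lambda$ of \eqref{21kappa}; these give at once the ambient embedding $\mathcal{W}^{s'}_{\textup{loc}}(\Omega,\mathcal{K}^{s',\gamma}(X^\wedge))\hookrightarrow\mathcal{W}^{s}_{\textup{loc}}(\Omega,\mathcal{K}^{s,\gamma}(X^\wedge))$ and the analogous one with the shifted weight $\gamma+\beta$. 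In parallel, from $\hat{H}^{s'}(\R^q_\eta)\hookrightarrow\hat{H}^{s}(\R^q_\eta)$ I obtain a continuous injective embedding $E^{s'}=C^\infty(X,\hat{H}^{s'}(\R^q_\eta))\hookrightarrow C^\infty(X,\hat{H}^{s}(\R^q_\eta))=E^{s}$, hence a continuous embedding $C^\infty(U,\mathcal{A}(\C\setminus K,E^{s'}))^\bullet\hookrightarrow C^\infty(U,\mathcal{A}(\C\setminus K,E^{s}))^\bullet$ for every compact $K\subset\C$ and $U\in\mathcal{U}(\Omega)$, and in turn a continuous embedding $\mathcal{W}^{s'}_{b,\mathcal{P}}\hookrightarrow\mathcal{W}^{s}_{b,\mathcal{P}}$ for every $b$, since the singular function $F^{-1}_{\eta\to y}\{[\eta]^{(n+1)/2}\omega(r[\eta])\langle\hat{\delta}_b(y,\eta),(r[\eta])^{-z}\rangle\}$ is literally the same whether $\hat{\delta}_b$ is read with values in $E^{s'}$ or in $E^{s}$.

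Next I would show that the asymptotic decomposition behind Definition \ref{31wedgeas} is inherited verbatim. Given $\tilde{u}\in\mathcal{W}^{s'}_{\textup{loc}}(\Omega,\mathcal{K}_{\mathcal{P}}^{s',\gamma}(X^\wedge))$ and $b=(c,U)$, take the compact set $K_b$, the function $\hat{f}_b\in C^\infty(U,\mathcal{A}(\C\setminus K_b,E^{s'}))^\bullet$ subordinate to $\mathcal{P}|_U$, the associated $\hat{\delta}_b$ from \eqref{31ased}, and the exponent $\beta$, so that the remainder in \eqref{31asinwg} lies in $\mathcal{W}^{s'}_{\textup{loc}}(U,\mathcal{K}^{s',\gamma+\beta}(X^\wedge))$. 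By the embeddings of the first step, $\hat{f}_b$ is also an element of $C^\infty(U,\mathcal{A}(\C\setminus K_b,E^{s}))^\bullet$, still subordinate to $\mathcal{P}|_U$, $\hat{\delta}_b$ takes values in $E^{s}$, and the remainder lies in $\mathcal{W}^{s}_{\textup{loc}}(U,\mathcal{K}^{s,\gamma+\beta}(X^\wedge))$. Hence the same decomposition exhibits $\tilde{u}|_U$ as an element of the non-direct sum $\mathcal{W}^{s}_{\textup{loc}}(U,\mathcal{K}^{s,\gamma+\beta}(X^\wedge))+\mathcal{W}^{s}_{b,\mathcal{P}}$, i.e.\ of the right-hand side of \eqref{32assum} at smoothness $s$. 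Since $b$ was arbitrary and $\tilde{u}$ lies in the ambient $\mathcal{W}^{s}_{\textup{loc}}$-space by the first step, we conclude $\tilde{u}\in\mathcal{W}^{s}_{\textup{loc}}(\Omega,\mathcal{K}_{\mathcal{P}}^{s,\gamma}(X^\wedge))$, so the embedding is well defined.

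For continuity I would use the Fréchet structures: by Remark \ref{32fr} the topology of $\mathcal{W}^{s}_{\textup{loc}}(\Omega,\mathcal{K}_{\mathcal{P}}^{s,\gamma}(X^\wedge))$ is the projective limit, over a countable cofinal family of $b=(c,U)$ (rational $c$, $U$ from a countable exhaustion), of the non-direct sums \eqref{32assum} via the restriction maps $\tilde{u}\mapsto\tilde{u}|_U$, together with the ambient embedding into $\mathcal{W}^{s}_{\textup{loc}}(\Omega,\mathcal{K}^{s,\gamma}(X^\wedge))$. For each $b$ the relevant triangle commutes: the restriction of our map equals the $s'$-restriction followed by the continuous map on non-direct sums induced by the two continuous embeddings of summands from the first step, hence it is continuous; continuity into the ambient space is already contained in the first step. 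The projective-limit universal property then yields continuity of the embedding. I do not expect a serious obstacle here; the one point needing care — the mild crux of the argument — is checking that the conditions encoded in the superscript $\bullet$ (finitely many poles with uniformly bounded total multiplicity) and the subordination to the variable discrete asymptotic type $\mathcal{P}$ are stable under the injective continuous change of target $E^{s'}\hookrightarrow E^{s}$; this holds because those conditions constrain only the location, the multiplicities, and the number of poles of the $y$-wise meromorphic continuations, none of which is affected by enlarging the value space.
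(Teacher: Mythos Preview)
Your proposal is correct and follows essentially the same route as the paper: both arguments reduce the embedding to the three ingredient embeddings $\mathcal{W}^{s'}_{\textup{loc}}(\Omega,\mathcal{K}^{s',\gamma}(X^\wedge))\hookrightarrow\mathcal{W}^{s}_{\textup{loc}}(\Omega,\mathcal{K}^{s,\gamma}(X^\wedge))$, $\mathcal{W}^{s'}_{\textup{loc}}(U,\mathcal{K}^{s',\gamma+\beta}(X^\wedge))\hookrightarrow\mathcal{W}^{s}_{\textup{loc}}(U,\mathcal{K}^{s,\gamma+\beta}(X^\wedge))$, and $\mathcal{W}^{s'}_{b,\mathcal{P}}\hookrightarrow\mathcal{W}^{s}_{b,\mathcal{P}}$, and then invoke the projective-limit description of the Fr\'echet topology over $b=(c,U)$. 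Your write-up is in fact more explicit than the paper's (which simply asserts that the embedding ``easily follows'' from those three facts), in particular where you check that the $\bullet$-conditions and the subordination to $\mathcal{P}$ survive the passage $E^{s'}\hookrightarrow E^{s}$.
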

\begin{proof}
Analogously as \eqref{32assum} we consider
\begin{equation} \label{32assumco}
\mathcal{W}^s_{\textup{comp}}(U,\mathcal{K}^{s ,\gamma +\beta}(X^\wedge)) +  \mathcal{W}^s_{b,\mathcal{P}}                 
\end{equation}
and tacitly assume in this case (to have a short notation for the second summand) that the respective function $\hat{\delta}_b(y,\eta)$ (or, equivalently $\hat{f}_b(y,z,\eta)$) has compact support in $U$ with respect to $y.$
Now for fixed $b=(c,U)$ the space of all $\tilde{u}$ mentioned at the beginning of the proof defines a Fr\'echet subspace of $\mathcal{W}^s_{\textup{loc}}(\Omega,\mathcal{K}^{s,\gamma}(X^\wedge)),$ and then $\mathcal{W}^s_{\textup{loc}}(\Omega,\mathcal{K}_{\mathcal{P}}^{s,\gamma}(X^\wedge))$ itself is Fr\'echet in the topology of the projective limit of those spaces over $c$ and then over $U.$ The continuous embeddings \eqref{32asembd} easily follow from the respective continuous embeddings $\mathcal{W}^{s'}_{\textup{loc}}(\Omega,\mathcal{K}^{s',\gamma}(X^\wedge))\hookrightarrow\mathcal{W}^s_{\textup{loc}}(\Omega,\mathcal{K}^{s,\gamma}(X^\wedge)),$  $\mathcal{W}^{s'}_{\textup{loc}}(U,\mathcal{K}^{s' ,\gamma +\beta}(X^\wedge))\hookrightarrow\mathcal{W}^s_{\textup{loc}}(U,\mathcal{K}^{s ,\gamma +\beta}(X^\wedge))$ and $\mathcal{W}^{s'}_{b,\mathcal{P}}\hookrightarrow\mathcal{W}^s_{b,\mathcal{P}}$.
\end{proof}
If $E$ and $\tilde{E}$ are Fr\'echet spaces with (say, monotonic) semi-norm systems $(\pi _j)_{j\in \N}$ and $(\tilde{\pi} _j)_{j\in \N},$ respectively, an operator $A:E\rightarrow \tilde{E}$ is continuous if for every $j\in \N$ there is a $k=k(j)\in \N$ such that $\tilde{\pi} _j(Au)\leq c\pi _k(u)$ for all $u\in E,$ for constants $c=c(A;j,k)>0.$ By $A\rightarrow 0$ in $\mathcal{L}(E,\tilde{E})$ we mean that for some choice of a map $j\mapsto k(j)$ such that the above estimates hold we have $c(A;j,k(j))\rightarrow 0$ for all $j$.
\begin{rem}\label{32lomult}
The multiplication by $b\in C^\infty (\Omega )$ induces a continuous operator $$b:\mathcal{W}^s_{\textup{loc}}(\Omega,\mathcal{K}_{\mathcal{P}}^{s,\gamma}(X^\wedge))\rightarrow \mathcal{W}^s_{\textup{loc}}(\Omega,\mathcal{K}_{\mathcal{P}}^{s,\gamma}(X^\wedge))$$ for every variable discrete  asymptotic type $\mathcal{P}$ over $\Omega ,$ and $b\rightarrow 0$ in $C^\infty (\Omega )$ entails $b\rightarrow 0$ in $\mathcal{L}(\mathcal{W}^s_{\textup{loc}}(\Omega,\mathcal{K}_{\mathcal{P}}^{s,\gamma}(X^\wedge))).$
\end{rem}
\begin{rem}\label{32locc}
Let $(U_\iota)_{\iota \in I} $ be a covering of $\Omega $ by open subsets. Then $u\in \mathcal{W}^s_{\textup{loc}}(\Omega,\mathcal{K}_{\mathcal{P}}^{s,\gamma}\newline (X^\wedge))$ is equivalent to $u|_{U_\iota }\in \mathcal{W}^s_{\textup{loc}}(U_\iota ,\mathcal{K}_{\mathcal{P|_{U_\iota }}}^{s,\gamma}(X^\wedge))$ for every $\iota \in I.$ Moreover, if the covering is locally finite and $(\varphi _\iota)_{\iota \in I} $ a subordinate partition of unity we have $u=\sum_{\iota \in I}\varphi _\iota u$ where $\varphi _\iota u\in \mathcal{W}^s_{\textup{loc}}(U_\iota ,\mathcal{K}_{\mathcal{P|_{U_\iota }}}^{s,\gamma}(X^\wedge)).$ In particular, in order to characterise elements of $\mathcal{W}^s_{\textup{loc}}(\Omega,\mathcal{K}_{\mathcal{P}}^{s,\gamma}(X^\wedge))$ it suffices to consider $u$ in open subsets of $\Omega $ of arbitrarily small diameter.
\end{rem}
If
\begin{equation} \label{32adj}
C:\mathcal{W}^s_{\textup{comp}}(\Omega ,\mathcal{K}^{s ,\gamma}(X^\wedge)) \rightarrow \mathcal{W}^{s-m}_{\textup{loc}}(\Omega ,\mathcal{K}^{s-m ,\gamma -\mu }(X^\wedge))
\end{equation}
is continuous for every $s\in \R$ we have the formal adjoint $C^*:\mathcal{W}^s_{\textup{comp}}(\Omega ,\mathcal{K}^{s ,-\gamma+\mu }\newline(X^\wedge)) \rightarrow \mathcal{W}^{s-m}_{\textup{loc}}(\Omega ,\mathcal{K}^{s-m ,-\gamma  }(X^\wedge))$ from a corresponding sesquilinear pairing between the involved spaces with respect to the $\mathcal{W}^0(\Omega ,\mathcal{K}^{0,0}(X^\wedge))$-scalar product ($\mathcal{W}^0(\Omega ,\cdot):=\mathcal{W}^0(\R^q ,\cdot)|_\Omega ).$
\begin{defn}\label{32smothing}
An operator $C:\mathcal{W}^s_{\textup{comp}}(\Omega ,\mathcal{K}^{s ,\gamma}(X^\wedge)) \rightarrow \mathcal{W}^\infty _{\textup{loc}}(\Omega ,\mathcal{K}^{\infty  ,\gamma -\mu }(X^\wedge))$ which is continuous for all $s$ is called smoothing in the edge algebra with variable discrete asymptotics if it induces continuous operators
\begin{equation} \label{32adjsm}
\begin{split}
&C:\mathcal{W}^s_{\textup{comp}}(\Omega ,\mathcal{K}^{s ,\gamma}(X^\wedge)) \rightarrow \mathcal{W}^\infty _{\textup{loc}}(\Omega ,\mathcal{K}_{\mathcal{P}}^{\infty  ,\gamma -\mu }(X^\wedge)),\\&C^*:\mathcal{W}^s_{\textup{comp}}(\Omega ,\mathcal{K}^{s ,-\gamma+\mu }(X^\wedge)) \rightarrow \mathcal{W}^\infty _{\textup{loc}}(\Omega ,\mathcal{K}_{\mathcal{Q}}^{\infty  ,-\gamma -\mu }(X^\wedge))
\end{split}
\end{equation}
for all $s\in \R,$ for ($C$-dependent) variable discrete asymptotic types $\mathcal{P},\mathcal{Q}.$
\end{defn}
\begin{defn}\label{32mplusg}
An operator $A=\Op_y(m+g)+C$ for $m+g\in R^\mu _{M+G}(\Omega \times \R^q,{\bf{g}}),{\bf{g}}=(\gamma ,\gamma -\mu ,\Theta ),$ and a smoothing operator $C$ in the sense of Definition \ref{32smothing} is called a smoothing Mellin plus Green operator in the edge calculus with variable discrete asymptotics. If the Mellin summand vanishes we talk about a Green operator in this calculus.
\end{defn}
\begin{prop}\label{31grree}
Let $m+g\in R^\mu _{M+G}(\Omega \times \R^q,{\bf{g}}),{\bf{g}}=(\gamma ,\gamma -\mu ,\Theta ),$ and $\varphi ,\varphi '\in C^\infty (\Omega ), \textup{supp}\,\varphi \cap\textup{supp}\,\varphi '=\emptyset.$ Then the operator $C:=\varphi \Op_y(m+g)\varphi '$ is smoothing in the sense of Definition \textup{\ref{32smothing}}.
\end{prop}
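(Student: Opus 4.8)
The plan is to exploit pseudolocality in the edge variable $y$ together with Proposition~\ref{24grdiff}: once a single $\eta$--derivative has been taken, the Mellin part of $m+g$ becomes a Green symbol, and the assertion is reduced to the mapping behaviour of Green operators, which we read off from Definition~\ref{21gr1}.

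By Remark~\ref{32locc} it suffices to argue over a small $U\in\mathcal{U}(\Omega)$. Since $\supp\varphi\cap\supp\varphi'=\emptyset$ there is $c_0>0$ with $|y-y'|\geq c_0$ on $\supp\varphi\times\supp\varphi'$. Writing $K_C(y,y')=\varphi(y)\varphi'(y')\int e^{i(y-y')\eta}(m+g)(y,\eta)\,\dbar\eta$ for the kernel of $C$ and integrating by parts $N$ times with the first order operator $L$, $Le^{i(y-y')\eta}=e^{i(y-y')\eta}$ (whose iterated transpose has coefficients smooth in $(y,y')$ on $\{|y-y'|\geq c_0\}$, each a combination of $|y-y'|^{-2N}(y-y')^\alpha\partial_\eta^\alpha$ with $|\alpha|=N$), we obtain, for every $N$,
\[
K_C(y,y')=\varphi(y)\varphi'(y')\sum_{|\alpha|=N}c_\alpha(y,y')\int e^{i(y-y')\eta}\big(D_\eta^\alpha(m+g)\big)(y,\eta)\,\dbar\eta ,
\]
with $c_\alpha\in C^\infty$ of compact support. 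Expanding each compactly supported factor $c_\alpha(y,y')\varphi'(y')$ into a rapidly convergent series $\sum_k a_k^\alpha(y)\,b_k^\alpha(y')$ of products with $a_k^\alpha,b_k^\alpha\in C_0^\infty$ (for instance by a Fourier series on a large torus), we get, for every $N$,
\[
C=\sum_{|\alpha|=N}\sum_k a_k^\alpha\,\varphi\,\Op_y\!\big(D_\eta^\alpha(m+g)\big)\,M_{b_k^\alpha},
\]
the series converging in the operator topology.

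Now $D_\eta^\alpha m\in R^{\mu-|\alpha|}_G(\Omega\times\R^q,{\bf{g}})$ for $\alpha\neq0$ by Proposition~\ref{24grdiff}, and $D_\eta^\alpha g\in R^{\mu-|\alpha|}_G(\Omega\times\R^q,{\bf{g}})$ by Proposition~\ref{24grop}(iii); hence each $g_\alpha:=D_\eta^\alpha(m+g)$ with $|\alpha|=N\geq1$ is a Green symbol of order $\mu-N$, say $g_\alpha\in R^{\mu-N}_G(\Omega\times\R^q,{\bf{g}})_{\mathcal{P}_\alpha,\mathcal{Q}_\alpha}$. This reduces everything to the claim that, for a Green symbol $h\in R^{m'}_G(\Omega\times\R^q,{\bf{g}})_{\mathcal{P}',\mathcal{Q}'}$, the operator $\Op_y(h)$ maps $\mathcal{W}^s_{\comp}(\Omega,\mathcal{K}^{s,\gamma}(X^\wedge))$ continuously into $\mathcal{W}^{s-m'}_{\loc}(\Omega,\mathcal{K}^{\infty,\gamma-\mu}_{\mathcal{P}'}(X^\wedge))$, with the analogous property for its formal adjoint $h^*\in R^{m'}_G(\Omega\times\R^q,{\bf{g^*}})_{\mathcal{Q}',\mathcal{P}'}$ by \eqref{22grnadas}: indeed, multiplication from the left by $a_k^\alpha\varphi$ preserves the spaces on the right by Remark~\ref{32lomult}, multiplication from the right by $b_k^\alpha$ keeps us in $\mathcal{W}^s_{\comp}(\Omega,\mathcal{K}^{s,\gamma}(X^\wedge))$, the $k$--sum converges because these spaces are Fr\'echet (Remark~\ref{32fr}), and since the displayed representation of $C$ holds for every $N$, intersecting the resulting mapping properties over $N$ yields the required $\infty$--smoothness; the asymptotic types $\mathcal{P}_\alpha$ arising for the various $\alpha$ are all subordinate to a single variable discrete asymptotic type $\mathcal{P}$ (only finitely many $\eta$--derivative levels affect the fixed weight strip of $(\gamma-\mu,\Theta)$), and similarly for $\mathcal{Q}$.

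It remains to prove the claim about $\Op_y(h)$. Fixing $b^1=(c^1,U)$, decompose $h=h_{b^1,\mathcal{P}'}+r^1$ as in \eqref{22gad5}, with $r^1\in S^{m'}_{\clw}(U\times\R^q;\mathcal{K}^{s,\gamma;g}(X^\wedge),\mathcal{K}^{\infty,\gamma-\mu+\beta^1;\infty}(X^\wedge))$ and $h_{b^1,\mathcal{P}'}$ of the form in the first line of \eqref{21anP1}. Because $\mathcal{W}^s(\R^q,H)=KH^s(\R^q,H)$ (cf. \eqref{31pot}), the symbol estimates \eqref{21symbest} for $r^1$ show that $\Op_y(r^1)$ maps into $\mathcal{W}^{s-m'}_{\loc}(U,\mathcal{K}^{\infty,\gamma-\mu+\beta}(X^\wedge))$, which is exactly the flat remainder admitted in \eqref{31asinwg}. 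For the asymptotic part, $\Op_y(h_{b^1,\mathcal{P}'})u$ takes, by \eqref{21anP1} and $\hat u(\cdot,\eta)=[\eta]^{(n+1)/2}\hat v(\cdot[\eta],\eta)$ with $v\in H^s(\R^q,\mathcal{K}^{s,\gamma}(X^\wedge))$ (cf. Remark~\ref{31potK}), the form
\[
F^{-1}_{\eta\rightarrow y}\Big\{[\eta]^{(n+1)/2}\,\omega(r[\eta])\,\big\langle\hat{\delta}_b(y,\eta),(r[\eta])^{-z}\big\rangle\Big\},
\]
where $\hat{\delta}_b(y,\eta)$ is produced by pairing the analytic functional $\zeta^1_{b^1}(y,\eta,\cdot)\in C^\infty(U,\mathcal{A}'(K^1_{b^1},E^1))^\bullet$ against $\hat v(\cdot,\eta)$ in the $\mathcal{K}^{\infty,-\gamma;\infty}(X^\wedge_{r',x'})$--tensor factor of $E^1$ (and $r^1$ once more contributes a flat remainder). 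Since $\zeta^1_{b^1}$ is subordinate to $\mathcal{P}'|_U$ and $E^1$ is classical in $\eta$, this pairing yields $\hat{\delta}_b\in C^\infty(U,\mathcal{A}'(K^1_{b^1},C^\infty(X,\hat{H}^s(\R^q_\eta))))^\bullet$ subordinate to $\mathcal{P}'|_U$; replacing, up to a flat remainder, the built--in cut--off and the $[\eta]^{(n+1)/2}$--normalisation by those of Definition~\ref{31wedgeas} (which is licensed by Proposition~\ref{22melgreen1}), we obtain exactly the membership in $\mathcal{W}^\infty_{\loc}(U,\mathcal{K}^{\infty,\gamma-\mu}_{\mathcal{P}'}(X^\wedge))$ of Definition~\ref{31wedgeas}. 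The same computation with $h^*$ and its $\mathcal{Q}'$--part gives the adjoint statement. The main obstacle is this last step, namely verifying that $\hat{\delta}_b$, obtained by pairing $\zeta^1_{b^1}$ with $\hat u$, retains both the Fr\'echet regularity in $(y,\eta)$ and the pointwise discrete pole structure subordinate to $\mathcal{P}'$; here the constructions of Section~2.2 (Proposition~\ref{21Fre}, Remark~\ref{21dicont}) together with Proposition~\ref{22melgreen1} supply the needed arguments.
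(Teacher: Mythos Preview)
Your approach is essentially the same as the paper's. Both arguments reduce $C$ via pseudolocality to a sum of operators with amplitudes $D_\eta^\alpha(m+g)$, $|\alpha|\geq 1$, then invoke Propositions~\ref{24grdiff} and~\ref{24grop}(iii) to see these are Green symbols, and finally appeal to the mapping behaviour of Green operators in spaces with variable discrete asymptotics. The only cosmetic differences are: (i) the paper uses a Taylor expansion of $\varphi'(y')$ about $y$ (the polynomial terms vanish by disjoint supports, the remainder carries the factor $(y'-y)^\alpha$ that is converted to $D_\eta^\alpha$), whereas you integrate by parts directly with $|y-y'|^{-2N}(y-y')^\alpha D_\eta^\alpha$; (ii) the paper cites Theorem~\ref{32gr} as a forward reference (noting its proof is independent of the present proposition), while you sketch that theorem's proof inline.

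Two small points on your inline sketch: your citation of Proposition~\ref{22melgreen1} for ``replacing the built-in cut-off'' is not quite the right reference --- that proposition concerns Mellin symbols, not the kernel representation of Green symbols; the correct observation is simply that changing $\omega$ or $[\eta]$ in \eqref{21anP1} alters $g_{b^1,\mathcal{P}'}$ by a flat symbol (compare Remark~\ref{21gralt}). Also, your pairing description of $\hat\delta_b$ is a compressed version of the tensor-product expansion the paper actually carries out in the proof of Theorem~\ref{32gr}; since you are effectively reproving that theorem, it would be cleaner just to cite it as the paper does.
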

\begin{proof}
Let us check the first mapping property of \eqref{32adjsm}; the second one is of analogous structure when we apply the conclusions to the symbol $\varphi '(y)(m+g)^*(y',\eta )\varphi (y')$ for the pointwise formal adjoint $(m+g)^*$ which has similar properties, cf. Proposition \ref{21smad} and relation \eqref{22grnadas}. We apply the Taylor formula to $\varphi '(y')$ and write 
\begin{equation}
\begin{split}
\varphi '(y') = & \sum_{|\alpha |\leq N}1/\alpha !(y'-y)^\alpha \partial _y^\alpha \varphi '(y)\\&+(N+1)!\sum_{|\alpha |= N+1} 1/\alpha !(y'-y)^\alpha \int_0^\infty (1-t)^N\partial _y^\alpha \varphi '(y+t(y'-y))dt.
\end{split}
\end{equation}
Setting $\psi _\alpha (y,y')=(N+1)!/\alpha !\int_0^\infty (1-t)^N\partial _y^\alpha \varphi '(y+t(y'-y))dt$ it follows that
\begin{equation} \label{32arest}
\begin{split}
Cu(y)&=\sum_{|\alpha |= N+1}\int\!\!\int e^{i(y-y')\eta }(m+g)(y,\eta )(y'-y)^\alpha \psi _\alpha (y,y')u(y')dy'\dbar\eta \\&=\sum_{|\alpha |=N+1}\int\!\!\int (-D_\eta )^\alpha e^{i(y-y')\eta }(m+g)(y,\eta )\psi _\alpha (y,y')u(y')dy'\dbar\eta\\&=\sum_{|\alpha |=N+1}\int\!\!\int  e^{i(y-y')\eta }D_\eta ^\alpha(m+g)(y,\eta )\psi _\alpha (y,y')u(y')dy'\dbar\eta.
\end{split}
\end{equation}
By virtue of Propositions \ref{24grop} and \ref{24grdiff} we have $g_\alpha (y,\eta ):=D_\eta ^\alpha(m+g)(y,\eta )\in R^{\mu -|\alpha| }_G(\Omega \times \R^q,{\bf{g}})$ for $N>0.$ We consider the summands $C_\alpha $ on the right of the latter equation separately for any fixed $\alpha $. A tensor product expansion for $\psi _\alpha (y,y')\in C^\infty (\Omega \times \Omega )$ gives us $\psi _\alpha (y,y')=\sum_{j=0}^\infty \lambda _jb_j(y)b'_j(y')$ for coefficients $\lambda _j\in \C,\sum_{j=0}^\infty |\lambda _j|<\infty $ and $b_j(y),b'_j(y')\in C^\infty (\Omega ),$ tending to $0$ as $j\rightarrow \infty .$ This gives us
\begin{equation}\label{32areco}
C_\alpha u(y)=\sum_{j=0}^\infty \lambda _jb_j(y)\Op_y(g_\alpha )b'_j(y')u(y').
\end{equation}
Let us now apply Theorem \ref{32gr} below (the proof does not depend on the present proposition).
Then using the known fact that $b'_ju$ tends to zero in the subspace of $\mathcal{W}^s_{\textup{comp}}(\Omega,\mathcal{K}^{s,\gamma}(X^\wedge))$ of elements with fixed compact support in $\Omega $ (namely, $\textup{supp}\,u$), Theorem \ref{32gr} yields $\Op_y(g_\alpha )b'_ju\rightarrow \!0$ in $\mathcal{W}^{s-(\mu+|\alpha |)}_{\textup{loc}}\!(\Omega,\mathcal{K}_{\mathcal{P}}^{\infty,\gamma-(\mu+|\alpha |)}(X^\wedge))$ for some variable discrete asymptotic type $\mathcal{P}.$ Remark \ref{32lomult} shows that \eqref{32areco} converges in $\mathcal{W}^{s-(\mu+|\alpha |)}_{\textup{loc}}(\Omega,\mathcal{K}_{\mathcal{P}}^{\infty,\gamma-(\mu+|\alpha |)}(X^\wedge)).$ A closed graph argument (first applied to elements with fixed compact support in $\Omega $) gives us the continuity of 
$$C:\mathcal{W}^s_{\textup{comp}}(\Omega,\mathcal{K}^{s,\gamma}(X^\wedge))\rightarrow \mathcal{W}^{s-(\mu+|\alpha |)}_{\textup{loc}}\!(\Omega,\mathcal{K}_{\mathcal{P}}^{\infty,\gamma-(\mu+|\alpha |)}(X^\wedge)).$$
Since $\alpha $ is arbitrary, we obtain the desired mapping property of $C.$
\end{proof}

\subsection{The action of operators in spaces with variable discrete asymptotics}
An important aspect of the program of regularity is that the operators of the edge calculus 
preserve variable discrete asymptotics. The main issue is to have operators  $$\mathcal{W}^s_{\textup{comp}}(\Omega,\mathcal{K}_{(\mathcal{P})}^{s,\gamma}(X^\wedge))\rightarrow\mathcal{W}^{s-\mu}_{\textup{loc}}(\Omega,\mathcal{K}_{(\mathcal{Q})}^{s-\mu,\gamma-\mu}(X^\wedge))$$
where $\mathcal{P}$ or $\mathcal{Q}$ in parentheses mean the corresponding spaces with or without such asymptotic types.  \\
We employ the fact that when $a(y,y', \eta)$ is a symbol in $S^m_{(\textup{cl})}(\Omega\times\Omega\times\R^q;H,\tilde{H})$ for (Hilbert or Fr\'echet) spaces $H,\tilde{H}$ with group action the operator $\Op_y(a)u(y)=\int\!\!\int e^{i(y-y')\eta}a(y,y', \eta)u(y')dy'\dbar\eta$ induces continuous operators 
\begin{equation} \label{32co}                       \Op(a):\mathcal{W}^s_{\textup{comp}}(\Omega,H)\rightarrow\mathcal{W}^{s-m}_{\textup{loc}}(\Omega,\tilde{H})\quad\mbox{for any}\quad s\in\R.
\end{equation}
\begin{thm} \label{32gr}
Let $g(y,\eta)$ be a Green symbol in the sense of Definition \textup{\ref{21gr1}}. Then $$\Op_y(g):\mathcal{W}^s_{\textup{comp}}(\Omega,\mathcal{K}^{s,\gamma}(X^\wedge))\rightarrow\mathcal{W}^{s-m}_{\textup{loc}}(\Omega,\mathcal{K}^{\infty,\gamma-\mu}(X^\wedge))$$  induces a continuous operator
\begin{equation} \label{32asef}
\Op_y(g):\mathcal{W}^s_{\textup{comp}}(\Omega,\mathcal{K}^{s,\gamma}(X^\wedge))\rightarrow\mathcal{W}^{s-m}_{\textup{loc}}(\Omega,\mathcal{K}_{\mathcal{Q}}^{\infty,\gamma-\mu}(X^\wedge))
\end{equation}
for some variable discrete asymptotic type $\mathcal{Q}$ over $\Omega,$ for any $s\in\R.$
\end{thm}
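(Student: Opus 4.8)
The plan is to localise in the edge variable $y$, split the Green symbol into its asymptotic part and a flat remainder, and to show that the asymptotic part produces exactly the structure demanded by Definition~\ref{31wedgeas}. By \eqref{22grn} fix $g\in R^m_G(\Omega\times\R^q,{\bf{g}})_{\mathcal{P},\mathcal{Q}'}$ for suitable variable discrete asymptotic types $\mathcal{P}$ (governing the image of $g$) and $\mathcal{Q}'$ (governing the image of $g^*$); the target type $\mathcal{Q}$ of the theorem will be built from $\mathcal{P}$. The plain continuity $\Op_y(g)\colon\mathcal{W}^s_{\textup{comp}}(\Omega,\mathcal{K}^{s,\gamma}(X^\wedge))\rightarrow\mathcal{W}^{s-m}_{\textup{loc}}(\Omega,\mathcal{K}^{\infty,\gamma-\mu}(X^\wedge))$ is already contained in \eqref{32co} applied to \eqref{22g1} (the image lies in $\mathcal{K}^{\infty,\gamma-\mu}$ because Green symbols are $\mathcal{K}^\infty$-valued), so only the refinement of the target to the subspace carrying the asymptotics has to be shown. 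By Remark~\ref{32locc} this may be checked locally: for $\varphi\in C_0^\infty(U)$, $U\in\mathcal{U}(\Omega)$, one has to see that $\varphi\,\Op_y(g)$ maps into $\mathcal{W}^{s-m}_{\textup{loc}}(U,\mathcal{K}^{\infty,\gamma-\mu}_{\mathcal{Q}|_U}(X^\wedge))$, which by Definition~\ref{31wedgeas} means producing, for every $b=(c^1,U)$, the data $\hat f_b(y,z,\eta)$, $\hat\delta_b(y,\eta)$ realising the flatness \eqref{31asinwg}.

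First I would split the symbol. For a fixed $b=(c^1,U)$ as in Definition~\ref{21gr1} write $g=g_{b^1,\mathcal{P}}+(g-g_{b^1,\mathcal{P}})$. Since $g-g_{b^1,\mathcal{P}}\in S^m_{\textup{cl}}(U\times\R^q;\mathcal{K}^{s,\gamma;g}(X^\wedge),\mathcal{K}^{\infty,\gamma-\mu+\beta^1;\infty}(X^\wedge))$, the mapping property \eqref{32co} shows that $\Op_y(g-g_{b^1,\mathcal{P}})$ maps $\mathcal{W}^s_{\textup{comp}}$ into $\mathcal{W}^{s-m}_{\textup{loc}}(U,\mathcal{K}^{\infty,\gamma-\mu+\beta^1;\infty}(X^\wedge))$, i.e.\ into a flat remainder of the type permitted in \eqref{31asinwg}. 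Everything therefore reduces to $\Op_y(g_{b^1,\mathcal{P}})$.

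For the asymptotic part I would exploit the structure \eqref{21anP1}: $g_{b^1,\mathcal{P}}(y,\eta)$ factors as a trace-type operator $u\mapsto\int\!\!\int_0^\infty\zeta^1_{b^1}(y,\eta,r'[\eta],x,x')\,u(r',x')\,(r')^n\,dr'dx'$, of order $m=\nu-(n+1)$, followed by the potential-type symbol $c\mapsto[\eta]^{(n+1)/2}\omega(r[\eta])\langle\,\cdot\,,(r[\eta])^{-z}\rangle\,c$ of \eqref{32asepott} (cf.\ Remark~\ref{32frsppot}), of order $0$. Representing $\zeta^1_{b^1}$ by its describing meromorphic family $f^1_{b^1}(y,z)\in C^\infty(U,\mathcal{A}(\C\setminus K^1_{b^1},E^1))^\bullet$, $E^1=S^\nu_{\textup{cl}}(\R^q,C^\infty(X_x)\hat{\otimes}_\pi\mathcal{K}^{\infty,-\gamma;\infty}(X^\wedge_{r',x'}))$, $\nu=m+n+1$, and using a nuclear expansion $f^1_{b^1}=\sum_j\lambda_j\,a_j\otimes e_j$ with scalar $a_j\in C^\infty(U,\mathcal{A}_{\mathcal{P}}(\C\setminus K^1_{b^1}))^\bullet$, $e_j\rightarrow0$ in $S^\nu_{\textup{cl}}(\R^q,C^\infty(X_x)\hat{\otimes}_\pi\mathcal{K}^{\infty,-\gamma;\infty}(X^\wedge_{r',x'}))$, $\sum_j|\lambda_j|<\infty$, one writes $\Op_y(g_{b^1,\mathcal{P}})=\sum_j\lambda_j\,\Op_y(k_{\zeta_{a_j}})\circ B_j$, where $B_j$ is the order-$m$ pairing against $e_j$ in the $(r',x')$-variables; each summand then maps $\mathcal{W}^s_{\textup{comp}}(\Omega,\mathcal{K}^{s,\gamma}(X^\wedge))$ into $\mathcal{W}^{s-m}_{b,\mathcal{P}}$ by \eqref{32co} (for $B_j$) and \eqref{32edfrepotas} (for $\Op_y(k_{\zeta_{a_j}})$), and by Remark~\ref{32fr} the series converges in the Fr\'echet space $\mathcal{W}^{s-m}_{b,\mathcal{P}}$. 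Unravelling the construction, $\Op_y(g_{b^1,\mathcal{P}})u=F^{-1}_{\eta\rightarrow y}\{[\eta]^{(n+1)/2}\omega(r[\eta])\langle\hat\delta_b(y,\eta),(r[\eta])^{-z}\rangle\}$ for a $z$-functional $\hat\delta_b(y,\eta)\in C^\infty(U,\mathcal{A}'(K_b,C^\infty(X,\hat{H}^{s-m}(\R^q_\eta))))^\bullet$ subordinate to $p_{K_b}\mathcal{P}|_U$. Letting $b$ range over a countable exhausting family and setting $\mathcal{Q}:=\bigcup_b p_{K_b}\mathcal{P}|_U$, which is again a variable discrete asymptotic type (cf.\ \eqref{21decasd}), one concludes, after a closed graph argument on elements of fixed compact support (as at the end of the proof of Proposition~\ref{31grree}), that $\Op_y(g)$ is continuous into $\mathcal{W}^{s-m}_{\textup{loc}}(\Omega,\mathcal{K}^{\infty,\gamma-\mu}_{\mathcal{Q}}(X^\wedge))$.

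The main obstacle will be the analysis of $\Op_y(g_{b^1,\mathcal{P}})$. One must verify that the $y'$-integration, the $(r',x')$-pairing and the oscillatory $\eta$-integral commute harmlessly with the analytic-functional pairing in the Mellin variable $z$ — which they do, $(r[\eta])^{-z}$ being entire in $z$ — and, more delicately, that the resulting $\hat\delta_b(y,\eta)$ has the required regularity, namely membership in $C^\infty(U,\mathcal{A}'(K_b,C^\infty(X,\hat{H}^{s-m}(\R^q_\eta))))^\bullet$, and keeps the pointwise-discrete $\bullet$-structure with uniformly bounded finite total pole multiplicity. The $\eta$-regularity is a Hilbert-space symbol estimate of essentially the same nature as the one underlying \eqref{32co}, restricted to the potential part; the persistence of the $\bullet$-structure and of subordination to $\mathcal{P}|_U$ under these operations is precisely where the definition of a variable discrete asymptotic type (Definition~\ref{21vars}) enters in an essential way, through the uniform bounds $\sup_{y\in U_i}\sum_j(1+m_j(y))<\infty$ on the compact carriers $K_i$.
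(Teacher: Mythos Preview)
Your proposal is correct and follows the same route as the paper: localise in $y$, use the decomposition \eqref{22gad5} to split off a flat remainder handled by \eqref{32co}, tensor-expand the asymptotic part $g_{b,\mathcal{P}}$ into potential-times-trace form, and conclude continuity by a closed graph argument. The only cosmetic difference is that the paper tensor-expands the analytic functional $\zeta$ with factors $\zeta_l\in\mathcal{A}'(K_b)\hat\otimes_\pi C^\infty(U)\hat\otimes_\pi C^\infty(X)$ not a priori in the $\bullet$-space and then obtains continuity via a detour through the continuous-asymptotics analogue of $\mathcal{W}^{s-m}_{b,\mathcal{Q}}$, whereas you isolate the pointwise-discrete structure in the scalar tensor factor from the outset (legitimate by Remark~\ref{32frsppot}), which lets you argue convergence directly in $\mathcal{W}^{s-m}_{b,\mathcal{P}}$.
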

\begin{proof}
The continuity of $\Op_y(g)$ between edge spaces without asymptotics follows from relation \eqref{22g1}. For the continuity of \eqref{32asef} it suffices to show that for every $b=(c,U),(n+1)/2-(\gamma -\mu )+\vartheta <c<(n+1)/2-(\gamma -\mu ),U\in \mathcal{U}(\Omega ),$ the operator is continuous as
\begin{equation} \label{32rU}
\Op_y(g):\mathcal{W}^s_{\textup{comp}}(U,\mathcal{K}^{s,\gamma}(X^\wedge))\rightarrow\mathcal{W}^{s-m}_{\textup{loc}}(U,\mathcal{K}^{\infty,\gamma-\mu+\beta }(X^\wedge))+\mathcal{W}^{s-m}_{b,\mathcal{Q}}
\end{equation}
for $\beta =\beta _0+\varepsilon ,0<\varepsilon <\varepsilon (b),\beta _0=(n+1)/2-(\gamma-\mu)-c.$ According to \eqref{22gad5} we can write 
\begin{equation} \label{32rUdec}
g(y,\eta) =g_0(y,\eta)+ g_{b,\mathcal{Q}}(y,\eta)
\end{equation}
for a $g_0(y,\eta)\in S^m_{\textup{cl}}(U\times \R^q;\mathcal{K}^{s,\gamma ;g}(X^\wedge),\mathcal{K}^{\infty ,\gamma -\mu +\beta ;\infty }(X^\wedge))$ and
\begin{equation} \label{32asant}
g_{b,\mathcal{Q}}(y,\eta)u(r,x)=\int\!\!\int_0^\infty \omega_\eta(r)\langle \zeta(y,\eta,r'[\eta],x,x'),(r[\eta])^{-z}\rangle u(r',x'))(r')^n dr'dx',
\end{equation}
for a $\zeta (y,\eta ;r',x,x')(=\zeta _b)\in C^\infty (U,\mathcal{A}'(K_b,L))^\bullet,K_b\Subset \{(n+1)/2-(\gamma-\mu)+\vartheta<\textup{Re}\,z<(n+1)/2-(\gamma-\mu)\},$ described by a family $f (=f _b)$ of $L$-valued meromorphic functions subordinate to $Q,$ and $L:= S^\nu_{\textup{cl}}(\R^q)\hat{\otimes}_\pi C^\infty(X_x)\hat{\otimes}_\pi\mathcal{K}^{\infty,-(\gamma-\mu );\infty}(X^\wedge_{r',x'})),$ $\nu=m+n+1.$ 
The operator $\Op(g_0)$ induces continuous maps
\begin{equation} \label{32asg0st}
\Op(g_0):\mathcal{W}^s_{\textup{comp}}(U,\mathcal{K}^{s,\gamma}(X^\wedge))\rightarrow\mathcal{W}^{s-m}_{\textup{loc}}(U,\mathcal{K}^{\infty,\gamma-\mu+\beta }(X^\wedge))
\end{equation}
for all $s.$ Moreover, a tensor product expansion gives us 
$$\zeta (y,\eta ;r',x,x')=\sum_{l=0}^\infty \lambda _l \zeta _l (y)a_l(\eta )t_l(r',x')$$
for $\lambda _l\in \C,\sum|\lambda _l|<\infty ,\zeta _l (y)\in \mathcal{A}'(K_b)\hat{\otimes}_\pi C^\infty (U)\hat{\otimes}_\pi C^\infty (X), a_l(\eta )\in S^{m +n+1}_{\textup{cl}}(\R^q_\eta ),\newline t_l(r',x')\in \mathcal{K}^{s,-\gamma ;-g}(X^\wedge),$  tending to 0 in the respective spaces for $l\rightarrow \infty .$ Then
\begin{equation} \label{32asanttr}
T_l(\eta)u=a_l(\eta )\int_X\!\!\int_0^\infty [\eta ]^{(n+1)/2}t_l(r'[\eta ],x') u(r',x')(r')^n dr'dx',
\end{equation}
defines a $T_l(\eta )\in S^{m +(n+1)/2}_{\textup{cl}}(\R^q;\mathcal{K}^{s,\gamma ;g}(X^\wedge),\C).$ For $u\in \mathcal{W}^s(\R^q,\mathcal{K}^{s,\gamma ;g}(X^\wedge))$ it follows that $w_l :=\Op_y (T_l)u\in H^{s-\nu -(n+1)/2}(\R^q).$ Write $\hat{w_l}(\eta )=[\eta ]^{(n+1)/2}\hat{w}_l^0(\eta )$ for a corresponding $w_l^0\in H^{s-m }(\R^q).$ Moreover, for
\begin{equation} \label{32asapot}
K_l(y,\eta)u=\omega (r[\eta ])\langle\zeta _l(y),(r[\eta ])^{-z}\rangle
\end{equation}
we have
\begin{equation} \label{32assuum}
g_{b,\mathcal{Q}}(y,\eta )=\sum _{l=0}^\infty \lambda _l K_l(y,\eta)T_l(\eta ).
\end{equation}
It follows that 
\begin{equation} \label{32asgrop}
\begin{split}
\Op_y(g_{b,\mathcal{Q}})u&=\sum _{l=0}^\infty \lambda _l \Op_y(K_lT_l)u\\&=\sum _{l=0}^\infty \lambda _l F_{\eta \rightarrow y}^{-1}K_l(y,\eta )(F\Op_y(T_l)u)\\&=\sum _{l=0}^\infty \lambda _l F_{\eta \rightarrow y}^{-1}[\eta ]^{(n+1)/2}\omega (r[\eta ])\langle\zeta _l (y),(r[\eta ])^{-z}\rangle \hat{w}^0_l(\eta ).
\end{split}
\end{equation}
The series 
\begin{equation} \label{32asdelta}
\delta _b(y,\eta ):=\sum_{l=0}^\infty \lambda _l \zeta _l (y) \hat{w}^0_l(\eta )
\end{equation}
is convergent in $C^\infty (U,\mathcal{A}'(K_b,E^{s-m }))^\bullet$ subordinate to $\mathcal{Q}$ (cf. the notation in \eqref{31asef}). It
follows that $\Op_y(g_{b,\mathcal{Q}})u=F^{-1}_{\eta\rightarrow y}\big\{[\eta]^{(n+1)/2}\omega(r[\eta])\langle\hat{\delta}_b(y,\eta),(r[\eta])^{-z}\rangle\big\}$ which belongs to $\mathcal{W}^{s-m}_{b,\mathcal{Q}}.$ In other words we proved that 
\begin{equation} \label{32asdeltas}
\Op_y(g_{b,\mathcal{Q}}):\mathcal{W}^s_{\textup{comp}}(U,\mathcal{K}^{s,\gamma}(X^\wedge))\rightarrow \mathcal{W}^{s-m}_{b,\mathcal{Q}}
\end{equation}
is a linear map. By virtue of the continuity of \eqref{32asg0st} for the continuity of \eqref{32rU} it remains to show that \eqref{32asdeltas} is continuous. This follows from the closed graph theorem when we have a closed graph. However, this is the case, since $\Op_y(g_{b,\mathcal{Q}})$ is continuous as an operator into an analogue of the space $\mathcal{W}^{s-m}_{b,\mathcal{Q}}$ based on continuous asymptotics carried by the compact set $K_b.$ Then, if a sequence of pairs $(u_j,v_j)_{j\in \N}$ in the graph of $\Op_y(g_{b,\mathcal{Q}})$ converges in the latter sense to a limit $(u,v)$ in the product space where the second component relies on the space with continuous asymptotics, we have $\Op_y(g_{b,\mathcal{Q}})u=v.$ But $v_j$ belongs to $\mathcal{W}^{s-m}_{b,\mathcal{Q}}$ in the pointwise discrete sense, and the limit in the continuous analogue is automatically pointwise discrete. This completes the proof of Theorem \ref{32gr}.
\end{proof}

\begin{thm} \label{32mel}
Let $m(y,\eta)$ be a smoothing Mellin symbol of the edge calculus of the form \eqref{21mmmel}. Then \begin{equation}\label{32melconw}
\Op_y(m):\mathcal{W}^s_{\textup{comp}}(\Omega,\mathcal{K}^{s,\gamma}(X^\wedge))\rightarrow\mathcal{W}^{s-\mu}_{\textup{loc}}(\Omega,\mathcal{K}^{\infty,\gamma-\mu}(X^\wedge))
\end{equation}  
restricts to a continuous operator
\begin{equation} \label{32asem}
\Op_y(m):\mathcal{W}^s_{\textup{comp}}(\Omega,\mathcal{K}_{\mathcal{P}}^{s,\gamma}(X^\wedge))\rightarrow\mathcal{W}^{s-\mu}_{\textup{loc}}(\Omega,\mathcal{K}_{\mathcal{Q}}^{\infty,\gamma-\mu}(X^\wedge))
\end{equation}
for every variable discrete asymptotic type $\mathcal{P}$ for some resulting variable discrete asymptotic type $\mathcal{Q},$ both over $\Omega,$ for any $s\in\R.$
\end{thm}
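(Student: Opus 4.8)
The plan is to mirror, as closely as possible, the proof of Theorem \ref{32gr}; the genuinely new ingredient is the contribution of the poles of the Mellin symbols to the output asymptotic type, together with the collision of these poles with those of the input when the input already carries asymptotics. First I would dispose of the continuity of \eqref{32melconw}: by Proposition \ref{21smmel} the symbol $m(y,\eta)$ is a classical operator-valued symbol of order $\mu$ from $\mathcal{K}^{s,\gamma}(X^\wedge)$ to $\mathcal{K}^{\infty,\gamma-\mu}(X^\wedge)$, so \eqref{32co} applies verbatim. Using Remark \ref{32locc} I would then reduce \eqref{32asem} to a local statement over a small $U\in\mathcal{U}(\Omega)$ and, inserting the partition of unity of \eqref{21melconvglob} and using that multiplication by a smooth function is harmless, to a single summand $m_{j\alpha}(y,\eta)=\omega_\eta r^{-\mu+j}\op_M^{\gamma_j-n/2}(f_{j\alpha})(y)\eta^\alpha\omega'_\eta$, where by \eqref{21melconvglob} we may take $\gamma_j$ fixed with $\pi_\C\mathcal{R}_{j\alpha}(y)\cap\Gamma_{(n+1)/2-\gamma_j}=\emptyset$ on $U$. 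For a fixed target datum $b=(c,U)$ and $\beta=\beta_0+\varepsilon$, $\beta_0=(n+1)/2-(\gamma-\mu)-c$, the goal, in the notation of \eqref{32assum}, is to produce a variable discrete asymptotic type $\mathcal{Q}$ with $\Op_y(m_{j\alpha})u\in\mathcal{W}^{s-\mu}_{\textup{loc}}(U,\mathcal{K}^{\infty,\gamma-\mu+\beta}(X^\wedge))+\mathcal{W}^{s-\mu}_{b,\mathcal{Q}}$ for all $u\in\mathcal{W}^s_{\textup{comp}}(U,\mathcal{K}^{s,\gamma}_{\mathcal{P}}(X^\wedge))$.

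The core step is the analysis of $\op_M^{\gamma_j-n/2}(f_{j\alpha})(y)$ applied to $\omega'_\eta u$. Writing $u=u_{\textup{flat}}+u_{\textup{sing}}$ as in \eqref{32assum}, with $u_{\textup{flat}}\in\mathcal{W}^s_{\textup{loc}}(U,\mathcal{K}^{s,\gamma+\beta'}(X^\wedge))$ for a $\beta'>\beta$ permitted by the weight interval of $\mathcal{P}$, and $u_{\textup{sing}}$ a potential of the form \eqref{32contras} carried by a compact $K_{b'}$ in the weight strip of $\mathcal{P}$, I would express the Mellin operator as its inversion integral over $\Gamma_{(n+1)/2-\gamma_j}$ and shift the contour to the left far enough that, after multiplication by $r^{-\mu+j}$, the residual integral on the far-left line yields flatness of order $\beta$ relative to $\gamma-\mu$, hence is absorbed by \eqref{32co} into $\mathcal{W}^{s-\mu}_{\textup{loc}}(U,\mathcal{K}^{\infty,\gamma-\mu+\beta}(X^\wedge))$ (the same reduction disposes of the action on $u_{\textup{flat}}$, where the shifted weight line meets no poles of the Mellin transform, so only the poles of $f_{j\alpha}(y,\cdot)$ are crossed). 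For $u_{\textup{sing}}$ one crosses in addition the poles of the Mellin transform of $\omega'_\eta u_{\textup{sing}}$, i.e.\ the points of $\pi_\C\mathcal{P}(y)$, and where two poles coincide their multiplicities add, exactly as in the multiplication rule of Proposition \ref{21mult}; by Definition \ref{21melas} and Definition \ref{21vars} only finitely many poles, with total multiplicity bounded locally uniformly in $y$, are involved. The residue terms, multiplied by $r^{-\mu+j}\eta^\alpha\omega_\eta$, should assemble into an operator of the potential shape \eqref{32contras}: the $[\eta]$-scaling already present in $\omega_\eta,\omega'_\eta$ supplies the correct symbolic $\eta$-behaviour, and the $y$-, $\eta$- and fibre dependences are separated by a tensor product expansion exactly as in the proof of Theorem \ref{32gr} and in Remark \ref{32frsppot}. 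This is the same bookkeeping as in Proposition \ref{24grdiff} and Proposition \ref{22melgreen1}, where differences of such Mellin operators were recognised as Green symbols, the only new feature being the extra poles coming from $\mathcal{P}$.

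With this in hand, the resulting type is read off: for each $b$ and each $(j,\alpha)$ the carrier of the potential is obtained from $\pi_\C\mathcal{R}_{j\alpha}$ and $\pi_\C\mathcal{P}$ by the translation $z\mapsto z+\mu-j$, intersected with the $\Theta$-strip relative to $\gamma-\mu$, multiplicities of coinciding points being added. Since $\mathcal{R}_{j\alpha}$ and $\mathcal{P}$ have uniformly bounded local multiplicities and finite intersection with every vertical strip, the union $\mathcal{Q}:=\bigcup_b\bigcup_{j,\alpha}\mathcal{Q}_{j\alpha}$ — an admissible, possibly infinite, union in the sense of \eqref{21decasd} — is again a variable discrete asymptotic type over $\Omega$. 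Continuity of the resulting map into $\mathcal{W}^{s-\mu}_{b,\mathcal{Q}}$ I would then obtain by the closed graph argument used at the end of the proof of Theorem \ref{32gr}: the operator is continuous into the analogue space built on continuous asymptotics carried by the relevant compact sets, where the structural argument is formally identical, and the limit of a convergent sequence of pointwise discrete elements is automatically pointwise discrete. Reassembling over the finitely many summands and globalising by the partition of unity finishes the argument.

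\emph{Main obstacle.} The delicate point is the core step: showing that $\op_M^{\gamma_j-n/2}(f_{j\alpha})(y)$ applied to a function already carrying variable discrete asymptotics produces a function of potential type whose asymptotic type combines $\mathcal{R}_{j\alpha}$ and $\mathcal{P}$, with the multiplicities adding on collisions — the operator-valued analogue of Proposition \ref{21mult} — while keeping the $\eta$-dependence inside the classical symbol classes and verifying that the combined type still has uniformly bounded local multiplicities. Everything else is a rearrangement of arguments already in the paper.
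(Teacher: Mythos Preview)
Your overall strategy is correct and matches the paper's: reduce to a single localised summand, split the input as $u=u_{\textup{flat}}+u_{\textup{sing}}$ according to \eqref{32assum}, handle the flat part by a contour shift in the Mellin integral (exactly the paper's computation \eqref{32spdif}--\eqref{32spdif6}), and close with a closed-graph argument. The description of the resulting type $\mathcal{Q}$ is also right.

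There is, however, a genuine gap in your treatment of $u_{\textup{sing}}$. You write as if $\op_M^{\gamma_j-n/2}(f_{j\alpha})(y)$ can be applied to $\omega'_\eta u_{\textup{sing}}$ pointwise in $(y,\eta)$ and then the contour shifted. But $u_{\textup{sing}}(r,y)=F^{-1}_{\eta\to y}\{[\eta]^{(n+1)/2}\omega(r[\eta])\langle\hat{\delta}_b(y,\eta),(r[\eta])^{-z}\rangle\}$ carries a $y$-dependence in the analytic functional $\hat{\delta}_b(y,\eta)$ \emph{in addition} to the Fourier variable, so $F_{y\to\eta}u_{\textup{sing}}$ is not simply the expression in braces, and $\Op_y(m)u_{\textup{sing}}$ is not computed by applying $m(y,\eta)$ to it pointwise. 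The paper resolves this by the tensor expansion you mention, writing $u_{\textup{sing}}=\sum_j\lambda_j\Op_y(k_{\zeta_j})v_j$ with potential symbols $k_{\zeta_j}(y,\eta)$ (Remark \ref{32frsppot}), so that $\Op_y(m)u_{\textup{sing}}=\sum_j\lambda_j\,\Op_y(m)\Op_y(k_{\zeta_j})v_j$ is a genuine \emph{composition} of $y$-pseudodifferential operators. Your contour-shift computation then applies only to the leading Leibniz term $\Op_y(m\cdot k_{\zeta_j})$ (this is the paper's \eqref{32abcch1}--\eqref{32spdof7}). What you are missing is the remainder $\Op_y(r_0)$ in the composition expansion \eqref{32rem1}--\eqref{32rem2}: the symbol $\partial_\eta m(y,\eta)$ hits the cut-offs $\omega_\eta,\omega'_\eta$, producing factors like $(\partial_r\omega)(r[\eta+t\xi])$ that mix the $\eta$- and $\xi$-scalings and destroy the simple $\kappa_{[\eta]}$-commutation structure. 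Showing that this remainder still lands in $\mathcal{W}^s_{\textup{loc}}(U,\mathcal{K}^{\infty,\gamma}_{\mathcal{L}}(X^\wedge))$ requires nontrivial oscillatory-integral estimates in the operator-valued Kumano-go calculus; the paper devotes three separate lemmas (Lemmas \ref{32rem871}, \ref{32rem86}, \ref{32rem87}) to this, splitting the remainder integrand into a piece with $\omega(r[\eta+t\xi])-\omega(r[\eta])$ (shown to be a symbol of order $-1$ into $\mathcal{K}^{\infty,\infty}(X^\wedge)$) and a piece that can again be written as $\omega_\eta\op_M^{\gamma-n/2}(f)(y)\omega_\eta$ composed with a lower-order potential symbol, to which the contour-shift argument applies once more.

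In short: your ``core step'' is the leading term of a pseudodifferential composition, and the substantive technical work you have not accounted for is the control of the Leibniz remainder when $\partial_\eta$ falls on the $\eta$-dependent cut-offs. This is precisely the point where the paper's proof is longest, and it cannot be absorbed into the bookkeeping of Propositions \ref{24grdiff} or \ref{22melgreen1}, which concern $\eta$-derivatives of $m$ alone rather than the oscillatory remainder of a composition with a $y$-dependent potential symbol.
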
 
\begin{proof}
An operator \eqref{21mmmel} is a sum of expressions of the form
\begin{equation} \label{32smel}
r^{-\mu +j}\omega _\eta \op_M^{\gamma _{j,\alpha }-n/2} f(y)\eta ^\alpha \omega '_\eta 
\end{equation}
where $j\in \N,|\alpha |\leq j,\gamma -j \leq \gamma _{j,\alpha } \leq \gamma ,$ and $f(y)\in C^\infty (\Omega ,\mathcal{M}^{-\infty }_{\mathcal{R}}(X))$ for some variable discrete Mellin asymptotic type $\mathcal{R}=\mathcal{R}_{j,\alpha }$ and $f$ supported with respect to $y$ in an open subset of small diameter, such that $\Gamma _{(n+1)/2-\gamma _{j,\alpha }}\cap \pi _\C \mathcal{R}_{j,\alpha }(y)=\emptyset$ for all $y\in U.$ By virtue of Proposition \eqref{21smmel} the operator \eqref{32melconw} is continuous for every $s.$ To show the continuity of \eqref{32asem} we mainly check that $\Op_y(m)$ defines a linear map between the corresponding spaces. The continuity is then a consequence of the closed graph theorem which applies for similar reasons as in the proof of Theorem \ref{32gr}. By Remark \ref{32locc} it suffices to assume that the argument function $u\in \mathcal{W}^s_{\textup{comp}}(\Omega,\mathcal{K}_{\mathcal{P}}^{s,\gamma}(X^\wedge))$ is supported with respect to $y$ in an open set $U\subseteq \Omega $ of sufficiently small diameter. We may (and will)  take the same $U$ as before for the Mellin symbol $f$ since the localisation of a Mellin operator off the diagonal gives rise to a Green operator, cf. Proposition \ref{31grree}. More precisely we assume $f\in C_0^\infty (U,\mathcal{M}_{\mathcal{R}}(X)).$ For $b:=(c,U)$ and $\tilde{b}:=(\tilde{c},U),$ 
$$(n+1)/2-\gamma +\vartheta <c<(n+1)/2-\gamma,(n+1)/2-(\gamma -\mu )+\vartheta <\tilde{c}<(n+1)/2-(\gamma -\mu )$$
according to \eqref{32assum} for our mapping we consider the spaces 
\begin{equation} \label{32spacedec}
\mathcal{W}^s_{\textup{comp}}(U,\mathcal{K}^{s ,\gamma +\beta}(X^\wedge)) +  \mathcal{W}^s_{b,\mathcal{P}},\,\mathcal{W}^{s-\mu }_{\textup{loc}}(U,\mathcal{K}^{s ,\gamma -\mu +\tilde{\beta}}(X^\wedge)) +  \mathcal{W}^{s-\mu }_{\tilde{b},\mathcal{Q}} 
\end{equation}
for $\beta =\beta _0+\varepsilon ,\beta _0=(n+1)/2-\gamma -c,\tilde{\beta}=\tilde{\beta}_0+\tilde{\varepsilon },\tilde{\beta}_0=(n+1)/2-(\gamma -\mu)-\tilde{c},$ for sufficiently small $\varepsilon ,\tilde{\varepsilon }>0$ (by definition we have $\beta_0 >0,$ and $\gamma +\beta $ indicates flatness of order $\beta $ relative to $\gamma ;$ a similar role play $\tilde{\beta },\tilde{\beta }_0$). It suffices to show that for every $\tilde{c}$ there is a $c$ such that 
\begin{equation} \label{32smelcontU}
\begin{split}
\Op_y(r^{-\mu +j}\omega _\eta \op_M^{\gamma _{j,\alpha }-n/2} f(y)\eta ^\alpha \omega '_\eta ):&\mathcal{W}^s_{\textup{comp}}(U,\mathcal{K}^{s ,\gamma +\beta}(X^\wedge)) +  \mathcal{W}^s_{b,\mathcal{P}}\\ &\rightarrow \mathcal{W}^{s-\mu }_{\textup{loc}}(U,\mathcal{K}^{s ,\gamma -\mu +\tilde{\beta}}(X^\wedge)) +  \mathcal{W}^{s-\mu }_{\tilde{b},\mathcal{Q}}
\end{split}
\end{equation}
is continuous. Let us mainly consider the case $\mu =0,$ and $j=0;$ then $\gamma _{j,\alpha }=\gamma .$  The general case can be treated by a slight modification of arguments.  By assumption we have $f\in C^\infty (\Omega ,\mathcal{M}^{-\infty }_{\mathcal{R}}(X))$ and $\Gamma _{(n+1)/2-\gamma }\cap\pi _\C \mathcal{R}(y)=\emptyset$ for all $y\in \Omega .$ Then the assertion can be decomposed into the mapping properties 
\begin{equation} \label{32smeltU1}
\Op_y(\omega _\eta \op_M^{\gamma -n/2} f(y) \omega '_\eta )\!:\!\mathcal{W}^s_{\textup{comp}}(U,\mathcal{K}^{s ,\gamma +\beta}(X^\wedge)) \rightarrow \!\mathcal{W}^s_{\textup{loc}}(U,\mathcal{K}^{s ,\gamma+\tilde{\beta}}(X^\wedge)) +  \mathcal{W}^s_{\tilde{b},\mathcal{L}},
\end{equation}
and
\begin{equation} \label{32smeltU2}
\Op_y(\omega _\eta \op_M^{\gamma -n/2} f(y) \omega '_\eta ):  \mathcal{W}^s_{b,\mathcal{P}}\rightarrow \mathcal{W}^s_{\textup{loc}}(U,\mathcal{K}^{s ,\gamma+\tilde{\beta}}(X^\wedge)) +  \mathcal{W}^s_{\tilde{b},\mathcal{M}}
\end{equation}
for variable discrete asymptotic types $\mathcal{L},\mathcal{M}.$ We will see that 
\begin{equation} \label{32smeltU3}
\mathcal{L}(y)=\{(r(y),l(y))\in \mathcal{R}(y):(n+1)/2-\gamma +\vartheta <\textup{Re}\,r(y)<(n+1)/2-\gamma\},
\end{equation}
and
\begin{equation} \label{32smeltV1}
\mathcal{M}(y)=\mathcal{P}(y)\cup\mathcal{L}(y),
\end{equation}
$y\in \Omega .$
Let us first consider the case \eqref{32smeltU1}. We choose $\tilde{b} =(\tilde{c},U)$ as mentioned before (now for $\mu =0$). Here we have some freedom, since the only condition is that all $U$ in consideration form a finite open covering of $\overline{U}_0$ for any given $U_0\in \mathcal{U}(\Omega );$  the diameter of $U$ may be taken as small as we want. Moreover, it suffices to assume that $\tilde{c}$ belongs to a countable sequence $(\tilde{c}_\iota )_{\iota \in \N}$ such that $\tilde{c}_\iota \rightarrow (n+1)/2-\gamma +\vartheta $ as $\iota \rightarrow \infty .$ For its choice we start with any such sequence $(\tilde{\tilde{c}}_\iota )_{\iota \in \N}$ and then by an approximation we pass to another one, namely,  $(\tilde{c}_\iota )_{\iota \in \N}$ with analogous properties. Let us write $\tilde{\tilde{c}}:= \tilde{\tilde{c}}_\iota $ for any fixed $\iota $. Given $\tilde{\tilde{c}}$ we first set $\tilde{\tilde{\beta }}_0:=(n+1)/2-\gamma -\tilde{\tilde{c}}.$ Then for every fixed $y_0\in \overline{U}_0$
we find a sufficiently small $\delta >0$ such that $\Gamma _{(n+1)/2-(\gamma+\tilde{\tilde{\beta }}_0+\delta)}\cap \pi _\C\mathcal{L}(y_0)=\emptyset.$ Now we set $\tilde{\beta }_0:=\tilde{\tilde{\beta }}_0+\delta $ for such a $\delta $ and $\tilde{c}:=(n+1)/2-(\gamma+\tilde{\beta }_0).$ Then we have 
\begin{equation} \label{32smappro}
\Gamma _{(n+1)/2-(\gamma+\tilde{\beta }_0+\tilde{\varepsilon })}\cap \pi _\C\mathcal{L}(y_0)=\emptyset
\end{equation}
first for $y=y_0$ and all $0<\tilde{\varepsilon  } \leq \tilde{\varepsilon  }_0$ for some sufficiently small $\tilde{\varepsilon  }_0>0,$ and then for all $y$ in an open neighbourhood $U(y_0)$ of $y_0.$ Then we set $U:=U(y_0).$ Summing up we made an appropriate choice of $\tilde{b} =(\tilde{c},U)$. 
For the proof of \eqref{32smeltU1} we set $\beta =\tilde{\beta },\beta _0=\tilde{\beta } _0, c=\tilde{c },$ such that $\beta =\beta _0+\varepsilon ,\beta _0=(n+1)/2-\gamma -c.$ Recall that $\mathcal{L}$ in this connection is used in the meaning
\begin{equation} \label{32smabb}
\mathcal{L}_b=p_{K_b}\mathcal{L}|_U,
\end{equation}
cf. the formula \eqref{32contras}. Let $u\in \mathcal{W}^s_{\textup{comp}}(U,\mathcal{K}^{s ,\gamma +\beta}(X^\wedge)).$ By Remark \ref{31potK} we have 
\begin{equation} \label{32spacedecv}
 (F_{y'\rightarrow \eta }u)(r,\eta )=[\eta ]^{(n+1)/2}\hat{v}(r[\eta ],\eta )\,\,\mbox{for a}
\,\,v(r,y)\in H^s(\R^q,\mathcal{K}^{s ,\gamma +\beta}(X^\wedge))
\end{equation}
(recall that the $x$-variable is suppressed in the notation). By virtue of Theorems \ref{22melgreen1} and \ref{32gr} we may assume $\omega =\omega '.$ We have
\begin{equation} \label{32spacedecvme}
 \omega (r[\eta ])(F_{y'\rightarrow \eta }u)(r,\eta )=[\eta ]^{(n+1)/2}\omega (r[\eta ])\hat{v}(r[\eta ],\eta )=\kappa _{[\eta ]}\{\omega (r)\hat{v}(r,\eta )\}.
\end{equation}
Since a Mellin operator commutes with $\kappa _{[\eta ]}$ when the Mellin symbol has constant coefficients in $r, $ cf. the relation \eqref{1mopcom}, for
\begin{equation} \label{32spaabvme}
 m(y,\eta ):=\omega _\eta \op_M^{\gamma -n/2} f(y) \omega _\eta
\end{equation}
 it follows that
\begin{equation} \label{32spme}
\begin{split}
 \Op_y(m)u(r,y)&=F^{-1}_{\eta \rightarrow y }\omega (r[\eta ])\op_M^{\gamma -n/2}(f)(y)\omega (r[\eta ])(F_{y'\rightarrow \eta }u)(r,\eta )\\&=F^{-1}_{\eta \rightarrow y }\omega (r[\eta ])\op_M^{\gamma -n/2}(f)(y)\kappa _{[\eta ]}\{\omega (r)\hat{v}(r,\eta )\}\\&=F^{-1}_{\eta \rightarrow y }\kappa _{[\eta ]}\{\omega (r)\op_M^{\gamma -n/2}(f)(y)\omega (r)\hat{v}(r,\eta )\}.
 \end{split}
\end{equation}
Let $\hat{b}(z,\eta ):=M_{\gamma -n/2}\{\omega (r)\hat{v}(r,\eta )\};$ then
$$\omega (r)\op_M^{\gamma -n/2}(f)(y)\{\omega (r)\hat{v}(r,\eta )\}=\omega (r)\int_{\Gamma_{(n+1)/2-\gamma } }r^{-z}f(y,z)\hat{b}(z,\eta )\dbar z.$$
For  $y\in U$ we can write 
\begin{equation} \label{32spdif}
\omega (r)\op_M^{\gamma -n/2}(f)(y)\{\omega (r)\hat{v}(r,\eta )\}=\hat{d}(r,y,\eta )+\hat{c}(r,y,\eta )
\end{equation}
for
\begin{equation} \label{32spdif1}
\begin{split}
\hat{d}(r,y,\eta )=&\,\omega (r)\int_{\Gamma_{(n+1)/2-\gamma } }r^{-z}f(y,z)\hat{b}(z,\eta )\dbar z\\&-\omega (r)\int_{\Gamma_{(n+1)/2-(\gamma +\beta)} }r^{-z}f(y,z)\hat{b}(z,\eta )\dbar z,
\end{split}
\end{equation}
and
\begin{equation} \label{32spdif2}
\hat{c}(r,y,\eta )=\omega (r)\int_{\Gamma_{(n+1)/2-(\gamma +\beta )} }r^{-z}f(y,z)\hat{b}(z,\eta )\dbar z.
\end{equation}
We have
\begin{equation} \label{32spdif3}
\hat{d}(r,y,\eta )=\omega (r)\int_C r^{-z}f(y,z)\hat{b}(z,\eta )\dbar z
\end{equation}
for a smooth compact curve $C$ counter-clockwise surrounding the poles of $f(y,z)$ in $\{(n+1)/2-(\gamma +\beta )<\textup{Re}\,z<(n+1)/2-\gamma\}$ for all $y\in U.$ Moreover, we write
\begin{equation} \label{32spdif4}
\begin{split}
\hat{c}(r,y,\eta )=\omega (r)\int_{-\infty }^\infty & r^{-((n+1)/2-(\gamma +\beta )+i\rho )}f(y,(n+1)/2-(\gamma +\beta )+i\rho )\\ &\hat{b}((n+1)/2-(\gamma +\beta )+i\rho,\eta )\dbar \rho \\
&=r^\beta \omega (r)\int_{\Gamma_{(n+1)/2-\gamma } } r^{-\tilde{z}}f(y,\tilde{z}-\beta  )\hat{b}(\tilde{z}-\beta ,\eta )\dbar \tilde{z}.
\end{split}
\end{equation}
It follows that $\omega (r)\op_M^{\gamma -n/2}(f)(y)\{\omega (r)\hat{v}(r,\eta )\}=\hat{d}(r,y,\eta )+\hat{c}(r,y,\eta )$ for all $y\in U,$ and \eqref{32spme} gives us
\begin{equation} \label{32spdif5}
\Op_y(m)u(r,y)=F^{-1}_{\eta \rightarrow y }\kappa _{[\eta ]}\{\big(\hat{d}(r,y,\eta )+\hat{c}(r,y,\eta )\big)\}.
\end{equation}
In order to recognise the nature of the latter expression we first consider the summand with $\hat{c}(r,y,\eta )$ and show that 
\begin{equation} \label{32spdif6}
F^{-1}_{\eta \rightarrow y }\kappa _{[\eta ]}\hat{c}(r,y,\eta )\in \mathcal{W}^s(\R^q,\mathcal{K}^{\infty ,\gamma +\beta }(X^\wedge)).
\end{equation}
To this end we employ the fact that 
$$f(y,\tilde{z}-\beta  )\in C_0^\infty (U,L^{-\infty }(X;\Gamma _{(n+1)/2-\gamma })).$$
Applying a tensor product expansion we write
$$f(y,\tilde{z}-\beta )|_{\Gamma_{(n+1)/2-\gamma } }=\sum_{j=0}^\infty \lambda _j\psi _j f_j$$
where $\sum_j|\lambda _j|<\infty ,$ and $\psi _j\in C_0^\infty (U),f_j\in L^{-\infty }(X;\Gamma _{(n+1)/2-\gamma })$ tending to $0$ in the respective spaces for $j\rightarrow \infty .$ This gives us
$$F^{-1}_{\eta \rightarrow y }\kappa _{[\eta ]}\hat{c}(r,y,\eta )=\sum_{j=0}^\infty F^{-1}_{\eta \rightarrow y }\kappa _{[\eta ]} \lambda _j\psi _j(y)\hat{c}_j(r,\eta )$$
for $\hat{c}_j(r,\eta )=r^\beta \omega (r)\int_{\Gamma_{(n+1)/2-\gamma } } r^{-\tilde{z}}f_j(\tilde{z}-\beta  )\hat{b}(\tilde{z}-\beta ,\eta )\dbar \tilde{z}.$\,We obtain
$$F^{-1}_{\eta \rightarrow y }\kappa _{[\eta ]} \psi _j(y)\hat{c}_j(r,\eta )=\psi _j(y)F^{-1}_{\eta \rightarrow y }\kappa _{[\eta ]}F_{y' \rightarrow \eta  }(c_j(r,y'))(\eta ),$$
where $\omega (r)c_j(r,y')\in H^s(\R^q,\mathcal{K}^{\infty ,\gamma +\beta }(X^\wedge)),$ i.e.
$$F^{-1}\kappa _{[\eta ]}F(c_j(r,y'))\in \mathcal{W}^s(\R^q,\mathcal{K}^{\infty ,\gamma +\beta }(X^\wedge))$$
which tends to $0$ as $j\rightarrow \infty .$ The multiplication by $\psi _j(y)$ acts on $\mathcal{W}^s(\R^q,\mathcal{K}^{\infty ,\gamma +\beta  }\newline (X^\wedge))$ as a continuous operator, and its norm tends to zero as $\psi _j(y) \rightarrow 0.$ This gives us altogether \eqref{32spdif6}, as desired.\\
Let us now characterise the contribution of $\hat{d}(r,y,\eta )$ in \eqref{32spdif5}. First note that 
$$\hat{b}(z,\eta )\in M_{\gamma -n/2,r\rightarrow z}F_{y'\rightarrow \eta }\omega (r)H^s(\R^q,\mathcal{K}^{\infty ,\gamma +\beta }(X^\wedge)).$$
The Mellin transform of $\omega (r)H^s(\R^q_{y'},\mathcal{K}^{\infty ,\gamma +\beta }(X^\wedge))$
consists of $H^s(\R^q_{y'},H^s(X))$-valued holomorphic functions in $\{\textup{Re}\,z>(n+1)/2-(\gamma +\beta )\}$ which restrict to elements in 
$$H^s(\R^q_{y'},\hat{H}^s(\Gamma _\alpha \times X))$$
for every $\alpha  >(n+1)/2-(\gamma +\beta ),$ uniformly in compact $\alpha  $-intervals. The product $f(y,z)\hat{b}(z,\eta )$ occurring in \eqref{32spdif3} is holomorphic in $z$ in a neighbourhood of the curve $C$ for all $y\in U$ and extends for every $y$ to a meromorphic function inside, with values in $H^s(\R^q_{y'},C^\infty (X)).$ Thus, \eqref{32spdif3} represents the pairing of a function $\hat{\delta }(y,\eta )\in C^\infty (U,\mathcal{A}'(K,C^\infty (X)\hat{\otimes}_\pi \hat{H}^s(\R^q_\eta )))^\bullet$ with $r^{-z}.$ This gives us a function $\delta (y,y')$ as in Definition \ref{31wedgeas}. Thus, the first summand on the right of \eqref{32spdif5} takes the form
\begin{equation} \label{32spdif7}
F^{-1}_{\eta \rightarrow y }\kappa _{[\eta ]}(\omega (r)\langle\delta (y,y'),r^{-z}\rangle)=F^{-1}_{\eta \rightarrow y }[\eta ]^{(n+1)/2}(\omega (r[\eta ])\langle\delta (y,y'),(r[\eta ])^{-z}\rangle).
\end{equation}
This yields finally
\begin{equation} \label{32spdif8}
\Op_y(m)u(r,y)\in \mathcal{W}^s_{\textup{loc}}(\Omega ,\mathcal{K}^{\infty ,\gamma }_{\mathcal{L}}(X^\wedge)).
\end{equation}
In order to show \eqref{32smeltU2} we assume $u\in \mathcal{W}^s_{b,\mathcal{P}}$ (recall that in this connection the function $u$ has compact support in $U$ with respect to $y,$ cf. the comment after \eqref{32assumco}). In other words, $\Op_y(m)$ acts on 
\begin{equation} \label{32auf}
u(r,y)= F^{-1}_{\eta\rightarrow y}\big\{[\eta]^{(n+1)/2}\omega(r[\eta])\langle\hat{\delta}_b(y,\eta),(r[\eta])^{-z}\rangle\}               
\end{equation}
for a $\hat{\delta}_b(y,\eta)\in C_0^\infty(U,\mathcal{A}'(K_b,E^s))^\bullet,$ cf. \eqref{31asef}, \eqref{31ased}, and \eqref{32contras}. Moreover, we assume again $f\in C_0^\infty (U, \mathcal{M}^{-\infty }_{\mathcal{R}}(X))$ which is admitted on the expense of a Green term, cf. Proposition \ref{31grree} and Theorem \ref{32gr}. Then we have to characterise
\begin{equation} \label{32amob}
\Op_y(\omega _\eta \op_M^{\gamma -n/2} (f)(y) \omega _\eta\{F^{-1}_{\tilde{\eta}\rightarrow y}\big\{[\tilde{\eta}]^{(n+1)/2}\omega_{\tilde{\eta}} )\langle\hat{\delta}_b(y,\tilde{\eta}),(r[\tilde{\eta}])^{-z}\rangle\}\}.               
\end{equation}
A tensor product expansion gives us
\begin{equation} \label{32atens}
\hat{\delta}_b(y,\eta )=\sum_{j=0}^\infty \lambda _j\zeta _j(y)\hat{v}_j(\eta ),               
\end{equation}
with $\sum_{j=0}^\infty |\lambda _j|<\infty $ and zero sequences $\zeta _j$ and $\hat{v}_j$ in the spaces $C_0^\infty (U,\mathcal{A}'(K_b,C^\infty\newline (X) ))^\bullet$ and $\hat{H}^s(\R^q),$ respectively (where $\zeta _j$ is defined by functions subordinate to $\mathcal{P}$). The potential symbols
\begin{equation} \label{32atepot}
k_{\zeta _j}(y',\eta )=\omega_{\tilde{\eta}}[\tilde{\eta}]^{(n+1)/2}\langle\zeta _j(y'),(r[\tilde{\eta}])^{-z}\rangle\in S^0_{\textup{cl}}(U\times \R^q;\C,\mathcal{K}^{\infty ,\gamma }(X^\wedge))              
\end{equation}
tend to zero in this space of symbols. From 
\begin{equation} \label{32atedev}
\begin{split}
u(r,y)=F^{-1}_{\tilde{\eta}\rightarrow y}&\big\{[\tilde{\eta}]^{(n+1)/2}\omega_{\tilde{\eta}} \langle\hat{\delta}_b(y,\tilde{\eta}),(r[\tilde{\eta}])^{-z}\rangle\}\hat{v}_j (\tilde{\eta})\\&=  \sum_{j=0}^\infty \lambda _j F^{-1}_{\tilde{\eta}\rightarrow y}\big\{[\tilde{\eta}]^{(n+1)/2}\omega_{\tilde{\eta}} \langle\zeta _j(y),(r[\tilde{\eta}])^{-z}\rangle \hat{v}_j (\tilde{\eta})\} \\&= \sum_{j=0}^\infty \lambda _j K_jv_j\,\,\,\mbox{for}\,\,\,K_j=\Op_y(k_{\zeta _j})
\end{split}       
\end{equation}it follows that 
\begin{equation} \label{32atepch}
\Op_y(m)u=\sum_{j=0}^\infty \lambda _j\Op_y\big(\omega _\eta \op_M^{\gamma -n/2} (f)(y) \omega _\eta\big)\Op_{y}\big([\eta]^{(n+1)/2}\omega_{\eta}k_{\zeta _j}(y,\eta )\big)v_j.
\end{equation}
To compute the right hand side we first consider the $j$-th summand, and drop for the moment the factor $\lambda _j.$ For 
\begin{equation} \label{32abc}
m(y,\eta )=\omega _\eta \op_M^{\gamma -n/2} (f)(y) \omega _\eta, \,\,n_j(y,\eta )=k_{\zeta _j}(y,\eta )
\end{equation}
we have 
\begin{equation} \label{32abcomp}
\Op_y\big(\omega _\eta \op_M^{\gamma -n/2}\! (f)(y) \omega _\eta\big)\Op_{y}\big(k_{\zeta _j}(y,\eta )\big)=\Op_y(m(y,\eta ))\Op_{y}( n_j(y,\eta )).
\end{equation}
According to the composition rule $\Op_y(m(y,\eta ))\Op_{y}( n_j(y,\eta ))=\sum_{|\alpha |\leq N}\Op_y\big(1/\alpha !\newline(\partial _\eta ^\alpha m(y,\eta ))D_\eta ^\alpha n_j(y,\eta )\big)+\Op_y\big(r_{N,j}(y,\eta )\big)$ for a remainder $r_{N,j}$ of standard form cf. the formula \eqref{32rem1} below, $N\in \N,$ we have to characterise 
\begin{equation} \label{32abcch}
\Op_y\big((\partial _\eta ^\alpha m(y,\eta ))D_y ^\alpha n_j(y,\eta )\big)v_j\,\,\mbox{and}\,\,\Op_y\big(r_{N,j}(y,\eta )\big)v_j
\end{equation}
for $v_j\in H^s(\R^q)$ as elements of $\mathcal{W}^s_{\textup{comp}}(U,\mathcal{K}^{\infty,\gamma  }_{\mathcal{
M}}(X^\wedge))$, tending to zero as $j\rightarrow \infty .$
It will be sufficient to consider the case $N=0.$ In the following computations we first omit $j$ and consider 
\begin{equation} \label{32abcch1}
\Op_y\!\big( m(y,\eta ) n(y,\eta )\!\big)v=\Op_y\!\big(\!(\omega _\eta \op_M^{\gamma -n/2} \! (f)(y) \omega _\eta)([\eta]^{(n+1)/2}\omega_{\eta}\langle\zeta _w(y),\!(r[\eta])^{-w}\rangle )\big)v.
\end{equation}
Similarly as \eqref{32spme} it follows that the latter expression is equal to
\begin{equation} \label{32abcch2}
F^{-1}_{\eta \rightarrow y}\{\kappa _{[\eta ]}\omega (r)\op_M^{\gamma -n/2} (f)(y)\tilde{\omega }(r)\langle\zeta _w(y),r^{-w}\rangle \hat{v}(\eta )\}
\end{equation}
for $\tilde{\omega }:=\omega ^2.$ Setting $b(y,z):=M_{\gamma -n/2,r\rightarrow z}\{\tilde{\omega }(r)\langle\zeta _w(y),r^{-w}\rangle \}$ and $\hat{b}(y,z,\eta ):=b(y,z)\hat{v}(\eta )$ it follows that
\begin{equation} \label{32abcch3}
\omega (r)\op_M^{\gamma -n/2} (f)(y)\tilde{\omega }(r)\langle\zeta _w(y),r^{-w}\rangle \hat{v}(\eta )=\omega (r)\int_{\Gamma _{(n+1)/2-\gamma }}r^{-z}f(y,z)\hat{b}(y,z,\eta )\dbar z.
\end{equation}
The following considerations make sense again first for a fixed $y_0\in \overline{U}$ such that the respective weight lines are free of poles of the integrands in $z$ and then in an open neighbourhood of that point. In other words, we may concentrate on a suitable $U$ of sufficiently small diameter. Then, similarly as \eqref{32spdif} for $y\in U$ we can write 
\begin{equation} \label{32spdi1}
\omega (r)\op_M^{\gamma -n/2} (f)(y)\tilde{\omega }(r)\langle\zeta _w(y),r^{-w}\rangle \hat{v}(\eta )=\hat{d}(r,y,\eta )+\hat{c}(r,y,\eta )
\end{equation}
for
\begin{equation} \label{32spdi0}
\begin{split}
\hat{d}(r,y,\eta )=\,&\omega (r)\int_{\Gamma_{(n+1)/2-\gamma } }r^{-z}f(y,z)\hat{b}(y,z,\eta )\dbar z\\&-\omega (r)\int_{\Gamma_{(n+1)/2-(\gamma +\beta)} }r^{-z}f(y,z)\hat{b}(y,z,\eta )\dbar z,
\end{split}
\end{equation}
and
\begin{equation} \label{32spdi5}
\hat{c}(r,y,\eta )=\omega (r)\int_{\Gamma_{(n+1)/2-(\gamma +\beta )} }r^{-z}f(y,z)\hat{b}(y,z,\eta )\dbar z.
\end{equation}
We have
\begin{equation} \label{32spdi6}
\hat{d}(r,y,\eta )=\omega (r)\int_C r^{-z}f(y,z)\hat{b}(y,z,\eta )\dbar z
\end{equation}
for a smooth compact curve $C$ counter-clockwise surrounding the poles of \newline $f(y,z)b(y,z)$ in $\{(n+1)/2-(\gamma +\beta )<\textup{Re}\,z<(n+1)/2-\gamma\}$ for all $y\in U.$ Moreover, we write
\begin{equation} \label{32spdi7}
\begin{split}
\hat{c}(r,y,\eta )=\omega (r)\int_{-\infty }^\infty & r^{-((n+1)/2-(\gamma +\beta )+i\rho )}f(y,(n+1)/2-(\gamma +\beta )+i\rho )\\ &\hat{b}(y,(n+1)/2-(\gamma +\beta )+i\rho,\eta )\dbar \rho \\
&=r^\beta \omega (r)\int_{\Gamma_{(n+1)/2-\gamma } } r^{-\tilde{z}}f(y,\tilde{z}-\beta  )\hat{b}(y,\tilde{z}-\beta ,\eta )\dbar \tilde{z}.
\end{split}
\end{equation}
It follows that $\omega (r)\op_M^{\gamma -n/2}(f)(y)\tilde{\omega }(r)\langle\zeta _w(y),r^{-w}\rangle \hat{v}(\eta )=\hat{d}(r,y,\eta )+\hat{c}(r,y,\eta )$ for all $y\in U,$ and \eqref{32spme} gives us
\begin{equation} \label{32spdi8}
\Op_y(m)u(r,y)=F^{-1}_{\eta \rightarrow y }\kappa _{[\eta ]}\{\big(\hat{d}(r,y,\eta )+\hat{c}(r,y,\eta )\big)\}.
\end{equation}
In order to recognise the nature of the latter expression we first note that the property
\begin{equation} \label{32spdi9}
F^{-1}_{\eta \rightarrow y }\kappa _{[\eta ]}\hat{c}(r,y,\eta )\in \mathcal{W}^s(\R^q,\mathcal{K}^{\infty ,\gamma +\beta }(X^\wedge))
\end{equation}
can be proved in an analogous manner as \eqref{32spdif6}. Next we characterise the contribution of $\hat{d}(r,y,\eta )$ in \eqref{32spdi8}.
The product $f(y,z)\hat{b}(y,z,\eta )=f(y,z)b(y,z)\hat{v}(\eta )$ occurring in \eqref{32spdi6} is holomorphic in $z$ in a neighbourhood of the curve $C$ for all $y\in U$ and extends for every $y$ to a meromorphic function inside, with values in $C^\infty (U,\mathcal{A}(\C\setminus K,E^s))$  for $E^s=C^\infty (X)\hat{\otimes}_\pi \hat{H}^s(\R^q_\eta ).$ The function \eqref{32spdi6} represents altogether the pairing of a function $\hat{\delta }(y,\eta )\in C^\infty (U,\mathcal{A}'(K,E^s))^\bullet$ with $r^{-z},$ cf. also the notation \eqref{21ofu}. This gives us a $\delta (y,y')$ as in Definition \ref{31wedgeas}. Thus, the first summand on the right of \eqref{32spdi8} takes the form
\begin{equation} \label{32spdof7}
F^{-1}_{\eta \rightarrow y }\kappa _{[\eta ]}(\omega (r)\langle\delta (y,y'),r^{-z}\rangle)=F^{-1}_{\eta \rightarrow y }\{[\eta ]^{(n+1)/2}\omega (r[\eta ])\langle\delta (y,y'),(r[\eta ])^{-z}\rangle\}.
\end{equation}
Next we turn to the remainder term in \eqref{32abcch} which is of the form (when we omit again $j$)
\begin{equation} \label{32rem1}
r_N(y,\eta )\!=\!(N+1)\!\!\!\!\!\sum_{|\alpha |=N+1}\!\int_0^1 \!\!\!(1-t)^N\!/\alpha !\!\!\int\!\!\!\int \!\!e^{-ix\xi }(\partial _\eta ^\alpha m)(y,\eta +t\xi )(D_y^\alpha n)(y+x,\eta )dx\dbar\xi dt.
\end{equation}
For $N=0$ we have
\begin{equation} \label{32rem2}
r_0(y,\eta )=\!\!\!\!\sum_{|\alpha |=1}\!\int_0^1 \!\!\int\!\!\!\int \!\!e^{-ix\xi }(\partial _\eta ^\alpha m)(y,\eta +t\xi )(D_y^\alpha n)(y+x,\eta )dx\dbar\xi dt.
\end{equation}
We may consider the summands separately. For convenience we assume $q=1$ (the general case is completely analogous). Then our remainder is the sum of the following two terms
\begin{equation} \label{32rem4}
\begin{split}
F^{-1}_{\eta \rightarrow y}\int_0^1 \!\int\!\!\!\int & e^{-ix\xi }\{ \omega(r[\eta +t\xi] ) \op_M^{\gamma -n/2}(f)(y)(\partial _\eta \omega_\eta  )(r[\eta +t\xi])\}\\&\{\omega (r[\eta ]) [\eta ]^{(n+1)/2}\langle(\partial _y \zeta )(y+x),(r[\eta ])^{-z}\rangle)\hat{v}(\eta )\}dx\dbar\xi dt.
\end{split}
\end{equation}
and
\begin{equation} \label{32rem3}
\begin{split}
F^{-1}_{\eta \rightarrow y}\int_0^1 \!\int\!\!\!\int & e^{-ix\xi }\{(\partial _\eta  \omega_\eta )(r[\eta +t\xi] ) \op_M^{\gamma -n/2}(f)(y)\omega (r[\eta +t\xi])\} \\&\{\omega (r[\eta ]) [\eta ]^{(n+1)/2}\langle(\partial _y \zeta )(y+x),(r[\eta ])^{-z}\rangle)\hat{v}(\eta )\}dx\dbar\xi dt
\end{split}
\end{equation}
According to Kumano-go's calculus \cite{Kuma1} on the structure of oscillatory integrals in compositions, here generalised to the set-up of operator-valued smbols of the kind \eqref{21symb}, the expression \eqref{32rem3} can be written as $\Op_y(a_0)v$ for a symbol $a_0(y,\eta )\in S^{-1}(\R^q\times \R^q;\C,\mathcal{K}^{\infty ,\infty }(X^\wedge)),$ continuously depending on the involved factors, in particular, on $\zeta $. This contributes to the convergence claimed after \eqref{32abcch}. For \eqref{32rem4} we write $\Op_y(a_1)v$ for
\begin{equation} \label{32rem5}
\begin{split}
a_1(y,\eta ):=\int_0^1 \!\int\!\!\!\int & e^{-ix\xi }\{ \omega(r[\eta +t\xi] ) \op_M^{\gamma -n/2}(f)(y)(\partial _\eta \omega _\eta )(r[\eta +t\xi])\}\\&\{\omega (r[\eta ]) [\eta ]^{(n+1)/2}\langle(\partial _y \zeta )(y+x),(r[\eta ])^{-z}\rangle\}dx\dbar\xi dt.
\end{split}
\end{equation}
In this case we have $a_1(y,\eta )\in S^0(\R^q\times \R^q;\C,\mathcal{K}^{\infty ,\gamma  }(X^\wedge)),$ again by a generalisation of Kumano-go's formalism. However, we want more, namely, variable discrete asymptotics in the image under $\Op_y(a_1)v$. To this end we decompose \eqref{32rem5} into $a_1(y,\eta )=\tilde{a}_0(y,\eta )+a(y,\eta )$ for
\begin{equation} \label{32rem6}
\begin{split}
\tilde{a}_0(y,\eta ):=\int_0^1 \!\int\!\!\!\int & e^{-ix\xi }\{ \big(\omega(r[\eta +t\xi] )-\omega(r[\eta ] )\big) \op_M^{\gamma -n/2}(f)(y)(\partial _\eta \omega _\eta )(r[\eta +t\xi])\}\\&\{\omega (r[\eta ]) [\eta ]^{(n+1)/2}\langle(\partial _y \zeta )(y+x),(r[\eta ])^{-z}\rangle)\}dx\dbar\xi dt,
\end{split}
\end{equation}
and
\begin{equation} \label{32rem7}
\begin{split}
a(y,\eta ):=\int_0^1 \!\int\!\!\!\int & e^{-ix\xi }\{ \omega(r[\eta ] ) \op_M^{\gamma -n/2}(f)(y)(\partial _\eta \omega _\eta )(r[\eta +t\xi])\}\\&\{\omega (r[\eta ]) [\eta ]^{(n+1)/2}\langle(\partial _y \zeta )(y+x),(r[\eta ])^{-z}\rangle)\}dx\dbar\xi dt.
\end{split}
\end{equation}
In Lemma \ref{32rem86}  below we will show that
\begin{equation} \label{32rem8}
\tilde{a}_0(y,\eta )\in S^{-1}(\R^q\times \R^q;\C,\mathcal{K}^{\infty ,\infty }(X^\wedge)).
\end{equation}
For $a(y,\eta )$ we write
\begin{equation} \label{32rem81}
a(y,\eta )= \omega(r[\eta ] ) \op_M^{\gamma -n/2}(f)(y)\omega(r[\eta ] )b(y,\eta )
\end{equation}
for
\begin{equation} \label{32rem9}
\begin{split}
b(y,\eta ):=&\int_0^1 \!\int\!\!\!\int  e^{-ix\xi }\{(\partial _\eta \omega _\eta )(r[\eta +t\xi])\}\\&\{\omega '(r[\eta ]) [\eta ]^{(n+1)/2}\langle(\partial _y \zeta )(y+x),(r[\eta ])^{-z}\rangle\}dx\dbar\xi dt.
\end{split}
\end{equation}
for any cut-off function $\omega '\succ \omega .$

For $v\in H^s(\R^q)$ we obtain
\begin{equation} \label{32rem82}
\Op_y(r_0)v=F^{-1}_{\eta \rightarrow y}\{a_0(y,\eta )+\tilde{a}_0(y,\eta )+a(y,\eta )\}\hat{v}(\eta )
\end{equation}
where
\begin{equation} \label{32rem83}
F^{-1}_{\eta \rightarrow y}\{a_0(y,\eta )\hat{v}(\eta )\},F^{-1}_{\eta \rightarrow y}\{\tilde{a}_0(y,\eta )\hat{v}(\eta )\}\in \mathcal{W}^s_{\textup{comp}}(U,\mathcal{K}^{\infty ,\infty }(X^\wedge)).
\end{equation}
What concerns the contribution of $a(y,\eta)$ Lemma \ref{32rem87} below will show that $b(y,\eta )\hat{v}(\eta)\in C_0^\infty (U_y,\hat{\mathcal{W}}^s(\R^q_\eta ,\mathcal{K}^{\infty ,\infty }(X^\wedge))).$ Then, to characterise 
\begin{equation} \label{32rem84}
F^{-1}_{\eta \rightarrow y}\{a(y,\eta )\}\hat{v}(\eta )=F^{-1}_{\eta \rightarrow y}\{\omega(r[\eta ] ) \op_M^{\gamma -n/2}(f)(y)\omega(r[\eta ] )b(y,\eta )\}\hat{v}(\eta )
\end{equation}
we are in a similar situation as in \eqref{32spme}, and it follows altogether that
\begin{equation} \label{32spdif85}
\Op_y(r_0)v(r,y)\in \mathcal{W}^s_{\textup{loc}}(\Omega ,\mathcal{K}^{\infty ,\gamma }_{\mathcal{L}}(X^\wedge)),
\end{equation}
cf. the notation in \eqref{32spdif8}. 
\end{proof}
Let $V$ be a Fr\'echet space with the semi-norm system $(\pi _j)_{j\in \N}$ which defines its Fr\'echet topology, and let $\boldsymbol{\nu }=(\nu _j)_{j\in \N}$ and $\boldsymbol{\mu }=(\mu _j)_{j\in \N}$ be sequences of reals. Then $S^{\boldsymbol{\mu };\boldsymbol{\nu }}(\R^q\times \R^q,V)$ is defined to be the space of all $a(x,\xi  )\in C^\infty (\R^q\times \R^q,V)$ such that 
\begin{equation} \label{32sy1}
\pi _j(D_x^\alpha D_\xi  ^\beta a(x,\xi  ))\leq c\langle \xi  \rangle^{\mu _j}\langle x\rangle^{\nu _j}
\end{equation}
for all $(x,\xi  )\in \R^q\times \R^q,\alpha ,\beta \in \N^q,j\in \N,$ for constants $c=c(\alpha ,\beta, j)>0.$ The space $S^{\boldsymbol{\mu },\boldsymbol{\nu }}(\R^q\times \R^q,V)$ is Fr\'echet with the optimal constants $c=c(\alpha ,\beta, j)(a)$ in the symbolic estimates \eqref{32sy1} as semi-norms. Set $S^{\boldsymbol{\infty  };\boldsymbol{\infty  }}(\R^q\times \R^q,V):=\bigcup_{\boldsymbol{\mu },\boldsymbol{\nu }}S^{\boldsymbol{\mu };\boldsymbol{\nu }}(\R^q\times \R^q,V).$ The machinery of oscillatory integrals in the sense of \cite{Kuma1} (here generalised to the vector-valued case) tells us that for any $\chi \in \mathcal{S}(\R^q\times \R^q),\chi\equiv 1$ near $(x,\xi )=0$ the limit
\begin{equation} \label{32osc1}
\int\!\!\!\int e^{-ix\xi }a(x,\xi )dx\dbar\xi :=\textup{lim}_{\varepsilon \rightarrow 0}\int\!\!\!\int e^{-ix\xi }\chi(\varepsilon x,\varepsilon \xi )a(x,\xi )dx\dbar\xi
\end{equation}
 exists and defines a continuous operator
\begin{equation} \label{32osc2}
S^{\boldsymbol{\mu },\boldsymbol{\nu }}(\R^q\times \R^q,V)\rightarrow V,
\end{equation}
independent of the choice of $\chi .$\\This construction can be applied, in particular, to the case  $V:=V^\mu :=S^\mu _{\textup{cl}}(\R^q_\eta ;\C,\newline\mathcal{K}^{\infty ,\infty }(X^\wedge))$ for some $\mu \in \R.$
According to Remark \ref{21syfr} the space $V^\mu $ is Fr\'echet with the semi-norm system
\begin{equation}\label{32rem8713}
\pi _{\tilde{\beta } ,N}(k):=\textup{sup}_{\eta \in \R^q}\langle\eta \rangle^{-\mu +|\tilde{\beta } |}\|\kappa _{\langle\eta \rangle}^{-1}D_\eta ^{\tilde{\beta } }k(\eta )\|_{\mathcal{K}^{N, N}(X^\wedge)}, \,N\in \N,\tilde{\beta } \in \N^q,
\end{equation}
together with the semi-norms of the components $k_{(\mu -j)}(\eta ),j\in \N,$ in the respective spaces of homogeneous functions $S^{(\mu -j)}(\R^q\setminus \{0\};\C,\mathcal{K}^{\infty ,\infty }(X^\wedge)),j\in \N,$ cf. the formulas \eqref{21hm} and \eqref{21hm1}, plus the semi-norms of the remainders $k(\eta )-\sum_{j=0}^M\chi (\eta )k_{(\mu -j)}(\eta ),M\in \N,$ in $S^{\mu -(M+1)}(\R^q_\eta ;\C,\mathcal{K}^{\infty ,\infty }(X^\wedge))$ for any excision function $\chi (\eta )$ (the latter semi-norms are as in \eqref{32rem8713} for $\mu -(M+1)$ rather than $\mu $).
\begin{lem}\label{32rem871}
Let $\sigma (r)\in C_0^\infty (\R_+),\rho (r)\in C_0^\infty (\overline{\R}_+),p(\eta )\in S_{\textup{cl}}^{\nu _1}(\R^q),s(\eta )\in S_{\textup{cl}}^{\nu _2}(\R^q)$ For every fixed $y$ and $t\in [0,1]$ the function 
\begin{equation}\label{32rem8711}
g_t:(x,\xi )\mapsto  \sigma (r[\eta +t\xi ])p(\eta +t\xi )s(\eta )[\eta ]^{(n+1)/2}\rho (r[\eta ])\omega (r[\eta ])\langle\zeta (x+y),(r[\eta ])^{-z}\rangle
\end{equation}
defines an element 
\begin{equation}\label{32rem87111}
g_t(x,\xi )\in S^{\boldsymbol{\rho  };\boldsymbol{\delta  }}(\R^q\times \R^q,V^\mu ),\,\mu :=\nu _1+\nu _2,
\end{equation}
for some sequences $\boldsymbol{\rho  },\boldsymbol{\delta  },$ and $t\mapsto g_t$ is continuous as a map $[0,1]\rightarrow V^\mu ;$ then $(g_t)_{t\in [0,1]}$ is a bounded set in the space $V^\mu .$
\end{lem}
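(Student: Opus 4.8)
The plan is to peel off the elementary scalar symbol factors, reduce to the potential-symbol core of Remark~\ref{31aspot}/\ref{32frsppot}, and then control the entangling factor $\sigma(r[\eta+t\xi])$ by conjugation with $\kappa_{[\eta]}$. First I would note that, for fixed $(x,\xi)$, the assignment $c\mapsto\big(\eta\mapsto g_t(x,\xi)(\eta)c\big)$ produces a $\mathcal{K}^{\infty,\infty}(X^\wedge)$-valued symbol in $\eta$, and that the scalar factors split off cleanly: $s(\eta)\in S^{\nu_2}_{\textup{cl}}(\R^q)$ is independent of $(x,\xi)$ and $t$ and multiplies $V^0$ continuously into $V^{\nu_2}$, while $p(\eta+t\xi)$, regarded as a function of $\eta$, lies in $S^{\nu_1}_{\textup{cl}}(\R^q_\eta)$ with all symbol seminorms bounded by a fixed power of $\langle\xi\rangle$ uniformly in $t\in[0,1]$ (the chain rule together with $|\partial_\eta^\gamma(\eta+t\xi)|\le 1$ for $|\gamma|=1$ and $\langle\eta+t\xi\rangle\le c\langle\eta\rangle\langle\xi\rangle$). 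Hence it suffices to treat $p\equiv s\equiv 1$, $\mu=0$, i.e. to show that
\[ g_t^0(x,\xi)(\eta)c:=\sigma(r[\eta+t\xi])\,[\eta]^{(n+1)/2}\rho(r[\eta])\omega(r[\eta])\langle\zeta(x+y),(r[\eta])^{-z}\rangle c \]
defines an element of $S^{\boldsymbol{\rho};\boldsymbol{\delta}}(\R^q\times\R^q,V^0)$ for suitable sequences, the contributions of $p$ and $s$ being absorbed into a shift of $\boldsymbol{\rho}$ and of $\mu$.

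Next I would identify the core. Dropping $\sigma(r[\eta+t\xi])$, the scaling identity $[\eta]^{(n+1)/2}\rho(r[\eta])\omega(r[\eta])\langle\zeta,(r[\eta])^{-z}\rangle=\kappa_{[\eta]}\{\rho(r)\omega(r)\langle\zeta,r^{-z}\rangle\}$ exhibits a function of $\eta$ that is homogeneous of degree $0$ for large $|\eta|$ in the sense of Lemma~\ref{21clsymb}, hence lies in $S^0_{\textup{cl}}(\R^q_\eta;\C,\mathcal{K}^{\infty,\gamma}(X^\wedge))$; this is precisely the potential symbol of Remark~\ref{31aspot}, and its dependence on $x+y$ through $\zeta$ is smooth with all $x$-derivatives bounded, the functional $\zeta$ (here a $y$-derivative of the original one) being compactly supported in $U$ and smooth in $y$ in the $^\bullet$-sense. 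Multiplying by $\sigma(r[\eta+t\xi])$ with $\sigma\in C_0^\infty(\R_+)$ forces infinite vanishing order at $r=0$ and compact $r$-support, which upgrades the target from $\mathcal{K}^{\infty,\gamma}(X^\wedge)$ to $\mathcal{K}^{\infty,\infty}(X^\wedge)$. To see that classicality in $\eta$ and the anisotropic $\xi$-estimates survive, I would conjugate by $\kappa_{[\eta]}^{-1}$ and set $\tilde r=r[\eta]$, obtaining the $\eta$-symbol $\sigma\big(\tilde r\,[\eta+t\xi]/[\eta]\big)\rho(\tilde r)\omega(\tilde r)\langle\zeta(x+y),\tilde r^{-z}\rangle$; since $\eta\mapsto[\eta+t\xi]/[\eta]$ is a classical symbol of order $0$ uniformly in $t\in[0,1]$ with $|\partial_\eta^\gamma([\eta+t\xi]/[\eta])|\le c\langle\xi\rangle\langle\eta\rangle^{-|\gamma|}$ for $|\gamma|\ge 1$, each $\eta$-derivative produces a finite sum of terms $\sigma^{(l)}(\tilde r[\eta+t\xi]/[\eta])\,\tilde r^{l}\,(\text{derivatives of }[\eta+t\xi]/[\eta])$, and on the supports of $\rho\omega$ and of $\sigma(\tilde r[\eta+t\xi]/[\eta])$ one has $\tilde r\le c\,[\eta]/[\eta+t\xi]\le c$, so these terms are uniformly bounded in each $\mathcal{K}^{N,N}(X^\wedge)$ and gain $\langle\eta\rangle^{-|\gamma|}$ at the price of a power of $\langle\xi\rangle$. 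This yields the estimates \eqref{32rem8713} for all $N,\tilde\beta$; expanding $[\eta+t\xi]/[\eta]=1+O(|\eta|^{-1})$ for large $|\eta|$ gives the polyhomogeneous expansion in $\eta$ with leading component $\sigma(\tilde r)\rho(\tilde r)\omega(\tilde r)\langle\zeta(x+y),\tilde r^{-z}\rangle$ (degree $0$), with remainders in the requisite spaces. Together with the compactly supported (hence $\langle x\rangle^0$) dependence on $x$, this produces the anisotropic estimate \eqref{32sy1}, i.e. \eqref{32rem87111}.

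Finally, for the continuity of $t\mapsto g_t$ and the boundedness of $(g_t)_{t\in[0,1]}$: the only $t$-dependence sits in $\sigma(r[\eta+t\xi])$ and $p(\eta+t\xi)$, both smooth in $t$, and a $t$-differentiation reproduces terms of exactly the same type with at most one extra power of $\langle\xi\rangle$ (and a harmless extra $\tilde r$), so all the estimates above hold uniformly in $t\in[0,1]$ and so does the estimate for $\partial_t g_t$. By the mean value theorem $t\mapsto g_t$ is Lipschitz, in particular continuous, as a map $[0,1]\to S^{\boldsymbol{\rho};\boldsymbol{\delta}}(\R^q\times\R^q,V^\mu)$ (for an appropriate choice of the sequences), and the continuous image of the compact interval $[0,1]$ is bounded there. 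The main obstacle is the middle step: the factor $\sigma(r[\eta+t\xi])$ genuinely couples $\eta$ and $\xi$ through the non-homogeneous bracket $[\,\cdot\,]$, so the ``homogeneous for large $|\eta|$'' shortcut of Lemma~\ref{21clsymb} does not apply directly; one must pass to the $\kappa_{[\eta]}^{-1}$-conjugated picture, treat $[\eta+t\xi]/[\eta]$ as a zero-order $\eta$-symbol with $\langle\xi\rangle$-controlled seminorms, and separately check the regime $|\xi|\gg|\eta|$, where $\tilde r$ is driven to small values at which the cut-offs $\rho,\omega$ are identically $1$, so that no additional loss occurs.
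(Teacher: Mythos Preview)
Your approach is correct and is organized differently from the paper's. The paper proves the seminorm estimates \eqref{32rem8713} by direct computation: it lists how each of $\partial_\eta,\partial_\xi,r\partial_r,D^m$ acts on the individual factors of $g_t$ (formulas \eqref{32kuest}--\eqref{32kuest5}), iteratively reduces to the case $\tilde\beta=\alpha=j=m=0$, and then applies Peetre's inequality to the remaining weighted $L^2$-integral, arriving at the explicit $\langle\xi\rangle$-exponent $|N-\gamma|+|\nu_1|$. You instead peel off the scalar symbols $s(\eta)$ and $p(\eta+t\xi)$ as continuous multipliers $V^0\to V^{\mu}$ with $\langle\xi\rangle$-controlled operator norms, conjugate the remaining core by $\kappa_{[\eta]}$ to pass to the fixed-scale variable $\tilde r=r[\eta]$, and encode the only surviving $\eta$--$\xi$ coupling in the zero-order scalar $\eta$-symbol $[\eta+t\xi]/[\eta]$. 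Your route is conceptually cleaner and makes the symbolic-calculus structure transparent; the paper's route yields explicit exponents and is closer to a self-contained verification.

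Two small points deserve tightening. First, the chain $\tilde r\le c\,[\eta]/[\eta+t\xi]\le c$ is not a general inequality (the ratio is unbounded when $t\xi\approx-\eta$); it \emph{does} hold on the joint support of $\rho(\tilde r)$ and $\sigma(\tilde r[\eta+t\xi]/[\eta])$, since $\textup{supp}\,\sigma\subset[a,b]$ forces $\tilde r\ge a[\eta]/[\eta+t\xi]$ while $\rho$ forces $\tilde r\le R$, and the two together give the bound---but you should say so. Second, the seminorms \eqref{32rem8713} involve $\kappa_{\langle\eta\rangle}^{-1}D_\eta^{\tilde\beta}$, not $D_\eta^{\tilde\beta}$ applied to the already-conjugated expression; passing from one to the other produces commutator terms of the shape $[\eta]^{-1}[\eta]'\bigl(r\partial_r+\tfrac{n+1}{2}\bigr)$ acting on lower $\eta$-derivatives, which are harmless (they have the same structure with one order less in $\eta$) but should be acknowledged. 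Both you and the paper leave the classicality seminorms (homogeneous components and remainders) to the reader; your Taylor-expansion remark $[\eta+t\xi]/[\eta]=1+O(|\eta|^{-1})$ is the right mechanism for that.
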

\begin{proof}
For convenience we assume again $q=1;$ the general case is completely analogous. The expression on the right hand side of \eqref{32rem8711} in the variables $(r,\cdot)\in X^\wedge$ is an element of $\mathcal{K}^{\infty , \infty}(X^\wedge)$ for every fixed $x,\xi ,y,\eta ,t.$ In fact, we have $\omega (r[\eta ])\langle\zeta (x+y),(r[\eta])^{-z}\rangle \in \mathcal{K}^{\infty ,\gamma  }(X^\wedge)$ since the compact carriers of the involved analytic functionals are contained in $\{\textup{Re}\,z<(n+1)/2-\gamma \},$ and we have $\varphi \mathcal{K}^{\infty ,\gamma  }(X^\wedge)\subseteq \mathcal{K}^{\infty ,\infty  }(X^\wedge)$ for any $\varphi \in C_0^\infty (\R_+).$ In addition for any fixed $x,\xi ,y,t,$ the function $g_t(x,\xi )$ belongs to $V^\mu :=S^\mu _{\textup{cl}}(\R_\eta ;\C,\mathcal{K}^{\infty ,\infty }(X^\wedge)).$\\
We have to show that for any semi-norm $\pi$ from the Fr\'echet space topology of the space $V^\mu $ there are orders $\rho (\pi), \delta (\pi)\in \R $ such that
\begin{equation}\label{32reesti}
\pi \big(D_x^\alpha D_\xi ^\beta  g_t(x,\xi )\big)\leq  c\langle \xi  \rangle^{\rho (\pi)}\langle x\rangle^{\delta (\pi)}
\end{equation}
for all $(x,\xi )\in \R\times \R$ and $\alpha ,\beta  \in \N,$ for some constants $c=c(\alpha ,\beta  )>0.$ The semi-norms $\pi $ for the estimates \eqref{32reesti} are as in Remark \ref{21syfr}, for $H=\C,\kappa _\lambda =\textup{id},$ and $\tilde{H}=\mathcal{K}^{N, N}(X^\wedge),\tilde{\kappa }_\lambda u(r,\cdot)=\lambda ^{(n+1)/2}u(\lambda r,\cdot),$ for every $N\in \N,$ $n=\mathrm{dim}\,X.$ Let us discuss in this proof the semi-norms \eqref{32rem8713}; the others are easy as well and left to the reader. In other words we estimate the norms $\|\kappa _{\langle\eta \rangle}^{-1}D_\eta ^{\tilde{\beta } }\big(D_x^\alpha D_\xi ^\beta g_t(x,\xi )\big)\|_{\mathcal{K}^{N, N}(X^\wedge)}$ for all $(x,\xi )\in \R\times\R$ and $\alpha,\beta\in\N.$ In our case it suffices to replace  $\mathcal{K}^{N, N}(X^\wedge)$ by $\mathcal{H}^{N, N}(X^\wedge)$ since $g_t$ contains the cut-off factor $\omega (r[\eta ])$ which is of bounded support in $r,$ uniformly in $\eta \in \R.$ The $\mathcal{H}^{N, N}(X^\wedge)$-norm can be estimated (up to a constant) by finitely many expressions of the form
\begin{equation}\label{32kuest0}
\sum_{j,m}\big\|(r\partial _r)^j D^m\{\kappa _{\langle\eta \rangle}^{-1}D_\eta ^{\tilde{\beta } }\big(D_x^\alpha D_\xi ^\beta  g_t(x,\xi )\big)\}\big\|_{r^{N-n/2}L^2(\R_+\times X)}
\end{equation}
where $j+m\leq N,$ and $D^m$ is a polynomial in vector fields tangential to $X$ of degree $m.$ 
Let us write $g:=g_t, \sigma :=\sigma (r[\eta +t\xi ]),p:=p(\eta +t\xi ), l:=[\eta ]^{(n+1)/2},f:=\rho (r[\eta ])\omega (r[\eta ])\langle\zeta (x+y),(r[\eta ])^{-z}\rangle;$ then $g=\sigma pslf.$ Applying $\partial _\eta $ gives us
\begin{equation}\label{32kuest}
\partial _\eta g=(\partial _\eta\sigma) pslf+\sigma (\partial _\eta p)slf+\sigma p(\partial _\eta s)lf+\sigma ps(\partial _\eta l)f+\sigma psl(\partial _\eta f).
\end{equation}
Let us denote by $m=m(\eta )$ different elements of $S_{\textup{cl}}^{-1}(\R_\eta ),$  and set $[\eta ]':=\partial _\eta [\eta ]$ which belongs to $S_{\textup{cl}}^0(\R_\eta ).$ Then we have
\begin{equation}\label{32kuest1}
\partial _\eta\sigma (r[\eta +t\xi ])=(\partial _\eta\sigma)(r[\eta +t\xi ])[\eta +t\xi]'[\eta ] ^{-1}(r[\eta ]),
\end{equation}
\begin{equation}\label{32kuest2}
\partial _\eta p(\eta +t\xi )=p'(\eta +t\xi )\,\,\mbox{for a}\,\,p'(\eta  )\in S_{\textup{cl}}^{\nu _1-1}(\R_\eta ),
\end{equation}
\begin{equation}\label{32kuest3}
\partial _\eta s(\eta)\in S_{\textup{cl}}^{\nu _2-1}(\R_\eta ),\,\partial _\eta l(\eta)=m(\eta )l(\eta ),
\end{equation}
\begin{equation}\label{32kuest4}
\partial _\eta \rho (r[\eta ])=(\partial _r \rho )(r[\eta ])(r[\eta ])m(\eta ),\,\,\partial _\eta \omega  (r[\eta ])=(\partial _r \omega )(r[\eta ])(r[\eta ])m(\eta )\tilde{\omega }(r[\eta ])
\end{equation}
for any cut-off function $\tilde{\omega }\succ \omega ,$
\begin{equation}\label{32kuest5}
\partial _\eta \langle\zeta (x+y),(r[\eta ])^{-z}\rangle=(r[\eta ])m(\eta )\langle z\zeta (x+y),(r[\eta ])^{-z}\rangle.
\end{equation}
The relations \eqref{32kuest1}-\eqref{32kuest5} show that the summands of \eqref{32kuest} have the same structure as $g$ itself, with the only exception that now $\nu_1+ \nu_2 $ is diminished by $1.$ By iterating the conclusion it suffices to consider the case $\tilde{\beta } =0,$ since the original orders $\nu_1, \nu_2 $ are arbitrary. In other words, in \eqref{32kuest0} we may assume $\tilde{\beta } =0.$ Clearly it suffices to study the summands separately. The summand containing $(r\partial _r)^j$ has the form
\begin{equation}\label{32kuest6}
\Big\{\!\! \!\int\big|r^{-N}\!(r\partial _r)^jD^m\kappa _{\langle\eta \rangle}^{-1}D_x^\alpha D_\xi ^\beta  \big(\sigma (r[\eta +t\xi ])p(\eta +t\xi)s(\eta )l(\eta )f(r[\eta ],x+y)\big)\big|^2r^ndrdx\!\Big\}^{1/2}
\end{equation}
for $f(r[\eta ],x+y):=\rho (r[\eta ])\omega (r[\eta ])\langle\zeta (x+y),(r[\eta ])^{-z}\rangle.$ 
The differentiation in $x$ is entirely harmless since $\zeta $ is of compact support with respect to the spatial variables. So we content ourselves with $\alpha  =0.$ Moreover, the $D^m$-derivatives have no  influence to the growth of \eqref{32kuest6} in $\eta $ or $\xi $ since $D^m$ only acts on the values of the involved analytic functionals in $C^\infty (X),$ and the result after applying $D^m$ has the same quality as before. Therefore, it suffices to consider the case $m=0.$ Thus, taking into account that $\kappa _{\langle\eta \rangle}^{-1}$ compensates the factor $l(\eta )=[\eta ]^{(n+1)/2}$ the expression \eqref{32kuest6} is reduced to 
\begin{equation}\label{32kuest7}
\Big\{\!\! \int\big|r^{-N}\!(r\partial _r)^j\delta  _{\langle\eta \rangle}^{-1} D_\xi ^\beta  \big(\sigma (r[\eta +t\xi ])p(\eta +t\xi)s(\eta )f(r[\eta ],x+y)\big)\big|^2r^ndrdx\!\Big\}^{1/2}
\end{equation}
for $(\delta _\lambda u)(r,\cdot):=u(\lambda r,\cdot).$ We have $\partial _\xi \{\sigma (r[\eta +t\xi ])p(\eta +t\xi)\}=t \{(\partial _r\sigma )(r[\eta +t\xi ])r[\eta +t\xi ](\partial _\xi [\eta +t\xi ])[\eta +t\xi ]^{-1}p(\eta +t\xi)+\sigma (r[\eta +t\xi ])(\partial _\eta p)(\eta +t\xi)\}.$ By iterating $\xi $-derivatives it follows that $D_\xi ^\beta  \{\sigma (r[\eta +t\xi ])p(\eta +t\xi)\}$ is equal to $t^\beta  $ times a finite linear combination of products of the form $\tau (r[\eta +t\xi ])b(\eta +t\xi)$ for certain $\tau (r)\in C_0^\infty (\R_+)$ and $b(\eta )\in S_{\textup{cl}}^{\nu _1-\gamma }(\R_\eta ).$ Thus, \eqref{32kuest7} can be estimated by a finite linear combination of terms of the form 
\begin{equation}\label{32kuest8}
\Big\{\!\! \int\big|r^{-N}\!(r\partial _r)^j t^\beta  \big(\tau (r\langle\eta \rangle^{-1}[\eta +t\xi ])b(\eta +t\xi)s(\eta )f(r\langle\eta \rangle^{-1}[\eta ],x+y)\big)\big|^2r^ndrdx\!\Big\}^{1/2}.
\end{equation} 
For purposes below we pass to the equivalent expression
\begin{equation}\label{32kuest8a}
\Big\{\!\! \int\big|r^{-\tilde{N}}\!(r\partial _r)^j t^\beta  \big(\tau (r\langle\eta \rangle^{-1}[\eta +t\xi ])b(\eta +t\xi)s(\eta )f(r\langle\eta \rangle^{-1}[\eta ],x+y)\big)\big|^2r^{\tilde{n}}drdx\!\Big\}^{1/2}.
\end{equation}
for
\begin{equation}\label{32kuest8b}
\tilde{N}:= N-\gamma ,\,\,\tilde{n}:=n-2\gamma .
\end{equation}
Now we apply the $r\partial _r$-derivatives. Observe that for any real-valued $\Theta $ and a function $\tau _0\in C_0^\infty (\R_+)$ we have $r\partial _r\tau _0(r\Theta )=r\Theta (\partial _r\tau _0)(r\Theta )=:\tau _1(r\Theta )$ for some $\tau _1(r)\in C_0^\infty (\R_+).$ Thus, $(r\partial _r)^l\tau _0(r\Theta )=\tau _l(r\Theta ),l\in \N,$ for some $\tau _l(r)\in C_0^\infty (\R_+).$ We apply this to the function $\tau =:\tau _0$ under the integral \eqref{32kuest7} as well as to $\rho _0(r):=\rho (r)\omega (r)$ occurring in the definition of $f$ which gives us functions $\rho  _l(r)\in C_0^\infty (\R_+),l\in \N.$ It remains to look at what happens when we apply $(r\partial _r)^l$ to $\langle\zeta (x+y),(r[\eta ])^{-z}\rangle$ contained in $f;$ but this remains unchanged at all. Thus, we do not change the quality of the expression \eqref{32kuest8} when we set $j=0.$ Inserting $r^{-\tilde{N}}= (r\langle\eta \rangle^{-1}[\eta +t\xi ])^{-\tilde{N}}(\langle\eta \rangle^{-1}[\eta +t\xi ])^{\tilde{N}}$ and $\tilde{\tau }(r):=\tau  (r)r^{-\tilde{N}}\in C_0^\infty (\R_+)$ from \eqref{32kuest8} we obtain 
\begin{equation}\label{32kuest9}
\Big\{\!\! \int\big|\!(\langle\eta \rangle^{-1}[\eta +t\xi ])^{\tilde{N}} t^\beta  \tilde{\tau } (r\langle\eta \rangle^{-1}[\eta +t\xi ])b(\eta +t\xi)s(\eta )f(r\langle\eta \rangle^{-1}[\eta ],x+y)\big|^2r^{\tilde{n}}drdx\!\Big\}^{1/2}.
\end{equation}
We have the inequalities
\begin{equation}\label{32un1}
|\tilde{\tau } (r\langle\eta \rangle^{-1}[\eta +t\xi ])|\leq c\,\,\mbox{for all}\,\,r\in \R_+,0\leq t\leq 1,\eta ,\xi\in \R,
\end{equation}
\begin{equation}\label{32un2}
|b(\eta +t\xi)|\leq c\langle\eta +t\xi\rangle^{\nu _1}\,\,\mbox{for all}\,\,0\leq t\leq 1,\eta ,\xi\in \R, 
\end{equation}
and
\begin{equation}\label{32un3}
|s(\eta )|\leq c\langle\eta \rangle^{\nu _2}\,\,\mbox{for all}\,\,\eta\in \R, 
\end{equation}
for different constants $c>0.$ Since the carriers of the analytic functionals involved in $f$ are contained in $\{\textup{Re}\,z<(n+1)/2-\gamma \},$ we have $\{ \int|f(r\langle\eta \rangle^{-1}[\eta ],x+y)|^2r^{n-2\gamma }drdx\}^{1/2}<\infty $ uniformly in $\eta .$ Thus, we can estimate \eqref{32kuest9} by
\begin{equation}\label{32un4}
c(\langle\eta \rangle^{-1}[\eta +t\xi ])^{N-\gamma  }t^\beta \langle\eta +t\xi\rangle^{\nu _1}\langle\eta \rangle^{\nu _2}.
\end{equation}
From Peetre's inequality we obtain $[\eta +t\xi ]^{N-\gamma  }\leq c\langle\eta \rangle^{N-\gamma  }\langle t\xi  \rangle^{|N-\gamma  |},\langle\eta +t\xi \rangle^{\nu _1  }\leq c\langle\eta \rangle^{\nu _1  }\langle t\xi \rangle^{|\nu _1  |}.$ It follows that \eqref{32kuest9} can be estimated by $ct^\beta \langle t\xi \rangle^{|N-\gamma |+|\nu _1  |}\langle \eta  \rangle^{\nu _1+\nu _2}.$ Comparing that with \eqref{32rem8713} and using $\mu =\nu _1+\nu _2$ we just obtain \eqref{32sy1} for $\mu _j=|N-\gamma |+|\nu _1  |$ and $\nu_j=0.$ Applying the oscillatory integral process \eqref{32osc2} we obtain the relation \eqref{32rem87111} for every $t\in [0,1].$ The boundedness of the set $(g_t)_{t\in [0,1]}$ in $V^\mu $ is a direct consequence of the estimates.The way to obtain the continuity of $t\mapsto V^\mu $ is evident after the above considerations. 
\end{proof}

\begin{lem}\label{32rem86}
We have $\tilde{a}_0(y,\eta )\in S^{-1}_{\textup{cl}}(\R^q\times \R^q;\C,\mathcal{K}^{\infty ,\infty }(X^\wedge)).$
\end{lem}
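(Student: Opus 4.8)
The plan is to follow the template of the proof of Lemma \ref{32rem871} together with Kumano-go's vector-valued oscillatory integral calculus. Assuming $q=1$ as there (the general case is analogous), the goal is to rewrite $\tilde{a}_0(y,\eta )$ of \eqref{32rem6} as the oscillatory integral over the integrand variables $(x,\xi )$ (and auxiliary parameters $t,\tau \in[0,1]$) whose inner symbol takes values in $V^{-1}:=S^{-1}_{\textup{cl}}(\R_\eta ;\C,\mathcal{K}^{\infty ,\infty }(X^\wedge ))$, i.e. with $\eta $-order $-1$ and target $\mathcal{K}^{\infty ,\infty }(X^\wedge )$, uniformly (indeed boundedly) in $y,t,\tau $; then the continuity of \eqref{32osc2} with $V:=V^{-1}$ yields $\tilde a_0\in S^{-1}_{\textup{cl}}(\R^q\times \R^q;\C,\mathcal{K}^{\infty ,\infty }(X^\wedge ))$, classicality being inherited from that of all the building blocks.

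Both improvements over $a_1$ (which a priori lies only in $S^0(\R^q\times\R^q;\C,\mathcal{K}^{\infty ,\gamma }(X^\wedge ))$) stem from the factor $\omega (r[\eta +t\xi ])-\omega (r[\eta ])$ occurring in \eqref{32rem6}. Since both cut-offs are $\equiv 1$ near $r=0$ and vanish for large $r$, this difference is supported, in the variable $r[\eta ]$, in a bounded annulus whose size is controlled by $[\eta +t\xi ]/[\eta ]$, hence by $\langle t\xi \rangle ^{\pm 1}$ via Peetre's inequality. As the difference multiplies from the left the function obtained by applying $\op_M^{\gamma -n/2}(f)(y)$, $f\in C_0^\infty (U,\mathcal{M}^{-\infty }_{\mathcal{R}}(X))$, to the $\mathcal{K}^{\infty ,\gamma }(X^\wedge )$-valued expression to its right, the outcome is — after the $\kappa _{\langle\eta \rangle}$-rescaling used throughout the proof of Lemma \ref{32rem871} — supported in $r$ in a bounded annulus up to $\langle t\xi \rangle ^{\pm 1}$, so it lies in $\mathcal{K}^{\infty ,\infty }(X^\wedge )$, all $\mathcal{K}^{N,N}$-norms being then bounded by powers of $\langle t\xi \rangle $. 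For the order gain I would insert the identity
\begin{equation*}
\omega (r[\eta +t\xi ])-\omega (r[\eta ])=t\xi \int_0^1 r\,(\partial _r\omega )(r[\eta +\tau t\xi ])\,[\eta +\tau t\xi ]'\,d\tau
\end{equation*}
(with $[\,\cdot\,]'\in S^0_{\textup{cl}}(\R)$) into \eqref{32rem6}: on the support of $(\partial _r\omega )(r[\eta +\tau t\xi ])$ one has $r\sim [\eta +\tau t\xi ]^{-1}\lesssim [\eta ]^{-1}\langle t\xi \rangle $, so the whole right-hand side is $O([\eta ]^{-1}\langle \xi \rangle ^{2})$, which is precisely one order lower in $\eta $ than the factor $\omega (r[\eta +t\xi ])=O(1)$ it replaces, the $\langle\xi \rangle$-powers being harmless inside the oscillatory integral.

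With this substitution $\tilde a_0$ becomes the oscillatory integral over $t,\tau \in[0,1]$ of expressions of the same shape as the integrand \eqref{32rem8711} of Lemma \ref{32rem871}, the new data being $\sigma (r):=r(\partial _r\omega )(r)\in C_0^\infty (\R_+)$, the symbol $p(\eta ):=[\,\cdot\,]'\in S^0_{\textup{cl}}(\R)$, the scalar $t\xi $, and the unchanged factors $[\eta ]^{(n+1)/2}$, $\omega (r[\eta ])$, $\langle (\partial _y\zeta )(y+x),(r[\eta ])^{-z}\rangle $, with $\op_M^{\gamma -n/2}(f)(y)$ still sitting between two cut-offs. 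One then runs the estimates \eqref{32kuest1}--\eqref{32kuest9} and \eqref{32un1}--\eqref{32un4} essentially verbatim: after iterating $\partial _\eta $ (each lowering orders, so one may reduce to $\tilde\beta =0$) and passing to the $\mathcal{H}^{N,N}(X^\wedge )$-norm of $\kappa _{\langle\eta \rangle}^{-1}D_x^\alpha D_\xi ^\beta $ of the integrand, the factor $r$ from the displayed identity, combined with $\sigma (r[\eta +\tau t\xi ])$ and rescaled by $\kappa _{\langle\eta \rangle}^{-1}$, contributes an overall bound $\le c\,\langle\eta \rangle^{-1}[\eta +\tau t\xi ]\cdot(\text{bounded})\le c\,\langle\eta \rangle^{-1}\langle t\xi \rangle $, while all remaining factors are estimated as before and $D_\xi ^\beta $ together with the $t\xi $ only add further powers of $\langle t\xi \rangle $. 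This gives, in the notation of \eqref{32sy1}, $\pi _{\tilde\beta ,N}\big(D_x^\alpha D_\xi ^\beta (\text{integrand})\big)\le c\langle\xi \rangle^{\mu _j}\langle x\rangle^{0}$ with an integrand of $\eta $-order $-1$ and values in $\mathcal{K}^{\infty ,\infty }(X^\wedge )$; integrating over $t,\tau $ and applying \eqref{32osc2} completes the argument. The only real obstacle — exactly as in Lemma \ref{32rem871} — is the uniform control of the $\eta $- and $\xi $-dependent transition zones of the cut-offs, handled throughout by Peetre's inequality, together with the heavier bookkeeping caused by the extra factor $r\,t\xi $; no genuinely new difficulty arises.
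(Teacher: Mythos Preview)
Your approach is correct in spirit and leads to the same conclusion, but it is organized differently from the paper, and the attribution of where the order $-1$ comes from is not the same.

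The paper factorises the integrand as $h_t(x,\xi)=n_t(\xi)\cdot\big(\omega_3(r[\eta])\op_M^{\gamma-n/2}(f)(y)\omega_2(r[\eta])\big)\cdot g_t(x,\xi)$. Here the sandwiched Mellin block is a symbol of order $0$ in $S^0_{\textup{cl}}(\R^q_\eta;\mathcal{K}^{\infty,\infty}(X^\wedge),\mathcal{K}^{\infty,\gamma}(X^\wedge))$, and the rightmost factor $g_t$ is \emph{exactly} of the form treated in Lemma~\ref{32rem871}: the $r$ hidden in $(\partial_\eta\omega_\eta)(r[\eta+t\xi])=\sigma(r[\eta+t\xi])\,r\,p(\eta+t\xi)$ is rewritten as $(r[\eta])[\eta]^{-1}$, the $(r[\eta])$-part is absorbed into a new $\rho\in C_0^\infty(\overline\R_+)$, and the explicit $[\eta]^{-1}$ gives $g_t\in V^{-1}$ directly by Lemma~\ref{32rem871} with $\nu_1=0,\ \nu_2=-1$. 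The difference factor $n_t(\xi)=(\omega(r[\eta+t\xi])-\omega(r[\eta]))\omega_4(r[\eta])$ is then shown, via the same Taylor identity you wrote, to be a multiplier of order $0$ in $S^0_{\textup{cl}}(\R^q_\eta;\mathcal{K}^{\infty,\gamma}(X^\wedge),\mathcal{K}^{\infty,\infty}(X^\wedge))$---so in the paper's scheme the Taylor expansion only certifies the \emph{target change}, not the order drop.

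What you do instead is attribute \emph{both} the order gain and the target change to the difference factor: Taylor gives an extra $r$ on the far left, estimated by $[\eta]^{-1}\langle t\xi\rangle$ via Peetre. This is correct, but two points deserve care. First, your integrand still contains the Mellin operator, which is absent from Lemma~\ref{32rem871}; you cannot literally match the shape of \eqref{32rem8711} but must insert the Mellin block as a separate order-$0$ symbol between two genuine $C_0^\infty(\R_+)$-type localisations and compose, exactly as the paper does---``sitting between two cut-offs'' hides this step. Second, you overlook that $(\partial_\eta\omega_\eta)(r[\eta+t\xi])$ already carries an $r$; your argument thus secretly yields order $-2$, which is of course fine for the stated claim, but it means your Taylor step is in fact redundant for the order and only needed for the target space. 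The paper's factorisation makes this separation of roles transparent and avoids the extra $\tau$-integration.
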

\begin{proof}
According to what is done in the operator-valued analogue of Kumano-go's oscillatory integral techniques it suffices to show that for every fixed $y$ and $t\in [0,1]$ the function 
\begin{equation}\label{32lem1}
\begin{split}
h_t:(x,\xi )\mapsto   &\big(\omega(r[\eta +t\xi] )-\omega(r[\eta ] )\big)\op_M^{\gamma -n/2}(f)(y)(\partial _\eta \omega _\eta )(r[\eta +t\xi])\\&\{\omega (r[\eta ]) [\eta ]^{(n+1)/2}\langle(\partial _y \zeta )(y+x),(r[\eta ])^{-z}\rangle\},
\end{split}
\end{equation}
contained in \eqref{32rem6}, belongs to the space $S^{\boldsymbol{\rho  };\boldsymbol{\delta  }}(\R^q\times \R^q,V^{-1})$ for $V^{-1}:=S^{-1} _{\textup{cl}}(\R^q_\eta ;\C,\newline\mathcal{K}^{\infty ,\infty }(X^\wedge)),$
for some sequences $\boldsymbol{\rho  },\boldsymbol{\delta  },$ and that $t\mapsto h_t$ is continuous as a map $[0,1]\rightarrow V^{-1};$ then $(g_t)_{t\in [0,1]}$ is a bounded set in the space $V^{-1}.$
 Similarly as in the proof of Lemma \ref{32rem871} we first observe that for every fixed $x,\xi ,y,\eta ,t$ the right hand side of \eqref{32lem1} belongs to $\mathcal{K}^{\infty ,\infty  }(X^\wedge).$ More precisely we show that for every fixed $x,\xi ,y,t$ the function $h_t$ takes values in $V^{-1}.$ For convenience we assume again $q=1.$ We have $p(\eta ):=\partial _\eta [\eta ]\in S^0 _{\textup{cl}}(\R^q_\eta) $ which gives us $\partial _\eta \omega _\eta =\sigma (r[\eta +t\xi] )rp(\eta +t\xi)$ for $\sigma (r):=(\partial _r\omega )(r)\in C_0^\infty (\R_+).$ Choose cut-off functions $\omega _4\succ \omega _3\succ \omega _2\succ \omega _1\succ \omega $ Then the right hand side of \eqref{32lem1} can be written as
\begin{equation}
h_t(x,\xi )=n_t(\xi )\omega _3(r[\eta ])\op_M^{\gamma -n/2}(f)(y)\omega _2(r[\eta ])g_t(x,\xi )
\end{equation}
for
\begin{equation}
n_t(\xi )=\big (\omega (r[\eta +t\xi ])-\omega (r[\eta  ])\big)\omega _4(r[\eta ]), 
\end{equation}
and
\begin{equation}
g_t(x,\xi )=\sigma (r[\eta +t\xi ])p(\eta +t\xi )[\eta ]^{-1}\rho (r[\eta ])\omega (r[\eta ])[\eta ]^{(n+1)/2}\langle(\partial _y\zeta )(y+x),(r[\eta ])^{-z}\rangle,
\end{equation}
for $\rho (r[\eta ]):=r[\eta ]\omega _1(r[\eta ]).$\\ Because of Lemma \ref{21clsymb} it is clear that $\omega _3(r[\eta ])\op_M^{\gamma -n/2}(f)(y)\omega _2(r[\eta ])$ belongs to $S^0_{\textup{cl}}(\R^q_\eta ;\mathcal{K}^{\infty ,\infty  }(X^\wedge),\mathcal{K}^{\infty ,\gamma  }(X^\wedge))$ (recall that $y$ is kept fixed for the moment, but there is smoothness and compact support with respect to $y$). Moreover, similarly as in Lemma \ref{32rem871} the function $g_t(x,\xi )$ takes values in $S^{-1} _{\textup{cl}}(\R^q_\eta ;\C,\mathcal{K}^{\infty ,\infty }(X^\wedge)).$  Then  $b_t(x,\xi )$ also takes values in $V^{-1}=S^{-1} _{\textup{cl}}\!(\R^q_\eta ;\C,\mathcal{K}^{\infty ,\infty }(X^\wedge))$ when $n_t(\xi )\!\in \!S^0_{\textup{cl}}\!(\R^q_\eta ;\mathcal{K}^{\infty ,\gamma  }(X^\wedge),\mathcal{K}^{\infty ,\infty }(X^\wedge))$ for fixed $\xi .$ The latter property will be the final point of the proof, since the symbolic estimates for $b_t$ in $x,\xi $ are again straighforward.\\
Let us set $\psi (\eta +t\xi ):=\omega (r[\eta +t\xi ]),$ and apply the Taylor formula
\begin{equation}
\psi (\eta +t\xi )=\sum_{|\alpha |\leq N}(t\xi )^\alpha /\alpha !(\partial _\eta ^\alpha \psi )(\eta )+r_N(\eta ,\xi ,t),
\end{equation}
$r_N(\eta ,\xi ,t)=(N+1)\sum_{|\alpha |= N+1}(t\xi )^\alpha /\alpha !\int_0^1(1-\vartheta )^N(\partial _\eta ^\alpha \psi )(\eta +\vartheta t\xi )d\vartheta .$ We need this only for $N=0.$ Recall that in this proof we consider the case $q=1.$ Then 
\begin{equation}
n_t(\xi )=\int_0^1(\partial _\eta  \psi )(\eta +\vartheta t\xi )d\vartheta \omega _4(r[\eta ])=\int_0^1(\partial _r\omega )(r[\eta +\vartheta t\xi ])p_0(\eta +\vartheta t\xi)d\vartheta r\omega _4(r[\eta ])
\end{equation}
for a $p_0(\eta)\in S^0_{\textup{cl}}(\R^q).$ Now an elementary computation shows that $n_t(\xi )\!\in \!S^0_{\textup{cl}}\!(\R^q_\eta ;\newline\mathcal{K}^{\infty ,\gamma  }(X^\wedge),\mathcal{K}^{\infty ,\infty }(X^\wedge)).$ 
\end{proof}
\begin{lem}\label{32rem87}
We have $b(y,\eta )\in  S^0_{\textup{cl}}(\R^q\times \R^q;\C,\mathcal{K}^{\infty ,\infty }(X^\wedge)).$ 
\end{lem}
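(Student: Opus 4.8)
The plan is to proceed exactly as in the proof of Lemma \ref{32rem86}, reducing the claim to an application of the operator-valued oscillatory integral calculus together with Lemma \ref{32rem871}. Recall from \eqref{32rem9} that
$$b(y,\eta )=\int_0^1\!\int\!\!\!\int e^{-ix\xi }\{(\partial _\eta \omega _\eta )(r[\eta +t\xi])\}\{\omega '(r[\eta ]) [\eta ]^{(n+1)/2}\langle(\partial _y \zeta )(y+x),(r[\eta ])^{-z}\rangle\}dx\dbar\xi dt,$$
so first I would write $\partial _\eta \omega _\eta =\sigma (r[\eta +t\xi ])(r[\eta +t\xi ])p(\eta +t\xi )[\eta ]^{-1}$ with $\sigma :=\partial _r\omega \in C_0^\infty (\R_+)$ and $p:=\partial _\eta [\eta ]\in S^0_{\textup{cl}}(\R^q)$, absorbing the factor $[\eta ]^{-1}$ together with $[\eta ]^{(n+1)/2}$ into $[\eta ]^{(n+1)/2-1}$, which is a scalar classical symbol of order $(n+1)/2-1$. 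Thus, up to these harmless scalar factors, the integrand of $b(y,\eta )$ has precisely the form of the function $g_t(x,\xi )$ in \eqref{32rem8711} of Lemma \ref{32rem871} (with $\rho (r[\eta ]):=r[\eta ]\omega '(r[\eta ])\in C_0^\infty(\overline{\R}_+)$ playing the role of $\rho$, with $\sigma$ as there, and with $\zeta$ replaced by $\partial _y\zeta$, which is again a family of analytic functionals of compact support in $y$ carried by sets in $\{\textup{Re}\,z<(n+1)/2-\gamma \}$).

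Given that identification, Lemma \ref{32rem871} shows directly that for every fixed $y$ and $t\in [0,1]$ the integrand defines an element of $S^{\boldsymbol{\rho };\boldsymbol{\delta }}(\R^q\times\R^q,V^0)$ for $V^0:=S^0_{\textup{cl}}(\R^q_\eta ;\C,\mathcal{K}^{\infty ,\infty }(X^\wedge))$ (the order being $0$ since $\nu _1=(n+1)/2-1$ and $\nu _2$ compensates, or more simply: the two scalar prefactors have total order $0$ relative to the normalisation appearing in \eqref{32rem8713}), with $t\mapsto g_t$ continuous into $V^0$ and $(g_t)_{t\in[0,1]}$ bounded. Then the oscillatory integral \eqref{32osc1}--\eqref{32osc2}, applied with $V=V^0$ and integrating also over $t\in[0,1]$, yields $b(y,\eta )\in V^0=S^0_{\textup{cl}}(\R^q_\eta ;\C,\mathcal{K}^{\infty ,\infty }(X^\wedge))$ for every fixed $y$; the smoothness and the symbolic estimates in $y$ follow because $\partial _y\zeta$ depends smoothly on $y$ with compact support, so the constants in \eqref{32sy1} are locally uniform in $y$. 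This gives the claimed membership $b(y,\eta )\in S^0_{\textup{cl}}(\R^q\times\R^q;\C,\mathcal{K}^{\infty ,\infty }(X^\wedge))$.

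The one point that needs genuine care — and which I expect to be the main obstacle — is checking that the oscillatory-integral machinery of \cite{Kuma1}, in its operator-valued generalisation, legitimately applies here with the target Fr\'echet space $V^0$ rather than a Hilbert space; i.e. that the regularisation $\chi (\varepsilon x,\varepsilon \xi )$ converges and the limit is independent of $\chi $. This is exactly the content of the construction leading to \eqref{32osc2}, which is available for any Fr\'echet space $V$, so once the estimates \eqref{32reesti} of Lemma \ref{32rem871} are in place — polynomial growth in $\langle\xi\rangle$ and $\langle x\rangle$ of every $V^0$-seminorm of $D_x^\alpha D_\xi ^\beta $ applied to the integrand — the conclusion is automatic. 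The verification of those estimates is the routine part already carried out in Lemma \ref{32rem871}, so the proof of the present lemma reduces to invoking that lemma and the oscillatory-integral continuity statement \eqref{32osc2}, exactly as in Lemma \ref{32rem86}.
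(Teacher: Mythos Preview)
Your approach is correct and is essentially the same as the paper's: form the integrand $g_t(x,\xi)$ of \eqref{32rem9}, recognise it as an instance of Lemma \ref{32rem871}, and apply the Fr\'echet-space oscillatory integral \eqref{32osc1}--\eqref{32osc2} with $V=V^0$, exactly as in Lemma \ref{32rem86}. One minor slip: in your factorisation of $(\partial_\eta\omega_\eta)(r[\eta+t\xi])=\sigma(r[\eta+t\xi])\,r\,p(\eta+t\xi)$ you should write $r=(r[\eta])[\eta]^{-1}$ (so that $r[\eta]$ is absorbed into $\rho(r[\eta])=r[\eta]\omega'(r[\eta])$), not $r=(r[\eta+t\xi])[\eta]^{-1}$, which is what you wrote; with the correct factorisation one has $p\in S^0_{\textup{cl}}$, $s(\eta)=[\eta]^{-1}\in S^{-1}_{\textup{cl}}$, and Lemma \ref{32rem871} actually gives $g_t\in V^{-1}\subset V^0$, so the claimed order $0$ follows (your order bookkeeping via ``$\nu_1=(n+1)/2-1$'' is muddled, but the conclusion is unaffected).
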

\begin{proof}
We can form
\begin{equation}\label{32rem875}
g_t:(x,\xi )\mapsto(\partial _\eta \omega _\eta )(r[\eta +t\xi])\omega '(r[\eta ]) [\eta ]^{(n+1)/2}\langle(\partial _y \zeta )(y+x),(r[\eta ])^{-z}\rangle
\end{equation}
for $t\in [0,1]$ and then argue in a similar manner as in the proof of Lemma \ref{32rem86}, i.e. apply an oscillatory integral argument to the $S^0_{\textup{cl}}(\R^q\times \R^q;\C,\mathcal{K}^{\infty ,\infty }(X^\wedge))$-valued function \eqref{32rem875} occurring in the expression $b(y,\eta )=\int_0^1\int\!\!\int e^{-ix\xi }g_t(x,\xi )dx\dbar\xi dt,$ cf. the formula \eqref{32rem9}.
\end{proof}

% ------------------------------------------------------------------------

% ------------------------------------------------------------------------
\end{document}